\documentclass[11pt]{article}

\usepackage{amsmath, amsthm, amssymb, amscd}
\usepackage{graphicx}
\usepackage{bm}
\usepackage{tikz}
\usepackage[title]{appendix}
\usepackage{comment}

\sloppy

\usepackage{hyperref}

%%%%%%%%%%%%%%%%%%%%%%%%%%%%%%%%%%%%%%%%%%%%%%%%%%%%%%%%%%%%%%%%%%%%%

%\newcommand{\cit}[1]{\citeasnoun{#1}}

\newcommand{\vt}{\vartheta}

\newcommand{\argmin}{\mathop{\rm arg\min}}
\newcommand{\argmax}{\mathop{\rm arg\max}}

\frenchspacing
\allowdisplaybreaks[4]
\numberwithin{equation}{section}

\swapnumbers

\numberwithin{equation}{section}
\newtheorem{thm}{Theorem}[section]
\newtheorem{lem}[thm]{Lemma}
\newtheorem{rem}[thm]{Remark}
\newtheorem{prop}[thm]{Proposition}
\newtheorem{cor}[thm]{Corollary}

\newtheorem{defi}[thm]{Definition}

%%%%%%%%%%%%%Appendix Environment

\newcommand{\Var}{\operatorname{Var}}

\newcommand{\E}{\mathbb{E}}
\renewcommand{\P}{\mathbb{P}}

\providecommand{\eps}{\varepsilon}

\newcommand{\MLE}{\operatorname{MLE}}
\newcommand{\KS}{\operatorname{KS}}
\newcommand{\Leb}{\operatorname{Leb}}

\newcommand{\sign}{\operatorname{sign}}

\newcommand{\Exp}{\operatorname{Exp}}

\newcommand{\TV}{\operatorname{TV}}

\newcommand{\Pois}{\operatorname{Pois}}

\newcommand{\Unif}{\operatorname{Unif}}

\newcommand{\PC}{\operatorname{PC}}

\newcommand{\Lip}{\operatorname{Lip}}
\newcommand{\fMLE}{\widehat f^{\MLE}}
\newcommand{\fMLEKs}{\widehat f^{\MLE}_{K,\bs}}
\newcommand{\mon}{\operatorname{mon}}

\newcommand{\R}{\mathbb{R}}

\newcommand{\bt}{\mathbf{t}}
\newcommand{\ba}{\mathbf{a}}

\newcommand{\bs}{\mathbf{s}}
\newcommand{\bb}{\mathbf{b}}

\newcommand{\mC}{\mathcal{C}}
\newcommand{\mF}{\mathcal{F}}
\newcommand{\mM}{\mathcal{M}}

\addtolength{\textwidth}{1in}
\addtolength{\oddsidemargin}{-0.5in}
\addtolength{\textheight}{1in}
\addtolength{\topmargin}{-0.55in}

\renewcommand{\theta}{\vartheta}
%\providecommand{\MR}{\color{red}}
%\providecommand{\JSH}{\color{blue}}
%spacing between sentences
\frenchspacing

%---Fu�noten mehrfach referenzieren

\parindent 0cm
\addtolength{\parskip}{0.5\baselineskip}

%%%%%%%%%%%%%%%%%%%%%%%%%%%%%%%%%%%%%%%%%%%%%%%%%%%%%%%%%%%%%%%%%%%%%

\begin{document}

% "Title of the paper"
\title{Nonparametric Bayesian analysis\\ of the compound Poisson
prior \\ for support boundary recovery}

\author{\parbox[t]{6cm}{\centering Markus Rei\ss\\Institute of Mathematics\\ Humboldt-Universit\"at zu Berlin\\ mreiss@math.hu-berlin.de}  \hspace{1cm} \parbox[t]{6cm}{\centering Johannes Schmidt-Hieber\\ Mathematical Institute\\ Leiden University \\ schmidthieberaj@math.leidenuniv.nl} }

\date{}
\maketitle

\begin{abstract}
Given data from a Poisson point process with intensity $(x,y) \mapsto n \mathbf{1}(f(x)\leq y),$ frequentist properties for the Bayesian reconstruction of the support boundary function $f$ are derived. We mainly study compound Poisson process priors with fixed intensity proving that the posterior contracts with nearly optimal rate for monotone and piecewise constant support boundaries and adapts to H\"older smooth boundaries with smoothness index at most one. We then derive a non-standard Bernstein-von Mises result for a compound Poisson process prior and a function space with increasing parameter dimension. As an intermediate result the limiting shape of the posterior for random histogram type priors is  obtained. In both settings, it is shown that  the marginal posterior of the functional $\vartheta =\int f$ performs an automatic bias correction and contracts with a faster rate than the MLE. In this case, $(1-\alpha)$-credible sets are also asymptotic $(1-\alpha)$-confidence intervals. As a negative result, it is shown that the frequentist coverage of credible sets is lost for linear functions indicating that credible sets only have frequentist coverage for priors that are specifically constructed to match properties of the underlying true function.

\medskip

\noindent\textit{MSC 2000 subject classification}:  62C10; 62G05; 60G55

\noindent\textit{Key words: Frequentist Bayes analysis, posterior contraction, Bernstein von Mises theorem, Poisson point process, boundary detection, compound Poisson process, subordinator prior.}

\end{abstract}

% \paragraph{AMS 2010 Subject Classification:}
% Primary 62G10; secondary 62G15, 62G20.
%
% %62G10   	Hypothesis testing
% %62G15   	Tolerance and confidence regions
% %62G20   	Asymptotic properties
%
% \paragraph{Keywords:} Brownian motion; convexity; differential inequalities; ill-posed problems; mode detection; monotonicity; multiscale statistics; shape constraints.

\newpage

\section{Introduction}

The estimation of support boundary  functions does not only have numerous applications, but also poses intriguing mathematical questions, see Gijbels {\it et al.} \cite{gijbels1999}, Chernozhoukov and Hong \cite{chernozhukov2004} as well as Korostelev and Tsybakov \cite{korostelev1993} for some overview. Here, we consider the fundamental observation model of a  Poisson point process (PPP) $N$ on $[0,T]\times \mathbb{R},$ $T>0$, with intensity
\begin{align}
	\lambda (x,y) = \lambda_f (x,y) = n \mathbf{1}(f(x)\leq y).
	\label{eq.mod}
\end{align}
We thus observe points $(X_i,Y_i)_{i\ge 1}$ on the epigraph of the boundary function $f:[0,T] \rightarrow \mathbb{R}.$ The goals is to recover the support boundary $f$ nonparametrically, see Figure \ref{fig.intro}. In a similar way as the Gaussian white noise model is the continuous analogue of nonparametric regression with centered errors, support boundary recovery occurs as the continuous limit of nonparametric regression with one-sided errors, see Meister and Rei\ss\ \cite{meisterreiss2013} for related asymptotic equivalence results. The fundamental difference is the information geometry: for the Gaussian white noise model this is the $L^2$-geometry, whereas for support boundary recovery it is induced by the $L^1$-norm and the laws are not mutually absolutely continuous.  As a consequence, not only convergence rates differ, but also the asymptotic distributions of estimators are non-classical. Moreover, the maximum-likelihood estimator (MLE) is often not efficient and Bayesian methods are advocated. At a methodological level we explore here to  what extent this remains true for non- and semi-parametric problems. This is particularly interesting because for many function classes a nonparametric MLE exists in the PPP model. In the related problem of boundary detection in images under Gaussian noise, the Hellinger distance is also of $L^1$-type, cf. Li and Ghosal \cite{LiGhoshal2015} for posterior contraction results, but the observation laws are mutually absolutely continuous and a nonparametric MLE usually does not exist.

\begin{figure}
\begin{center}
	\includegraphics[scale=0.7]{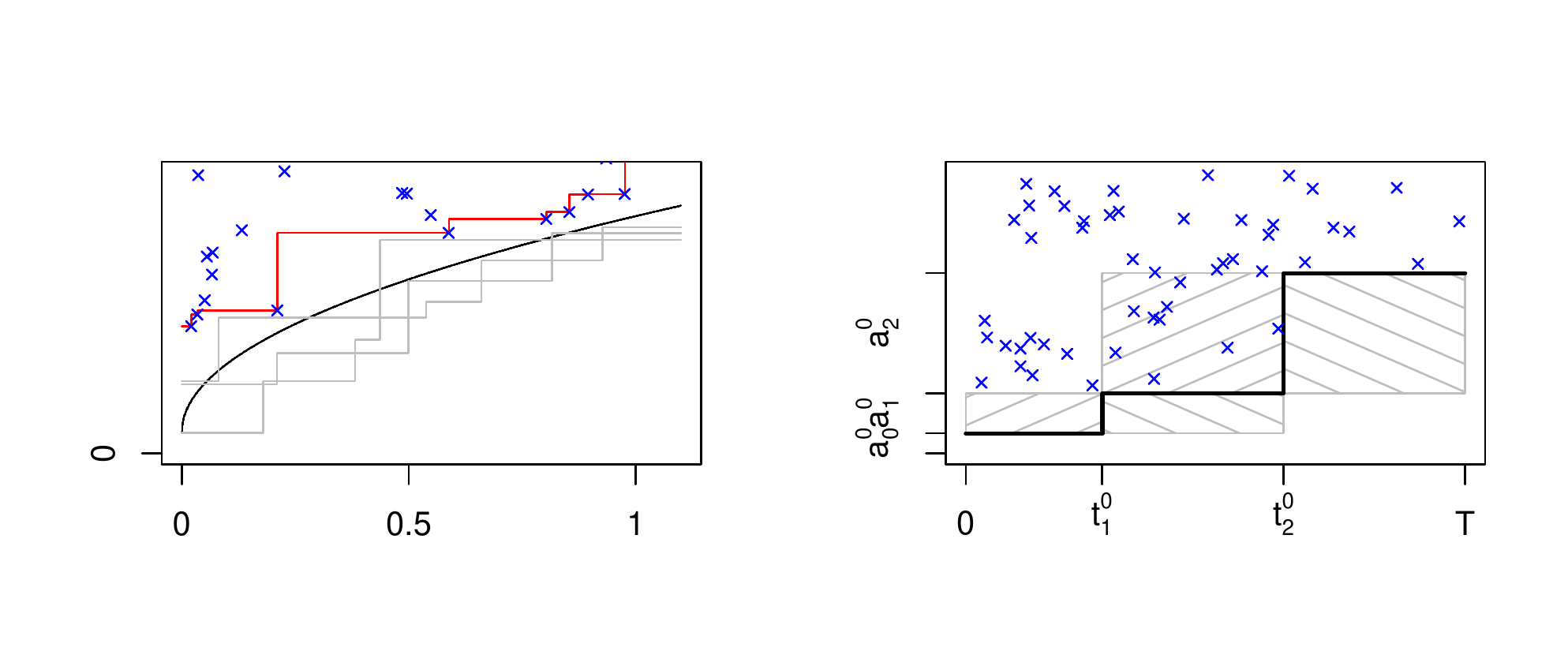}
	\vspace{-1.3cm}
	\caption{\label{fig.intro} {Two simulated data examples for the PPP model with true boundary (black) and observations (blue). Left:  MLE (red), posterior draws (gray). Right: shaded gray areas related to Definition \ref{assump.min_rect} below.}}
\end{center}
\end{figure}

A second major goal is to understand the performance of compound Poisson processes (CPP) as nonparametric priors. CPPs are probabilistically well understood, are easy to sample and can be equivalently understood as piecewise constant priors, where the jump locations are uniform, the jump sizes are i.i.d. random and the number of jumps is chosen by a Poisson hyperprior. For binary regression, CPP priors were studied by Coram and Lalley \cite{coram2006}, establishing nonparametric consistency, and they are often recommended in practice, e.g. as priors for monotone functions in Holmes and Heard \cite{holmes2003} with applications to gene expression data. We prove below that under CPP priors  optimal posterior contraction rates (sometimes up to logarithmic factors) are attained for H\"older functions, piecewise constant functions with number of jumps growing to infinity and for monotone functions. They even adapt automatically to the unknown H\"older smoothness or number of jumps. Given that the jump intensity remains fixed, this shows how powerful and versatile simple CPP priors are. The derivation of the contraction rates is based on the general theory developed in the companion paper \cite{reiss2018a}. The theory for monotone functions extends to subordinator priors, that is monotone L\'evy processes, which  have been studied  in survival analysis by Kim and Lee \cite{kim2004}, but not in the context of nonparametric posterior contraction rates.

Going beyond rate results, most effort is required to study Bernstein-von Mises (BvM) theorems for the function $f$ and its mean $\theta=\int f$, a basic semiparametric functional. Before turning to CPP priors, we study the simpler case of piecewise constant function priors where only the jump sizes are random and refer to them as random histogram priors. Concerning the frequentist approach, the nonparametric MLE $\fMLE$ exists for H\"older balls with smoothness index $\beta \leq 1,$ monotone functions and piecewise constant functions  and  achieves the minimax estimation rate. For functionals such as $\theta,$ however,  the MLE $\widehat  \vartheta^{\MLE} = \int \widehat f^{\MLE}$ converges usually with a suboptimal rate. A rate-optimal estimator can be obtained if we subtract a term that scales with the number of observations lying on the boundary of the MLE and consider
\begin{align}
	\widehat \vartheta = \int \widehat f^{\MLE} - \frac{\text{number of data points } (X_i,Y_i) \text{ on the boundary of } \widehat f^{\MLE}}{n},
	\label{eq.fctal_bias_correction}
\end{align}
see Rei\ss\ and Selk \cite{reiss2014}. This bias correction accounts for the fact that $\widehat f^{\MLE}$ overshoots the true boundary function $f$ considerably.
In the case of a constant function $f$ and for more general parametric setups, Bayes estimators correct the bias of the MLE by distributing the posterior mass correctly below $\widehat f^{\MLE}$, cf. Kleijn and Knapik \cite{Kleijn2012}.

It is therefore natural to ask whether a nonparametric Bayesian approach also performs this correction automatically. Here we show that the answer can be positive as well as negative. As a positive result, we prove that for piecewise constant and monotone support boundaries under random histogram and CPP priors the posterior concentrates around $\widehat \vartheta$ with the optimal contraction rate. Optimal frequentist estimation of piecewise constant and monotone functions in Gaussian noise has attracted a lot of attention recently, see Gao {\it et al.} \cite{Gao2017} and the references discussed there.  Furthermore, we obtain intervals which are simultaneously asymptotic $(1-\alpha)$-credible and $(1-\alpha)$-confidence intervals of rate-optimal length. The Bayesian approach clearly outperforms the MLE in this case. As a negative example, we consider a linear support boundary $f$. The posterior contracts around the true support boundary $f$ with the optimal rate, but the bias correction of the marginal posterior for $\vartheta$ is of incorrect order. In this case, credible sets have asymptotically no frequentist coverage. In conclusion, the bias correction induced by the Bayes approach must always be carefully studied.

Conceptionally, we  study BvM results for increasing parameter dimensions with the hyperprior on the number of jumps determining the model dimension. For linear and exponential family models this has been treated by Ghosal \cite{ghosal1999,Ghosal2000} and by Bontemps \cite{bontemps2011} for Gaussian regression. Panov and Spokoiny \cite{Panov2015} explore the scope of BvM results for regular i.i.d. models of growing dimension and find a critical dimension related to ours, see the discussion in Section \ref{SecHistoNonpar} below. A bias problem for functional estimation by adaptive Bayesian methods has
been exhibited by Castillo and Rousseau \cite{castillo2015b} and Rousseau and Rivoirard \cite{rivoirard2012}, which bears some similarity with our approach, but at a parametric $\sqrt n$-rate.

Related to CPP priors are many popular piecewise constant prior prescriptions. First of all, there are priors on regression trees, such as Bayesian CART (Denison {\it et al.} \cite{denison1998}) and BART (Chipman {\it et al.} \cite{chipman2010}). Regression trees subdivide the space of covariates and then put a constant value on each of the cells. These priors are henceforth supported on piecewise constant functions. Posterior contraction for BART has been derived only recently by Rockova and van der Pas \cite{rockova2017}.
For density estimation, histogram priors are well studied. Scricciolo \cite{Scricciolo2007} considers random histograms with fixed bin width and the number of bins a hyperprior. It is shown that near optimal contraction rates are obtained if the true density is either H\"older with index at most one or piecewise constant.

So far, only little theory has been developed  for nonparametric Bayes under shape constraints. Exceptions are
Salomond \cite{salomond2014} for monotone densities and  Mariucci {\it et al.} \cite{mariucci2017} for log-concave densities. In both cases mixtures of Dirichlet processes are taken as priors. To the best of our knowledge the present paper is the first one that derives Bernstein-von Mises type results under a shape constraint.

The subsequent article is organized as follows. Contraction rates for compound Poisson process  and subordinator priors are investigated in Section \ref{sec.post_contr}. In an interlude, Section \ref{sec.BvM_general} discusses a general description of the asymptotic posterior shape, in which the results thereafter can be embedded. Bernstein-von Mises type theorems and results on the frequentist coverage of credible sets can be found in Section \ref{sec.BvM} for random histogram priors and in Section \ref{SecBvMCPP} for CPP priors. The proofs for Sections \ref{sec.post_contr}, \ref{sec.BvM} and \ref{SecBvMCPP} are gathered in Appendices \ref{AppA}, \ref{AppB} and \ref{AppC}, respectively. Proofs for the monotone MLE are delegated to Appendix \ref{AppMLE}, and Appendix \ref{AppTV} contains some independent total variation results.

{\it Notation.} We write $N = \sum_i \delta_{(X_i,Y_i)}$ for a random point measure on $[0,1] \times \mathbb{R}$ and denote the support points by $(X_i, Y_i)_i.$ Whenever $N$ is observed, it is natural to call the support points  observations. Moreover, we use the standard terminology $\mathbf{1}_A := \mathbf{1}(\cdot \in A),$ $(x)_+ := \max(x,0)$ and $\|\cdot\|_p$ for the $L^p([0,1])$-norm.

\section{Posterior contraction}
\label{sec.post_contr}

{\bf Bayes formula.} Let us first recall the Bayes formula for the PPP model as derived in \cite{reiss2018a}. Let $(\Theta,d)$ be a Polish space equipped with its Borel $\sigma$-algebra and $d$ a stronger metric than the $L^1$-norm. For $f_0\in L^1([0,T])$, a prior $\Pi$ on $\Theta$ and a Borel set $B\subset\Theta,$ Lemma 2.2 in \cite{reiss2018a} gives an explicit Bayes formula under the law $P_{f_0}$:
\begin{align}
	\Pi(B | N) = \frac{\int_B e^{n\int_0^T f} \mathbf{1} (\forall i : f(X_i) \leq Y_i) \, d\Pi(f)}{\int_\Theta  e^{n\int_0^T f} \mathbf{1}(\forall i : f(X_i) \leq Y_i) \, d\Pi(f)}
	= \frac{\int_B  e^{-n\int_0^T (f_0-f)_+} \frac{dP_{f\vee f_0}}{dP_{f_0}} (N) \, d\Pi(f)}{\int_\Theta  e^{-n\int_0^T (f_0-f)_+} \frac{dP_{f\vee f_0}}{dP_{f_0}} (N) \, d\Pi(f)}\quad P_{f_0}\text{-a.s.}
	\label{eq.Bayes}
\end{align}
The default is $T=1$ but in Section \ref{SecBvMCPP} it is convenient to work with $T>1.$

{\bf Compound Poisson prior.} The main focus of this section is to study posterior contraction for compound Poisson process priors defined on the space $\Theta=D[0,1]$ of c\`adl\`ag functions, equipped with the Skorokhod topology. A compound Poisson process $Y$ on $[0,1]$ can be written as
$Y_t= \sum_{i=1}^{N_t} \Delta_i$
with  a Poisson process $(N_t)_{t\geq 0}$ of intensity $\lambda>0$ and  an i.i.d. sequence $(\Delta_i)$ of random variables, independent of the Poisson process. We denote the distribution of $\Delta_1$ by $G$. We randomize the starting value $X_0 = \Delta_0$ according to a distribution $H$ and consider
\begin{align}\label{EqCPP}
	X_t = \Delta_0 + \sum_{i=1}^{N_t} \Delta_i = \sum_{i=0}^{N_t} \Delta_i,
\end{align}
with $\Delta_0 \sim H$ independent of $(\Delta_i)_{i\ge 1}$ and $(N_t)_{t\geq 0}.$

A CPP can equivalently be viewed as a hierarchical prior on $f$ in the spirit of \cite{spa, castillo2015}. The hierarchical CPP construction picks in a first step a model dimension prior $\pi \sim \Pois(\lambda).$ The order statistics property of a Poisson process (\cite{Embrechts2003}, p.186) says that conditionally on the event that the CPP jumps $K$ times on $[0,1],$ the ordered jump locations $(t_1,\ldots, t_K),$ $t_0:=0\leq t_1 \leq \ldots \leq t_K \leq 1,$ have the same distribution as the order statistic of $K$ i.i.d. $U([0,1])$ random variables. The Lebesgue density of $(t_1, \ldots, t_K)|K$ is therefore $K!\mathbf{1}(0\leq t_1 \leq t_2 \leq \ldots \leq t_K \leq 1).$ The last step is then to assign the starting value $a_0$ and the jump sizes $a_1, \ldots, a_K.$  Assuming that the distributions $G, H$ have Lebesgue densities $g$ and $h,$ respectively, we can write the CPP prior in closed form as a prior on $K, \bt,$ and $\bb$
\begin{align}
	(K, \bt, \ba) \mapsto e^{-\lambda} \lambda^K h(a_0)\prod_{j=1}^K g(a_j) \mathbf{1}\big(0< t_1 < t_2 < \ldots <t_K<1 \big)
	\label{eq.CPP_explicit}
\end{align}
generating random c{\`a}dl{\`a}g functions $f= \sum_{j=0}^K a_j \mathbf{1}_{[t_j, 1] }$ with $t_0:=0.$

Since $\lambda$ is fixed, for most draws of the prior the number of jumps will be of order $\lambda$. As we show below, the CPP prior puts still enough mass around functions with an increasing number of jumps to ensure nearly optimal posterior contraction rates for H\"older functions and for piecewise constant function with an increasing number of pieces. Let us also mention that the CPP prior randomizes over the jump points and should therefore be able to adapt to local smoothness. This might be an advantage compared to random histogram priors where the function jumps on a fixed grid.

%to do: does this prior lead to a simple computational method???

{\bf Function classes.}
A natural parameter space for CPP priors are piecewise constant functions with $K$ pieces:
\begin{align}
	\PC(K,R) = \Big\{ f: f= \sum_{j=1}^K b_j \mathbf{1}_{[t_{j-1}, t_j)} \ & \text{with} \  0:=t_0 <t_1<\ldots < t_K:=1, |b_j|\leq R\Big\}.
	\label{eq.PC_def}
\end{align}
We are interested in the case where the number of pieces $K=K_n$ grows with $n.$ In this case we have $2K_n-1$ parameters. Since the squared parametric rate is $n^{-1}$, we expect the best possible contraction rate to be $K_n/n.$ Moreover, we denote by $\mC^\beta(R)$ the ball of $\beta$-H\"older functions $f:[0,1]\to\R$ with  H\"older norm $\|f\|_{C^\beta}$ bounded by $R.$
The CPP prior allows to build in monotonicity as prior knowledge by choosing a positive jump distribution. We define the space of montone functions which are bounded by $R$ as
\begin{align*}
	\mM(R) :=\{ f: \ f \ \text{monotone increasing and} \ -R \leq f(0)\leq f(1) \leq R\}.
\end{align*}

\begin{thm}
\label{thm.contr_CPP}
Consider the CPP prior \eqref{EqCPP} with a positive and continuous Lebesgue density $h$ on $\mathbb{R}.$ If there are constants $\gamma,L>0$ such that $\P ( |\Delta_i | \geq s ) \leq L^{-1} e^{-L s^{\gamma}}$ for all $s\geq 0,$ then, there exist positive constants $M$ and $c$ such that
\begin{itemize}
\item[(i)]  if $g$ is positive and continuous on $\mathbb{R}^+,$
\begin{align*}
	\sup_{f_0\in \mM(R)} E_{f_0} \Big[ \Pi\Big(f : \|f-f_0\|_1\geq M \sqrt{\frac{\log n}{n}}   \,  \Big| \, N\Big) \Big] \leq e^{-c\sqrt{n\log n}};
\end{align*}
\item[(ii)] if $g$ is positive and continuous on $\mathbb{R},$
\begin{align*}
	\sup_{f_0\in \mC^\beta(R)} E_{f_0} \Big[ \Pi\Big(f : \|f-f_0\|_1\geq M  (\log n/n)^{\beta/(1+\beta)} \, \Big| \, N\Big) \Big] \leq e^{-cn(\log n/n)^{\beta/(1+\beta)}};
\end{align*}
\item[(iii)] if $g$ is positive and continuous on $\mathbb{R}$ and if $n^{\rho} \lesssim K_n=o(n/\log n) $ for some $\rho >0,$
\begin{align*}
	\sup_{f_0\in \PC(K_n, R)} E_{f_0} \Big[ \Pi\Big(f : \|f-f_0\|_1\geq M \frac {K_n}n \log n \, \Big| \, N\Big) \Big] \leq n^{-cK_n}.
\end{align*}
\end{itemize}
\end{thm}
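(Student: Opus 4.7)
The strategy is to invoke the general PPP posterior contraction theorem of the companion paper \cite{reiss2018a}. In the $L^1$-information geometry of this model the contraction conditions at rate $\eps_n$ reduce to: (a) a prior mass lower bound $\Pi(\{f : \|f - f_0\|_1 \leq \eps_n\}) \gtrsim e^{-c_1 n \eps_n}$, where the exponent is linear in $n\eps_n$ (not $n\eps_n^2$) because the observation laws are not mutually absolutely continuous; (b) a sieve $\Theta_n \subset D[0,1]$ with $\Pi(\Theta_n^c) \leq e^{-c_2 n \eps_n}$; and (c) an $L^1$-covering number bound $\log N(\eps_n, \Theta_n, \|\cdot\|_1) \lesssim n \eps_n$. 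Each of the three statements then follows from checking (a)--(c) for the corresponding $\eps_n$.

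The heart of the proof is the prior mass condition (a). I approximate $f_0$ by a piecewise constant function $f^*$ with $K^*$ pieces and use the explicit hierarchical representation \eqref{eq.CPP_explicit} to bound from below the CPP density of a product neighbourhood of $f^*$. The choices are $K^* \asymp \sqrt{n/\log n}$ in (i), $K^* \asymp (n/\log n)^{1/(1+\beta)}$ in (ii), and $K^* = K_n$ in (iii); in each case the resulting $L^1$-approximation error matches the claimed rate $\eps_n$. For a product of $K^*+1$ intervals of width $\delta$ around the target jump locations and sizes, the density \eqref{eq.CPP_explicit} contributes $e^{-\lambda}\lambda^{K^*}$ from the Poisson weight and a factor $\delta^{K^*}$ times uniform positive lower bounds on $g$ and $h$ over the compact range of interest. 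Taking logarithms yields a cost of order $-K^*\log K^* - K^*\log(1/\delta)$, which for $\delta$ polynomial in $1/n$ stays within the budget $c_1 n \eps_n$ up to the logarithmic factors present in the stated rates. In case (i) the support of $g$ in $\mathbb{R}^+$ forces the CPP draws, and thus the approximants, to be monotone, so the posterior concentrates inside $\mM(R)$; in (ii) and (iii) positivity of $g$ on all of $\mathbb{R}$ is needed because $f^*$ may have increments of either sign.

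For the sieve I take $\Theta_n = \{f \in D[0,1] : K(f) \leq K_n^{\max},\ \|f\|_\infty \leq R_n\}$, where $K(f)$ counts the jumps of $f$, with $K_n^{\max} \asymp n \eps_n / \log n$ and $R_n$ polylogarithmic in $n$. A Poisson deviation bound controls $\Pi(K > K_n^{\max})$, and the sub-Weibull jump tail assumption $\P(|\Delta_i|\geq s)\leq L^{-1}e^{-Ls^\gamma}$ combined with a union bound over the $K_n^{\max}$ jumps controls $\Pi(\|f\|_\infty > R_n)$; both are of order $e^{-c_2 n \eps_n}$. The standard entropy estimate $\log N(\eps_n, \PC(K_n^{\max}, R_n), \|\cdot\|_1) \lesssim K_n^{\max} \log(R_n/\eps_n)$ then matches $n\eps_n$.

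The main obstacle is case (iii): the target dimension $K_n$ is polynomial in $n$ while the CPP has typical dimension $O(1)$, so the Poisson prior cost $K_n \log K_n$ competes directly with the available budget $n\eps_n = K_n \log n$. This is admissible only when $\log K_n = O(\log n)$, which is ensured by $K_n = o(n/\log n)$, whereas the lower bound $n^\rho \lesssim K_n$ both keeps the approximation remainder asymptotically negligible and upgrades the generic subexponential decay to the sharper form $n^{-cK_n}$ stated in the theorem.
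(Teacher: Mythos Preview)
Your overall architecture matches the paper's: both invoke the PPP contraction theorem from \cite{reiss2018a}, verify the prior mass condition by an explicit lower bound on the CPP density in a product neighbourhood of a piecewise constant approximant, and build the sieve from bounds on the number and size of jumps. The choices of $K^*$ and the resulting rates are the same.

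There is, however, one model-specific point you have glossed over that the paper treats carefully. The contraction theorem quoted here as Theorem~\ref{thm.main_ub} does \emph{not} use the two-sided $L^1$-ball you wrote in condition~(a); it requires prior mass on the one-sided set $\{f:\|f-f_0\|_1\le A\eps_n,\ f\le f_0\}$, and correspondingly condition~(i) uses one-sided bracketing numbers $N_{[}(\eps_n,\Theta_n)$ rather than covering numbers. This is precisely where the non-absolute-continuity bites: a function $f$ with $f>f_0$ on a set of positive measure satisfies $\mathbf{1}(\forall i:f(X_i)\le Y_i)=0$ with positive $P_{f_0}$-probability, so two-sided $L^1$-neighbours of $f_0$ do not give a useful lower bound on the denominator of the Bayes formula \eqref{eq.Bayes}. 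The paper's Lemma~\ref{lem.SB_for_CPP} therefore constructs, in cases (i) and (iii), product neighbourhoods that lie entirely below $f_0$ (the approximant $f_-$ is built from left-endpoint or blockwise-minimum values, and the jump sizes are chosen to undershoot). Your product-neighbourhood sketch can be adapted to do this, but as written it does not address the constraint, and the phrase ``the observation laws are not mutually absolutely continuous'' is invoked only to justify the exponent $n\eps_n$ rather than to explain why the small-ball set must be one-sided. Similarly, Lemma~\ref{lem.CPP_entropy} produces lower brackets $h\le f$, not merely $L^1$-centres.
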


In all cases the rate is  expected to be optimal up to the $\log n$ factor. Compound Poisson processes thus furnish a very versatile prior adapting to unknown smoothness and shape.

The proof is based on a  Ghosal-Ghosh-van der Vaart type result from \cite{reiss2018a}. To check the conditions we derive lower bounds on the one-sided small ball probabilities of the CPP prior for the function classes considered above. These bounds could be used to derive contraction rates for other nonparametric models.

{\bf Subordinators.} CPPs form the subclass of L\' evy processes with finite jump intensity. Allowing also for infinitely many jumps, subordinators, that is L\' evy processes with monotone sample paths, generate a rich class of monotone function priors. We consider only subordinators without drift, characterized by their characteristic function
\[ \phi_t(u)=\E[e^{iuY_t}]=\exp\Big(t\int_{\R^+}(e^{iux}-1)\nu(dx)\Big),\quad t\ge 0,\]
where the L\'evy measure $\nu$ is a $\sigma$-finite measure on $\R^+$, satisfying $\int_{\R^+} (x \wedge 1)\nu(dx)<\infty$. Its intensity is $\lambda=\nu(\R^+)\in[0,\infty]$ and in the finite intensity case a subordinator is just a compound Poisson process of intensity $\lambda$ with jump distribution $G=\nu/\lambda$.

Among  subordinators of infinite intensity prominent examples are the Gamma  and inverse Gaussian processes, see  \cite{Sato2013} for a comprehensive treatment. Dirichlet processes belong to the most frequently used priors in nonparametric Bayesian methods and can be viewed as time-changed and normalized Gamma processes, see \cite{ghoshal2017}, Section 4.2.3.
Subordinators as priors have been studied in the context of survival models by \cite{kim2004}. There the target of estimation is the cumulative hazard function, which can be estimated at the parametric rate $n^{-1/2}$. Subordinators as priors for shape-constrained estimation problems in regression or density-type models do not seem to have been analyzed yet  so that the result below can be of independent interest.

{\bf The randomly initialized subordinator prior.} As priors we consider randomly initialized subordinators of the form
\begin{align*}
	X_t =Y_0 +Y_t, \quad \text{with} \ (Y_t)_{t\geq 0} \ \text{ a subordinator and } Y_0 \sim H \ \text{independent of } (Y_t)_{t>0},
\end{align*}
where $H$ is assumed to have a positive and continuous Lebesgue density on $\mathbb{R}.$ Moreover, we suppose that the L\'evy measure $\nu$ has a Lebesgue density which by some slight abuse of notation is called $\nu(x)$ and is assumed to be continuous and positive on $\mathbb{R}^+.$

\begin{thm}
\label{thm.contr_subord}
Consider the randomly initialized subordinator prior. If there exist constants $\gamma, L>0$ such that $\nu(x) \leq L x^{-3/2}$ for all $x>0$ and  $\int_s^\infty \nu(x) dx \leq Le^{-L^{-1}s^\gamma}$ for all $s\geq 1,$ then there are  constants $M,c>0$ such that
\begin{align*}
	\sup_{f_0 \in \mM(R)} E_{f_0}\Big[ \Pi \Big(f: \|f -f_0\|_1 \geq M \sqrt{ \frac {\log n}n} \, \Big | \, N \Big) \Big]
	\leq e^{-c\sqrt{n\log n}}.
\end{align*}
\end{thm}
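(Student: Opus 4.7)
The plan is to apply exactly the same Ghosal--Ghosh--van der Vaart style posterior contraction criterion from the companion paper \cite{reiss2018a} that underlies Theorem \ref{thm.contr_CPP}(i). With the target rate $\varepsilon_n=\sqrt{\log n/n}$, the criterion reduces the statement to three conditions: (a) a one-sided small-ball bound of the form $\Pi(f:\int_0^1(f_0-f)_+\le \varepsilon_n,\,\|f\|_\infty\le R_n)\gtrsim e^{-Cn\varepsilon_n^2}=n^{-C}$; (b) a sieve $\mathcal F_n\subset \mM(R_n)$ whose $L^1$-bracketing entropy is $\lesssim n\varepsilon_n^2=\log n$; and (c) exponentially small prior mass outside that sieve. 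Since $\mM(R)$ is a Donsker-type class, (b) is standard: for bounded monotone functions the $\varepsilon$-bracketing entropy in $L^1$ is of order $R_n/\varepsilon$, so for $R_n$ growing polylogarithmically in $n$ we comfortably fit under $\log n$. For (c), the exponential tail bound $\int_s^\infty \nu(x)\,dx\le Le^{-L^{-1}s^\gamma}$ on large jumps, together with the tail assumption on $H$, gives $\Pi(\sup_t|X_t|\ge R_n)\le e^{-c R_n^\gamma}$, which is negligible for $R_n=(\log n)^{2/\gamma}$.

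The technical heart, as in the CPP proof, is the small-ball bound (a). I would proceed by first approximating $f_0\in \mM(R)$ by a monotone step function $f_0^{(K)}$ with $K_n\asymp \sqrt{n/\log n}$ jumps, for which $\|f_0-f_0^{(K)}\|_1\lesssim K_n^{-1}\asymp \varepsilon_n$. Then decompose the subordinator $Y$ into its large-jump part $Y^{>}$ consisting of jumps exceeding a threshold $\delta_n$ (a compound Poisson process of finite intensity $\lambda_n=\int_{\delta_n}^\infty\nu(x)\,dx$ and jump density $\nu/\lambda_n$ on $(\delta_n,\infty)$) and its complementary small-jump part $Y^{<}$. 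For $Y^{>}$ one essentially rerun the CPP small-ball argument: condition on exactly $K_n$ large jumps; then the order-statistic property places them uniformly, and we require (i) the $j$-th jump location to lie in an interval of length $\varepsilon_n/K_n$ around the $j$-th jump of $f_0^{(K)}$, (ii) each jump size to lie in a window of size $\varepsilon_n/K_n$ around the corresponding increment (using positivity and continuity of $\nu$ on compacta), and (iii) the randomised initial value $Y_0$ to lie within $\varepsilon_n$ below $f_0(0)$. The probabilities multiply to at least $e^{-CK_n\log n}=e^{-Cn\varepsilon_n^2}$, matching (a).

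The main novelty over the CPP case, and the step I expect to be the main obstacle, is controlling the infinite-activity small-jump part $Y^{<}$ uniformly on $[0,1]$ so it does not spoil the one-sided $L^1$ closeness. Choose $\delta_n\asymp \varepsilon_n^2$. The assumption $\nu(x)\le Lx^{-3/2}$ then yields
\begin{equation*}
\E[Y^{<}_1]=\int_0^{\delta_n}\!x\,\nu(x)\,dx\lesssim \delta_n^{1/2}\asymp \varepsilon_n,\qquad \Var(Y^{<}_1)\le\int_0^{\delta_n}\!x^2\nu(x)\,dx\lesssim \delta_n^{3/2}.
\end{equation*}
Since $Y^{<}$ is an increasing (pure-jump) Lévy process of finite variance, a Doob--type maximal inequality combined with the exponential Bernstein bound for pure-jump Lévy processes with bounded jumps gives $\P(\sup_{t\in[0,1]}Y^{<}_t\ge C\varepsilon_n)\le 1/2$ for a large enough constant $C$. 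This bound is independent of the large-jump event, so intersecting with the CPP-type event above only changes the small-ball probability by a constant factor. The calibration of $\delta_n$ is delicate because it must be small enough that $\lambda_n=\int_{\delta_n}^\infty\nu(x)\,dx$ is at most a polynomial in $n$ (so the Poisson factor $e^{-\lambda_n}\lambda_n^{K_n}/K_n!$ is $n^{-O(K_n)}$) while also keeping $\sup_t Y^{<}_t=O(\varepsilon_n)$; the power $-3/2$ in the assumption is exactly what balances these two requirements.

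Putting the three items together in the master theorem of \cite{reiss2018a} gives the claimed exponential decay $e^{-c\sqrt{n\log n}}$ of the expected posterior mass outside the $M\varepsilon_n$-ball, uniformly over $f_0\in\mM(R)$.
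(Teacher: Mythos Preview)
Your overall plan --- verify the three conditions of Theorem~\ref{thm.main_ub} --- is right, but you have stated those conditions in the wrong calibration throughout: in this PPP model the master theorem is formulated in terms of $n\eps_n$, not $n\eps_n^2$ as in the regular Hellinger setting. With $\eps_n=\sqrt{\log n/n}$ the small-ball requirement is $\Pi(\cdot)\ge e^{-Cn\eps_n}=e^{-C\sqrt{n\log n}}$ and the entropy budget is $e^{C'n\eps_n}=e^{C'\sqrt{n\log n}}$, not $n^{-C}$ and $\log n$. Your own computation $e^{-CK_n\log n}$ equals $e^{-C\sqrt{n\log n}}$, not $e^{-Cn\eps_n^2}=n^{-C}$; likewise the monotone bracketing entropy $R_n/\eps_n\asymp\sqrt{n/\log n}$ is far above $\log n$. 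Once $n\eps_n^2$ is replaced by $n\eps_n$ everywhere, your arithmetic becomes internally consistent, but as written you are checking the wrong inequalities.

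The paper's small-ball argument (Proposition~\ref{PropSmallBallSubord}) is quite different from your moving-threshold scheme and avoids the balancing you flag as delicate. It splits the L\'evy measure at a \emph{fixed} level, so that the large-jump component $X^>$ is a CPP of fixed finite intensity to which Lemma~\ref{lem.SB_for_CPP}(i) applies verbatim. The small-jump component $X^<$ has L\'evy density bounded by $Lx^{-3/2}$ and is therefore stochastically dominated by a $1/2$-stable subordinator; the key external input is the small-deviation asymptotics $\log P(\|X^{(1/2)}\|_\infty\le\eps)\asymp -\eps^{-1}$ from \cite{Simon2004}, giving $P(\|X^<\|_\infty\le\eps)\ge e^{-c/\eps}$ directly. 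There is no $n$-dependent intensity $\lambda_n$, no growing Poisson factor, and no issue with target increments of $f_0^{(K)}$ falling below the truncation level --- in your scheme both $\delta_n$ and the window width $\eps_n/K_n$ are of order $(\log n)/n$, so nearly-flat pieces of $f_0$ cannot be matched by jumps of $Y^>$ without a further repair. Your route can be made to work with extra bookkeeping, but the stable-subordinator coupling is shorter and yields an $n$-free small-ball bound $\eps^{c/\eps}$.

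Your sieve step is also too thin. The assertion $\Pi(\sup_t|X_t|\ge R_n)\le e^{-cR_n^\gamma}$ is not automatic for an infinite-activity subordinator: the exponential L\'evy tail controls single large jumps, not the accumulation of small ones. The paper (Lemma~\ref{lem.CPP_entropy2}) here \emph{does} use an $n$-dependent threshold $\delta=1/(2M\sqrt{n\log n})$, bounds $P(X_<(1)>1)\le e^{-M\sqrt{n\log n}}$ via L\'evy--Khintchine and an exponential Markov inequality, treats $X_>$ as a CPP of intensity $O(\delta^{-1/2})$, and builds the sieve as a sumset of bounded monotone functions and piecewise-constant functions, whose one-sided bracketing numbers multiply.
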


\section{On the generalized Bernstein-von Mises phenomenon}
\label{sec.BvM_general}

Before we move on and derive the posterior limit for the CPP prior, we briefly discuss the extension of the Bernstein-von Mises theorem beyond regular models. The classical Bernstein-von Mises theorem assumes a parametric model $(P_\theta^n : \theta \in \Theta)$ that is differentiable in quadratic mean and has nonsingular Fisher information $I_{\theta,n}.$ Then, for a continuous and positive prior, the posterior can be approximated in total variation distance by
\begin{align*}
	\mathcal{N}\big( \widehat{\theta}_n^{\MLE}, I_{\theta_0,n}^{-1}\big)
\end{align*}
if the i.i.d. data are generated from $P_{\theta_0}^n,$ $\theta_0 \in \Theta,$ see \cite{vdvaart1998}, Section 10.2 for a precise statement. It can also be easily seen that if we observe $Y_i= \theta_0 + \eps_i,$ $i=1,\ldots,n$, with independent $\eps_i \sim \Exp(1)$, then $\widehat \theta_n^{\MLE}=\min(Y_1,\ldots,Y_n)\sim \theta_0+\eps$ with $\eps\sim \Exp(n)$. For a continuous and positive prior we obtain in the limit the posterior $(\widehat \theta_n^{\MLE} - \widetilde \eps) \, | \, \widehat \theta^{\MLE}$ with $\widetilde \eps \sim \Exp(n)$ and $\widetilde \eps$ independent of $\eps,$ see \cite{2012arXiv1210.6204K}.

This suggests that a generalized Bernstein-von Mises theorem should be of the following form: If there exists a MLE $\widehat \theta_n^{\MLE}$ such that
\begin{align}
	\widehat \theta_n^{\MLE} = \theta_0 + \eps_n(\theta_0),
	\label{eq.MLE_expansion}
\end{align}
with $\eps_n(\theta_0)$ some random variable, then, under  standard assumptions on the prior, the posterior should be close to the conditional distribution of
\begin{align}
	\big(\widehat{\theta}_n^{\MLE} - \widetilde \eps_n(\theta_0)\big) \, \big| \, \widehat{\theta}_n^{\MLE},
	\label{eq.BvM_general}
\end{align}
where $\widetilde \eps_n(\theta_0)$ has the same distribution as $\eps_n(\theta)$ but is independent of it. This unifies both cases above. For problems with increasing model dimension, we can additionally build in a model selection prior such that the posterior concentrates on smaller models. If the posterior puts asymptotically all mass on one model, then \eqref{eq.MLE_expansion} and \eqref{eq.BvM_general} have to be replaced by the corresponding expressions in this model, see \cite{castillo2015}, Section 2.4 for an example. The posterior limit distributions that occur in the subsequent chapters are exactly of this form.

\section{Bernstein-von Mises for random histogram type priors}
\label{sec.BvM}

\subsection{An asymptotic shape result for the full posterior} \label{SecHistoNonpar}

The CPP prior charges all piecewise constant functions by randomizing over the number of jumps, the jump locations and the jump sizes. As an intermediate step, it seems natural to ask first about the limiting posterior shape if the number of jumps and the jump locations are fixed and only the jump sizes are random. Since such a prior generates piecewise constant functions looking like histograms, only without normalisation and non-negativity constraint, we  refer to this prior as histogram prior.

Given a positive integer $K$ and $(t_0, t_1, \ldots, t_K)$ with $0=:t_0 <t_1< \ldots < t_{K}:=1,$ consider the space of piecewise constant functions with fixed jump times:
\begin{align*}
	\PC^*(K, (t_0, \ldots, t_{K}), R) =\Big\{ f: f= \sum_{j=1}^{K} a_j \mathbf{1}_{[t_{j-1}, t_j)}, \, |a_j|\leq R\Big\}.
\end{align*}
The underlying parameter vector is $\ba=(a_1, \ldots, a_K)\in [-R,R]^K.$
We are mainly interested in the regime with increasing parameter dimension $K=K_n \rightarrow \infty.$ For convenience we omit the dependence on $n$ and write $t_j$ for the $n$-dependent jump points. As prior density on the vector $\ba$ consider
\begin{align}
	\pi(\ba) = \prod_{j=1}^K g(a_j)
	\label{eq.prior_PC*}
\end{align}
with a fixed Lebesgue density $g$. Compared to the CPP prior \eqref{eq.CPP_explicit}, we do not parametrize the jump sizes itself here. The MLE over  $(a_1,\ldots, a_K) \in \Theta = \R^K$ is
\begin{align*}
	\widehat f^{\MLE} = \sum_{j=1}^K \widehat a_j  \mathbf{1}_{[t_{j-1}, t_j)}, \quad \text{with} \ \ \widehat a_j := \min_{i: X_i \in [t_{j-1}, t_j) } Y_i,
\end{align*}
recalling that $(X_i,Y_i)_{i\ge 1}$ denote the observations  of the PPP $N$.
Write $f_0 = \sum_{j=1}^{K} a_j^0 \mathbf{1}_{[t_{j-1}, t_j)}$ for the true function. Under $P_{f_0}$,  we have $\widehat a_j -a_0^j \sim \Exp(n(t_j-t_{j-1}))$ because
\[ P_{f_0}(\widehat a_j -a_0>y) = P_{f_0}(N([t_{j-1},t_j) \times [a_0,a_0+y])=0)=e^{-n(t_j-t_{j-1})y},\quad y\ge 0.
\]

The main result in this section provides simple conditions under which the posterior can be approximated in total variation by the conditional distribution of
\begin{align*}
	\widehat f^{\MLE}- \sum_{j=1}^K\eta_j \mathbf{1}_{[t_{j-1}, t_j)}=\sum_{j=1}^K \big( \widehat a_j - \eta_j \big)\mathbf{1}_{[t_{j-1}, t_j)},
\end{align*}
for independent $\eta_j \sim \Exp(n(t_j-t_{j-1}))$ given the data $(X_i, Y_i)_i.$ Notice that this process is of the form \eqref{eq.BvM_general}.

The posterior $\Pi(\cdot|N)$ on the vector $(a_1,\ldots, a_K)$ is a measure on $(\R^K, \mathcal{B}(\R^K)).$ Let $Q^n$ be the distribution of $(\widehat a_1 - \eta_1, \ldots, \widehat a_K - \eta_K)$ on $(\R^K, \mathcal{B}(\R^K))$ for given $(\widehat a_1, \ldots, \widehat a_K)$ and denote by $\|\cdot\|_{\TV}$ the total variation norm.

\begin{thm}
\label{thm.BvM}
Consider the prior \eqref{eq.prior_PC*} and assume that $g$ is positive and $\beta$-H\"older continuous for some $0< \beta \leq 1.$ If
\begin{align}
	n \frac{\inf_{j=1, \ldots, K_n} |t_j-t_{j-1} | }{K_n^{1/\beta} \log (K_n)} \rightarrow \infty,
	\label{eq.BvM_cond}
\end{align}
then
\begin{align*}
	\sup_{f_0\in \PC^*(K_n, (t_0, \ldots, t_{K_n}), R) } E_{f_0}\big[ \big\| \Pi (\cdot | N) - Q^n \big\|_{\TV} \big] \rightarrow 0.
\end{align*}
\end{thm}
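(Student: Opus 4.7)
The plan is to exploit the fact that both the posterior and $Q^n$ factorise into $K_n$ independent one-dimensional pieces, reducing the TV estimate to a sum of one-dimensional TV distances. Writing $\Delta_j := t_j - t_{j-1}$, the likelihood factor in the Bayes formula \eqref{eq.Bayes} splits as $e^{n\int f}\mathbf{1}(\forall i: f(X_i)\leq Y_i) = \prod_j e^{n a_j \Delta_j}\mathbf{1}(a_j\leq \hat a_j)$, and the prior \eqref{eq.prior_PC*} is already of product form, so the posterior equals
\[
	\Pi(d\ba\mid N) = \prod_{j=1}^{K_n} \pi_j(a_j\mid N)\,da_j,\qquad
	\pi_j(a_j\mid N)\propto e^{n a_j \Delta_j}\, g(a_j)\,\mathbf{1}(a_j\leq \hat a_j).
\]
Since $Q^n$ is also a product of marginals $Q^n_j$, the standard coupling inequality for product measures gives $\|\Pi(\cdot\mid N) - Q^n\|_{\TV}\leq \sum_{j=1}^{K_n}\|\pi_j(\cdot\mid N) - Q^n_j\|_{\TV}$.

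Each summand I would estimate after the rescaling $u := n\Delta_j(\hat a_j - a_j)$: in this coordinate the posterior marginal has density $e^{-u}g(\hat a_j - u/(n\Delta_j))/Z_j$ on $[0,\infty)$ with $Z_j := \int_0^\infty e^{-u}g(\hat a_j - u/(n\Delta_j))\,du$, whereas $Q^n_j$ becomes $\Exp(1)$. The $\beta$-Hölder bound $|g(\hat a_j - u/(n\Delta_j)) - g(\hat a_j)|\leq [g]_{C^\beta}(u/(n\Delta_j))^\beta$ combined with $\int_0^\infty u^\beta e^{-u}\,du = \Gamma(\beta+1)$ yields $|Z_j - g(\hat a_j)|\leq [g]_{C^\beta}\Gamma(\beta+1)/(n\Delta_j)^\beta$, and a short triangle-inequality computation then gives
\[
	\|\pi_j(\cdot\mid N) - Q^n_j\|_{\TV}\leq \frac{C}{g(\hat a_j)\,(n\Delta_j)^\beta}
\]
for $n\Delta_j$ sufficiently large, with $C$ depending only on $\beta$ and $[g]_{C^\beta}$.

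To convert this into an in-expectation bound I would isolate the event $E_n := \{\hat a_j\in[-R,R+1] \text{ for all } j\}$ on which $g(\hat a_j)$ is uniformly controlled. Integrability of $g$ together with global Hölder continuity forces $g$ to be bounded, and since $g$ is continuous and strictly positive it is bounded below by some $c_0 > 0$ on $[-R-1,R+1]$. Under $P_{f_0}$ the variables $\hat a_j - a_j^0$ are independent $\Exp(n\Delta_j)$ (as already noted in the excerpt), so a union bound yields $P_{f_0}(E_n^c)\leq K_n e^{-n\Delta_{\min}}$ with $\Delta_{\min} := \min_j \Delta_j$. Combining with the marginal bound above,
\[
	\E_{f_0}\bigl[\|\Pi(\cdot\mid N) - Q^n\|_{\TV}\bigr]\leq \frac{C\,K_n}{c_0\,(n\Delta_{\min})^\beta} + K_n\, e^{-n\Delta_{\min}},
\]
and both terms vanish uniformly in $f_0$ under \eqref{eq.BvM_cond}; the $(\log K_n)^\beta$ factor in \eqref{eq.BvM_cond} even provides some slack. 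The main obstacle I anticipate is precisely this last step, namely pinning down $\hat a_j$ so that $g(\hat a_j)$ stays uniformly away from zero across all $K_n$ bins; the earlier factorisation and marginal TV computation are essentially routine given the neat product structure inherited from the PPP likelihood.
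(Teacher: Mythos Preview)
Your proof is correct and takes a genuinely different route from the paper's. The paper does \emph{not} exploit the product structure of $\Pi(\cdot\mid N)$ and $Q^n$. Instead it localises to the event $A=\{0\le \max_j r_{j,n}(\widehat a_j-a_j)_+\le 2\log K_n\}$ and, via Lemma~\ref{lem.TV_for_cond_distribs}, reduces to comparing the conditional measures on $A$. On $A$ the prior ratio $\prod_j g(a_j)/\prod_j g(\widehat a_j)$ is squeezed between $((1-R_n)/(1+R_n))^{K_n}$ and $((1+R_n)/(1-R_n))^{K_n}$ with $R_n=\tfrac{\|g\|_{\mC^\beta}}{c}\bigl(\tfrac{2\log K_n}{r_n}\bigr)^\beta$, and the condition $R_nK_n\to 0$ is precisely \eqref{eq.BvM_cond}. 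The $\log K_n$ in the hypothesis thus enters through the size of the localisation window, which must be large enough for $Q^n(A^c)$ and $\Pi(A^c\mid N)$ to vanish.

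Your argument bypasses the localisation entirely: by summing the exact marginal TV bounds $\|\pi_j-Q^n_j\|_{\TV}\lesssim (n\Delta_j)^{-\beta}$ (obtained by integrating $e^{-u}|g(\widehat a_j-u/(n\Delta_j))-g(\widehat a_j)|$ over \emph{all} $u\ge 0$) you obtain $K_n/(n\Delta_{\min})^\beta\to 0$, which is \eqref{eq.BvM_cond} without the $\log K_n$. So your approach is both more elementary and sharper here. The price is that it relies on the product form of the prior; the paper's ratio argument would extend more readily to priors with mild dependence across coordinates.
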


In contrast to the parametric Bernstein-von Mises theorem (\cite{vdvaart1998}, Theorem 10.1), Theorem \ref{thm.BvM} also assumes H\"older smoothness on the marginal prior densities $g.$ Together with the condition \eqref{eq.BvM_cond} this ensures that the prior washes out in the limit. The maximal speed at which $K_n$ can tend to infinity in Theorem \ref{thm.BvM} depends on the H\"older index $\beta$ and the rate at which the minimal grid length $t_{j}-t_{j-1}$ decreases. If the $t_j$ are on a regular grid in the sense that $\inf_j (t_j-t_{j-1}) \asymp 1/K_n$, then
\begin{align}
	K_n = o\Big( \frac{n}{\log n}\Big)^{\beta/(\beta+1)}
	\label{eq.Kn_constraint}
\end{align}
suffices. For $\beta=1$ we can allow $K_n$ to be almost $\sqrt{n}.$

Let us describe the reason for the rate \eqref{eq.Kn_constraint} in more detail. It can be shown that the posterior concentrates on a set $\mathcal{U}$ where each $a_i$ is localized up to a term of order $(K_n/n) \log n.$   For the Bernstein-von Mises theorem to hold, the variation of the prior on $\mathcal{U}$ must asymptotically vanish, that is, $\sup_{\ba,\ba' \in\mathcal{U}}|\pi(\ba)-\pi(\ba')|=o(1).$ For simplicity, assume that $(0,0,\ldots,0) \in \mathcal{U}$ and assume that the prior is $g(x)=1+|x|^\beta$ in a neighborhood of $x=0.$ This is a $\beta$-H\"older function. Observe that
\begin{align*}
	\pi\Big(\frac{K_n}{n}\log n, \ldots, \frac{K_n}{n}\log n\Big)= g\Big(\frac{K_n}{n}\log n\Big)^{K_n} = \Big(1+\Big(\frac{K_n}{n}\log n\Big )^\beta \Big)^{K_n} \approx e^{K_n (\frac {K_n} n \log n)^\beta}.
\end{align*}
To ensure that the prior variation over $\mathcal{U}$ vanishes we must have $K_n (K_n/n \log n)^\beta \rightarrow 0$ and rewriting this yields condition \eqref{eq.Kn_constraint}.

Condition \eqref{eq.Kn_constraint} should be compared to the Bernstein-von Mises phenomenon for increasing parameter dimension which requires a number of parameters smaller than $n^{1/3},$ cf. \cite{Panov2015}. Moreover, \cite{Hermansen2015} establishes a limiting shape result in the nonparametric regression model with Gaussian errors using a random histogram prior of the form \eqref{eq.prior_PC*} and $a_j$ drawn from a normal distribution. Using conjugacy, it can be shown that the prior washes out if the number of pieces is of a smaller polynomial order than $n^{1/2}.$

While the MLE overshoots each true parameters $a_0^j$ by an exponential distribution with parameter $n(t_j-t_{j-1}),$ asymptotically the posterior distribution "corrects" for that bias by subtracting independent $\eta_j$ with the same distribution. This is the reason why Bayesian methods are advocated for related parametric boundary estimation problems in the frequentist literature, see e.g. the discussion in \cite{chernozhukov2004}.  In the special case $K_n=1$ the true function $f_0=a_0^1$ is  a constant and the corresponding likelihood is proportional to $e^{n a_1} \mathbf{1}(a_1 \leq \min_i Y_i).$ The same likelihood is obtained in the model, where we observe $n$ i.i.d. copies of $Y=a_1+ \eps$ with $\eps \sim \Exp(1).$ This establishes the equivalence between Theorem \ref{thm.BvM} and Theorem 1.1 in \cite{2012arXiv1210.6204K} for $K_n=1.$

%We believe that Theorem \ref{thm.BvM} can be extended to more complicated models and priors but any significant generalization will result in very technical proofs. A natural extension of the Bernstein-von Mises theorem would be to consider the class $\PC(K,R)$ and the CPP prior introduced in Section \ref{sec.CPP}. Conditionally on the dimension $k$ and the grid point vector $\bt,$ one can expect that the posterior satisfies a similar limiting shape result. But the full posterior distribution will contain also in the limit some randomness which accounts for the uncertainty of $\bt.$ One can further expect that the posterior spreads non-negligible mass over different model dimensions $k$ similar as in the Bernstein-von Mises type result stated in \cite{castillo2015}, Theorem 6.

\subsection{A specific semi-parametric Bernstein-von Mises result}

We study the Bernstein-von Mises phenomenon and frequentist coverage of credible sets for the functional $\vt =\int f,$ which serves as a prototype of a linear functional of $f$. For the class of piecewise constant functions, the MLE is $\widehat \vt^{\MLE} = \int \widehat f^{\MLE}.$
By the explicit law of $\widehat\vt^{\MLE}$, we can derive $\widehat \vt^{\MLE} - \vt =K_n/n +O_P(\sqrt{K_n}/n).$ The MLE has thus rate of convergence $K_n/n$ whereas the bias corrected estimator $\widehat \vt = \int \widehat f^{\MLE} - K_n/n$ attains the faster rate $O_P(\sqrt{K_n}/n)$.

The bias correction term $K_n/n$ can also be derived from \eqref{eq.fctal_bias_correction} since there are almost surely $K_n$ points on the MLE for the parameter space $\PC^*(K_n, (t_0, \ldots, t_{K_n}), R).$

\begin{cor}
\label{cor.BvM_fctal}
Consider the prior \eqref{eq.prior_PC*} and work under the assumptions of Theorem \ref{thm.BvM}. Denote by $\Pi(\vartheta \in \cdot |N)$ the marginal posterior of the integral $\vartheta := \int f.$ If $K_n \rightarrow \infty,$ then
\begin{align*}
	\sup_{f_0\in \PC^*(K_n, (t_0, \ldots, t_{K_n}), R) } E_{f_0} \Big[ \Big\| \Pi( \vartheta  \in \cdot \,| N ) - \mathcal{N}\Big( \widehat \vt^{\MLE} - \frac{K_n}{n}, \frac{K_n}{n^2}\Big)\Big\|_{\TV}\Big]\rightarrow 0.
\end{align*}
The asymptotic $(1-\alpha)$-credible interval
\begin{align}
	I(\alpha) = \Big[ \widehat \vt^{\MLE} - \frac{K_n}{n} + \frac{\sqrt{K_n}}{n}\Phi^{-1}\big(\alpha/2\big),
	 \widehat \vt^{\MLE} - \frac{K_n}{n} + \frac{\sqrt{K_n}}{n}\Phi^{-1}\big(1-\alpha/2\big) \Big]
	 \label{eq.def_I(alpha)}
\end{align}
is moreover an honest asymptotic confidence set,
\begin{align*}
	\sup_{f_0\in \PC^*(K_n, (t_0, \ldots, t_{K_n}), R) }\Big|
	P_{f_0} \big( \textstyle\int f_0 \in I(\alpha) \big)-\big(1-\alpha\big)\Big|\to 0.
\end{align*}
\end{cor}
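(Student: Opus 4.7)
The plan is to combine Theorem \ref{thm.BvM} with a standard Gamma-to-Gaussian total variation estimate. Since total variation contracts under the measurable pushforward $f \mapsto \int f$, we have
\begin{equation*}
	\bigl\| \Pi(\vartheta \in \cdot \mid N) - \widetilde Q^n \bigr\|_{\TV} \leq \bigl\| \Pi(\cdot \mid N) - Q^n \bigr\|_{\TV},
\end{equation*}
where $\widetilde Q^n$ denotes the image of $Q^n$ under $f \mapsto \int f$. By Theorem \ref{thm.BvM} the right-hand side vanishes in $P_{f_0}$-expectation uniformly over $\PC^*(K_n,(t_0,\ldots,t_{K_n}),R)$. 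Under $Q^n$ the integral equals $\widehat \vartheta^{\MLE} - S$ with $S := \sum_{j=1}^{K_n}\eta_j(t_j - t_{j-1})$; each summand satisfies $\eta_j(t_j - t_{j-1}) \sim \Exp(n)$ by exponential scaling, so $S$ is a sum of $K_n$ i.i.d.\ $\Exp(n)$ variables, hence $S \sim \Gamma(K_n, n)$ with mean $K_n/n$ and variance $K_n/n^2$.

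The second ingredient is the density-level CLT bound
\begin{equation*}
	\bigl\| \Gamma(K_n, n) - \mathcal{N}(K_n/n, K_n/n^2) \bigr\|_{\TV} = O(K_n^{-1/2}) \to 0,
\end{equation*}
which follows from the local central limit theorem or Edgeworth expansions for sums of i.i.d.\ exponentials; alternatively, one may compare the explicit Gamma density with the Gaussian via Stirling's formula and integrate the pointwise relative error of order $K_n^{-1/2}$ using the exponential Gamma tails. Applying the triangle inequality conditionally on $\widehat \vartheta^{\MLE}$ (a measurable function of the data whose shift leaves both $\widetilde Q^n$ and the target Gaussian law in the stated form) then yields the first assertion.

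For the coverage statement, fix $f_0 = \sum_j a_j^0 \mathbf{1}_{[t_{j-1},t_j)} \in \PC^*(K_n,(t_0,\ldots,t_{K_n}),R)$. Under $P_{f_0}$ the residuals $\widehat a_j - a_j^0$ are independent $\Exp(n(t_j - t_{j-1}))$, hence by the same scaling argument
\begin{equation*}
	\widehat \vartheta^{\MLE} - \int f_0 \;=\; \sum_{j=1}^{K_n}(\widehat a_j - a_j^0)(t_j - t_{j-1}) \;\sim\; \Gamma(K_n, n).
\end{equation*}
Rearranging the definition of $I(\alpha)$, the event $\{\int f_0 \in I(\alpha)\}$ is equivalent to
\begin{equation*}
	T_n \;:=\; \frac{K_n - n(\widehat \vartheta^{\MLE} - \int f_0)}{\sqrt{K_n}} \;\in\; \bigl[\Phi^{-1}(\alpha/2),\; \Phi^{-1}(1-\alpha/2)\bigr],
\end{equation*}
and $T_n$ is minus the standardization of a $\Gamma(K_n, n)$ variable, so $T_n \Rightarrow \mathcal{N}(0,1)$ by the CLT. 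Crucially the law of $T_n$ is the same for every $f_0$ in the class, so the coverage probability equals $1-\alpha + o(1)$ uniformly in $f_0$. The main obstacle is the Gamma-to-Normal TV estimate of the second paragraph; all remaining steps are bookkeeping via Theorem \ref{thm.BvM} and the explicit exponential laws of the MLE coordinates.
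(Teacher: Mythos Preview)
Your proof is correct and follows essentially the same route as the paper: pushforward of Theorem \ref{thm.BvM} under $f\mapsto\int f$, identification of the resulting distribution as $\widehat\vartheta^{\MLE}$ minus a sum of $K_n$ i.i.d.\ $\Exp(n)$ variables, a CLT in total variation (the paper invokes Lemma \ref{lem.CLT_in_TV}, citing \cite{bally2016}, where you sketch a Gamma-to-Gaussian TV bound via Stirling/Edgeworth), and the same exact-distribution argument for coverage. The only cosmetic difference is that the paper writes $\xi_j=n(t_j-t_{j-1})\eta_j\sim\Exp(1)$ rather than packaging $S\sim\Gamma(K_n,n)$, but this is the same computation.
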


One of the interesting consequences of this result is that asymptotically for $K_n \rightarrow \infty$ the credible set does  not contain the MLE $\widehat \vartheta^{\MLE}=\int \widehat f^{\MLE}.$ Hence, the posterior distribution automatically corrects for the bias and is not misguided by the high values of the likelihood around $\widehat \vartheta^{\MLE}.$

\subsection{A negative result on frequentist coverage of credible sets under model misspecification}

We have shown that credible sets are asymptotic confidence sets for priors on the space of piecewise constant functions, provided the true function $f_0$ is also piecewise constant. In the frequentist estimation theory it is known that for Lipschitz-continuous functions $f_0$ a bias-corrected MLE over piecewise constant functions at jump points $t_j=j/K_n$ remains rate-optimal if $K_n\asymp n^{1/2}$, cf. the block-wise estimator in \cite{reiss2014}.
In the same spirit, the nonparametric Bayes result in Section \ref{sec.post_contr} establishes good posterior contraction rates for Lipschitz functions given a CPP prior generating piecewise constant functions. As we shall see here, the automatic bias correction by a Bayes method fails in the case of piecewise linear functions. A consequence is that credible sets may have asymptotically no frequentist coverage at all.

We consider the same piecewise-constant prior as in the previous subsection with jump locations $t_j =j/K_n$ and study the limiting shape of the posterior for data generated by a piecewise linear support boundary
\begin{align}
	f_0(x)=x+ \sum_{j=1}^{K_n} a_j \mathbf{1}\Big(x\in \Big[\frac{j-1}{K_n}, \frac j{K_n}\Big)\Big).
	\label{eq.def_piecewise_lin}
\end{align}

As a benchmark result for Bayesian procedures, we show in a first step that there exists a frequentist method which does equally well for piecewise constant functions, but is also able to return converging confidence sets if the true function is of the form \eqref{eq.def_piecewise_lin}. Consider the space of piecewise 1-Lipschitz functions
\begin{align*}
	\Lip_{K_n}=\Big\{ f: |f(x) -f(y)| \leq |x-y|, \ \forall  x,y \in \Big(\frac{j-1}{K_n}, \frac{j}{K_n}\Big] ,\ \forall j=1,\ldots,K_n\Big\}
\end{align*}
and notice that this space contains all piecewise constant functions as well as  all functions of the form \eqref{eq.def_piecewise_lin}.

\begin{lem}
\label{lem.freq_CI}
Let $K_n \geq 1,$ then for $0< \alpha <1$ there exists a frequentist confidence interval $C(\alpha)$ such that
\begin{align*}
	\inf_{f_0 \in \Lip_{K_n}} P_{f_0}\Big( \int f_0(x) dx \in C(\alpha) \Big) \geq  1-\alpha
\end{align*}
and
\begin{align*}
	\sup_{f_0 \in \Lip_{K_n}} \operatorname{length}\big( C(\alpha)\big) \lesssim \frac{\sqrt{K_n}}{n} + \frac{1}{\sqrt{K_nn}}.
\end{align*}
\end{lem}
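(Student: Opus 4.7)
The plan is to construct $C(\alpha)$ via Poisson sample splitting combined with a count-based unbiased estimator of $\int f_0$. First, thin the PPP $N$ into two independent PPPs $N^{(1)}, N^{(2)}$, each of intensity $(n/2)\mathbf{1}(y\ge f_0(x))$. On each block $I_j = [(j-1)/K_n, j/K_n)$ set $\hat a_j := \min\{Y_i : (X_i,Y_i)\in N^{(1)},\ X_i\in I_j\}$ and define the data-driven upper envelope $b(x) := \hat a_j + 1/K_n$ for $x\in I_j$. Since $\hat a_j \ge m_j := \min_{x\in I_j} f_0(x)$ almost surely and $f_0(x) \le m_j + 1/K_n$ on $I_j$ by the piecewise $1$-Lipschitz hypothesis, one gets $b\ge f_0$ pointwise with probability one.

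Now set
\[
\widehat\vartheta := \int_0^1 b(x)\,dx - \frac{2M}{n},\qquad M := \#\bigl\{(X_i,Y_i)\in N^{(2)} : Y_i \le b(X_i)\bigr\}.
\]
Conditionally on $N^{(1)}$, the independence of $N^{(1)}, N^{(2)}$ and the inclusion $b\ge f_0$ give $M \mid N^{(1)} \sim \Pois\bigl((n/2)\bigl(\int b - \int f_0\bigr)\bigr)$, so $\E_{f_0}[\widehat\vartheta\mid N^{(1)}] = \int f_0$ and $\Var_{f_0}(\widehat\vartheta \mid N^{(1)}) = (2/n)\bigl(\int b - \int f_0\bigr)$. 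Integrating yields the unconditional identity $\Var_{f_0}(\widehat\vartheta) = (2/n)\,\E_{f_0}\bigl[\int b - \int f_0\bigr]$.

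The heart of the proof is a uniform overshoot bound, $\E_{f_0}[\int b - \int f_0] \lesssim 1/K_n + K_n/n$ over $f_0 \in \Lip_{K_n}$. Writing
\[
\int b - \int f_0 = \sum_{j=1}^{K_n}\frac{\hat a_j - m_j}{K_n} + \sum_{j=1}^{K_n}\Bigl[\frac{m_j + 1/K_n}{K_n} - \int_{I_j} f_0\Bigr],
\]
the second, deterministic, sum is bounded by $1/K_n$ using $f_0\ge m_j$ on $I_j$. For the first sum I use the survival function
\[
P_{f_0}(\hat a_j - m_j > y) = \exp\!\Bigl(-\tfrac{n}{2}\int_{I_j}\bigl(y - (f_0(x)-m_j)\bigr)_+\,dx\Bigr)
\]
and split $\E[\hat a_j-m_j] = \int_0^\infty P_{f_0}(\hat a_j - m_j > y)\,dy$ at $y = 1/K_n$. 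On $y\le 1/K_n$, the worst case over 1-Lipschitz profiles on a block of length $1/K_n$ yields the Gaussian-type bound $P\le e^{-ny^2/4}$, contributing at most $\sqrt{\pi/n}$; on $y>1/K_n$ the bound becomes genuinely exponential with rate $n/(2K_n)$, contributing $O(K_n/n)$. Combined with the AM--GM inequality $1/\sqrt{n} \le 1/K_n + K_n/n$ and summed over the $K_n$ blocks, this gives $\Var_{f_0}(\widehat\vartheta) \lesssim 1/(K_n n) + K_n/n^2$ uniformly.

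Finally, define
\[
C(\alpha) := \bigl[\widehat\vartheta - L_\alpha,\ \widehat\vartheta + L_\alpha\bigr], \qquad L_\alpha := \sqrt{C/\alpha}\,\Bigl(\sqrt{K_n}/n + 1/\sqrt{K_n n}\Bigr).
\]
Chebyshev's inequality applied to $\widehat\vartheta$ gives $\inf_{f_0\in\Lip_{K_n}} P_{f_0}\bigl(\int f_0\in C(\alpha)\bigr) \ge 1-\alpha$ with length $\lesssim \sqrt{K_n}/n + 1/\sqrt{K_n n}$. The principal obstacle is the uniform overshoot bound: it is exactly the piecewise Lipschitz constraint forcing $f_0 - m_j \le 1/K_n$ on each block that keeps both the Rayleigh-type part of the tail finite and its per-block contribution summable to the target rate; dropping the Lipschitz constraint would destroy the $1/\sqrt{K_n n}$ term.
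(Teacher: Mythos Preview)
Your proof is correct and self-contained, but the route differs from the paper's. Both arguments construct a block-wise upper envelope $Y_k^*+1/K_n$ and subtract a point count below it to obtain an unbiased estimator of $\int f_0$, then invoke Chebyshev. The paper, however, uses \emph{all} observations for both the block minimum and the count, and for the resulting dependence between minimum and count it invokes Theorem~2.1 of Rei\ss--Selk \cite{reiss2014} (extended piecewise) to obtain $E_{f_0}[\widehat\vartheta^{\mathrm{block}}]=\vartheta_0$ and the exact variance $2/(K_nn)+K_n/n^2$. Your thinning device decouples the envelope construction (via $N^{(1)}$) from the count (via $N^{(2)}$), so conditional on $N^{(1)}$ the count is exactly Poisson and the variance identity $\Var(\widehat\vartheta)=(2/n)E[\int b-\int f_0]$ becomes a two-line computation; the overshoot bound $E[\hat a_j-m_j]\lesssim n^{-1/2}+K_n/n$ then replaces the external theorem entirely. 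The price is only a constant: intensity $n/2$ instead of $n$, which does not affect the rate. In short, the paper's proof is shorter because it outsources the hard step to \cite{reiss2014}, while yours is more elementary and stands on its own.
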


For $K_n \geq \sqrt{n},$ a Bayesian credible set should therefore contract with the rate $\sqrt{K_n}/n$ even if the function is piecewise linear. If $K_n \rightarrow \infty,$ we shall see that the marginal posterior distribution still converges in the Bernstein-von Mises sense to
\begin{align*}
	\mathcal{N}\Big( \widehat \vartheta^{\MLE} -\frac {K_n}n , \frac{K_n}{n^2}\Big)
\end{align*}
where $\widehat \vartheta^{\MLE}$ is the MLE over the space of piecewise constant functions. As in the previous section,
\begin{align}
	I(\alpha) = \Big[ \widehat \vt^{\MLE} - \frac{K_n}{n} + \frac{\sqrt{K_n}}{n}\Phi^{-1}\big(\alpha/2\big),
	 \widehat \vt^{\MLE} - \frac{K_n}{n} + \frac{\sqrt{K_n}}{n}\Phi^{-1}\big(1-\alpha/2\big) \Big]
	 \label{eq.def_I(alpha)_RECALL}
\end{align}
is therefore a $(1-\alpha)$-credible set. This means that the posterior credible set contracts with the correct rate $\sqrt{K_n}/n.$ But if $\sqrt{n} \leq K_n,$ Proposition \ref{prop.MLE_misspecified} shows that for piecewise linear support boundary of the form \eqref{eq.def_piecewise_lin}, $\Var_{f_0}(\widehat \vt^{\MLE} )\lesssim K_n/n^2$ and
\begin{align*}
	E_{f_0}\Big[ \widehat \vt^{\MLE} - \frac {K_n}n \Big] \leq \int f_0 - \frac{n}{2^7K_n^3}.
\end{align*}
For $K_n =o(n^{4/7}),$ we find $\sqrt{K_n}/n = o(n/K_n^3)$ and consequently $I(\alpha)$ does not cover the true parameter $\int f_0$ for all sufficiently large $n.$ This implies that asymptotically the credible set has zero coverage.

The next theorem gives the precise conditions. Since this is a negative result it is sufficient to work with one specific prior. For technical convenience, we consider a uniform prior on the function values that allows for a wider range of $K_n$ as in Theorem \ref{thm.BvM}.

\begin{thm}
\label{thm.credible_not_conf}
Let $f_0(x)=x$ and $\vt_0 = \int_0^1 f_0(x) dx =1/2.$ Consider the prior \eqref{eq.prior_PC*} with $t_j= j/K_n,$ $K_n \leq n/\log n,$ $K_n \rightarrow \infty,$ and $g(a_i) = (2R)^{-1}\mathbf{1}_{[-R,R]}$ for fixed $R>3.$ Then,
\begin{align*}
	E_{f_0} \Big[ \Big\| \Pi( \vartheta  \in \cdot \,| N ) - \mathcal{N}\Big( \widehat \vt^{\MLE} - \frac{K_n}{n}, \frac{K_n}{n^2}\Big)\Big\|_{\TV}\Big]\rightarrow 0
\end{align*}
and $I(\alpha)$ as defined in \eqref{eq.def_I(alpha)_RECALL} is an asymptotic $(1-\alpha)$-credible set. On the other hand, if $K_n=o(n^{4/7})$ and $\rho_n= 2^{-8}(nK_n^{-3/2} \wedge n^2K_n^{-7/2}),$ then
\begin{align}
	P_{f_0} \Big( \vt_0 \leq \widehat \vt^{\MLE} - \frac{K_n}{n} + \frac{\sqrt{K_n}}{n} \rho_n \Big) \rightarrow 0
	\label{eq.credible_set_overshoot}
\end{align}
and in particular $I(\alpha)$ has asymptotically no frequentist coverage:
\begin{align*}
	P_{f_0} \big( \vt_0 \in I(\alpha) \big) \rightarrow 0.
\end{align*}
\end{thm}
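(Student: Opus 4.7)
The plan is to prove the three claims of the theorem in turn. The first claim is a Bernstein-von Mises statement that I will establish directly from the conjugate structure of the uniform prior; the second is a first-moment and variance computation for $\widehat \vt^{\MLE}$ combined with Chebyshev; the third is an immediate consequence because the width of $I(\alpha)$ is of a smaller order than the actual bias of the centering.

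For the BvM claim (first display), I exploit that the uniform prior is conjugate and the posterior does not depend on $f_0$. Substituting $f=\sum_{j=1}^{K_n} a_j \mathbf{1}_{[t_{j-1},t_j)}$ into the Bayes formula \eqref{eq.Bayes} and using $\int f = (1/K_n)\sum_j a_j$, the posterior density on $[-R,R]^{K_n}$ is proportional to $\prod_j e^{na_j/K_n}\mathbf{1}(a_j\le \widehat a_j)$. Conditionally on $N$ the variables $b_j := \widehat a_j - a_j$ are therefore independent and each follows an $\Exp(n/K_n)$ law truncated to $[0,\widehat a_j+R]$. Under $f_0(x)=x$ one has $\widehat a_j\ge 0$, so the truncation error per coordinate is at most $e^{-nR/K_n}$, which is negligible given $K_n\le n/\log n$. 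Pushing forward via $\vt=\widehat\vt^{\MLE}-(1/K_n)\sum_j b_j$ and noting that the law of $(1/K_n)\sum_j b_j$ equals $\Gamma(K_n,n/K_n)/K_n$, the BvM reduces to the local CLT bound $\|\Gamma(K_n,n/K_n)/K_n-\mathcal{N}(K_n/n,K_n/n^2)\|_{\TV}=O(1/\sqrt{K_n})$, which holds since Gamma densities are smooth with finite absolute third moment. The TV bound is deterministic, so taking $E_{f_0}$ is trivial.

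For the second claim, I use Proposition \ref{prop.MLE_misspecified} to obtain $\Var_{f_0}(\widehat\vt^{\MLE})\lesssim K_n/n^2$ and $E_{f_0}[\widehat\vt^{\MLE}-K_n/n]\le \vt_0 - n/(2^7K_n^3)$ in the regime $K_n\ge\sqrt n$; these follow from the explicit survival function $P_{f_0}(\widehat a_1>y)=e^{-ny^2/2}$ for $y\in[0,1/K_n]$ and $e^{-(n/K_n)(y-1/(2K_n))}$ for $y>1/K_n$, and a careful Taylor expansion in which the $1/K_n$-contributions in $E_{f_0}[\widehat a_1]$ cancel exactly with the arithmetic offset from averaging $f_0(x)=x$ across bins. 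In the complementary regime $K_n\le\sqrt n$ the bound is easier because the deterministic offset $-1/(2K_n)$ already dominates all other terms. The choice $\rho_n=2^{-8}(nK_n^{-3/2}\wedge n^2K_n^{-7/2})$ produces $\sqrt{K_n}\rho_n/n = 2^{-8}(K_n^{-1}\wedge n/K_n^3)$, hence $s_n:=\vt_0+K_n/n-\sqrt{K_n}\rho_n/n-E_{f_0}[\widehat\vt^{\MLE}]\ge 2^{-8}(K_n^{-1}\wedge n/K_n^3)$. Chebyshev then gives
\begin{equation*}
P_{f_0}\!\big(\vt_0\le \widehat\vt^{\MLE}-K_n/n+\sqrt{K_n}\rho_n/n\big)=P_{f_0}\!\big(\widehat\vt^{\MLE}-E_{f_0}[\widehat\vt^{\MLE}]\ge s_n\big)\lesssim \frac{K_n/n^2}{s_n^2},
\end{equation*}
and the critical constraint $K_n=o(n^{4/7})$ arises precisely in the regime $K_n\gg\sqrt n$, where $s_n\asymp n/K_n^3$ and the ratio above becomes $K_n^7/n^4\to 0$ iff $K_n=o(n^{4/7})$.

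The third claim is immediate: $\Phi^{-1}(1-\alpha/2)$ is a fixed constant while $\rho_n\to\infty$ under $K_n=o(n^{4/7})$, so for $n$ large $\sqrt{K_n}\rho_n/n$ exceeds the half-width $\sqrt{K_n}\Phi^{-1}(1-\alpha/2)/n$ of $I(\alpha)$. Hence $I(\alpha)\subset(-\infty,\widehat\vt^{\MLE}-K_n/n+\sqrt{K_n}\rho_n/n]$, and combining with \eqref{eq.credible_set_overshoot} gives $P_{f_0}(\vt_0\in I(\alpha))\to 0$. The main obstacle is extracting the explicit constant $2^{-7}$ in the bias bound, because the leading $1/K_n$ terms in $E_{f_0}[\widehat a_1]-K_n/n$ must cancel against the arithmetic contribution from $f_0(x)=x$, which forces one to carry the $n/K_n^3$ correction in the Taylor expansions of both $\int_0^{1/K_n}e^{-ny^2/2}dy$ and $(K_n/n)e^{-n/(2K_n^2)}$ and to track signs across both regimes $K_n\le\sqrt n$ and $K_n>\sqrt n$.
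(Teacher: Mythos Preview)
Your approach is essentially the same as the paper's: exploit conjugacy of the uniform prior to identify the posterior, apply a total-variation CLT for the Gamma sum, then use the explicit moment bounds from Proposition~\ref{prop.MLE_misspecified} together with Chebyshev for \eqref{eq.credible_set_overshoot}, and finish by comparing $\rho_n$ with $\Phi^{-1}(1-\alpha/2)$.

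There is, however, one genuine gap in your BvM argument. You assert that the posterior on $b_j=\widehat a_j-a_j$ is $\Exp(n/K_n)$ truncated to $[0,\widehat a_j+R]$, but this is only correct on the event $\{\widehat a_j\le R\}$. If $\widehat a_j>R$, the upper constraint $a_j\le R$ from the uniform prior becomes active and the support of $b_j$ is $[\widehat a_j-R,\widehat a_j+R]$, not $[0,\widehat a_j+R]$; the posterior then concentrates near $a_j=R$, not near $\widehat a_j$. You address only the \emph{lower} truncation ($a_j\ge -R$) via $\widehat a_j\ge 0$, which indeed needs only $R>1$. The paper handles the upper constraint by working on the event $\mathcal A=\{\max_j\widehat a_j\le R\}$ and showing, via Proposition~\ref{prop.MLE_misspecified}, that
\[
P_{f_0}(\mathcal A^c)\le K_nP_{f_0}\big(V_{jn}\ge K_n(R-1)\big)\le K_n e^{-\frac{n}{2K_n}(R-1)}\le K_n n^{-(R-1)/2}\to 0,
\]
where the last step uses $K_n\le n/\log n$ and requires $R>3$. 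Without this step the hypothesis $R>3$ is never invoked, and your posterior identification is not justified on a set of asymptotically full probability. Once you insert this event (and the trivial conditioning argument as in Lemma~\ref{lem.TV_for_cond_distribs}), the rest of your proof goes through and matches the paper's.
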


In parametric models a similar phenomenon has been observed in the case of model misspecification, cf. \cite{Kleijn2012}. For nonparametric models, it is sometimes possible to take a ball that covers $1-\alpha$ of the posterior mass and to show that enlarging the radius of the ball by a constant, results in frequentist coverage tending to one, cf. \cite{Szabo2015}. The result above implies that in order to achieve frequentist coverage the radius needs to be multiplied by a sequence that tends to infinity with polynomial rate in the sample size. If $K_n \asymp \sqrt{n}$ the blow-up factor needs to be at least of the order $n^{1/4}.$

If the Gaussian white noise model is considered with the same prior, one can show that even for linear functions, credible sets  form asymptotic confidence sets. The main reason is that in a  model of piecewise constant functions the sample means  on each block form a sufficient statistic in the Gaussian white noise model, while in the support boundary detection model the sufficient statistics are the blockwise sample minima. The law of the sample mean in a Gaussian shift model is the same for a constant function and a linear  function with the same mean. The law of the sample minimum for a linear boundary, however, deviates significantly from that for a constant boundary.

\section{Limiting shape of the posterior for the CPP prior}\label{SecBvMCPP}

Generalizing the last chapter, we now consider the CPP prior. Compared to the random histogram prior, the difficulties lie in the additional mixing over the model size and the randomness of the jump locations. For the model size, we show that the full posterior concentrates on the true number of jumps under minimal signal strength assumptions. The randomness of the jump locations induces additional randomness of the limiting shape.

We study support boundaries that are piecewise constant and monotone. This function class has received a lot of attention recently in nonparametric statistics, see \cite{Gao2017, chatterjee2015}. Due to the imposed monotonicity, the nonparametric MLE exists and we believe that this is crucial for the posterior to have a tractable limit distribution, see also Section \ref{sec.BvM_general}.

\subsection{The limiting shape of the full posterior}

We first derive the limiting shape of the full posterior and then study the marginal distribution of the functional $\vartheta = \int f.$

{\bf Model.} The likelihood taken over all increasing functions on $[0,T]$ is unbounded. This is caused by functions that have an extremely steep jump close to the right boundary of the observation interval $[0,T].$ Similar boundary phenomena are well-known in the nonparametric maximum likelihood theory under shape constraints. The unboundedness of the likelihood causes the Bayes formula to be extremely sensitive to values close to the right boundary. Since we are interested in a framework that avoids these extreme spikes at the boundary we therefore consider the PPP model \eqref{eq.mod} with $T> 1$ assuming that the true function is constant on the interval $[1,T].$ For jump functions, this is the same as saying that all jumps occur before time one.

{\bf Function class.} We consider piecewise constant,  right-continuous functions that are monotone increasing assuming that all jumps occur up to time one:
\begin{align*}
	\mM(K, R) := \Big\{f = \sum_{\ell=0}^{K} a_\ell \mathbf{1}_{[t_\ell,T]} \, : \, 0 \leq a_\ell \leq R, \,  0\leq t_1 \leq \hdots \leq t_{K_n} \leq 1\Big\}.
\end{align*}
For a generic function in $\mM(K, R)$ we write $f = \sum_{\ell=0}^{K} a_\ell \mathbf{1}(\cdot \geq t_\ell)$ with ordered jump locations $0=: t_0 \leq t_1 \leq \hdots \leq t_{K} \leq 1<  t_{K+1}:=T.$ We assume that there is a minimal signal strength. Without such a constraint one cannot exclude the case that the number of true jumps is consistently underestimated, see for instance \cite{Frick2014}, Section 2.1. Typically, conditions of this type occur when there is an underlying model selection problem, compare with the $\beta$-min conditions for high-dimensional problems.

\begin{defi}
\label{assump.min_rect}
A function $f_0 \in \mM(K_n, R)$ belongs to the subclass $\mM_S(K_n, R)$ if and only if for all $k=1,\ldots,K_n$
\begin{align*}
	a_k^0 (t_{k+1}^0 -t_k^0) \wedge a_k^0 (t_k^0 -t_{k-1}^0)  \geq 2K_n \log (eK_n) \frac{ \log^3 n}{n},\quad
	 a_k^0 \geq \frac{2\log n}{\sqrt n}, \quad
		(t_{k+1}^0 -t_k^0) \geq \frac{2}{\sqrt n},
\end{align*}
and the two last inequalities also hold for $k=0.$
\end{defi}

\begin{rem}\label{RemMS}
Since $\sum_{k=0}^{K_n-1}(t_{k+1}^0-t_k^0)\le 1$, the last condition implies implicity $K_n={\cal O}(n^{1/2})$. In view of $\max_ka_k^0\le R$ the first condition even implies $K_n^2\log(eK_n)\le R n/\log^3(n)$, in particular $K_n=o(n^{1/2})$. This is the same condition as in the case of smooth random histogram priors.
\end{rem}

The expressions $a_i^0 (t_{i+1}^0 -t_i^0)$ and $a_i^0 (t_{i}^0 -t_{i-1}^0)$ are the areas in Figure \ref{fig.intro}(right). Let us briefly discuss the imposed lower bound on these areas. Since the PPP has intensity $n$ on the epigraph of the support boundary, in order  to ensure that each of the $K_n$ sets contains at least one support point of the PPP, all of them need  to have an area of at least order $\log(K_n)/n$. One might therefore wonder whether the factor $K_n$ in the lower bound for the areas is necessary to ensure strong model selection. We shall see that the posterior has to choose among a huge number of models, cf. the proof of Proposition \ref{prop.MLE_with_Kn_jumps}. To find the correct model might therefore indeed require a larger lower bound on the areas.

{\bf Prior.} By assumption all jumps occur before time one. We therefore draw the prior from a CPP on $[0,1]$ and then extend it continuously to a prior on $[0,T]$ by appending a constant function on $(1,T].$  The Lebesgue density of $(t_1, \ldots, t_K)|K$ is $K!\mathbf{1}(0\leq t_1 \leq t_2 \leq \ldots \leq t_K \leq 1),$ see Section \ref{sec.post_contr}. To model the monotonicity, the process should have positive jumps and thus the jump distribution should be supported on the positive real line. It turns out that there is one natural prior on the jump sizes. The construction is as follows: choose the random starting value of the CPP according to $a_0 \sim \Exp(1)$ and independently draw i.i.d. jump sizes $a_\ell \sim \Gamma(2,1)$ for $\ell=1, \ldots, K.$ With
\begin{align}
	g_K(\ba) =  e^{-\sum_{k=0}^K a_k} \prod_{k=1}^K a_k,\quad \ba=(a_0,\ldots,a_K)\in\R_+^{K+1}
	\label{eq.g_def}
\end{align}
the prior \eqref{eq.CPP_explicit} takes therefore the more specific form
\begin{align}
	(K, \bt, \ba) \mapsto e^{-\lambda} \lambda^K g_K(\ba)  \mathbf{1}\big(0\leq t_1 \leq t_2 \leq \ldots \leq t_K \leq 1 \big).
	\label{eq.g_def2}
\end{align}
We can also rewrite the prior as a prior on functions of the form $f= \sum_{k=0}^{K} b_k\mathbf{1}_{[t_k,t_{k+1})}.$ Under this reparametrization, we obtain $g_K(\bb) = e^{-b_K} \prod_{k=1}^K (b_k-b_{k-1})_+.$

Since $f(0)=a_0,$ this means in particular that all paths generated by the prior are non-negative. To put different priors on $a_0$ and $a_\ell,$ $\ell \geq 1,$ turns out to be natural. For this specific choice the marginal posterior of any $a_k$ follows approximately an exponential distribution. This is a crucial property that allows us to derive tight bounds for the numerator and denominator in the Bayes formula, see also the proofs of Lemma \ref{lem.denom} and Lemma \ref{lem.ub_num} for more details.

{\bf MLE.} Over all monotone functions on $[0, T], T>1,$ that are constant on $[1,T],$ there exists a nonparametric MLE $\widehat f^{\MLE}$ (unique almost surely). Existence follows from the general theory because the class of monotone functions is closed under the maximum, see \cite{reiss2014}. Almost surely, the MLE is piecewise constant with finitely many jumps and bounded. This implies in particular, that $\widehat f^{\MLE}$ is also the MLE over all piecewise constant monotone functions with jumps on $[0,1].$ Furthermore $f\leq  \widehat f^{\MLE}$ for all piecewise constant and monotone functions satisfying $f(X_i) \leq Y_i$ for all $i.$ Denoting the number of jumps by $M,$ we write
\[\widehat{f}^{\MLE}(t)= \sum_{\ell=0}^M \widehat a_\ell^{\MLE}\mathbf{1}(t\geq \widehat t_\ell^{\MLE}), \quad t\in [0,T]\]
with $0=: \widehat t_0^{\MLE}<\widehat t_1^{\MLE} < \cdots<\widehat t_M^{\MLE} \leq 1.$ This MLE should not be confused with the monotone MLE on $[0,T]$ without the restriction that the functions are constant on $[1,T].$

{\bf Construction of the majorant process $\widetilde f$.} We consider two sequences of observation points that are close to the true jump points of the unknown regression function $f_0.$ Recall that $t_0^0=0$ and $t_{K_n+1}^0=T.$ For $k=0,1 \ldots, K_n,$ consider
\begin{align}
	(X_k^*, Y_k^*)  := \argmin_{(X_i, Y_i) \ \text{observation point}} \big\{ Y_i : X_i \in [t_k^0, t_{k+1}^0) \big\}
	\label{eq.*seq_def}
\end{align}
and for $k=1, \ldots, K_n,$ with  $ R_k:=\big\{ (X_i,Y_i) \text{ observation} : X_i \in [t_{k-1}^0, t_k^0), Y_i \leq f_0(t_k^0) \big\}$
\begin{align}
	(X_k', Y_k')  := \begin{cases} \argmax_{(X_i, Y_i)}\{X_i:(X_i,Y_i)\in R_k\} ,&\text{ if } R_k\not=\varnothing\\ (t_{k-1}^0, f_0(t_{k-1}^0)),&\text{ otherwise.}\end{cases}
	\label{eq.'seq_def}
\end{align}

\begin{figure}
\begin{center}
	\includegraphics[scale=0.42]{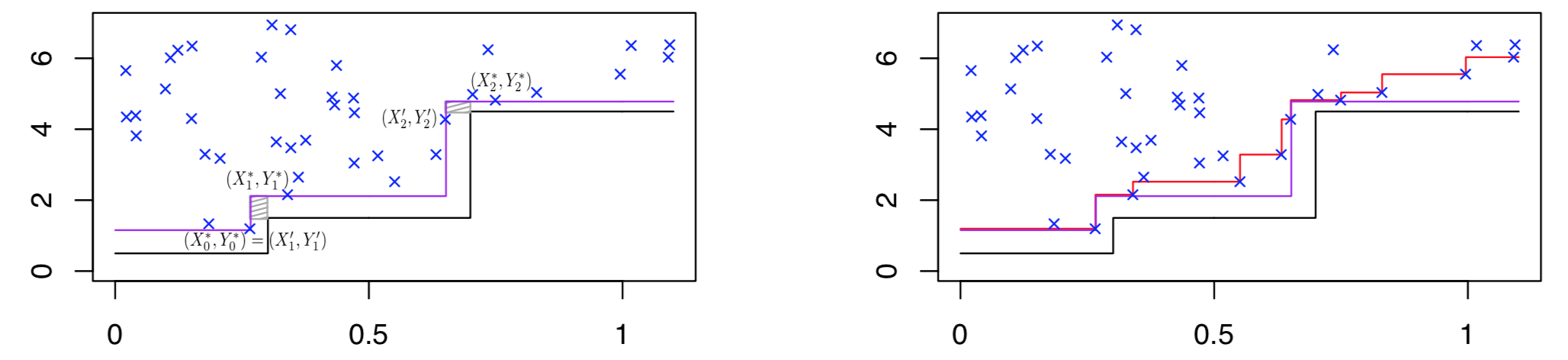}
	\caption{\label{fig.MLE} Left: Data example with true boundary (black), the function $\widetilde f$ (purple) and the sequences $(X_k^*,Y_k^*),$ $(X_k',Y_k').$ Right: If none of the observations fall into the gray areas then the sequences $(X_k^*,Y_k^*),$ $(X_k',Y_k')$ lie on the MLE over monotone functions (red).}
\end{center}
\end{figure}

We also set $X_0':=0$ and $X_{K_n+1}':=T.$ With probability one, the sequences are unique, see also Figure \ref{fig.MLE}. The assigned values for the case $R_k =\varnothing$ do not affect the asymptotic analysis but are convenient choices giving the guarantee that the subsequent formulas are well-defined. The key object for the limiting shape result of the posterior is the process
\begin{equation}\label{eq.widetildef_def}
	\widetilde f = \sum_{k=0}^{K_n}Y_k^* \mathbf{1}_{[X_k',X_{k+1}')},
\end{equation}
 a realization of which is displayed in Figure \ref{fig.MLE}. Since $\widetilde f \geq f_0,$ we call $\widetilde f$ also the \textit{majorant process} (of $f_0$). Observe that the majorant process is piecewise constant with $K_n$ jumps. As the support boundary is unknown, the majorant process cannot be computed from the data alone. As we shall see next, $\widetilde f$ coincides asymptotically with the MLE over monotone functions with the correct number of $K_n$ jumps.

\begin{prop}
\label{prop.MLE_with_Kn_jumps}
If $\widehat f^{\MLE}_{K_n}$ denotes the MLE in the space $\mM(K_n,\infty),$ then
\begin{align*}
	\inf_{f_0\in \mM_S(K_n,R)} P_{f_0}\big( \widetilde f = \widehat f^{\MLE}_{K_n} \big) \to 1.
\end{align*}
In particular, $\inf_{f_0\in \mM_S(K_n,R)} P_{f_0}( \widetilde f \ \text{is monotone}\, ) \to 1.$
\end{prop}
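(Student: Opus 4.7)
The plan is to isolate a high-probability event $G_n$ on which the identity $\widetilde f=\widehat f^{\MLE}_{K_n}$ is visible directly from the observations, and to bound $P_{f_0}(G_n^c)$ from Poisson-tail estimates combined with the signal-strength inequalities of Definition~\ref{assump.min_rect}.

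Set $V_k:=Y_k^*-f_0(t_k^0)$ and $U_k:=t_k^0-X_k'$. Define $G_n$ as the intersection, over $k=1,\ldots,K_n$, of the three events (E1) $R_k\neq\varnothing$, (E2) $V_{k-1}<a_k^0$, and (E3) the ``gray rectangle'' $(X_k',t_k^0)\times(f_0(t_k^0),Y_k^*)$ contains no point of $N$. Under $P_{f_0}$, $V_k\sim\Exp(n(t_{k+1}^0-t_k^0))$ is independent of $U_k$, which on (E1) is a truncated exponential of rate $na_k^0$. The area condition $a_k^0(t_k^0-t_{k-1}^0)\wedge a_k^0(t_{k+1}^0-t_k^0)\ge 2K_n\log(eK_n)\log^3 n/n$ gives $P((E1)^c)\vee P((E2)^c)\le e^{-2K_n\log(eK_n)\log^3 n}$. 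For (E3), on the sub-event $\{U_k\le 2\log n/(na_k^0),\,V_k\le 2\log n/(n(t_{k+1}^0-t_k^0))\}$ (each of probability $\ge 1-n^{-2}$) the conditional expected number of points in the gray rectangle is at most $4\log^2 n/(na_k^0(t_{k+1}^0-t_k^0))\le 2/(K_n\log(eK_n)\log n)$, using the same inequality. A union bound over $k\le K_n$ yields $P_{f_0}(G_n^c)\to 0$ uniformly over $\mM_S(K_n,R)$.

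On $G_n$, monotonicity $Y_0^*<\cdots<Y_{K_n}^*$ follows from (E2), so $\widetilde f\in\mM(K_n,\infty)$, and $\widetilde f(X_i)\le Y_i$ at every observation holds by a short case split: for $X_i\in[t_k^0,X_{k+1}')$ it is the definition of $Y_k^*$, while for $X_i\in[X_k',t_k^0)$ the $R_k$-maximality of $X_k'$ forces $Y_i>f_0(t_k^0)$, which together with (E3) upgrades to $Y_i\ge Y_k^*$ (a minor endpoint convention places the jump immediately after $X_k'$ itself). For the optimality, I would use the formula $\widehat f^{\MLE}(x)=\inf\{Y_i:X_i\ge x\}$ for the unrestricted monotone MLE, whose jumps occur exactly at the right-to-left running minima (RR-minima) of the observations. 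On $G_n$ these RR-minima group naturally by the true segments $[t_k^0,t_{k+1}^0)$, with $(X_k^*,Y_k^*)$ the leftmost RR-minimum of segment $k$; hence $\widehat f^{\MLE}$ contains $K_n$ ``between-segment'' jumps separating the distinct levels $Y_0^*,\ldots,Y_{K_n}^*$, each of height at least $a_{k+1}^0$ by (E2), plus possibly additional small ``within-segment'' sub-jumps. Since every valid $K_n$-jump monotone step function lies pointwise below $\widehat f^{\MLE}$, the restricted MLE $\widehat f^{\MLE}_{K_n}$ is obtained from $\widehat f^{\MLE}$ by merging consecutive segments until only $K_n$ jumps remain, in such a way as to maximise the remaining integral; the elementary observation that merging two consecutive pieces of levels $u_i<u_{i+1}$ and lengths $L_i,L_{i+1}$ into one piece of level $u_i$ decreases the integral by $(u_{i+1}-u_i)L_{i+1}$, combined with the signal-strength inequalities, shows the optimum removes exactly the within-segment sub-jumps. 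The resulting step function has value $Y_k^*$ on its $k$-th piece and jumps at the rightmost observation with $Y$-value below $Y_{k+1}^*$, which by (E3) and the $R_{k+1}$-maximality of $X_{k+1}'$ equals $X_{k+1}'$; hence $\widehat f^{\MLE}_{K_n}=\widetilde f$ on $G_n$.

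The hardest step is the combinatorial/optimisation argument in the last paragraph, namely justifying that the $K_n$-jump restriction of $\widehat f^{\MLE}$ retains all between-segment jumps rather than any mixed selection. I would adopt a direct local-swap argument: given any valid $f\in\mM(K_n,\infty)$ with jumps $\sigma_1<\cdots<\sigma_{K_n}$, move each $\sigma_k$ onto $X_k'$ one at a time and verify that $\int f$ is non-decreasing at every step on $G_n$, the sign of the perturbation following from $Y_{k-1}^*<Y_k^*$ and the emptiness of the gray rectangles; uniqueness of the MLE (almost surely) then identifies $\widetilde f$ with $\widehat f^{\MLE}_{K_n}$.
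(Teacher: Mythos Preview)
Your construction of the high-probability event $G_n$ and the verification that $\widetilde f$ is monotone and consistent with the observations on $G_n$ are essentially correct and parallel what the paper does via the event $H$ in \eqref{eq.H_def} together with Lemma~\ref{lem.seqs_on_MLE}. The gap is in the optimality step.

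The paper first observes that the $K_n$-jump MLE is necessarily of the form $\fMLE_{K_n,\bs}$ for some subset $\bs$ of the unrestricted MLE's jump locations, so it suffices to show $\int_0^T\widetilde f>\int_0^T\fMLE_{K_n,\bs}$ for every $\bs$ with $\ell(\bs)<K_n$. This is an immediate consequence of Lemma~\ref{lem.int_comps} with $K=K_n$, whose proof rests on two quantitative area estimates: each $s_q\notin\{X_1',\ldots,X_{K_n}'\}$ contributes at most $(\log K_n+C'\log\log n)/n$ to the integral difference (Lemma~\ref{lem.ub_areas}), whereas each missing $X_k'$ costs at least $(\log^2 n)/n$ (Lemma~\ref{lem.lb_areas}); since $\log^2 n\gg\log K_n$, any $\bs$ with $\ell(\bs)<K_n$ loses.

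Your events (E1)--(E3) give no such upper bound on the within-segment jumps of $\widehat f^{\MLE}$: on $G_n$ alone, the gain $(\widehat f^{\MLE}(s_q)-Y_{q^*}^*)(X_{q^*+1}'-s_q)$ from inserting a within-segment jump $s_q$ is uncontrolled. The signal-strength inequalities do lower-bound $(Y_k^*-Y_{k-1}^*)(X_{k+1}'-X_k')$, but the relevant loss rectangle when $X_k'$ is omitted from $\bs$ is $(Y_k^*-Y_k')(X_k^*-X_k')$, which requires the separate argument of Lemma~\ref{lem.lb_areas}. Your local-swap recipe ``move $\sigma_k$ onto $X_k'$ one at a time'' is moreover not well-posed: nothing on $G_n$ forces $\sigma_k\in[X_{k-1}',X_{k+1}')$, so the move may reorder the jump times or push a piece above an observation, and the claim that the integral is non-decreasing at each step does not follow from $Y_{k-1}^*<Y_k^*$ and the empty gray rectangles alone. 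To make the outline rigorous you would need precisely the uniform control of Lemmas~\ref{lem.ub_areas} and~\ref{lem.lb_areas}, at which point the paper's direct integral comparison via Lemma~\ref{lem.int_comps} is both shorter and cleaner than any swap argument.
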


{\bf Limit distribution.} We now describe the sequence of distributions that asymptotically approximates the posterior. For convenience we ignore the dependence on $n$ and refer to this sequence as the limit distribution. Working conditionally on the sequences $(X_k')_k$ and $(Y_k^*)_k$ the limit distribution $\Pi_{f_0,n}^\infty$ is then the distribution on the Skorokhod space $D([0, T])$ of
\begin{align}
	f=\sum_{k=0}^{K_n} (Y_k^*-E_k^*)\mathbf{1}_{[X_k' +E_k', X_{k+1}'+E_{k+1}')}
	\label{eq.limit_distr_2_def}
\end{align}
with independent $E_k^* \sim \Exp(n(X_{k+1}'-X_k'))$ and $E_k' \sim \Exp(n(Y_k^*-Y_{k-1}^*)) \wedge (X_{k+1}'-X_k')$, $k\leq K_n$, and $E_0':=E_{K_n+1}':=0.$

Given  the majorant process $\widetilde f,$ we can  draw from the limit distribution by moving each jump location independently to the right by a (truncated) exponential distribution with scale parameter $n(Y_k^*-Y_{k-1}^*).$ Moreover, the function value on each piece is decreased by another independently generated exponential random variable. With Proposition \ref{prop.MLE_with_Kn_jumps} it follows that the limit is of the generalized form discussed in Section \ref{sec.BvM_general}.

\begin{thm}[Limiting shape result for CPP prior]
\label{thm.BvM2}
Let $K_n \leq n^{1/2-\delta}$ for some $\delta>0.$ For the prior \eqref{eq.g_def} and $\Pi_{f_0,n}^\infty$ as defined in \eqref{eq.limit_distr_2_def},
\begin{align*}
	\lim_{n\to\infty}\sup_{f_0\in \mM_S(K_n,R)} E_{f_0}^n \Big[\big\| \Pi(\cdot | N) -\Pi_{f_0,n}^\infty \big\|_{\TV}\Big] = 0.
\end{align*}
\end{thm}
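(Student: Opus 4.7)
My plan is to prove Theorem 4.2 in four stages: localize via a good event, reduce to the correct model dimension $K=K_n$, reparametrize the posterior locally around the majorant process $\widetilde f$, and identify the factorized exponential structure matching $\Pi_{f_0,n}^\infty$.

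First I would introduce a $P_{f_0}$-high-probability event $A_n$ on which (a) Theorem 2.1(i) localizes the posterior to an $L^1$-neighborhood of $f_0$ of radius $M\sqrt{\log n/n}$, (b) Proposition 5.1 gives $\widetilde f=\widehat f_{K_n}^{\MLE}$, and (c) routine PPP concentration yields $X_{k+1}'-X_k'\asymp t_{k+1}^0-t_k^0$, $|Y_k^*-f_0(t_k^0)|\lesssim \log(n)/n$, and adequate separation of $(X_k^*,Y_k^*)$ from $(X_k',Y_k')$. On $A_n^c$ the TV-distance is trivially bounded by $2$, contributing $o(1)$ to the expectation.

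Next I would show $\Pi(K\neq K_n\,|\,N)\to 0$. For $K<K_n$, any monotone step function with $K$ jumps must omit one true jump of $f_0$, producing an $L^1$-error of at least $\tfrac12 a_k^0\min(t_k^0-t_{k-1}^0,t_{k+1}^0-t_k^0)$, which exceeds $M\sqrt{\log n/n}$ by the minimum signal strength in Definition 4.1. For $K>K_n$ I compare numerator and denominator of the Bayes formula \eqref{eq.Bayes}: extra jumps can increase the likelihood only when squeezed into narrow admissible regions of Lebesgue measure $O(\log(n)/n)$, so the $K=K_n+j$ contribution to the numerator is at most $\lambda^j/(K_n+j)!\cdot (C\log(n)/n)^j$ times the $K_n$-numerator, while the denominator is lower-bounded by restricting the prior to a neighborhood of $\widetilde f$ of mass $\gtrsim \lambda^{K_n}e^{-\lambda}/K_n!\cdot(c/n)^{2K_n}$ with uniformly bounded likelihood contribution. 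Summing over $j\ge 1$ gives an $o(1)$ bound using the $K_n\log(eK_n)$-factor in Definition 4.1.

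On $\{K=K_n\}\cap A_n$, I reparametrize via $t_k=X_k'+s_k$ and $b_k=Y_k^*-u_k$ (where $b_k=\sum_{\ell=0}^k a_\ell$ is the function value on $[t_k,t_{k+1})$). The constraints $f(X_i)\le Y_i$ translate on $A_n$ into box constraints $u_k\ge 0$, $s_k\in[0,X_{k+1}'-X_k')$; the ordering $t_{k-1}\le t_k$ and monotonicity $b_{k-1}\le b_k$ are automatic because the perturbations have size $O(K_n/n)$ while $Y_k^*-Y_{k-1}^*\approx a_k^0\ge 2\log(n)/\sqrt n\gg K_n/n$. An Abel summation of the exponent $n\int_0^T f$ with $s_0=s_{K_n+1}=0$ yields the parameter-dependent part
\[
-n\sum_{k=0}^{K_n}u_k(X_{k+1}'-X_k')-n\sum_{k=1}^{K_n}s_k(Y_k^*-Y_{k-1}^*)+R_n,\qquad |R_n|=O(K_n^2/n)=o(1),
\]
and expanding $\log g_{K_n}(\mathbf a)=-b_{K_n}+\sum_k\log((Y_k^*-Y_{k-1}^*)+(u_{k-1}-u_k))$ gives a constant plus an $o(1)$ remainder by the lower bound on $a_k^0$. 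Hence the joint density factorizes, up to a multiplicative $e^{o(1)}$-error, into independent $\Exp(n(X_{k+1}'-X_k'))$-factors in $u_k$ and truncated $\Exp(n(Y_k^*-Y_{k-1}^*))\wedge(X_{k+1}'-X_k')$-factors in $s_k$, which is exactly $\Pi_{f_0,n}^\infty$; a Scheff\'e-type bound then yields the TV convergence. The main obstacle is the cross-dimensional bound ruling out $K>K_n$: it requires a uniform comparison of posterior mass across models of different sizes and hinges on the $K_n\log(eK_n)$-factor in Definition 4.1 to absorb the combinatorial cost of larger models.
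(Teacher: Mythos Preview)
Your overall architecture (high-probability event, model selection to $K=K_n$, reparametrization around $\widetilde f$, factorization) matches the paper, and your stage~3--4 computation is essentially what the paper does in its proof of the theorem once the posterior has been localized to the sets $I^*\cap T^*$. The difficulty is that your stage~2 does not deliver the localization you need for stages~3--4.

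\textbf{The $K<K_n$ argument fails quantitatively.} You claim that omitting a true jump forces $\|f-f_0\|_1\ge \tfrac12 a_k^0\min(t_{k+1}^0-t_k^0,t_k^0-t_{k-1}^0)$, which by Definition~\ref{assump.min_rect} is at least $K_n\log(eK_n)\log^3(n)/n$. But for $K_n\asymp n^{1/2-\delta}$ this is of order $n^{-1/2-\delta}\log^4 n$, which is \emph{smaller} than the monotone contraction radius $M\sqrt{\log n/n}\asymp n^{-1/2}\sqrt{\log n}$ from Theorem~\ref{thm.contr_CPP}(i). So $L^1$-contraction does not exclude $K<K_n$. (Also note that Theorem~\ref{thm.contr_CPP} assumes $h$ is positive on all of $\mathbb R$, whereas the prior \eqref{eq.g_def} has $a_0\sim\Exp(1)$; invoking it here would need a separate check.)

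\textbf{Even on $\{K=K_n\}$, the box constraints are not automatic.} You assert that ``the constraints $f(X_i)\le Y_i$ translate on $A_n$ into $u_k\ge 0$, $s_k\in[0,X_{k+1}'-X_k')$'' and that perturbations have size $O(K_n/n)$. Neither follows from $L^1$-contraction: a monotone $K_n$-step function satisfying $f\le\widehat f^{\MLE}$ can place its $k$-th jump at any of the many MLE jump points in $[t_{k-1}^0,t_{k+1}^0]$, not only at $X_k'$, and the $L^1$-radius $\sqrt{\log n/n}$ is far too coarse to pin $t_k$ to an interval of length $\asymp 1/\sqrt n$. Without this, your cross-term bound $|R_n|=O(K_n^2/n)$ and the claim that ordering/monotonicity are automatic both break down.

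\textbf{What the paper does instead.} The paper bypasses $L^1$-contraction entirely for model selection and proves Theorem~\ref{thm.model_selection} by direct likelihood comparison. It introduces the sets $B_K$ of functions whose $k$-th jump overshoots $\widehat f^{\MLE}$ at the previous jump (Lemma~\ref{lem.AK_bd} disposes of $B^c$ by a merging argument), then for $f\in B_K$ bounds $\Pi(B_K\cap\{f\le \widehat f^{\MLE}_{K,\bs}\}\,|\,N)$ uniformly over subsets $\bs$ of MLE jump points (Lemmas~\ref{lem.denom}--\ref{lem.A_K^c_bd}). The decisive ingredient is Lemma~\ref{lem.int_comps}: each jump placed at a non-$X_k'$ MLE location gains at most $(\log K_n+C'\log\log n)/n$ in $\int\widehat f^{\MLE}_{K,\bs}$, whereas each omitted $X_k'$ loses at least $\log^2(n)/n$ (from the rectangle areas in Lemmas~\ref{lem.ub_areas}, \ref{lem.lb_areas}). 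This simultaneously handles $K\ne K_n$ \emph{and} the case $K=K_n$ with $\ell(\bs)<K_n$, i.e.\ the reduction to $\{f\le\widetilde f\}$. Only after this does the paper prove the finer localization to $I^*\cap T^*$ (Lemma~\ref{lem.localization}), which is what justifies treating $u_k,s_k$ as $O(\log n/(n\sqrt n))$-size perturbations and controlling the cross term via \eqref{MLE5}. Your stages~3--4 are sound once this machinery is in place, but you will need to replace your stage~2 by an argument of this type.
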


Since we work with one specific prior, we call this a limiting shape result instead of a Bernstein-von Mises theorem. Using \eqref{eq.limit_distr_2_def}, one can show that the posterior contracts with rate $K_n/n.$ We conjecture that the MLE only achieves the slower rate $K_n \log n /n.$ One of the heuristic reasons  is that the MLE overshoots the true model dimension $K_n$ by choosing a model with order $K_n \log n$ many jumps, see Figure \ref{fig.MLE} and Lemma \ref{lem.ub_jumps_of_MLE}.  It is conceivable that each of the additional jumps introduces an error of size $1/n$ which then gives the rate $K_n \log n /n.$ A similar phenomenon occurs in the nonparametric regression model, see  Prop. 2.1 in \cite{Gao2017}.

The proof is non-standard. It follows immediately from the likelihood that the posterior only puts mass on paths that lie below the monotone MLE $\widehat f^{\MLE}.$ Let $f$ be a piecewise constant function with $K$ jumps such that there exists a function $f_{>}$ with $K-1$ jumps such that $f \leq f_{>}\leq \widehat f^{\MLE}.$ Interestingly, it can be shown that the posterior puts negligible mass on the union over all such functions and all $K.$ The remaining paths have more structure. We use this to introduce a parametrization from which we can derive sufficiently sharp bounds over the corresponding integrals in the Bayes formula. The proof also requires many properties of the monotone MLE which might be of independent interest and are collected in Appendix \ref{AppMLE}.

\subsection{A positive result on frequentist coverage of functionals}

For the functional $\vartheta = \int_0^T f,$ we have under the limit distribution $\Pi_{f_0,n}^\infty,$
\begin{align}
	\vartheta = \int_0^T \widetilde f - \sum_{k=0}^{K_n} E_k^* (X_{k+1}'-X_k')
	- \sum_{k=1}^{K_n} E_k' (Y_k^*-Y_{k-1}^*)
	- \sum_{k=0}^{K_n} E_k^* (E_{k+1}'-E_k').
	\label{eq.vartheta_under_limit}
\end{align}
In this section, we show that this converges to a normal distribution with mean $\int \widetilde f - (2K_n+1)/n$ and variance $(2K_n+1)/n.$ Given two probability measures $P,Q$ on $(\mathbb{R}, \mathcal{B}(\mathbb{R})),$ define the Kolmogorov-Smirnov distance
\begin{align*}
	\|P-Q\|_{\KS} := \sup_{x\in \mathbb{R}} \big| P\big((-\infty,x]\big) - Q\big((-\infty,x]\big) \big|.
\end{align*}

\begin{cor}
\label{cor.marg_BvM_CPP}
Consider the prior \eqref{eq.g_def}. Then, for any sequence $K_n\to\infty$ with  $ K_n \leq n^{1/2-\delta}$ for some $\delta>0$
\begin{align*}
	\sup_{f_0\in \mM_S(K_n,R)} E_{f_0}^n \Big[\Big\| \Pi(\vartheta \in \cdot | N) -\mathcal{N}\Big(\int_0^T \widetilde f - \frac{2K_n+1}{n}, \frac{2K_n+1}{n^2}\Big) \Big\|_{\KS}\Big] \rightarrow 0.
\end{align*}
\end{cor}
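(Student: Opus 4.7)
The plan is to combine Theorem~\ref{thm.BvM2} with a central-limit argument applied to the push-forward of $\Pi_{f_0,n}^\infty$ under the linear functional $\vartheta$. Since KS distance is dominated by total variation and TV is non-increasing under any measurable push-forward,
\[
\bigl\|\Pi(\vartheta\in\cdot\mid N) - \Pi_{f_0,n}^\infty \circ \vartheta^{-1}\bigr\|_{\KS}
\le \bigl\|\Pi(\cdot\mid N) - \Pi_{f_0,n}^\infty\bigr\|_{\TV},
\]
so Theorem~\ref{thm.BvM2} makes the $E_{f_0}^n$-expectation of the right-hand side vanish uniformly over $\mM_S(K_n,R)$. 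It therefore suffices to show that on a $P_{f_0}^n$-event of probability $1-o(1)$, the push-forward $\Pi_{f_0,n}^\infty\circ\vartheta^{-1}$ lies within $o(1)$ KS distance of the stated Gaussian, uniformly in $f_0\in\mM_S(K_n,R)$.

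Using \eqref{eq.vartheta_under_limit}, under $\Pi_{f_0,n}^\infty$ the quantity $n(\int_0^T\widetilde f-\vartheta)$ decomposes as $S_n+T_n+nR_n$ with
\[
S_n:=\sum_{k=0}^{K_n} nE_k^*(X_{k+1}'-X_k'),\ T_n:=\sum_{k=1}^{K_n} nE_k'(Y_k^*-Y_{k-1}^*),\ R_n:=\sum_{k=0}^{K_n} E_k^*(E_{k+1}'-E_k').
\]
The summands of $S_n$ are $K_n+1$ i.i.d.\ $\Exp(1)$ and, absent the $\wedge(X_{k+1}'-X_k')$-truncation on $E_k'$, the summands of $T_n$ are $K_n$ further i.i.d.\ $\Exp(1)$, so $S_n+T_n\sim\Gamma(2K_n+1,1)$. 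Standard concentration will give $X_{k+1}'-X_k'\asymp t_{k+1}^0-t_k^0$ and $Y_k^*-Y_{k-1}^*\asymp a_k^0$ on a $P_{f_0}^n$-event of probability $1-o(1)$; by the area lower bound in Definition~\ref{assump.min_rect} and a union bound, the truncation of $E_k'$ is then $\Pi_{f_0,n}^\infty$-almost-surely inactive for every $k\le K_n$ with probability tending to one.

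For the cross term, exponential tail bounds and a union bound yield $E_k^*\le(2\log K_n)/[n(X_{k+1}'-X_k')]$ and $E_k'\le(2\log K_n)/[n(Y_k^*-Y_{k-1}^*)]$ uniformly in $k\le K_n$ with $\Pi_{f_0,n}^\infty$-probability $1-o(1)$; combined again with Definition~\ref{assump.min_rect}, this gives $n|R_n|\lesssim(\log K_n)^2/\log^3 n = o(1)$. A Berry--Esseen bound for i.i.d.\ exponentials delivers $\|\Gamma(2K_n+1,1)-\mathcal N(2K_n+1,2K_n+1)\|_{\KS}=O(K_n^{-1/2})$, and an $o(1)$ additive perturbation of $S_n+T_n$ inflates the KS distance by at most $O(n|R_n|/\sqrt{K_n})=o(1)$. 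Translating by $\int_0^T\widetilde f$ and rescaling by $1/n$ both preserve KS distance, so the claim follows. The hard part will be the uniform control of $R_n$: the factor $K_n\log(eK_n)\log^3 n$ in the area lower bound of Definition~\ref{assump.min_rect} is exactly calibrated so that $n|R_n|$ vanishes at the $\sqrt{K_n}$-scale governing the CLT fluctuation.
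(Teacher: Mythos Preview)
Your approach is correct and mirrors the paper's: reduce to $\Pi_{f_0,n}^\infty$ via Theorem~\ref{thm.BvM2}, show the truncation of $E_k'$ and the cross term $R_n$ are negligible on a high-probability event using the area lower bound from Definition~\ref{assump.min_rect}, and apply a CLT to the remaining $2K_n+1$ i.i.d.\ $\Exp(1)$ summands followed by a sandwich argument exploiting the Lipschitz property of the Gaussian c.d.f. The paper carries this out via the localization set $I^*\cap T^*$ (yielding the clean deterministic bound $|nR_n|\le 1/\log n$ there) together with the TV--CLT of Lemma~\ref{lem.CLT_in_TV}, while you substitute direct exponential tail bounds and Berry--Esseen; these are interchangeable implementation choices rather than different strategies.
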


By Lemma \ref{prop.MLE_with_Kn_jumps}, the majorant process $\widetilde f$ in the limit distribution can be replaced by $\widehat f^{\MLE}_{K_n}.$ The result is formulated in terms of the Kolmogorov-Smirnov distance, which suffices to describe asymptotic probabilities for credible intervals. It is not clear whether a total variation version holds as well because point masses enter into the proof argument and are difficult to control.

The observations that lie on the majorant process are $(X_k',Y_k'),$ $k=1, \ldots, K_n$ and $(X_k^*,Y_k^*),$ $k=0, \ldots, K_n.$ This means that $2K_n+1$ observations lie on the boundary of $\widetilde f$ (almost surely). The bias correction term $(2K_n+1)/n$ is consequently of the same form as for the bias-corrected MLE in \cite{reiss2014}. We can now argue as in Corollary \ref{cor.BvM_fctal} to construct a $(1-\alpha)$-credible interval that is also an asymptotic $(1-\alpha)$-confidence interval and shrinks with the correct rate $O(\sqrt{K_n}/n).$

\subsection{A negative result on posterior coverage for the CPP prior}
\label{sec.negative_BvM_CPP}

We consider the same statistical model: we observe a PPP on $[0,T] \times \mathbb{R}$ with intensity $\lambda_f(x,y) = n \mathbf{1}(y \geq f(x)).$ We are now interested in the coverage of credible sets if the support boundary function is not piecewise constant. For the specific choice $f_0(x) = (x + 1/2)\wedge 3/2$ of the support boundary function it is shown that the credible sets do not have asymptotic coverage. Notice that $f_0$ is constant on $[1,T].$

{\bf Class of priors.} Consider a (generalized) CPP prior. Given the number of jumps $K$, the jump heights $\ba=(a_0,a_1, \ldots, a_K)$ are assumed to be independent but not necessarily identically distributed and the prior is of the form
\begin{align}
	g_K(\ba) = \prod_{k=0}^K g_k(a_k).
	\label{eq.gK_recall}
\end{align}
For the marginal prior on the individual jumps we assume that there exist constants $c>0,$ $\gamma \geq 0,$ such that
\begin{align}
	g_k(x) \geq  cx^\gamma, \quad \forall \ x\in [0,1] , \  k\ge 0.
	\label{eq.gk_lb}
\end{align}
In particular this is satisfied by the prior \eqref{eq.g_def} with $\gamma=1$ and $c=e^{-1}.$

\begin{figure}
\begin{center}
	\includegraphics[scale=0.6]{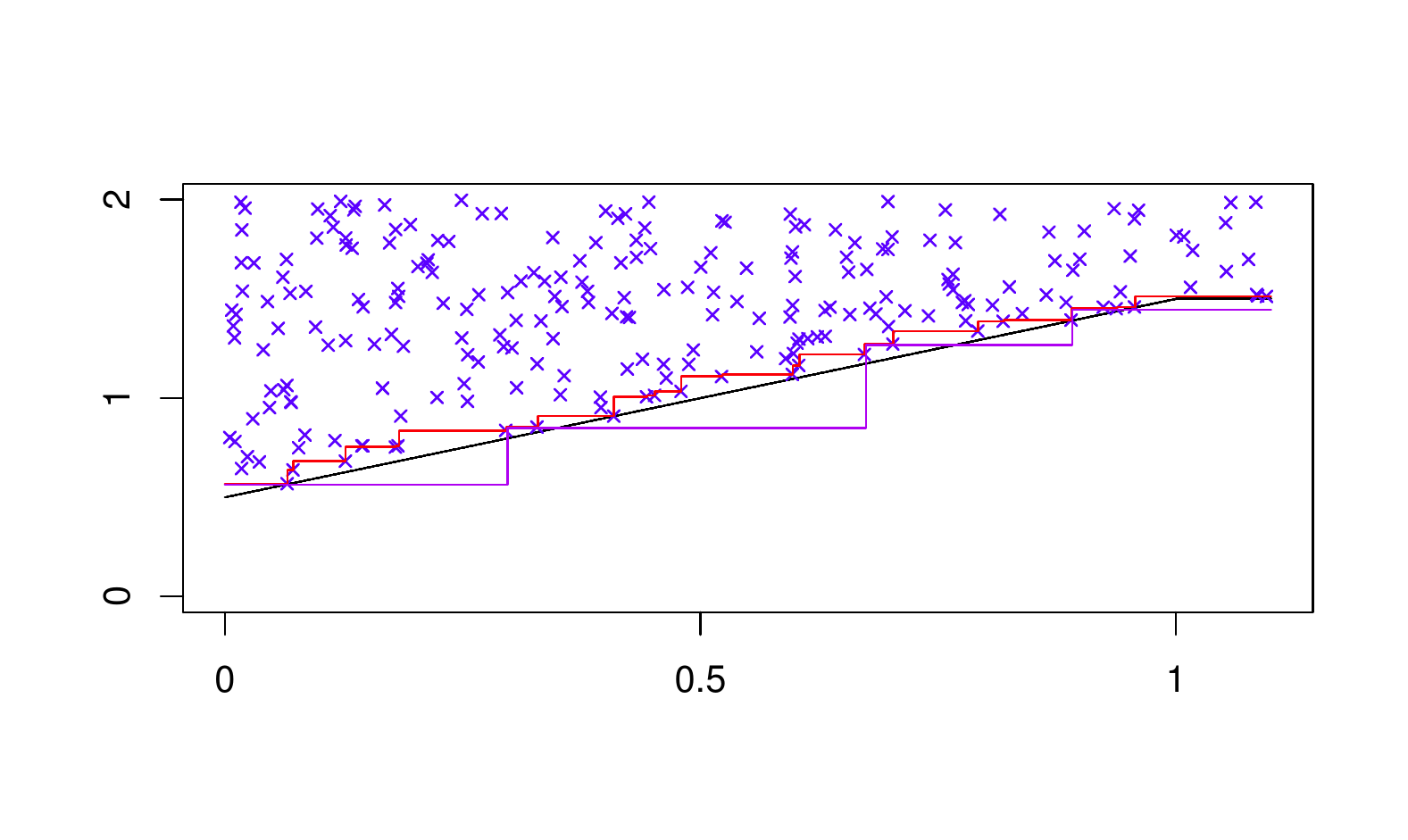}
	\vspace{-1cm}
	\caption{\label{fig.lb} The argument for the lower bound with monotone MLE (red), true function (black) and a function $\fMLEKs$ with few jumps (purple). The posterior puts asymptotically all mass on paths with much fewer jumps than the monotone MLE. This creates a downwards bias of the posterior for the marginal posterior of the integral $\int_0^1 f$.}
\end{center}
\end{figure}

 The first result shows that the posterior concentrates on models with size $\sqrt{n/\log n}.$ This is of a slightly smaller order than the MLE, which has of the order $\sqrt{n}$ many jumps. This causes then a downwards bias of the posterior, compare Figure \ref{fig.lb}. Interestingly, a similar phenomenon occurs in the Gaussian white noise model, cf. Prop. 2 in \cite{castillo2015b}.

\begin{prop}
\label{prop.jump_nr_lb}
Consider a CPP prior with jump distribution satisfying \eqref{eq.gK_recall} and \eqref{eq.gk_lb}. For $f_0 = (\tfrac 12 + \cdot) \wedge \tfrac 32$ there exists a positive constant $c_*$ such that
\begin{align*}
	E_{f_0}\Big[\Pi\Big(K\geq  c_* \sqrt{\frac  n {\log n}} \, \Big | N \Big) \Big] \rightarrow 0.
\end{align*}
\end{prop}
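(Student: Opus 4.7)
The plan is to apply the Bayes formula \eqref{eq.Bayes} and write, with $k_0 := c_*\sqrt{n/\log n}$,
\[
\Pi(K\ge k_0\mid N) = \frac{N_n}{D_n},
\]
where $N_n$ is the Bayes integral restricted to paths with at least $k_0$ jumps and $D_n$ is the full denominator. The guiding heuristic is: pushing the number of jumps from some $K^*<k_0$ up to $K\ge k_0$ can raise $\int f$ by at most $\mathcal{O}(1/K^*)$, because $f_0$ is $1$-Lipschitz on $[0,1]$, giving a likelihood gain of at most $e^{n/K^*}$; but the Poisson factor on $K$, together with the $1/K!$ volume of the ordered simplex of jump locations, costs of order $e^{-K^*\log K^*}$. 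These balance precisely at $K^*\asymp\sqrt{n/\log n}$, which fixes the threshold.

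To lower-bound $D_n$, I would construct a staircase $f^*$ with $K^* = \lfloor c_*\sqrt{n/\log n}/4\rfloor$ equispaced jumps $t_k^* = k/K^*$, starting value $a_0^* = 1/2 - 3/(2K^*)$ and jump sizes $a_k^* = 1/K^*$, so that $f^*\le f_0 - 1/K^*$ on $[0,T]$ and $\int f^*\ge\int f_0 - 3/K^*$. A box $A^*$ of half-width $1/K^{*3}$ around $(\bt^*,\ba^*)$ preserves $f\le f_0$ (so every data constraint $f(X_i)\le Y_i$ holds deterministically) and still satisfies $\int f\ge\int f_0 - C/K^*$. Combining the lower bound \eqref{eq.gk_lb} applied to each $g_k(a_k)\ge c(2K^*)^{-\gamma}$ on $A^*$, the Poisson mass $e^{-\lambda}\lambda^{K^*}/K^*!$, the simplex density $K^*!$, and Stirling yields
\[
D_n\ge\exp\!\left(n\textstyle\int_0^T f_0 - \alpha(c_*)\sqrt{n\log n}\right),
\]
for an explicit $\alpha(c_*)$ depending on $\gamma,\lambda,c_*$; only $\alpha(c_*)<c_*/2$ for $c_*$ large will be needed.

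For the numerator, since each $g_k$ is a probability density and $f(X_i)\le Y_i$ reduces to $a_k\le m_k(\bt):=\min\{Y_i:X_i\in[t_k,t_{k+1})\}$, the elementary estimate $\int g_k(a_k)e^{na_k(t_{k+1}-t_k)}\mathbf{1}(a_k\le m_k)\,da_k\le e^{nm_k(t_{k+1}-t_k)}$ integrates out the jump heights and yields the bound $\exp(n\int \widehat f^{\MLE}_{K,\bt})$, where $\widehat f^{\MLE}_{K,\bt}$ is the MLE among piecewise constant functions with jumps at $\bt$. A direct PPP computation using the $1$-Lipschitz structure of $f_0$ shows that $\E_{f_0}[\min_i Y_i - f_0(t_k)]$ is of order $n^{-1/2}$ on cells of length $\gtrsim n^{-1/2}$ and of order $(n(t_{k+1}-t_k))^{-1}$ on smaller cells; this produces, on a good event of probability $1-o(1)$, uniformly in $\bt$ with $K$ cells, $n(\int\widehat f^{\MLE}_{K,\bt}-\int f_0)\le C(\sqrt n+K)$. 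Summing the Poisson weights $\lambda^K e^{CK}/K!$ over $K\ge k_0$ and using the standard tail $P(\Pois(\lambda e^C)\ge k_0)\le e^{-k_0\log(k_0/(e\lambda e^C))}$ gives $N_n\le\exp(n\int_0^T f_0 + C\sqrt n - k_0\log k_0\,(1-o(1)))$. Since $k_0\log k_0\sim(c_*/2)\sqrt{n\log n}$, the ratio $N_n/D_n\le\exp\{(\alpha(c_*) + o(1) - c_*/2)\sqrt{n\log n}\}$ tends to $0$ once $c_*/2>\alpha(c_*)$.

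The main obstacle I expect is the uniform exponential-moment control of $\int\widehat f^{\MLE}_{K,\bt}$ underlying the numerator bound. A naive computation of $\E_{f_0}[e^{n(t_{k+1}-t_k)m_k}]$ diverges, because $m_k$ has a slowly decaying right tail above $f_0(t_k)$ on cells containing no PPP points. I plan to handle this in two steps: (i) restrict to the high-probability event that $m_k - f_0(t_k)\le (\log n)/(n(t_{k+1}-t_k)) + \sqrt{(\log n)/n}$ for every $k$, whose complement has probability $\le e^{-c\sqrt n}$ by a union bound in $k$; and (ii) treat microscopic cells with $t_{k+1}-t_k\lesssim 1/n$ separately, where $e^{na_k(t_{k+1}-t_k)}=\mathcal{O}(1)$ automatically makes the $a_k$-integral bounded regardless of $m_k$. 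With these two ingredients in place, the comparison above closes the argument.
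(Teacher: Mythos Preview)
Your denominator lower bound via a staircase below $f_0$ is essentially the paper's argument, so that part is fine. The divergence is in the numerator, and here the paper sidesteps almost everything you propose to do.

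The paper does not bound $N_n$ via the piecewise-constant MLE at all. Instead it uses the second representation in the Bayes formula \eqref{eq.Bayes},
\[
\Pi(K\ge m\mid N)=\frac{\int_{K\ge m} e^{-n\int(f_0-f)_+}\,\tfrac{dP_{f\vee f_0}}{dP_{f_0}}(N)\,d\Pi(f)}{\int e^{-n\int(f_0-f)_+}\,\tfrac{dP_{f\vee f_0}}{dP_{f_0}}(N)\,d\Pi(f)},
\]
bounds $e^{-n\int(f_0-f)_+}\le 1$ in the numerator, and then simply takes $E_{f_0}$ using $E_{f_0}[dP_{f\vee f_0}/dP_{f_0}(N)]=1$ and Fubini. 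This yields $E_{f_0}[\Pi(K\ge m\mid N)]\le e^{\sqrt{n\log n}}\,\Pi(K\ge m)/\Pi(\|X-f_0\|_1\le\sqrt{\log n/n},\,X\le f_0)$ in one line, with the numerator reduced to the raw Poisson tail $\Pi(K\ge m)$. No control of $\widehat f^{\MLE}_{K,\bt}$, no exponential moments, no case analysis on cell lengths is needed.

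Your route can in principle be made to work, but the gap you yourself flagged is real and your proposed patch does not close it. The bound ``on a good event of probability $1-o(1)$, uniformly in $\bt$ with $K$ cells, $n(\int\widehat f^{\MLE}_{K,\bt}-\int f_0)\le C(\sqrt n+K)$'' cannot be obtained from a union bound in $k$: the good event depends on the partition $\bt$, and you need it simultaneously for all (uncountably many) $\bt$. Moreover, for any $\bt$ with an empty cell, $m_k(\bt)=+\infty$ and $\widehat f^{\MLE}_{K,\bt}$ is not even finite; your treatment of microscopic cells only covers lengths $\lesssim 1/n$, yet cells of length $\asymp 1/n$ are empty with constant probability. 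If the prior is supported on monotone paths one can dominate by the global monotone MLE and invoke Lemma~\ref{lem.MLE_concentration}, but the proposition only assumes \eqref{eq.gk_lb}, which says nothing about the support of $g_k$. So either restrict to monotone priors and argue via $\widehat f^{\MLE}$, or---much simpler---take the expectation first as the paper does and avoid the MLE entirely.
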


\begin{thm}
\label{thm.marg_BvM2}
If $f_0 = (\cdot + 1/2) \wedge 3/2,$ then there exists a positive constant $c^*,$ such that for the marginal posterior on the functional $\vartheta=\int_0^1 f$
\begin{align*}
	E_{f_0}\Big[\Pi\Big( \vartheta \geq \int_0^1 f_0(x) dx - \widetilde c\sqrt{\frac{\log n}{n}} \Big | \, N \, \Big)\Big] \rightarrow 0.
\end{align*}
This means that the whole posterior mass lies asymptotically below the true value.
\end{thm}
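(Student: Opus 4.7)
The plan is to combine Proposition~\ref{prop.jump_nr_lb}, which concentrates the posterior on $\{K\le K_*:=c_*\sqrt{n/\log n}\}$, with a pathwise bound on $\int_0^1 f$ that isolates the irreducible bias of piecewise constant approximations of the linear $f_0$. After discarding the negligible event $\{K\ge K_*\}$, it suffices to show that admissible posterior draws with $K\le K_*$ pieces satisfy $\int_0^1 f\le \int_0^1 f_0-\widetilde c\sqrt{\log n/n}$ with $P_{f_0}$-probability tending to one.

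For such a draw $f=\sum_{k=0}^K b_k\mathbf{1}_{[t_k,t_{k+1})}$, monotonicity of the prior together with the support condition $f(X_i)\le Y_i$ gives $b_k\le b_{k+1}\le\cdots\le b_K$ and $b_\ell\le\min\{Y_i:X_i\in[t_\ell,t_{\ell+1})\}$ for each $\ell$, hence $b_k\le\widetilde M(t_k):=\min\{Y_i:X_i\ge t_k\}$. Setting $\ell_k:=(t_{k+1}\wedge 1)-t_k$ and $U(x):=\widetilde M(x)-f_0(x)\ge 0$ (using $Y_i\ge f_0(X_i)\ge f_0(x)$ for $X_i\ge x$), the linearity of $f_0$ on $[0,1]$ yields $\sum_k f_0(t_k)\ell_k=\int_0^1 f_0-\tfrac12\sum_k\ell_k^2$, and Cauchy-Schwarz gives $\sum_k\ell_k^2\ge 1/(K+1)\ge 1/(K_*+1)$. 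Therefore
\[
\int_0^1 f\;\le\;\sum_k\widetilde M(t_k)\ell_k\;\le\;\int_0^1 f_0\;-\;\frac{1}{2(K_*+1)}\;+\;\sum_k U(t_k)\ell_k.
\]

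Because $\sum_k\ell_k\le 1$, the partition-dependent term is dominated by a partition-free supremum: $\sum_k U(t_k)\ell_k\le\sup_{x\in[0,1]}U(x)$. For fixed $x$, the event $\{U(x)>a\}$ requires the PPP to have no point in the wedge $\{(X,Y):X\ge x,\,Y\in[f_0(X),f_0(x)+a]\}$ of area $a^2/2$ (for $a\le 1-x$), giving $P(U(x)>a)\le e^{-na^2/2}$. Discretizing $x$ on a mesh of size $1/n$ and using the Lipschitz bound $U(x)\le U(y)+(y-x)$ for $y\ge x$ (valid since $\widetilde M$ is nondecreasing and $f_0$ has slope one), a union bound shows $\sup_x U(x)\le\sqrt{(2+\varepsilon)\log n/n}$ with $P_{f_0}$-probability tending to one for any $\varepsilon>0$.

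Combining, with high $P_{f_0}$-probability $\int_0^1 f\le\int_0^1 f_0-\bigl(\tfrac{1}{2c_*}-\sqrt{2+\varepsilon}\bigr)\sqrt{\log n/n}(1+o(1))$, so the theorem holds with any $\widetilde c<\tfrac{1}{2c_*}-\sqrt{2}$ provided $c_*<1/(2\sqrt{2})$. The principal obstacle is this constant matching: a direct union-bound attempt on $\sum_k(M_k-f_0(t_k))\ell_k$ with $M_k=\min\{Y_i:X_i\in[t_k,t_{k+1})\}$ over all partitions would have exponentially many terms in $K_*$, which is why I rely on monotonicity to collapse the partition-dependent quantity into the single-parameter supremum $\sup_x U(x)$. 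Matching the resulting constant $\sqrt2$ against $c_*$ requires the proof of Proposition~\ref{prop.jump_nr_lb} to furnish a $c_*<1/(2\sqrt{2})$; if necessary, a sharper Gumbel-type asymptotic for $\sup_x U(x)$ reduces the leading constant to its optimal Poisson value.
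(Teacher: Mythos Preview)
Your overall strategy---concentrate the posterior on $\{K\le K_*\}$ via Proposition~\ref{prop.jump_nr_lb}, then exploit the irreducible $L^1$ bias of piecewise constant approximations to the linear $f_0$---matches the paper's. The gap is precisely the constant matching you flag, and it is not repairable along the lines you suggest.

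First, the constant $c_*$ in Proposition~\ref{prop.jump_nr_lb} is \emph{large}, not small: in its proof one needs $m\log m\gtrsim A\sqrt{n\log n}$, i.e.\ $c_*>2A$, where $A$ comes from the one-sided small ball probability of the prior and depends on $\lambda,c,\gamma$. Second, your sup bound is sharp: $\sup_{x}U(x)\sim\sqrt{2\log n/n}$ in probability, so a Gumbel refinement only replaces $\sqrt{2+\varepsilon}$ by $\sqrt{2}$ and cannot beat a large $c_*$. The step that costs you the proof is the crude inequality $\sum_k U(t_k)\ell_k\le \sup_x U(x)$, which puts the overshoot term at the same order $\sqrt{\log n/n}$ as the bias.

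The paper avoids this by decoupling differently. Using $f\le \widehat f^{\MLE}$ and $\widehat f^{\MLE}\ge f_0$,
\[
\int_0^1(f-f_0)\;=\;2\int_0^1(f-f_0)_+-\int_0^1|f-f_0|\;\le\;2\int_0^1(\widehat f^{\MLE}-f_0)-\int_0^1|f-f_0|.
\]
The second term is at least $\tfrac{1}{4(K+1)}\ge \tfrac{1}{8c_*}\sqrt{\log n/n}$ by your own Cauchy--Schwarz argument (this is Lemma~\ref{lem.lb_nr_jumps_approx}). The first term is $O_{P_{f_0}}(n^{-1/2})$, a \emph{strictly smaller} order than $\sqrt{\log n/n}$ (Lemma~\ref{lem.MLE_concentration}, via the known $E_{f_0}[\int(\widehat f^{\MLE}-f_0)]=O(n^{-1/2})$ for the monotone MLE under Lipschitz boundaries). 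Hence no constant matching is needed: one simply chooses $c'=\tfrac{1}{32c_*}$ so that $2c'\sqrt{\log n/n}<\tfrac{1}{8c_*}\sqrt{\log n/n}$ and the difference yields $\widetilde c=\tfrac{1}{16c_*}$, valid for \emph{any} $c_*>0$.

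In short, replace $\sup_x U(x)$ by $\int_0^1(\widehat f^{\MLE}-f_0)$ as the data-dependent control on the overshoot; the integrated MLE error is a full $\sqrt{\log n}$ factor smaller than your sup-norm bound, and that is exactly what makes the argument close for arbitrary $c_*$.
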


We conjecture that the negative result continues to hold in the case of piecewise constant functions with at least $\sqrt{n}$ jumps because the posterior will put all asymptotic mass on models of dimension $O(\sqrt{n/\log n})$, underestimating the number of true jumps by at least a logarithmic factor.

\appendix

\section{Proofs for Section \ref{sec.post_contr}} \label{AppA}

Denote by $N(\eps, \mF, d)$ the $\eps$-covering number of $\mF \subset L^1([0,1])$ with respect to the distance $d.$ The one-sided bracketing number $N_{[}(\delta, \mF)$ is the smallest number $M$ of functions $\ell_1,\ldots,\ell_M\in L^1([0,1])$ such that for any $f\in \mF$ there exists $j\in \{1,\ldots, M\}$ with $\ell_j\leq f$ (almost everywhere) and $\int(f-\ell_j)\leq \delta.$ The functions $\ell_j$ are not required to be in $\mF.$

\begin{thm}[Theorem 2.3 and Corollary 2.6 in \cite{reiss2018a}]
\label{thm.main_ub}
If for some $\Theta_n \subset \Theta,$ some rate $\eps_n \rightarrow 0$ and constants $C, C',C''\geq 1$, $A>0$

\begin{tabular}{ll}
 (i) & \quad
   $N_{[}\big(\eps_n, \Theta_n\big) \leq C'' e^{C'n \eps_n};$ \\[0.3cm]
  (ii) & \quad $\Pi(f: \|f-f_0\|_1 \leq A\eps_n, f\leq f_0) \geq e^{-Cn\eps_n};$ \\[0.3cm]
  (iii) & \quad $\Pi(\Theta_n^c)\leq C''e^{-(C+A+1) n \eps_n},$ \\
\end{tabular}

then there exists a constant $M$ such that
\begin{align*}
	&E_{f_0}\big[\Pi\big( f : \| f-f_0\|_1 \geq M\eps_n | N \big) \big]  \leq 3C'' e^{- n\eps_n}.
\end{align*}
\end{thm}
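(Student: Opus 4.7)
The plan is to work directly from the Bayes formula \eqref{eq.Bayes} written as $\Pi(B|N) = \int_B \Lambda(f)\, d\Pi(f)/\int_\Theta \Lambda(f)\, d\Pi(f)$ with likelihood ratio $\Lambda(f) := e^{n\int(f-f_0)}\mathbf{1}(\forall i: f(X_i)\leq Y_i)$. The void-probability formula for the PPP yields the basic identity $E_{f_0}[\Lambda(f)] = e^{-n\int(f_0-f)_+}$, and more generally the change-of-measure relation $\Lambda(f)\, dP_{f_0} = e^{-n\int(f_0-f)_+}\, dP_{f\vee f_0}$. A \emph{deterministic} lower bound on the denominator follows from condition (ii): restricted to $\{f\leq f_0,\, \|f-f_0\|_1\leq A\eps_n\}$ the indicator in $\Lambda$ is identically $1$ under $P_{f_0}$ and $\Lambda(f) \geq e^{-An\eps_n}$, so $\int_\Theta \Lambda\, d\Pi \geq e^{-(A+C)n\eps_n}$ almost surely.

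For the numerator I would partition $B := \{f:\|f-f_0\|_1\geq M\eps_n\}$ into $B_1 := B \cap \{\int(f_0-f)_+\geq M\eps_n/2\}$ and $B_2 := B\setminus B_1 \subset \{\int(f-f_0)_+\geq M\eps_n/2\}$. On $B_1$ the extra factor $e^{-n\int(f_0-f)_+}\leq e^{-Mn\eps_n/2}$ inside $E_{f_0}[\Lambda(f)]$, combined with the denominator bound, already delivers the required decay once $M>2(A+C+1)$. On $B_2\cap \Theta_n^c$ the trivial bound $E_{f_0}[\Lambda(f)]\leq 1$ together with condition (iii) suffices. The delicate regime is $B_2\cap \Theta_n$: the suppression $e^{-n\int(f_0-f)_+}$ can be as large as $1$, and the bracketing condition (i) must be brought in through a testing argument. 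This is the main obstacle.

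The key construction is a test built from one-sided brackets. Fix an $\eps_n$-bracketing $\{\ell_1,\dots,\ell_{N_n}\}$ of $\Theta_n$ with $N_n\leq C'' e^{C'n\eps_n}$, so that each $f\in \Theta_n$ admits some $j(f)$ with $\ell_{j(f)}\leq f$ and $\int(f-\ell_{j(f)})\leq \eps_n$. Keep only those indices with $\int(\ell_j - f_0)_+ \geq M\eps_n/2-\eps_n$; by the pointwise inequality $(f-f_0)_+\leq (\ell_j - f_0)_+ + (f-\ell_j)_+$, this shortened list still covers $B_2$. Define
\[\phi := \max_j \mathbf{1}(\forall i:\ell_j(X_i)\leq Y_i).\]
The void formula gives $P_{f_0}(\forall i:\ell_j\leq Y_i) = e^{-n\int(\ell_j-f_0)_+}$, whence a union bound yields $E_{f_0}[\phi] \leq N_n\, e^{-n(M\eps_n/2-\eps_n)} \leq C''e^{(C'+1-M/2)n\eps_n}$. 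Under $P_{f\vee f_0}$ for $f\in B_2\cap\Theta_n$ every point of $N$ lies above $f\vee f_0\geq f\geq \ell_{j(f)}$, so $\phi\equiv 1$ $P_{f\vee f_0}$-a.s., giving $E_{f\vee f_0}[1-\phi]=0$.

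To assemble, I would use the standard decomposition together with the change-of-measure identity:
\[E_{f_0}[\Pi(B|N)] \leq E_{f_0}[\phi] + e^{(A+C)n\eps_n}\int_B e^{-n\int(f_0-f)_+} E_{f\vee f_0}[1-\phi]\, d\Pi(f).\]
Splitting the integral into contributions from $B_1$, $B_2\cap \Theta_n^c$ and $B_2\cap \Theta_n$ (the last vanishing by the test property) and choosing $M>\max\{2(A+C+1),\, 2(C'+2)\}$ bounds each of the three surviving terms by $C''e^{-n\eps_n}$, yielding the claimed $3C''e^{-n\eps_n}$ bound.
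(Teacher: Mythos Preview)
The paper does not prove this theorem; it is quoted verbatim from the companion paper \cite{reiss2018a} (Theorem 2.3 and Corollary 2.6 there) and used as a black box, so there is no in-paper proof to compare against.

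Your argument is correct and is in fact the natural adaptation of the Ghosal--Ghosh--van der Vaart strategy to the PPP support-boundary model. Two features are worth highlighting because they are specific to this irregular setting and are exactly what makes the companion paper's result work: (a) the denominator bound is \emph{deterministic} under $P_{f_0}$ rather than an event of high probability, since $f\le f_0$ forces $\mathbf{1}(\forall i:f(X_i)\le Y_i)\equiv 1$; and (b) the one-sided brackets $\ell_j\le f$ give tests with \emph{exactly zero} Type II error under $P_{f\vee f_0}$, because every observation lies above $f\vee f_0\ge f\ge \ell_{j(f)}$. Both of these are cleaner than what one gets in regular models and are the reason no chaining or peeling over shells $\{j\eps_n\le\|f-f_0\|_1<(j+1)\eps_n\}$ is needed here. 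The pointwise inequality $(f-f_0)_+\le(\ell_j-f_0)_++(f-\ell_j)$ that you use to pass the lower bound on $\int(f-f_0)_+$ to $\int(\ell_j-f_0)_+$ is the right device to ensure the retained brackets cover $B_2\cap\Theta_n$. Your final choice $M>\max\{2(A+C+1),\,2(C'+2)\}$ correctly balances the three surviving pieces to land on $3C''e^{-n\eps_n}$.
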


\subsection{Proof of Theorem \ref{thm.contr_CPP}}

It is convenient to use the notation $\P(X\in A) := \Pi(f \in A)$ to prove generic properties of the compound Poisson process $X$ defined in \eqref{EqCPP}.

\begin{lem}
\label{lem.SB_for_CPP}
Consider the CPP prior \eqref{EqCPP} with a positive and continuous Lebesgue density $h$ on $\mathbb{R}.$
\begin{itemize}
\item[(i)] If $g$ is positive and continuous on $\mathbb{R}^+,$ there exists a positive constant $c = c(R),$ such that
\begin{align*}
	\inf_{f\in \mM(R)} \P\big( \| X-f \|_1 \leq 2\eps, X\leq f\big) \geq e^{-2\lambda} (1\wedge \lambda)^{4R/\eps}\eps^{c \eps^{-1}}, \quad \text{for all} \ \ 0<\eps\leq R \wedge \tfrac 12.
\end{align*}
\item[(ii)] If $g$ is positive and continuous on $\mathbb{R},$ then for $0< \beta \leq 1$ there exists a positive constant $c = c(\beta, R)$ such that
\begin{align*}
	\inf_{f\in \mC^\beta(R)}\P\big( \| X-f \|_\infty \leq \eps \big) \geq e^{-2\lambda} (1\wedge \lambda)^{(4R/\eps)^{1/\beta}} \eps^{c\eps^{-1/\beta}} \quad \text{for all} \  \ 0<\eps\leq \tfrac {R\wedge 1}2;
\end{align*}
\item[(iii)] If $g$ is positive and continuous on $\mathbb{R},$ then, there exists a positive constant $c=c(R)$ such that
\begin{align*}
	\inf_{f\in \PC(K_n,R)} \P\big( \| X-f \|_1 \leq \eps, X\leq f\big) \geq e^{-\lambda} (1\wedge \lambda)^{K_n} \Big( \frac{\eps}{K_n} \Big)^{cK_n}, \quad \text{for all} \ \ 0 < \eps\leq \tfrac 12.
\end{align*}
\end{itemize}
\end{lem}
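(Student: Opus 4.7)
The three parts share a common structure: for each target $f$ I construct an explicit step-function approximant $f_\eps$ and an associated small-ball event $B \subset D[0,1]$ such that every path $X \in B$ automatically satisfies the required one-sided approximation, and then lower-bound $\P(X \in B)$ directly from the closed-form density \eqref{eq.CPP_explicit}. The mass-allocation factorises cleanly into three contributions (Poisson probability on the number of jumps, uniform order-statistics density of the jump locations, and marginal densities of the jump sizes), each of which is controlled separately.

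\textbf{Step-function approximation.} In (i), a monotone $f:[0,1]\to[-R,R]$ is minorised by a piecewise constant $f_\eps \le f$ with $N=\lceil 4R/\eps\rceil$ pieces obtained by slicing the range $[-R,R]$ into $N$ equal bands and taking $f_\eps$ equal to the lower band value on the corresponding preimage, yielding $\|f-f_\eps\|_1 \le \eps$. In (ii), the regular-grid approximation $f_\eps(x)=f(j/N)-RN^{-\beta}$ on the $j$-th of $N=\lceil(4R/\eps)^{1/\beta}\rceil$ equal subintervals satisfies $f_\eps \le f$ and $\|f-f_\eps\|_\infty\le\eps/2$. In (iii), I first merge every piece of $f$ whose length is below $\eps/(8RK_n)$ into a neighbour (the total $L^1$ mass of such pieces is at most $\eps/8$), assigning to each merged block the minimum of the two adjacent long-piece values; the resulting $f_\eps \le f$ has at most $K_n$ pieces, satisfies $\|f_\eps-f\|_1\le \eps/4$, and has all inter-jump gaps $\ge \eps/(8RK_n)$.

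\textbf{Small-ball event and probability bound.} Write $0=s_0<s_1<\cdots<s_{N-1}<s_N=1$ for the jump locations of $f_\eps$ and $u_0,u_1,\ldots,u_{N-1}$ for its increments, so that $f_\eps$ takes value $u_0+\cdots+u_j$ on the $j$-th piece. Choose window widths $w,\delta>0$ of order $\eps/N$ in (i) and (iii), and of orders $1/N$ and $\eps/N$ respectively in (ii), and define
\[
	B = \Bigl\{X : K=N-1,\ t_j\in[s_j,s_j+w]\ (1\le j<N),\ a_0\in[u_0-\delta,u_0],\ a_j\in[u_j-\delta,u_j]\Bigr\},
\]
with $[s_j,s_j+w]$ replaced by $[s_j-w,s_j]$ in (iii) at downward jumps of $f_\eps$. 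The sign conventions ensure $X\le f_\eps$ pointwise on $B$, and the choices of $w,\delta$ give $\|X-f_\eps\|_1 \le \eps/2$ (respectively $\|X-f_\eps\|_\infty\le\eps/2$ in (ii)), which combined with Step 1 produces the required approximation tolerance. Using the uniform order-statistics density $(N-1)!$ on the simplex and \eqref{eq.CPP_explicit},
\[
	\P(X\in B) \ \ge\ e^{-\lambda}\frac{\lambda^{N-1}}{(N-1)!}\cdot (N-1)!\,w^{N-1}\cdot \int_{u_0-\delta}^{u_0} h(a_0)\,da_0 \prod_{j=1}^{N-1}\int_{u_j-\delta}^{u_j} g(a_j)\,da_j.
\]
Positivity and continuity of $h$ and $g$ on the compact range spanned by the target heights $\{u_j\}$ supply a uniform positive constant $c_g=c_g(R)$ (or $c_g(\beta,R)$), so the height integrals are bounded below by $(c_g\delta)^N$. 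Using $\lambda^{N-1}\ge (1\wedge\lambda)^N/\lambda$ and substituting the explicit values of $N,w,\delta$ from Step 2 gives the three stated lower bounds, the exponent constant $c$ absorbing all constants depending on $R$, $\beta$ and $c_g$.

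\textbf{Main obstacle.} The genuinely delicate point is \emph{uniformity} in $f$. In (iii), without the coarsening of Step 1 the location-window width $w$ would have to shrink with the minimum inter-jump gap of $f$, destroying the uniform lower bound; the coarsening guarantees a gap of order $\eps/K_n$, which is exactly what the bound $(\eps/K_n)^{cK_n}$ reflects. In (i), when $f$ has long flat regions certain level-set preimages degenerate to a single point, so the $s_j$ must be taken at the right endpoint of each preimage to keep consecutive windows $[s_j,s_j+w]$ disjoint while preserving $f_\eps\le f$. Finally, the uniformity of $c_g$ in (i) relies on the targets $u_j\asymp\eps/R$ not approaching a region where $g$ decays too fast: any polynomial decay at $0$ is absorbed into the constant $c(R)$, which is why the bound $\eps^{c\eps^{-1}}$ rather than a stronger one is claimed.
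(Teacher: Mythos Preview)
Your strategy matches the paper's, but there is a real gap in part~(i). By slicing the \emph{range} $[-R,R]$ you obtain jump locations $s_j$ whose spacing is not controlled: if $f$ has a large discontinuity (or is very steep) several band boundaries are crossed at essentially the same $x$-value, so the windows $[s_j,s_j+w]$ overlap or coincide. Your location-probability bound $(N-1)!\,w^{N-1}$ requires the box $\prod_j[s_j,s_j+w]$ to sit inside the ordered simplex $\{t_1<\cdots<t_{N-1}\}$, which fails in this case; when all $s_j$ coincide the correct probability is only $w^{N-1}$, smaller by the factor $(N-1)!$. Your ``Main obstacle'' paragraph also mis-diagnoses the issue: flat regions are harmless (they simply generate no jump), it is steep regions that cause the collapse. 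The paper avoids this entirely by slicing the \emph{domain}: with $\delta\asymp\eps/R$ and $f_-(x)=f(j\delta)$ on $[j\delta,(j+1)\delta)$, monotonicity gives $f_-\le f$ and $\|f-f_-\|_1\le\eps$, while the target jump times $j\delta$ are automatically $\delta$-separated so the factorisation is honest.

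In (iii) your coarsening rule does not secure $f_\eps\le f$: assigning a short piece the minimum of the two \emph{adjacent long-piece values} can raise its value above the original (take a short piece with value $-R$ between two long pieces with value $R$). The paper's device---snap all jump points of $f$ to a grid of mesh $\delta\asymp\eps/(RK_n)$ and set $f_-=\sum_j(\min_{I_j}f)\mathbf 1_{I_j}$---is automatically $\le f$, has $\|f-f_-\|_1\le\eps/2$, at most $K_n$ jumps (all on the grid, hence $\delta$-separated), and then the signed-window trick you describe for up/down jumps goes through. Part~(ii) is essentially the paper's argument and is fine.
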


{\it Proof of $(i):$} For fixed $f\in \mM(R),$ we construct a deterministic step function $f_-$ with $f_-\leq f$ and $\|f_- -f\|_1 \leq \eps.$ It is then enough to show
\begin{align}
	\P\big( \| X-f_- \|_1 \leq \eps, X\leq f_-\big) \geq e^{-2\lambda} (1\wedge \lambda)^{4R/\eps} \eps^{c/\eps}, \quad \text{for all} \ 0< \eps \leq R \wedge 1/2.
	\label{eq.SB_MR_to_show}
\end{align}
If $\eps \leq R,$ there exists $\delta$ such that $\eps/(4R)\leq \delta \leq \eps/(2R)$ and $N:=1/\delta$ is a positive integer. Let $r(j, \delta):= f(j\delta) - f((j-1)\delta)$ for $j\geq 1.$ Define the step functions
\begin{align*}
	f_- := \sum_{j=0}^{N-1} f(j\delta)\mathbf{1}_{[j\delta,(j+1)\delta)}
	= f(0) + \sum_{j=1}^{N-1} r(j, \delta) \mathbf{1}_{[j\delta,1]}
\end{align*}
and $f_+ := \sum_{j=1}^{N} f(j\delta)\mathbf{1}_{[(j-1)\delta,j\delta)}.$ Since $f$ is monotone increasing, $f_- \leq f\leq f_+$ and $\|f - f_-\|_1 \leq \|f_+-f_- \|_1 = \delta (f(1)-f(0))\leq \eps.$ By the assumptions on $g$ and $h,$ $c_0 := \inf_{-R-1 \leq x \leq R} h(x) \wedge \inf_{0 \leq y \leq R+1} g(y)$ is positive. Due to \eqref{eq.CPP_explicit} and $e^{-\lambda}/\lambda \geq e^{-2\lambda},$
\begin{align*}
	&\P\big( \| X-f_- \|_1 \leq \eps, X\leq f_-\big)\\
	&\geq \Pi\Big( k= N-1, f(0)-\eps \leq a_0 \leq f(0)-\frac{\eps}2,  r(j,\delta) \leq a_j \leq r(j,\delta)+  \frac{\eps\delta}{2}, t_j \in \Big[ j\delta, j\delta+ \frac{\eps\delta}{2}\Big]\Big) \\
	&\geq e^{-\lambda} \lambda^{N-1} \Big( c_0\frac{\eps\delta}{2}\Big)^N \Big(\frac{\eps\delta}{2}\Big)^{N-1} \\
	&\geq e^{-2\lambda} (1\wedge \lambda)^{4R/ \eps} \Big( \sqrt{c_0} \frac{\eps^2}{4R}\Big)^{2/\delta},
\end{align*}
where the probability $\Pi$ is taken over all $j=1, \ldots, N-1.$ This yields \eqref{eq.SB_MR_to_show} and proves $(i).$

{\it Proof of $(ii):$} The argument is very similar to $(i).$ Let now $\delta$ be such that $(\eps/(4R))^{1/\beta} \leq \tfrac 12 (\eps/(2R))^{1/\beta} \leq \delta  \leq (\eps/(2R))^{1/\beta}$ and $N:=1/\delta$ is a positive integer. With $r(0,\delta):=f(0)$ and  $r(j, \delta):= f(j\delta) - f((j-1)\delta)$ for $j\geq 1,$ define
\begin{align*}
	f_- := \sum_{j=0}^{N-1} f(j\delta)\mathbf{1}_{[j\delta,(j+1)\delta)}
	= \sum_{j=0}^{N-1} r(j, \delta) \mathbf{1}_{[j\delta,1]}.
\end{align*}
Now, $\delta \leq (\eps/(2R))^{1/\beta}$ and $f\in \mC^\beta(R)$ give $\|f-f_-\|_\infty \leq \eps/2.$ It is thus enough to prove  $\P( \| X-f_- \|_\infty \leq \eps/2)\geq e^{-2\lambda} (1\wedge \lambda)^{(4R/\eps)^{1/\beta}} \eps^{c\eps^{-1/\beta}}.$ By assumption, $g$ and $h$ are continuous and positive and therefore $c_0 := \inf_{-2R-1 \leq x \leq 2R} g(x) \wedge h(x)$ is positive. Due to \eqref{eq.CPP_explicit}, $|r(j, \delta)|\leq 2R$ and $e^{-\lambda}/\lambda \geq e^{-2\lambda},$
\begin{align*}
	&\P\big( \| X-f_- \|_\infty \leq \eps/2\big)\\
	&\geq \Pi\Big( k= N-1, r(j,\delta) - \frac{\eps\delta}{4} \leq a_j \leq r(j,\delta), t_j \in \Big[ j\delta, j\delta+ \frac{\eps\delta}{4}\Big], j=0, \ldots, N-1\Big) \\
	&\geq e^{-\lambda} \lambda^{N-1}  \Big( c_0\frac{\eps\delta}{4}\Big)^N \Big(\frac{\eps\delta}{4}\Big)^{N-1} \\
	&\geq e^{-2\lambda} (1\wedge \lambda)^{2 (2R/ \eps)^{1/\beta}} \Big( \frac{\sqrt{c_0}}{8} (2R)^{-1/\beta} \eps^{\frac{\beta+1}{\beta}}\Big)^{2/\delta}.
\end{align*}
Choosing $c = c(\beta, R)$ large enough, the result follows.

{\it Proof of $(iii):$} Let $f = \sum_{j=1}^{K_n} a_j \mathbf{1}_{[t_{j-1},t_j)}$ be an arbitrary function in $\PC(K_n,R).$ Without loss of generality, we can assume that $R\geq 2.$ Choose $\delta$ such that $\eps/(4RK_n) \leq \delta \leq \eps/(2RK_n)$ and $N:=1/\delta$ is a positive integer. Define $f_-= \sum_{j=1}^N \min_{x\in [(j-1)\delta, j\delta]} f(x) \mathbf{1}_{[(j-1)\delta, j\delta)}.$ Obviously $f_-\leq f$ and $\|f-f_-\|_1 \leq K_n R \delta \leq \eps/2.$ We can then write $f_-= \sum_{j=0}^{K_n^*} b_j^* \mathbf{1}_{[t_j^*, 1)}$ with $K_n^* \leq K_n,$ $|b_j^*|\leq 2R,$ $0= t_0^* < t_1^* < \ldots < t_{K_n^*}^* <1$ and $t_j^*$ a multiple of $\delta$ (only incorporating points $j \delta$ where $f_-$ actually jumps). Let $I_j$ denote the interval with endpoints $t_j^*$ and $t_j^*+ \delta \sign(f_-(t_j^*) -f_-(t_{j-1}^*))/2.$ Let $c_0:= \inf_{-2R-1 \leq x\leq 2R} g(x) \wedge h(x).$ Arguing as in \textit{(i)}, $c_0>0$ and
\begin{align*}
	\P\big( \| X-f_- \|_1 \leq \eps,& X\leq f_-\big)\\
	&\geq \Pi\Big( k= K_n^*, b_j^* - \delta/2 \leq b_j \leq b_j^*, t_\ell \in I_\ell, j=0, \ldots, K_n^*, \ell=1, \ldots, K_n^*\Big)\\
	&\geq e^{-\lambda} \lambda^{K_n^*} \Big(c_0 \frac{\delta}{2}\Big)^{K_n^*+1} \Big( \frac{\delta}{2}\Big)^{K_n^*}\\
	&\geq e^{-\lambda} (1\wedge \lambda)^{K_n} \Big( \frac{\eps}{K_n} \Big)^{cK_n}
\end{align*}
for some $c=c(R).$ \hfill $\qed$\\

\begin{lem}
\label{lem.CPP_entropy}
Consider the randomly initialized CPP \eqref{EqCPP} and assume that there are constants $\gamma,L>0$ such that $\P ( |\Delta_i | \geq s ) \leq L^{-1} e^{-L s^{\gamma}}$ for all $s\geq 0.$ Then for any $M>0$, any $\eps>0,$ and any  $K>1$ there exists a Borel set $\Theta$ and constants $C',C''$ that only depend on $M, L, \gamma,$ such that
\begin{align*}
	\P(X \notin \Theta) \leq C'' K^{-MK} \quad \text{and}  \ \ \ N_{[}\big( \eps, \Theta, \|\cdot \|_1 \big) \leq C'' \Big(\frac{K}{\eps}\Big)^{C'K}.
\end{align*}
\end{lem}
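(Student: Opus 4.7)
The plan is to take $\Theta$ to be the set of c\`adl\`ag functions on $[0,1]$ with at most $AK$ jumps and supremum norm at most $B$, where $A$ is chosen large depending on $M$ and $\lambda$ and $B:=C_1K^{1+1/\gamma}(\log K)^{1/\gamma}$ with $C_1$ large depending on $M,L,\gamma$. The tail estimate $\P(X\notin\Theta)$ then splits into two events. For $\{N_1>AK\}$ the Chernoff bound for Poisson gives $\P(N_1>AK)\leq e^{-\lambda}(e\lambda/(AK))^{AK}\leq \tfrac12 K^{-MK}$ once $A$ is large enough. On $\{N_1\leq AK\}$, using $\sup_t|X_t|\leq\sum_{i=0}^{N_1}|\Delta_i|$ together with a union bound and the tail hypothesis $\P(|\Delta_i|\geq s)\leq L^{-1}e^{-Ls^\gamma}$ yields
\[
\P\bigl(\sup_t|X_t|>B\,\bigm|\,N_1\leq AK\bigr)\leq (AK+1)L^{-1}e^{-L(B/(AK+1))^\gamma},
\]
which is also at most $\tfrac12 K^{-MK}$ for the chosen $B$. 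Summing gives $\P(X\notin\Theta)\leq C''K^{-MK}$.

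For the bracketing bound I would assume $\eps<2B$ (otherwise a single constant bracket covers $\Theta$). Lay down a uniform grid on $[0,1]$ of mesh $\delta_t:=\eps/(4ABK)$ and a value grid $V:=\{j\delta_y:j\in\Z\}\cap[-B,B]$ with $\delta_y:=\eps/2$. For each $f\in\Theta$ define $\tilde f$ to be piecewise constant on the $\delta_t$-cells with $\tilde f|_{I_j}:=\lfloor\inf_{I_j}f/\delta_y\rfloor\delta_y$. Then $\tilde f\leq f$ everywhere. Since $f$ is constant on any grid cell not containing a jump, such cells contribute only the rounding error $\delta_y\delta_t$ to $\int(f-\tilde f)$, while the at most $AK$ cells meeting a jump contribute at most $2B\delta_t$ each, so that
\[
\int_0^1(f-\tilde f)\,dt\leq 2B\cdot AK\cdot\delta_t+\delta_y\leq\eps,
\]
and $\tilde f$ is a valid one-sided $\eps$-bracket for $f$.

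It remains to count the possible $\tilde f$. Because $f$ has at most $AK$ jumps, $\tilde f$ has at most $AK$ value changes among the $\lfloor1/\delta_t\rfloor$ grid boundaries and takes values in $V$ with $|V|\leq 4B/\eps+1$. The number of $\tilde f$ is therefore bounded by
\[
\sum_{k=0}^{AK}\binom{\lfloor 1/\delta_t\rfloor}{k}|V|^{k+1}\leq(AK+1)\bigl(\tfrac{8ABK}{\eps}\bigr)^{AK}\bigl(\tfrac{5B}{\eps}\bigr)^{AK+1},
\]
and substituting the polynomial-in-$K$ value of $B$, using $(\log K)^{2/\gamma}\leq K^{2/\gamma}$ for $K$ large, gives $C''(K/\eps)^{C'K}$ with $C'=A(3+4/\gamma)$, say. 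The main delicate point is to keep the exponent linear in $K$ rather than in $1/\eps$: a naive discretization that specified $\tilde f$ on each of the $\sim K/\eps$ grid cells independently would give an exponent of order $K/\eps$, hopelessly too large. The crux is that $\tilde f$ inherits from $f$ the constraint of having at most $AK$ value changes, which reduces the count to the required form and forces $C'$ to depend on $\gamma$ through the growth of $B$ in $K$.
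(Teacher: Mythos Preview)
Your proof is correct and follows essentially the same strategy as the paper's: restrict to a sieve of piecewise constant functions with $O(K)$ jumps and controlled magnitude, then count one-sided $L^1$-brackets via a grid discretization, using the jump bound to keep the exponent linear in $K$ rather than in $1/\eps$. The only cosmetic difference is that the paper bounds the \emph{individual} jump sizes by $t\sim ((MK)\log K)^{1/\gamma}$ (so that $\|f\|_\infty\le (MK+1)t$) whereas you bound the sup norm directly by $B\sim K\cdot t$; this makes your constants slightly larger but changes nothing structurally.
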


\begin{proof}
%The assertion is trivial for $MK_n\leq 1.$ It is thus enough to consider $MK_n \geq 1.$
If $N \sim \Pois(\lambda)$, $K\ge 1$ and $M \geq \max(2\lambda e, 1),$ then, using Stirling's formula,
\begin{align}
	\P\big( N \geq  M K \big)
	&= e^{-\lambda} \sum_{k= \lceil MK\rceil }^\infty \frac{\lambda^k}{k!}
	\leq \sum_{k= \lceil MK\rceil }^\infty \Big(\frac{\lambda e}{k}\Big)^k
	\leq \sum_{k= \lceil MK\rceil }^\infty \Big(\frac 1{2K}\Big)^k \leq K^{-MK}.
	\label{eq.contr_CPP0}	
\end{align}
With $t:= ((MK+1)L^{-1}\log K)^{1/\gamma}$ and the assumption on the tail behavior of the jump heights, we obtain
\begin{align}
	\P\Big(\{N \ge MK\}\cup\Big\{\max_{i=0, \ldots,N} |\Delta_i| \geq t\Big\}\Big)
	&\leq \P(N \ge MK) + MK \P\big(|\Delta_1| \geq t\big) \notag \\
&  \leq (1+M/L) K^{-MK}.
	\label{eq.contr_CPP1}
\end{align}
Define $\Theta$ as the space of piecewise constant functions $f$ with $|f(0)|\leq t$, maximal jump size bounded by $t$ and less than $MK$ jumps. By the computations above, $\P(X\notin \Theta) \leq (1+M/L) K^{-MK}.$

Next, we compute the bracketing number of $\Theta$ with respect to the $L^1$-norm. Let $r_{\eps}$ be such that $\eps /(4M K t) \leq  r_{\eps} \leq \eps /(2M K t)$ and $1/r_{\eps}$ is an integer. Define $x_j := j r_{\eps}$ for $0\leq j < 1/r_{\eps}.$ In $y$-direction, consider the grid points $y_\ell := \ell \eps/2,$ $\ell = - S_{\eps}, \ldots, S_{\eps}$ with $S_{\eps} = \lfloor 2MK t/\eps \rfloor.$ Let $\Theta^0\subset \Theta$ be the space of piecewise constant functions in $\Theta$ with all jumps locations on the grid points $x_j,$ and function values in the discrete set $\{y_\ell : \ell = - S_{\eps}, \ldots, S_{\eps}\}.$  We prove that for any function $f \in \Theta,$ there exists a function $h\in \Theta^0$  such that $h \leq f$ and $\|h-f\|_1 \leq \eps.$ Consider
\begin{align*}
	h = \sum_{j=1}^{ 1/r_{\eps}} \max\Big \{y_\ell : y_\ell \leq \min_{x\in [x_{j-1}, x_j]} f(x)\Big \} \mathbf{1}_{[x_{j-1}, x_j)}.
\end{align*}
Obviously, $h\in \Theta^0$ and $h \leq f.$ Let us show $\|h-f\|_1 \leq \eps.$ Observe that $\|h-\widetilde h\|_\infty \leq \eps/2$ with $\widetilde h = \sum_{j=1}^{ 1/r_{\eps}}\min_{x\in [x_{j-1}, x_j)} f(x)\mathbf{1}_{[x_{j-1}, x_j)}.$ If $f$ jumps $k$ times on the interval $[x_{j-1}, x_j)$ then $\sup_{x\in [x_{j-1}, x_j)}|f(x) - \widetilde h(x) | \leq k t.$ Since the total number of jumps is bounded by $MK,$ $\|f-\widetilde h\|_1 \leq M K t r_{\eps} = \eps/2$ implying $\|f-h\|_1 \leq \eps.$ There are at most $\binom{1/r_{\eps}}{\ell}(2S_{\eps}+1)^{\ell+1}$ functions in $\Theta^0$ with $\ell$ jumps. The cardinality of $\Theta^0$ is therefore bounded by
\begin{align*}
	\sum_{\ell =0}^{MK} \binom{1/r_{\eps}}{\ell} (2S_{\eps}+1)^{\ell+1}
	&\leq \sum_{\ell =0}^{MK} r_{\eps}^{-\ell} (2S_{\eps}+1)^{\ell+1}
	\leq 2 r_{\eps}^{-MK}(2S_{\eps}+1)^{MK+1}\\
	&\leq C'' \Big( \frac{K}{\eps}\Big)^{C'K}
\end{align*}
for suitable constants $C'$ and $C''.$
\end{proof}

\begin{proof}[Proof of Theorem \ref{thm.contr_CPP}]
For all three cases we apply Lemma \ref{lem.SB_for_CPP} and Lemma \ref{lem.CPP_entropy} to verify the conditions of Theorem \ref{thm.main_ub}. For \textit{(i)} we choose $\eps = (\log n/n)^{\beta/(\beta+1)}$ and $K = (n/\log n)^{1/(\beta+1)}$ in Lemma \ref{lem.CPP_entropy}. \textit{(ii)} can be proved in the same way with $\beta=1.$ For \textit{(iii)}, observe that by Lemma \ref{lem.CPP_entropy} there exists $\Theta_n$ such that $N_{[}\big( \eps_n, \Theta_n, \|\cdot \|_1 \big) \leq C''e^{Cn \eps_n}$ and $\Pi(\Theta_n^c) \leq e^{-cM n \eps_n}$ if
\begin{align*}
	K_n \log \Big( \frac{K_n}{\eps_n} \Big)  \leq n \eps_n \quad \text{and} \quad K_n \log K_n \geq cn \eps_n.
\end{align*}
If $\eps_n  \gtrsim n^{\rho -1}$ for some $\rho>0$ and $n$ is sufficiently large, then $\log(n\eps_n)/\log n$ remains positive and $K_n = n\eps_n/\log n$ satisfies both inequalities for some $c>0$.
\end{proof}

\subsection{Proof of Theorem \ref{thm.contr_subord}}

\begin{prop}\label{PropSmallBallSubord}
Consider the randomly initialized subordinator prior. If $\nu$ satisfies $\nu(x) \leq C x^{-3/2}$ for all $x,$ then, there exists a positive constant $c>0$ such that
\[ \inf_{f_0\in \mM(R)} P\Big(\|X-f_0\|_1\le 3\eps,\,X\le f_0\Big)\ge \eps^{c\eps^{-1}} \quad \text{ for all \ } \eps\in(0,1/2). \]
\end{prop}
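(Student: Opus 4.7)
The plan is to adapt the proof of Lemma~\ref{lem.SB_for_CPP}(i) to the infinite-activity setting. First, choose $\delta\asymp\eps/R$ with $N=1/\delta\in\N$ and set $f_-(t)=f_0(j\delta)$ on $[j\delta,(j+1)\delta)$; monotonicity of $f_0$ gives $f_-\le f_0$ and $\|f_0-f_-\|_1\le\eps$, reducing the task to showing $\P(X\le f_-,\,\|X-f_-\|_1\le 2\eps)\ge\eps^{c\eps^{-1}}$. The key obstacle compared with a compound Poisson process is that $Y$ has infinitely many jumps on every interval, so to recover a tractable CPP structure I pick a threshold $s\asymp\eps^2$ and decompose $Y=Y^{s+}+Y^{s-}$ into the independent CPP of jumps of size exceeding $s$ and the pure-jump subordinator of small jumps. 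Under $\nu(x)\le Cx^{-3/2}$ one has $\lambda^+:=\nu((s,\infty))\le 2Cs^{-1/2}\lesssim 1/\eps$ and $\E[Y_1^{s-}]\le 2Cs^{1/2}\lesssim\eps$, so the big-jump piece plays the role of the CPP in Lemma~\ref{lem.SB_for_CPP} while the small-jump piece is controlled by Markov.

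Writing $r_j=f_0(j\delta)-f_0((j-1)\delta)$, I call $j$ \emph{big} if $r_j\ge 4s$, enumerating big indices as $j_1^+<\dots<j_K^+$ (so $K\le N\asymp 1/\eps$); the remaining (small) $r_j$ satisfy $\sum r_j\le 4sN\lesssim\eps$, within the allowed slack. The event $E$ is the intersection of three independent events: (a) $Y_0\in[f_0(0)-\tfrac{2\eps}{3},\,f_0(0)-\tfrac{\eps}{2}]$; (b) $Y_1^{s-}\le\eps/10$; (c) $Y^{s+}$ has exactly $K$ jumps on $[0,1]$, at ordered times $T_k$ in disjoint windows $[j_k^+\delta,\,j_k^+\delta+\tau]$ with $\tau\asymp\eps/R$, of sizes $\Delta_k\in[r_{j_k^+}-2s,\,r_{j_k^+}]$. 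A case analysis on each grid interval, distinguishing whether it contains a big jump and, within it, whether one is before or after the target window, shows that on $E$ the process $\widetilde X:=Y_0+Y^{s+}$ lies below $f_-$ with $\int(f_--\widetilde X)\le 2\eps$; the only delicate contribution comes from the pre-jump windows $[j_k^+\delta,T_k)$, where $f_-$ has already jumped by $r_{j_k^+}$ but $\widetilde X$ has not, contributing in total at most $\tau\sum_k r_{j_k^+}\le 2R\tau$, which is exactly what pins down $\tau\asymp\eps/R$. Adding the small-jump piece $Y^{s-}\le\eps/10$ preserves $X\le f_-\le f_0$ and $\|X-f_-\|_1\le 2\eps$, so $\|X-f_0\|_1\le 3\eps$.

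By independence $\P(E)=\P(a)\P(b)\P(c)$: event (a) has probability $\gtrsim\eps$ from continuity and positivity of $h$, event (b) has probability $\ge 1/2$ by Markov, and for (c) the intensity measure of the Poisson point process of jumps of $Y^{s+}$ on $[0,1]\times(s,\infty)$ is $dt\,\nu(dx)$, so by disjointness of the target windows
\[
\P(c)=e^{-\lambda^+}\prod_{k=1}^{K}\tau\,\nu\big([r_{j_k^+}-2s,\,r_{j_k^+}]\big)\ge e^{-\lambda^+}\big(\tau\cdot 2s\cdot\nu_*(s)\big)^{K},
\]
where $\nu_*(s)=\inf_{x\in[2s,2R]}\nu(x)>0$ by continuity and positivity of $\nu$. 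Plugging in $K\lesssim 1/\eps$, $\tau\asymp\eps$, $s\asymp\eps^2$, and $e^{-\lambda^+}\ge e^{-O(1/\eps)}$ yields $\P(E)\ge\eps^{c\eps^{-1}}$ for a constant $c$ depending on $\nu,R,H$. The main obstacle is precisely this three-way balance in the choice of $s$: it must be small enough that $\E[Y_1^{s-}]\ll\eps$ and that the small $r_j$'s fit within the $\eps$-slack, yet large enough that $e^{-\lambda^+}=e^{-O(s^{-1/2})}$ does not kill the target rate. The assumption $\nu(x)\le Cx^{-3/2}$ is exactly what makes $s\asymp\eps^2$ simultaneously achieve all three.
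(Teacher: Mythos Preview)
Your argument is correct and takes a genuinely different route from the paper's. The paper decomposes $X=X_0+X^<+X^>$ with a \emph{fixed} splitting of $\nu$: the large-jump part $X^>$ is a CPP of bounded intensity, handled by invoking Lemma~\ref{lem.SB_for_CPP}(i) as a black box, while the small-jump part $X^<$ is coupled below a $1/2$-stable subordinator and controlled via the nontrivial small-ball asymptotics $\log P(\|X^{(1/2)}\|_\infty\le\eps)\asymp-\eps^{-1}$ from Simon (2004). You instead use an $\eps$-\emph{dependent} threshold $s\asymp\eps^2$: the small-jump piece $Y^{s-}$ is dispatched by the elementary Markov bound $\E[Y_1^{s-}]\le 2Cs^{1/2}\lesssim\eps$, and the big-jump CPP $Y^{s+}$ (now of intensity $\asymp 1/\eps$) is treated by an explicit Poisson computation matching its $K\le N$ jumps to the large increments of $f_-$. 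Your approach is more self-contained---no external small-ball input is needed---at the cost of the three-way balance in the choice of $s$ that you correctly identify; the paper's approach is more modular and makes the critical role of the exponent $3/2$ transparent through the $1/2$-stable coupling. One minor point: your bound carries the $\eps$-dependent factor $\nu_*(s)=\inf_{[2s,2R]}\nu$, which is absorbed into $\eps^{c/\eps}$ only if $\nu$ does not vanish super-polynomially at $0^+$; the paper's argument makes the analogous implicit use of a positive $\inf_{(0,R+1]}g$ via the constant $c_0$ in Lemma~\ref{lem.SB_for_CPP}(i), so this is not a discrepancy between the two approaches but a point both leave to the standing hypotheses on $\nu$.
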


\begin{proof}
We shall use the following small ball probability of an $\alpha$-stable subordinator around zero:
\[ \lim_{\eps\to 0} \eps^{\alpha/(1-\alpha)}\log(P(\|X\|_\infty\le\eps))\in (-\infty,0),
\]
which follows from Proposition 1 in \cite{Simon2004} noting that for non-decreasing functions starting in zero the 1-variation equals the supremum norm. This result shows that the $\alpha$-stable subordinators satisfy the small ball probability in $L^\infty$ with rate $e^{-c\eps^{-1}}$ if and only if $\alpha\le 1/2$.

Introducing $\nu_>(x)=(\nu(x) \wedge \nu(1)){\bf 1}(0\leq x\leq 1)+\nu(x){\bf 1}(x>1)$ and $\nu_< = \nu-\nu_>,$ we can decompose $X$ as $X_0+X^<+X^>$ with two independent L\'evy processes $X^<,X^>$ having L\'evy densities $\nu_<,\nu_>$, respectively. The small jump process $X^<$ is a subordinator whose L\'evy density is smaller than $\nu_{1/2}(x)= Cx^{-3/2}{\bf 1}(x>0)$, the L\'evy density of a stable subordinator $X^{(1/2)}$ of index $\alpha=1/2$. We can thus couple $X^<$ and $X^{(1/2)}$ such that $X^<_t\le X^{(1/2)}_t$ holds for all $t\ge 0$ a.s. By the above result, this gives
\[ \log\big(P(\|X^<\|_\infty\le\eps)\big)\gtrsim -\eps^{-1}.\]
Because of $\lambda :=\int \nu_> \leq \nu(1) + \int_1^\infty \nu < \infty,$ the process $X^>$ is a CPP with jump distribution $G=\nu_>/\lambda.$ If $f_0 \in \mM(R)$ and $\eps \leq R,$ then $f_0-\eps \in \mM(2R)$ and by Lemma \ref{lem.SB_for_CPP} (i),
\[  \inf_{f_0 \in \mM(R)}P\big(\| X_0+X^>-(f_0-\eps)\|_1\le 2\eps,\,X\le f_0-\eps\big)\Big) \ge e^{-2\lambda} (1\wedge \lambda)^{8R/\eps}\eps^{c \eps^{-1}}.\]
By independence, we conclude for $X= X_0+X^<+X^>$:
\begin{align*}
&\log\Big(P\big(\|X-f_0\|_1\le 3\eps,\,X\le f_0\big)\Big)\\
&\ge  \log\Big(P\big(\| X_0+X^>-(f_0-\eps)\|_1\le 2\eps,\, X_0+X^>\le f_0-\eps,\,X^<\le\eps\big)\Big)\\
&\gtrsim -\eps^{-1}\log(\eps^{-1})-\eps^{-1}.
\end{align*}
This gives the result.
\end{proof}

\begin{lem}
\label{lem.CPP_entropy2}
Consider the randomly initialized subordinator prior. Assume that there are constants $\gamma,L>0$ such that $\nu(x) \leq L x^{-3/2}$ for all $x$ and $\int_s^\infty \nu(x) +h(x) + h(-x) \, dx \leq L^{-1} e^{-L s^{\gamma}}$ for all $s\geq 1.$ Then for any $M,A >0$ there exist Borel sets $(\Theta_n)_n$ and constants $C',C'',$ such that for all sufficiently large $n,$
\begin{align*}
	\P(X \notin \Theta_n) \leq C'' e^{-M\sqrt{n\log n}} \quad \text{and}  \ \ \ N_{[}\Big( A(\sqrt{\log n /n}), \Theta_n, \|\cdot \|_1 \Big) \leq C'' e^{C'\sqrt{n\log n}}.
\end{align*}
\end{lem}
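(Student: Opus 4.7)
The plan is to truncate the L\'evy measure at scale $\delta_n=c_1\log n/n$ and decompose the prior path $X=X_0+X^<+X^>$ into independent components driven by $\nu_{<}=\nu\mathbf{1}_{(0,\delta_n]}$ and $\nu_{>}=\nu\mathbf{1}_{(\delta_n,\infty)}$. Under $\nu(x)\le Lx^{-3/2}$ the large-jump process $X^>$ is a compound Poisson process with expected jump count $\lambda_n=\int_{\delta_n}^\infty\nu\lesssim\delta_n^{-1/2}\asymp\sqrt{n/\log n}$, while $X^<$ is non-decreasing on $[0,1]$ with $E[X^<_1]\lesssim\sqrt{\delta_n}\asymp\sqrt{\log n/n}$. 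With $K_n=\lceil\lambda_n\rceil$, $t_n=c_3(n\log n)^{1/(2\gamma)}$ and $B_n=c_2\sqrt{(\log n)^3/n}\asymp\sqrt{\log n/n}\,\log n$, I would define the sieve
\[
\Theta_n=\bigl\{f=f_{\text{CPP}}+g\bigr\},
\]
where $f_{\text{CPP}}$ is piecewise constant with at most $MK_n$ jumps, starting value in $[-t_n,t_n]$, and jumps in $(\delta_n,t_n]$, and $g$ is non-decreasing on $[0,1]$ with $g(0)=0$ and $g(1)\le B_n$.

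The complement estimate $\P(X\notin\Theta_n)\le C''e^{-M\sqrt{n\log n}}$ then splits into four pieces handled in parallel with Lemma \ref{lem.CPP_entropy}: Poisson concentration for $\#X^>$ gives $\exp(-cMK_n\log K_n)\asymp\exp(-cM\sqrt{n\log n})$; the exponential tails of $h$ and $\nu|_{[1,\infty)}$ bound $|X_0|$ and the maximum large jump by $t_n$ at rate $\exp(-Lt_n^\gamma)\asymp\exp(-c'\sqrt{n\log n})$; and a Chernoff bound for $X^<_1$ using $e^{sx}-1\le(e-1)sx$ on $(0,\delta_n]$ with $s=1/\delta_n$ gives $E[e^{X^<_1/\delta_n}]\le\exp(c\delta_n^{-1/2})$, whence $\P(X^<_1\ge B_n)\le\exp(-B_n/\delta_n+O(\delta_n^{-1/2}))\le\exp(-c\sqrt{n\log n})$ for the prescribed $B_n$.

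For the bracketing entropy I would bracket the two components separately and take pointwise sums of lower brackets. The piecewise-constant part is handled exactly as in Lemma \ref{lem.CPP_entropy} via a product grid of mesh $\eps_n/(MK_nt_n)$ in $x$ and $\eps_n/2$ in $y$, producing $\exp(C'MK_n\log(MK_nt_n/\eps_n))=\exp(C'\sqrt{n\log n})$ lower brackets of $L^1$-width $\eps_n/2$. The monotone part with $g(1)\le B_n$ contributes the classical bracketing bound $\log N_{[}(\eps_n/2,\cdot,L^1)\lesssim B_n/\eps_n\asymp\log n$, which is lower order. Summing lower brackets across the two components preserves the one-sided inequality and adds the bracket widths, so the overall bracketing number is at most $\exp(C''\sqrt{n\log n})$ as required.

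The main obstacle is the small-jump tail bound. The naive attempt to force $X^<_1\le C\eps_n$ of the order of the bracket width fails because already $E[X^<_1]\asymp\eps_n$ and Bennett's inequality for $X^<_1-E[X^<_1]$ yields only $\exp(-c\sqrt{n/\log n})$. The resolution is to allow $X^<_1$ to range up to $B_n\asymp\eps_n\log n$: this is large enough that Chernoff with $s=1/\delta_n$ delivers the required $\exp(-c\sqrt{n\log n})$ decay, yet still small enough that $B_n/\eps_n\asymp\log n$ keeps the monotone bracketing of $g$ at $O(\log n)$, well below the target $\sqrt{n\log n}$.
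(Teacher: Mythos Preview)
Your overall architecture (split $X=X_0+X^<+X^>$ at a level $\delta_n$, bracket the CPP part via Lemma~\ref{lem.CPP_entropy}, bracket the small-jump part as a bounded monotone function) is exactly the paper's, but your choice $\delta_n\asymp\log n/n$ breaks the Poisson concentration step, and this is not repairable without changing $\delta_n$.

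With $\delta_n\asymp\log n/n$ you have $\lambda_n\asymp\sqrt{n/\log n}\asymp K_n$, so the threshold $MK_n$ is a \emph{constant} multiple of the Poisson mean. The Chernoff bound then only gives
\[
P(\#X^>\ge MK_n)\le e^{-\lambda_n}\Big(\frac{e\lambda_n}{MK_n}\Big)^{MK_n}\asymp \exp\big(-cK_n\big)=\exp\big(-c\sqrt{n/\log n}\,\big),
\]
not $\exp(-cK_n\log K_n)$ as you claim; the bound $K^{-MK}$ from \eqref{eq.contr_CPP0} requires $M\ge 2e\lambda_n$, which here would force $M\to\infty$. Worse, no threshold works: the entropy constraint forces the jump-count cap $K_n'$ to satisfy $K_n'\log n\lesssim\sqrt{n\log n}$, i.e.\ $K_n'\lesssim\sqrt{n/\log n}$, hence $K_n'/\lambda_n=O(1)$ and the Poisson tail is at best $\exp(-c\sqrt{n/\log n})$, always short of the required $\exp(-M\sqrt{n\log n})$.

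The paper resolves this by taking $\delta\asymp 1/\sqrt{n\log n}$, so that $\lambda\asymp(n\log n)^{1/4}\ll m:=4M\sqrt{n/\log n}$ and the ratio $m/\lambda\asymp n^{1/4}$ supplies the missing logarithm: $P(N\ge m)\le (e\lambda/m)^m=\exp(-cm\log n)\asymp\exp(-c\sqrt{n\log n})$. The price is that $X^<_1$ is now only bounded by a constant (the paper shows $P(X^<_1>1)\le e^{-M\sqrt{n\log n}}$ via your Chernoff argument with $s=1/\delta$), but that is harmless because the bracketing entropy of monotone functions bounded by $1$ is $\exp(K/\eps_n)=\exp(K\sqrt{n/\log n})$, still comfortably within the target $\exp(C'\sqrt{n\log n})$. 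In short, the obstacle you identified (small-jump tail) was real but you over-optimised it; relaxing $B_n$ all the way up to a constant is what makes the Poisson tail and the entropy simultaneously achievable.
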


\begin{proof}
Let $\delta = 1/(2M\sqrt{n\log n}).$ We can decompose the subordinator in $X=X_0+X_{<}+X_{>},$ where $X_{<}$ and $X_{>}$ are subordinators with L\'evy densities $\nu_{<}(x)=\nu(x) \mathbf{1}(x\leq \delta)$ and $\nu_{>} = \nu-\nu_{<},$ respectively. Observe that by the L\' evy-Khintchine formula, extended to the moment-generating function,
\begin{align*}
	P\big(X_{<}(1) > 1 \big)
	&\leq \frac{E[e^{ \delta^{-1}X_{<}(1)}]} {e^{\delta^{-1}}}
	= \exp\Big( \int_0^{\delta} (e^{x/\delta}-1) \nu(x) dx  - \frac 1\delta\Big) \\
	&\leq  \exp\Big( \int_0^{\delta} (e-1)\frac x \delta \nu(x) dx -\frac 1\delta\Big)\le \exp\Big(\frac{2L(e-1)\delta^{1/2}-1}{\delta}\Big)
	\leq e^{-M\sqrt{n \log n}}
\end{align*}
for all sufficiently large $n.$ The process $X_{>}$ is a CPP with intensity $\lambda = \int_\delta^\infty \nu(x) \leq 2L\delta^{-1/2}$ and jump density $\nu_{>}(x)/\lambda.$ If $N\sim \Pois(\lambda)$ denotes the number of jumps of $X_{>}$ on $[0,1],$ we find by \eqref{eq.contr_CPP0}, $P( N \geq \max(2\lambda e, 1) m)\leq m^{- m}.$ Let $\Delta_0:=X_0$ and denote the jump heights of the CPP $X_{>}$ by $\Delta_i,$ $i=1,\ldots.$ Let $c_0:=\inf_{x\in [1,2]} \nu(x)$ and observe that $c_0>0$ because $\nu$ is continuous and positive. Arguing as for \eqref{eq.contr_CPP1}, with $t:= 1\vee (L^{-1}(m+1)\log m)^{1/\gamma},$
\begin{align*}
	\P\Big(\max_{i=0, \ldots, N} |\Delta_i| \geq t\Big)
	&\leq \P\big(|\Delta_0| \geq t\big)  + m  \max(2\lambda e, 1)  \frac{\int_t^\infty \nu}{\lambda} + m^{-m} \\
	&\leq \Big(2+\frac{m}{L} \max(2e,1/c_0) \Big) e^{-Lt^\gamma} + m^{-m}
	\leq \Big(\frac{1}{L} \max(2e,1/c_0) + 3\Big) m^{-m}.
\end{align*}
Put $m=4M\sqrt{n/\log n}$ and define $\Theta_n^>$ as the space of piecewise constant functions $f$ with $|f(0)| \leq t$, less than $m$ jumps, minimal jump size $\delta$ and maximal jump size bounded by $t$. For all sufficiently large $n,$ $m^{-m} \leq e^{-2M\sqrt{n/\log n}(\log n-\log\log n)}\leq e^{-M \sqrt{n\log n}}.$ From the computations above, $P(X_{>} \notin \Theta_n^>) \leq \text{const.} \times e^{-M \sqrt{n\log n}}.$ Let $\Theta_{\mon, \delta} = \{g : g \ \text{monotone}, g \leq 1 \text{ and all jumps are} \leq \delta\}$ and  $\Theta_n =\{f=g+h : g\in \Theta_{\mon, \delta}, h \in \Theta_n^{>}\}$ then also $\P(X\notin \Theta_n) \leq \text{const.} \times e^{-M \sqrt{n\log n}}$ due to the uniqueness of the decomposition $f=g+h$ in $\Theta_n.$

Notice that
\begin{align*}
	N_{[}\big( \eps, \Theta_n, \|\cdot \|_1 \big)
	\leq N_{[}\big( \eps/2, \Theta_{\mon,0}, \|\cdot \|_1 \big)N_{[}\big( \eps/2, \Theta_n^>, \|\cdot \|_1 \big).
\end{align*}
It is well known (\cite{AadWell}, 2.7.5 Theorem) that $N_{[}\big( \eps/2, \Theta_{\mon,0}, \|\cdot \|_1 \big) \leq e^{K/\eps}$ for some constant $K.$ A bound for the second factor follows from the proof of Lemma \ref{lem.CPP_entropy} with $K_n = m.$ This completes the proof.
\end{proof}

\begin{proof}[Proof of Theorem \ref{thm.contr_subord}] Using Lemma \ref{lem.SB_for_CPP} with $\eps= \sqrt{\log n/n}$ and \ref{lem.CPP_entropy2} yield the conditions of Theorem \ref{thm.main_ub} for contraction rate $\sqrt{\log n/n}.$
\end{proof}

\section{Proofs for Section \ref{sec.BvM}} \label{AppB}

\subsection{Proof of Theorem \ref{thm.BvM}}

Set $r_{j,n}:=n(t_j-t_{j-1}),$ $r_n = \min_j r_{j,n}$ and $c:= \inf_{-2R\leq x \leq 2R} g(x)>0$ ($g$ is continuous and positive).   Let
\begin{align*}
	A = \Big\{0\leq  \max_j r_{j,n} (\widehat a_j -a_j)_+ \leq 2\log K_n \Big\}.
\end{align*}
By Lemma \ref{lem.TV_for_cond_distribs} it remains to prove that uniformly over $f_0\in \PC^*(K_n, (t_0, \ldots, t_{K_n}), R),$
\begin{itemize}
\item[(i)] $E_{f_0}[ \| \Pi (\cdot \cap A | N)/\Pi(A |N) - Q^n(\cdot |A) \|_{\TV}] \rightarrow 0$
\item[(ii)] $E_{f_0} [ \Pi(A^c |N)+Q^n(A^c) ] \rightarrow 0.$
\end{itemize}
{\it Proof of $(i):$} Recall that $-R\leq a_j^0 \leq \widehat a_j.$ Then on the event $\{\max_j \widehat a_j \leq 3R/2\}\cap A$
\begin{align*}
	g(a_j) \leq g\big( \widehat a_j \big) + \|g\|_{\mC^\beta} \Big( \frac{2\log K_n}{r_n}\Big)^\beta
	\leq g\big( \widehat a_j \big) ( 1+  R_n) \text{ with }
	R_n = \frac{ \|g\|_{\mC^\beta}}c \Big( \frac{2\log K_n}{r_n}\Big)^\beta.
\end{align*}
Similarly, we find $ g(a_j) \geq g\big( \widehat a_j \big) (1-R_n).$ For an arbitrary Borel set $B$ and $f_{\ba}= \sum_{j=1}^{K_n} a_j \mathbf{1}_{[t_{j-1},t_j)}$ we obtain
\begin{align*}
	\frac{\Pi(B\cap A | N)}{\Pi(A|N)}
	&=\frac{\int_{B \cap A} e^{n\int f_a} \mathbf{1}(\forall i: f_a(X_i) \leq Y_i) \prod_j g(a_j) d\ba}{\int_{A} e^{n\int f_a} \mathbf{1}(\forall i: f_a(X_i) \leq Y_i) \prod_j g(a_j) d\ba} \\
	&\leq \frac{\sup_{(a_1, \ldots, a_{K_n}) \in A} \prod_j g(a_j)}{\inf_{(a_1, \ldots, a_{K_n}) \in A} \prod_j g(a_j)} Q^n(B|A)\\
	&\leq \Big(\frac{1+R_n}{1-R_n}\Big)^{K_n} Q^n(B|A).
\end{align*}
By assumption, there is some $N_0$ such that  $R_n \leq 1/2$ for all $n \geq N_0.$ This gives
\begin{align*}
	\Big(\frac{1+R_n}{1-R_n}\Big)^{K_n}
	\leq (1+4R_n )^{K_n} \leq \exp(4R_n K_n),
\end{align*}
which proves that
\begin{align}
	\sup_B \Big(\frac{\Pi(B\cap A | N)}{\Pi(A|N)} - Q^n(B|A)\Big) \leq \exp(4R_n K_n)-1.
	\label{eq.TV_one_direction}
\end{align}
Analogous arguments together with the Bernoulli inequality imply
\begin{align*}
	\frac{\Pi(B\cap A | N)}{\Pi(A|N)}
	\geq \Big(\frac{1-R_n}{1+R_n}\Big)^{K_n} Q^n(B|A)
	\geq (1-R_n)^{K_n}  Q^n(B|A) \geq Q^n(B|A) -R_nK_n.
\end{align*}
 Together with \eqref{eq.TV_one_direction} and the assumption $R_n K_n \rightarrow 0$ this gives
\begin{align}
	\| \Pi (\cdot \cap A | N)/\Pi(A |N) - Q^n(\cdot |A) \|_{\TV} \mathbf{1}\big( \max_j \widehat a_j \leq 3R/2 \big) \to 0.
	\label{eq.TV_restricted}
\end{align}
Notice that under $P_{f_0},$ $\widehat a_j -a_0 \sim \Exp(r_{j,n})$.  Thus with $\xi_j \sim \Exp(r_{j,n}),$
\begin{align}
	P_{f_0}\Big( \max_j \widehat{a}_j \geq 3R/2 \Big)
	\leq \sum_{j=1}^{K_n} \P( \xi_j \geq R/2) \leq K_n e^{-r_n R/2} \rightarrow 0.
	\label{eq.ld_hataj}
\end{align}
Together with \eqref{eq.TV_restricted}, this proves $(i).$

{\it Proof of $(ii):$} The density of $Q^n$ factorizes as $\prod_{j=1}^{K_n} r_{j,n} e^{r_{j,n} ( a_{j} -\widehat a_{j})} \mathbf{1}(a_j \leq \widehat a_j ).$ By a union bound we obtain
\begin{align*}
	Q^n\Big( \max_j r_{j,n} (\widehat a_j -a_j)_+ \geq 2\log K_n \Big)
	\leq
	\sum_{j=1}^{K_n} Q^n\big( (\widehat a_j -a_j)_+ \geq 2r_{j,n}^{-1}\log K_n \big) = \frac{1}{K_n} \rightarrow 0
\end{align*}
and thus $E_{f_0} [ Q^n(A^c) ] \rightarrow 0.$
Next, we  show that
\begin{align*}
	\max_j E_{f_0}\big[\Pi\big( (\widehat a_j -a_j)_+ \geq 2r_{j,n}^{-1}\log K_n \big| N \big)\big]
	\lesssim \frac 1{K_n^2},
\end{align*}
which together with a union bound completes the proof for $(ii).$

Since the likelihood factorizes as $e^{n \int f_\ba} \mathbf{1}\big(\forall i : f_\ba(X_i) \leq Y_i \big)= \prod_{j=1}^{K_n} e^{r_{j,n} a_j } \mathbf{1}\big( a_j \leq \widehat a_j\big),$ we find, using $\|g\|_\infty \leq \|g\|_{\mC^\beta},$
\begin{align*}
	\Pi\big( (\widehat a_j -a_j )_+ \geq 2r_{j,n}^{-1}\log K_n \big| N \big)
	&= \frac{\int_{-\infty}^{\widehat a_j - 2\log (K_n) /r_{j,n}} e^{r_{j,n} a_j } g(a_j) da_j}{\int_{-\infty}^{\widehat a_j} e^{r_{j,n} a_j} g(a_j) da_j} \\
	&\leq \frac{\|g\|_{\mC^\beta} e^{r_{j,n} \widehat a_j}}{K_n^2 r_{j,n} \int_{-\infty}^{\widehat a_j} e^{r_{j,n} a_j} g(a_j) da_j}.
\end{align*}
Recall that $|a_j^0|\leq R$ and $a_j^0 \leq \widehat a_j.$ As in $(i)$ we work on the event $\widehat a_j \leq 3R/2.$ Then in the denominator we can bound from below
\begin{align*}
	\int_{-\infty}^{\widehat a_j} e^{r_{j,n} a_j} g(a_j) da_j
	\geq c \int_{\widehat a_j-R/2}^{\widehat a_j} e^{r_{j,n} a_j} da_j
	\geq \frac{c}{r_{j,n}} e^{r_{j,n}\widehat a_j} \big(1 -e^{-r_{j,n} R/2}\big).
\end{align*}
Let $N_0'$ such that $r_n \geq 2/R$ for all $n\geq N_0'.$ Then, for $n \geq N_0',$
\begin{align*}
	\Pi\big( (\widehat a_j -a_j )_+ \geq 2r_{j,n}^{-1}\log K_n \big| N \big) \mathbf{1}\big(\widehat a_j \leq 3R/2\big)
	\leq \frac{\|g\|_{\mC^\beta}}{K_n^2 c(1-e^{-1})}.
\end{align*}
Together with \eqref{eq.ld_hataj} and $K_ne^{-r_nR/2} \lesssim 1/K_n^2,$ this yields
\begin{align*}
	\max_j E_{f_0}\big[\Pi\big( (\widehat a_j -a_j)_+ \geq 2r_{j,n}^{-1}\log K_n \big| N \big)\big]
	\lesssim \frac 1{K_n^2}.
\end{align*}
This shows $(ii)$ and completes the proof.

\subsection{Proof of Corollary \ref{cor.BvM_fctal}}

By Theorem \ref{thm.BvM}, the total variation distance between the marginal posterior  of $\vartheta$  under $P_{f_0}$ and the distribution of $\int \widehat f^{\MLE} - \sum_{j=1}^{K_n} (t_j-t_{j-1})\eta_j$ with independent $\eta_j \sim \Exp(n(t_j-t_{j-1}))$ converges to zero. For $\xi_j=n(t_j-t_{j-1})\eta_j \sim \Exp(1)$ we deduce from Lemma \ref{lem.CLT_in_TV} below
\begin{align}
	\TV\Big( \int \widehat f^{\MLE} - \frac {\sum_{j=1}^{K_n} \xi_j}n , \mathcal{N}\Big(\int \widehat f^{\MLE} -\frac {K_n} n, \frac{K_n}{n^2} \Big) \Big)
	\rightarrow 0.
	\label{eq.CLTinTV}
\end{align}
This completes the proof of the first assertion. It also implies that $I(\alpha)$ is an asymptotic $(1-\alpha)$-credible interval.

It remains to prove that $I(\alpha)$ is also an honest  confidence interval. By the explicit law of $\widehat f^{\MLE}$, we conclude that under $P_{f_0},$ $\int \widehat f^{\MLE}= \int f_0 + n^{-1}\sum_{j=1}^{K_n} \xi_j'$ holds with independent $\xi_j' \sim \Exp(1)$.  Thus, uniformly in $f_0$ we have
\begin{align*}
	P_{f_0}\Big( \int f_0 \in I(\alpha) \Big)
	= \P \Big( \Phi^{-1}\big( \alpha/2\big)  \leq K_n^{-1/2}\sum_{j=1}^{K_n} (\xi_j'-1) \leq \Phi^{-1}\big(1-\alpha/2\big) \Big)
	\rightarrow 1-\alpha,
\end{align*}
using the standard central limit theorem.

\subsection{Proof of Lemma \ref{lem.freq_CI}}

A brief inspection of the proof shows that Theorem 2.1 in \cite{reiss2014} also holds for functions which are $C^\beta(R)$ on each interval $[kh,k(h+1)).$ Define $I_k := [(k-1)/K_n, k/K_n),$ $Y_k^* := \min_{i:X_i \in I_k}Y_i,$
\begin{align*}
	\widehat \vt_k := \Big( Y_k^* +\frac{1}{K_n}\Big) - \frac{K_n}{n}
	\sum_{i\geq 1} \mathbf{1}\Big( X_i \in I_k , Y_i \leq  Y_k^*+ \frac{1}{K_n}\Big),
\end{align*}
and $\widehat \vt^{\operatorname{block}}= K_n^{-1}\sum_{k=1}^{K_n} \widehat \vt_k.$ To obtain the expectation and a bound on the variance of $\widehat \vt^{\operatorname{block}},$ we can apply Theorem 2.1 in \cite{reiss2014} with $w=1, \beta=R=1$ and $h=1/K_n$ since the true support boundary function is in $\Lip_{K_n}.$ This gives $E_{\vt_0}[\widehat\vt^{\operatorname{block}}] = \vt_0$ and $\Var(\widehat\vt^{\operatorname{block}}) = 2/(K_n n)+K_n/n^2.$ For
\[C(\alpha) = \big[\widehat\vt^{\operatorname{block}} - \alpha^{-1/2}(2/(K_n n)+K_n/n^2)^{1/2}, \widehat\vt^{\operatorname{block}} + \alpha^{-1/2}(2/(K_n n)+K_n/n^2)^{1/2}\big]\]
we obtain
\begin{align*}
	P_{f_0}\big(\vt_0 \notin C(\alpha) \big)
	\geq  P_{f_0}\big( \big| \widehat\vt^{\operatorname{block}} - \vt_0 \big| \le \alpha^{-1/2}\Var_{f_0}(\widehat \vt^{\operatorname{block}})^{1/2} \big)
	\leq \alpha
\end{align*}
by Chebyshev's inequality. The length of $C(\alpha)$ is $O( \sqrt{K_n}/n+1/\sqrt{K_nn})$. \qed

\subsection{Proof of Theorem \ref{thm.credible_not_conf}}
\begin{prop}
\label{prop.MLE_misspecified}
Consider data generated by  $f_0$ of the form \eqref{eq.def_piecewise_lin}. Then the MLE taken over the class of piecewise constant functions $$\widehat f^{\MLE} = \sum_{j=1}^{K_n} \widehat a_j^{\MLE} \mathbf{1}_{[(j-1)/K_n, j/K_n)}$$ with $\widehat a_j^{\MLE} = \min_{i: X_i \in [(j-1)/K_n, j/K_n)} Y_i$ can be written in distribution as
\begin{align*}
	\widehat a_j^{\MLE} = a_j^0 + \frac{j-1+ V_{jn}}{K_n},
\end{align*}
where $(V_{jn})_j$ is i.i.d. with distribution defined by
\begin{align*}
	P_{f_0}(V_{jn} \geq y)
	&= \exp \Big( - \frac{n}{2K_n^2}(y \wedge 1)^2 - \frac{n}{K_n^2} (y-1)_+ \Big),\quad y\ge 0.
\end{align*}
Moreover, for $K_n \geq \sqrt{n}$ we have $\Var_{f_0}(\widehat \vt^{\MLE} )^{1/2}\lesssim \sqrt{K_n}/n$ and
\begin{align*}
	E_{f_0}\Big[ \widehat \vt^{\MLE} - \frac {K_n}n \Big] \leq \int f_0 - \frac{n}{2^7K_n^3}.
\end{align*}
\end{prop}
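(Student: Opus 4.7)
My strategy is to first derive the explicit distribution of each $V_{jn}$ by computing Poisson void probabilities on triangular/trapezoidal regions, then to show independence across $j$, and finally to compute the mean and second moment of $V_{1n}$ with enough precision to extract the claimed constants.

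\textbf{Step 1: distribution of $V_{jn}$.} Fix $j$ and $y\ge 0$, and write $\delta=(j-1+y)/K_n$. Since $\widehat a_j^{\MLE}\ge a_j^0+\delta$ iff the PPP has no points in
\[
R_{j,y}=\{(x,y')\in I_j\times\R:\, f_0(x)\le y'< a_j^0+\delta\},\]
and on $I_j$ we have $f_0(x)=x+a_j^0$, the relevant area is
\[
\area(R_{j,y})=\int_{I_j\cap[0,\delta)}\bigl(\delta-x\bigr)\,dx.\]
Splitting into cases, a routine computation gives
\[
\area(R_{j,y})=\frac{1}{2K_n^{2}}(y\wedge 1)^{2}+\frac{1}{K_n^{2}}(y-1)_+\quad(y\ge 0),\]
and by the Poisson void formula $P_{f_0}(V_{jn}\ge y)=\exp(-n\,\area(R_{j,y}))$, which is the claim. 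Independence of $(V_{jn})_j$ follows because $V_{jn}$ is a measurable function of $N$ restricted to $I_j\times\R$, and the $I_j$ are disjoint.

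\textbf{Step 2: bias bound.} Writing $\vartheta_0=\int f_0=\tfrac 12+\tfrac{1}{K_n}\sum a_j^0$ and using Step~1,
\[
E_{f_0}\widehat\vartheta^{\MLE}-\vartheta_0=\frac{1}{K_n^{2}}\sum_{j=1}^{K_n}\Bigl(j-1+E\,V_{jn}\Bigr)-\frac12=-\frac{1}{2K_n}+\frac{1}{K_n}E\,V_{1n},\]
so the target inequality reduces to $E\,V_{1n}\le \tfrac12+K_n^{2}/n-n/(2^{7}K_n^{2})$. Set $\sigma^{2}:=K_n^{2}/n$; since $K_n\ge\sqrt n$, $\sigma\ge 1$. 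Then
\[
E\,V_{1n}=\int_0^{1}e^{-y^{2}/(2\sigma^{2})}\,dy+\sigma^{2}e^{-1/(2\sigma^{2})}.\]
Using $e^{-u}\le 1-u+u^{2}/2$ for $u\ge 0$ on both integrals gives
\[
\int_0^{1}e^{-y^{2}/(2\sigma^{2})}\,dy\le 1-\tfrac{1}{6\sigma^{2}}+\tfrac{1}{40\sigma^{4}},\qquad \sigma^{2}e^{-1/(2\sigma^{2})}\le \sigma^{2}-\tfrac12+\tfrac{1}{8\sigma^{2}}.\]
Adding and using $\sigma\ge 1$ to absorb the $O(\sigma^{-4})$ term into the $O(\sigma^{-2})$ one yields $E\,V_{1n}\le \sigma^{2}+\tfrac12-\tfrac{1}{60\sigma^{2}}$, and $1/60>1/128=1/2^{7}$ gives the bias bound.

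\textbf{Step 3: variance bound.} By independence, $\Var_{f_0}(\widehat\vartheta^{\MLE})=K_n^{-3}\,\Var(V_{1n})\le K_n^{-3}\,E\,V_{1n}^{2}$. Split
\[
E\,V_{1n}^{2}=2\int_0^{1}ye^{-y^{2}/(2\sigma^{2})}\,dy+2e^{-1/(2\sigma^{2})}\int_{1}^{\infty}ye^{-(y-1)/\sigma^{2}}\,dy\le 1+2(\sigma^{2}+\sigma^{4})\lesssim \sigma^{4},\]
using $\sigma\ge 1$ in the last step. Therefore $\Var_{f_0}(\widehat\vartheta^{\MLE})^{1/2}\lesssim \sigma^{2}/K_n^{3/2}=\sqrt{K_n}/n$.

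\textbf{Expected obstacle.} The computations are all elementary once the distribution in Step~1 is in hand; the only slightly delicate point is Step~2, where one must produce a quantitatively negative remainder $-cn/K_n^{2}$ rather than merely an $O(1/\sigma^{2})$ error, which forces the use of the two-term Taylor upper bound on both pieces and a careful check that the $+1/(8\sigma^{2})$ coming from the tail integral does not wipe out the $-1/(6\sigma^{2})$ coming from the bulk.
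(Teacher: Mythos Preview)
Your proof is correct and follows essentially the same strategy as the paper: both derive the explicit survival function of $V_{jn}$ from a Poisson void probability, then control the first two moments of $V_{1n}$. The only noteworthy difference is in Step~2. The paper first uses the integration-by-parts identity
\[
\int_0^1 e^{-ry^2}\,dy+\tfrac{1}{2r}e^{-r}=\tfrac{1}{2r}+\int_0^1\!\!\int_0^z e^{-ry^2}\,dy\,dz
\]
to peel off the exact term $K_n^2/n$, and then bounds the remaining double integral by splitting $[0,1]=[0,\tfrac12]\cup[\tfrac12,1]$ and applying $e^{-x}\le \tfrac12\vee(1-\tfrac x2)$, arriving at the constant $1/2^7$. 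You instead apply the two-term Taylor bound $e^{-u}\le 1-u+\tfrac{u^2}{2}$ directly to each of the two pieces of $E\,V_{1n}$, obtaining the sharper constant $1/60$; since $1/60>1/2^7$, the stated inequality follows a fortiori. Your route avoids the integration-by-parts identity at the cost of tracking a $\sigma^{-4}$ correction, which you correctly absorb using $\sigma\ge 1$.
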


\begin{proof} The first assertion follows from a simple PPP probability calculation. Let us derive bounds for the expectation and the second moment of $V_{jn}.$ Let $r>0.$ The identity $\int_0^1 ye^{-ry^2}dy = (1-e^{-r})/(2r)$ and integration by parts give
\begin{align*}
	\int_0^1 e^{-r y^2} dy + \frac 1{2r} e^{-r}= \frac{1}{2r} + \int_0^1 \int_0^z e^{-ry^2} dy dz.
\end{align*}
With $r= n/(2K_n^2),$ $E[V_{jn}]$ can therefore be rewritten as
\begin{align}
	E\big[V_{jn}\big] &= \int_0^\infty P\big(V_{jn} \geq y \big) dy= \int_0^1 e^{-\frac{n}{2K_n^2}y^2} \, dy  + \frac{K_n^2}n e^{-\frac{n}{2K_n^2}}
	= \int_0^1 \int_0^z e^{-\frac{n}{2K_n^2}y^2} \, dy \, dz + \frac{K_n^2}{n}
	\notag \\
	&\leq
	\frac{3}{8} + \frac 18 e^{- \frac{n}{8K_n^2}}+ \frac{K_n^2}{n}
	\leq \frac{7}{16} \vee \Big( \frac{1}{2} - \frac{n}{2^7K_n^2}\Big)  + \frac{K_n^2}{n},
	\label{eq.E_V_jn}
\end{align}
where for the first inequality, we decomposed the double integral into $\int_0^1 = \int_0^{1/2}+ \int_{1/2}^1$ and $\int_0^z = \int_0^{1/2}+ \int_{1/2}^z$ for $z\geq 1/2$ and for the second inequality used $e^{-x} \leq 1/2 \vee (1-x/2)$ for $x\geq 0.$ Moreover,
\begin{align}
	E\big[V_{jn}^2\big]
	&= \int_0^\infty P\big(V_{jn} \geq \sqrt{y} \big) dy
	= \int_0^1 e^{- \frac{n}{2K_n^2} y} dy + e^{\frac{n}{2K_n^2}}\int_1^\infty e^{- \frac{n}{K_n^2} \sqrt{y}} dy \notag \\
	&= 2\frac{K_n^2}{n} \big( 1 - e^{-\frac{n}{2K_n^2}}\big) +  2e^{\frac{n}{2K_n^2}}\int_1^\infty v e^{-\frac{n}{K_n^2} v} dv\leq 2\frac{K_n^2}{n} + 8 \frac{K_n^4}{n^2}.
	\label{eq.var_V_jn}
\end{align}
For $K_n \geq \sqrt{n},$ we have by \eqref{eq.E_V_jn}, $E[V_{jn}]\leq \tfrac 12 - n/(2^7K_n^2) + K_n^2/n$ and together with $\widehat a_j =a_j^0 +(j-1+V_{jn})/K_n,$
\begin{align*}
	E_{f_0}\Big[ \int \widehat f^{\MLE} - \frac {K_n}n \Big]
	&= 	\frac 1{K_n} \sum_{j=1}^{K_n} E_{f_0}[\widehat a_j] - \frac {K_n}n
	= \int f_0 - \frac 12 + \frac{K_n(K_n-1)}{2K_n^2} + \frac{E[V_{1n}]}{K_n} - \frac {K_n}n \\
	&\leq \int f_0 - \frac{n}{2^7K_n^3} \quad \quad  \text{for all} \ K_n \geq \sqrt{n}.
\end{align*}
\end{proof}

{\textit Proof of Theorem \ref{thm.credible_not_conf}:} We first prove the Bernstein-von Mises type result $E_{f_0} [\|\Pi( \cdot | N) - Q^n\|_{\TV}] \rightarrow 0$ with $Q^n$ as defined in Theorem \ref{thm.BvM}. For $(a_1, \ldots, a_{K_n}) \sim Q^n$ we have
\begin{align*}
	Q^n\big( \min_j a_j < -R \big) \leq K_n \int_{-\infty}^{-R} \frac{K_n}{n} e^{\frac{n}{K_n}(a_j-\widehat a_j)} da_j \leq  K_ne^{-\frac{n}{K_n}R} \leq K_nn^{-R} \rightarrow 0.
\end{align*}
Arguing as in the proof of Theorem \ref{thm.BvM}, it follows that
\begin{align}
	\textstyle \|Q^n - Q^n (\cdot | \min_j a_j \geq -R )\|_{\TV} \leq 2Q^n (\min_j a_j < -R) \rightarrow 0.
	\label{eq.Qn_tv}
\end{align}
On the event $\mathcal{A} := \{\max_j \widehat a_j \leq R\}$ we have equality
\begin{align*}
	\Pi((a_1,\ldots, a_K) \in B |N )
	 = \frac{\int_{B \cap [-R,R]^{K_n}} e^{\frac{n}{K_n} \sum_j a_j} \mathbf{1}(\forall j:a_j \leq \widehat a_j) d\ba}
	 {\int_{ [-R,R]^{K_n}} e^{\frac{n}{K_n} \sum_j a_j} \mathbf{1}(\forall j:a_j \leq \widehat a_j) d\ba}
	 =
	 Q^n \big( B \big | \min_j a_j \geq  -R \big).
\end{align*}
Thus,
\begin{align}
\textstyle	E_{f_0} [\|\Pi( \cdot | N) - Q^n (\cdot |\min_j a_j \geq  -R )\|_{\TV}] \leq P_{f_0}(\mathcal{A}^c).
	\label{eq.post_tv}
\end{align}
By Proposition \ref{prop.MLE_misspecified}, $\widehat a_j = (j-1+V_{jn})/K_n,$ in distribution and thus
\begin{align*}
	P_{f_0}(\mathcal{A}^c) \leq  K_n P_{f_0}\big(V_{jn} \geq K_n(R-1)\big) \leq K_n e^{-\frac{n}{2K_n} (R-1)} \leq K_n n^{-(R-1)/2} \rightarrow 0,
\end{align*}
where the last step follows because of $R>3.$ With \eqref{eq.Qn_tv} and \eqref{eq.post_tv}, we obtain $E_{f_0} [\|\Pi( \cdot | N) - Q^n\|_{\TV}] \rightarrow 0.$ Arguing as in the proof of Corollary \ref{cor.BvM_fctal}, we can then conclude that for the marginal posterior of $\vartheta = \int f,$
\begin{align*}
	E_{f_0} \Big[ \Big\| \Pi( \vartheta  \in \cdot \,| N ) - \mathcal{N}\Big( \int \widehat f^{\MLE} - \frac{K_n}{n}, \frac{K_n}{n^2}\Big)\Big\|_{\TV}\Big]\rightarrow 0.
\end{align*}
This proves $E_{f_0}[ \Pi ( \vartheta \in I(\alpha) \, | \, N )] \rightarrow 1-\alpha.$

We turn to proving \eqref{eq.credible_set_overshoot}. Recall that $f_0(x)=x$ and $\rho_n  = 2^{-8}(nK_n^{-3/2} \wedge n^2K_n^{-7/2}).$ With $\sigma_n^2 := \Var_{f_0}(V_{1n})$ and
$$A_n := \frac{K_n^{3/2}}{\sigma_n} \Big( \frac{1}{2K_n} + \frac{K_n}{n} - \frac{E[V_{1n}]}{K_n} - \frac{\sqrt{K_n}}{n} \rho_n \Big),$$
we obtain by Chebyshev inequality
\begin{align}
	P_{f_0} \Big( \frac 12 \leq  \int \widehat f^{\MLE} - \frac {K_n}n + \frac{\sqrt{K_n}}{n} \rho_n \Big)
	=
	P_{f_0} \Big( \frac{\sum_{j=1}^{K_n} V_{j,n} - E[V_{j,n}]}{\sqrt{K_n} \sigma_n}
	\geq  A_n \Big)
	\leq \frac 1{A_n^2},
	\label{eq.cheb_lb}
\end{align}
If $K_n \leq \sqrt{n/8},$ then, with \eqref{eq.E_V_jn} and \eqref{eq.var_V_jn},
\begin{align*}
	A_n
	\geq \frac{K_n^{3/2}}{\sigma_n} \Big( \frac{1}{16K_n}- \frac{\sqrt{K_n}}{n} \rho_n \Big) \rightarrow \infty.
\end{align*}
On the other hand, if $\sqrt{n/8} \leq K_n =o(n^{4/7}),$ with \eqref{eq.E_V_jn} and \eqref{eq.var_V_jn},
\begin{align*}
	A_n
	\geq \frac{K_n^{3/2}}{\sigma_n} \Big( \frac{n}{2^7 K_n^3} - \frac{\sqrt{K_n}}{n} \rho_n \Big) \rightarrow \infty.
\end{align*}
Together with \eqref{eq.cheb_lb} this proves \eqref{eq.credible_set_overshoot}. The last claim follows from the definition of $I(\alpha)$ in \eqref{eq.def_I(alpha)} and the fact that $\rho_n \rightarrow \infty$ for $K_n=o(n^{4/7})$. \qed

\section{Proofs for Section \ref{SecBvMCPP}} \label{AppC}

\subsection{Properties of the MLE}

We gather here the results on the MLE $\widehat f^{\MLE}$, obtained over monotone functions $f$ on $[0,T]$ that are constant on $[1,T]$, which are proved in Appendix \ref{AppMLE} below.
If not otherwise stated, we work with a generic $f_0\in \mM_S(K_n, R)$ and under $P_{f_0}.$ An event $A$ is said to have {\it probability converging uniformly to one} if $\inf_{f_0\in \mM_S(K_n, R)} P_{f_0}(A) \rightarrow 1$ and we then write
\[ P_{f_0}(A)\xrightarrow{u} 1.\]

Define the event
\begin{align}
	H:=&\Big\{\forall k: X_k' \geq t_k^0 - \frac 1{2\sqrt{n}} \text{\, and} \  Y_k^* \leq f_0(t_k^0) + \frac{ \log n}{2\sqrt{n}} \Big\}%\nonumber\\
	\cap \Big\{ \widehat f^{\MLE}(1)\leq (K_n+2)R  \Big\},
	\label{eq.H_def}
\end{align}
where all $X_k'$ for $k=1, \ldots, K_n $ and all $Y_k^*$ for $k=0,1, \ldots, K_n $ are considered. Lemma \ref{lem.diff_lb}  states
\begin{equation}\label{MLE1}
P_{f_0}(H)\xrightarrow{u} 1
\end{equation}
and on $H$ for all $k=1, \ldots, K_n,$
\begin{align}
X_k'-X_{k-1}'\geq \frac{1}{\sqrt n}, \quad \ & \label{MLE3}\\
Y_k^*-Y_{k-1}^* \geq \frac{\log n}{\sqrt n } \quad \text{and } Y_0^* &\geq \frac{\log n }{\sqrt{n}}, \label{MLE4}\\
(Y_k^*-Y_{k-1}^*)\big[(X_{k+1}'-X_k') \wedge (X_k'-X_{k-1}') \big] &\geq \frac{K_n \log (eK_n) \log^3 n}{4 n}.\label{MLE5}
\end{align}
Moreover, we frequently use $X_{K_n+1}'-X_{K_n}' \geq T-1 \geq 1/\sqrt{n}$ for all sufficiently large $n.$ Recall that $\widetilde f = \sum_{k=0}^{K_n}Y_k^* \mathbf{1}_{[X_k',X_{k+1}')}.$ Lemma \ref{lem.seqs_on_MLE} shows that asymptotically there are no observations in $[X_k',t_k^0]\times[f(t_k^0),Y_k^*]$ and thus
\begin{equation}\label{MLE7}
	P_{f_0}(\widetilde f \leq \widehat f^{\MLE})\xrightarrow{u} 1.
\end{equation}
 Lemma \ref{lem.ub_jumps_of_MLE} shows that the number of jumps in each interval $[t_k^0,t_{k+1}^0]$ is at most of order $\log n$:
there exists a constant $C(R)$  such that
\begin{equation}\label{MLE8}
P_{f_0}\Big(\forall k=0,\ldots,K_n: \#\{\widehat t_\ell^{\MLE}\in[t_k^0,t_{k+1}^0\wedge 1]\,|\,\ell=0,\ldots,M\}\le C(R)\log n\Big)\xrightarrow{u} 1.
\end{equation}
Finally, in Lemmas \ref{lem.ub_areas}, \ref{lem.lb_areas} we are able to uniformly bound the area of certain rectangles which will be used later for bounding integrals: with probability tending uniformly to one
\begin{align}
	&\max_{ k=0, \ldots, K_n}\max_{\ell: t_k^0\le \widehat t_\ell^{\MLE}\le t_{k+1}^0} \big( \widehat f^{\MLE}(\widehat t_\ell^{\MLE})-f_0(t_k^0) \big)\big(t_{k+1}^0 - \widehat t_\ell^{\MLE} \big) \leq \frac{\log K_n + C' \log \log n} n,\label{MLE10}\\
	&\min_{k=1, \ldots, K_n} (Y_k^*-Y_k') (X_k^*-X_k') \geq \frac{\log^2 n}{n}.\label{MLE11}
\end{align}

\subsection{Posterior model selection}

In this section, we show that under a minimal signal strength condition the posterior concentrates asymptotically on the true dimension $K_n.$ Given a  set $B$ of functions on $[0,T]$, define
\[B_K = B \cap \mM(K,\infty)\]
for the restriction to monotone, piecewise constant functions with $K$ jumps. A generic function $f\in \mM(K,\infty)$ will be parametrized as $f(t)= \sum_{\ell=0}^K a_\ell \mathbf{1}(t \geq t_\ell)$ assuming that the jump times are ordered such that $0=: t_0 < t_1 \leq t_2 \leq \hdots \leq t_K.$ Recall the definition of the prior in \eqref{eq.g_def2}. With the Bayes formula \eqref{eq.Bayes}, the posterior is given by
\begin{equation}\label{EqPosterior}
 \Pi(B\,|\,N)=\frac{\sum_{K=0}^\infty U_K(B_K)}{\sum_{K=0}^\infty U_K(\mM(K, \infty))}
 \end{equation}
with
\begin{align*}
	U_K(B_K) :=
	&\lambda^K \int_{B_K} e^{n \sum_{\ell=0}^K a_\ell(T-t_\ell) }\mathbf{1}(f \leq \widehat  f^{\MLE}) g_K(\ba)
	\mathbf{1}(0\leq t_1 \leq \hdots \leq t_K\leq 1) \, d \bt d \ba,
\end{align*}
$d\bt = dt_1 \ldots dt_K$ and $d\ba = da_1 \ldots da_K.$ The first step is the following lower bound for the denominator in the Bayes formula.

\begin{lem}
\label{lem.denom}
With $\widetilde f$ from \eqref{eq.widetildef_def} we have
\begin{align*}
	P_{f_0}\Big(\int e^{n \int_0^T f} \mathbf{1} (f\leq \widehat f^{\MLE}) d\Pi(f)
	\geq
	\frac{\lambda^{K_n} (1-n^{-1/2})^{2K_n+1}}{n^{2K_n+1} \prod_{k=0}^{K_n} (X_{k+1}'-X_k')} e^{n \int_0^{T -\frac 1n}\widetilde f}\, \Big)\xrightarrow{u} 1.
\end{align*}
\end{lem}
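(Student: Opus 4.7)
The plan is to retain only the $K=K_n$ term in \eqref{EqPosterior} and, within that term, to restrict the integral $U_{K_n}(\mM(K_n,\infty))$ to a narrow neighborhood of the majorant process $\widetilde f$. By \eqref{MLE7} the event $\{\widetilde f\leq \widehat f^{\MLE}\}$ has probability converging uniformly to one, so any $f\leq \widetilde f$ automatically satisfies the likelihood constraint $f\leq \widehat f^{\MLE}$ and the indicator in the Bayes numerator is preserved. The quantitative inputs for everything that follows are the MLE estimates \eqref{MLE3}--\eqref{MLE5}.

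Next I would parameterize $f=\sum_{k=0}^{K_n} b_k\mathbf{1}_{[t_k,t_{k+1})}$ with $t_0:=0,\,t_{K_n+1}:=T$ and substitute
\[ t_k = X_k' + s_k/n \quad (k=1,\ldots,K_n), \qquad b_k = Y_k^* - u_k/n \quad (k=0,\ldots,K_n). \]
The Jacobian is exactly $n^{-(2K_n+1)}$, which supplies the claimed $n$-dependence. Summation by parts with $s_0=s_{K_n+1}=0$ gives
\[ n\int_0^T f \;=\; n\int_0^T \widetilde f \;-\; \sum_{k=1}^{K_n} s_k(Y_k^*-Y_{k-1}^*) \;-\; \sum_{k=0}^{K_n} u_k(t_{k+1}-t_k), \]
and the factor $e^{-b_{K_n}}$ from $g_{K_n}(\ba)$ shaves off $Y_{K_n}^*$, converting $e^{n\int_0^T\widetilde f}$ into the desired prefactor $e^{n\int_0^{T-1/n}\widetilde f}$ since $n\int_{T-1/n}^T\widetilde f=Y_{K_n}^*$. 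The constraints $f\leq\widetilde f$, $a_k\geq 0$, $t_1\leq\cdots\leq t_{K_n}\leq 1$ all translate into explicit restrictions on $(s_k,u_k)$.

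With $A_k:=Y_k^*-Y_{k-1}^*$, I would take $u_k\in[0,\rho_k]$ for $\rho_k$ of order $(\log n)/(X_{k+1}'-X_k')$ and $s_k\in[0,\sigma_k]$ for $\sigma_k$ of order $(\log n)/A_k$. Each $u_k$-integral, computed via $\int_0^\infty-\int_{\rho_k}^\infty$, returns $(1-O(n^{-1/2}))/(t_{k+1}-t_k)\geq (1-O(n^{-1/2}))/(X_{k+1}'-X_k')$; each $s_k$-integral returns $(1-O(n^{-1/2}))/A_k$ and cancels against the corresponding factor in the prior lower bound $g_{K_n}(\ba)\geq e^{-Y_{K_n}^*}(1-O(n^{-1/2}))\prod_{k=1}^{K_n}A_k$. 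The minimal-signal inequality \eqref{MLE5} is the crucial input: it simultaneously forces $\rho_k\leq nA_k/2$ (so that $a_k\geq 0$ and the prior lower bound remains valid), $\sigma_k A_k\geq (\log n)/2$ (so the $s_k$-integrals are near their infinite-range limits), and $|u_k-u_{k-1}|/(nA_k)\lesssim 1/(K_n\log^2 n)$, so that the prior's coupling factor $\prod_k(1-(u_k-u_{k-1})/(nA_k))$ telescopes into a single $(1-O(n^{-1/2}))$. Bookkeeping the factors gives the total $(1-n^{-1/2})^{2K_n+1}$ slack claimed.

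The hardest point is the boundary constraint $t_{K_n}\leq 1$: when $t^0_{K_n}=1$, only $n(1-X_{K_n}')\lesssim 1$ is available as the upper endpoint for $s_{K_n}$, so the $s_{K_n}$-integration may fall short of producing the full $(1-O(n^{-1/2}))/A_{K_n}$ factor needed to cancel $A_{K_n}$ in the prior. This is reconciled by the fact that $T>1$ is a fixed constant, so $X_{K_n+1}'-X_{K_n}'=T-X_{K_n}'\geq T-1$ is uniformly bounded below and the corresponding $u_{K_n}$-integration still supplies the factor $(X_{K_n+1}'-X_{K_n}')^{-1}$ required by the claim. The residual mismatch from the truncated $s_{K_n}$-integration is controlled using the minimal-signal lower bound on the rectangle $a_{K_n}^0(t_{K_n+1}^0-t_{K_n}^0)$, which combined with PPP concentration at the rightmost point forces $n(1-X_{K_n}')A_{K_n}$ to be large enough in the relevant regime, and the remaining slack is absorbed into one of the $(1-n^{-1/2})$ factors.
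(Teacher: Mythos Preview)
Your overall strategy coincides with the paper's: restrict to $H\cap\{\widetilde f\le\widehat f^{\MLE}\}$, keep only the $K=K_n$ term, and lower-bound by integrating over a thin tube of functions below $\widetilde f$ that jump once in each $[X_k',X_{k+1}')$. The execution differs in one instructive respect. You work in the level parametrization $b_k=\sum_{\ell\le k}a_\ell$, which makes the exponent $-\sum s_kA_k-\sum u_k(t_{k+1}-t_k)$ couple $u_k$ with $s_k,s_{k+1}$; you then need \eqref{MLE5} twice, once to justify $\prod(b_k-b_{k-1})\ge(1-o(1))\prod A_k$ and once to replace $t_{k+1}-t_k$ by $X_{k+1}'-X_k'$. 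The paper instead stays with the increment parametrization $f=\sum_k a_k\mathbf 1_{[t_k,T]}$, for which $n\int f=\sum_k na_k(T-t_k)$ is \emph{separable} in $t_1,\dots,t_{K_n}$. Integrating each $t_k$ first over $[X_k',X_{k+1}')$ gives exactly $(1-n^{-1/2})\,e^{na_k(T-X_k')}/(na_k)$, and these $1/a_k$ factors cancel \emph{exactly} against the $\prod_{k\ge1}a_k$ in $g_{K_n}$ --- this is precisely the reason the $\Gamma(2,1)$ jump prior is singled out above \eqref{eq.g_def}. A subsequent top-down recursive $a_k$-integration with $v_{K_n}=T-n^{-1}-X_{K_n}'$ and $v_k=X_{k+1}'-X_k'$ for $k<K_n$ then produces the claimed denominator $\prod v_k$ and exponent $n\sum_k Y_k^*v_k=n\int_0^{T-1/n}\widetilde f$ directly, with exactly $2K_n{+}1$ factors of $(1-n^{-1/2})$. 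Your route is valid but carries extra $(1-o(1))$ slack from the approximations; the paper's exploits the specific prior structure so that no approximation of $g_{K_n}$ is needed.

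On the boundary issue $t_{K_n}\le 1$: you are right to flag it, but your proposed fix does not close the gap. When $t_{K_n}^0$ is near $1$, the length $1-X_{K_n}'$ is only of order $(na_{K_n}^0)^{-1}$, so neither $n(1-X_{K_n}')A_{K_n}$ nor the $s_{K_n}$-integral is forced to be large. Invoking the minimal-signal bound on $a_{K_n}^0(t_{K_n+1}^0-t_{K_n}^0)$ does not help, because $t_{K_n+1}^0=T$ and that product controls $T-t_{K_n}^0$, not $1-t_{K_n}^0$. The paper, for its part, simply integrates $t_{K_n}$ over the full interval $[X_{K_n}',X_{K_n+1}')$ without separately discussing the prior constraint $t_{K_n}\le 1$.
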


\begin{proof}
By \eqref{MLE1}, \eqref{MLE7} $P(H\cap\{\widetilde f \leq \widehat f^{\MLE}\})\xrightarrow{u} 1$ holds. As a lower bound, we only integrate over functions $f\leq \widetilde f$ which jump exactly once in the interval $[X_k',X_{k+1}')$ and all jumps are at least of size $\log n/(2\sqrt{n}).$ This gives on $\{\widetilde f\le \widehat f^{\MLE}\}$
\begin{align}
	&\int e^{n \int f} \mathbf{1} (f\leq \widehat f^{\MLE}) d\Pi(f) \notag \\
	&\geq
	\lambda^{K_n} \int e^{n\sum_{k=0}^{K_n} a_k(T-t_k)}
	\mathbf{1}\Big( \forall k \; :\; X_k' \leq t_k < X_{k+1}', a_k \geq \frac{\log n}{2\sqrt n}, \sum_{\ell=0}^k a_\ell \leq Y_k^* \Big)  g_{K_n}(\ba) \,
	d \bt \, d \ba
	\label{eq.lb_den1}
\end{align}
(the $\forall k $ in the previous inequality is a slight abuse of notation, since $ X_k' \leq t_k < X_{k+1}'$ is meant to hold for $k=1, \ldots, K_n$ and  the inequalities for $a_k$ for $k=0, \ldots, K_n$). Now
\begin{align*}
	&\int_{X_k'}^{X_{k+1}'} e^{na_k(T-t_k)}
	dt_k
	= \frac{1}{na_k} e^{na_k(T-X_k')}\big(1 -e^{-na_k(X_{k+1}'-X_k')}\big)
\end{align*}
On the event $H$ we have by  \eqref{MLE3}, $X_k'-X_{k-1}'\geq 1/\sqrt n.$ Hence on $H\cap \{a_k \geq \log n/(2\sqrt n)\}$, $na_k(X_{k+1}'-X_k') \geq \tfrac 12 \log n$ and
\begin{align*}
	&\int_{X_k'}^{X_{k+1}'} e^{na_k(T-t_k)}
	dt_k
	\geq \frac{1}{na_k} e^{na_k(T-X_k')}\big(1 -n^{-1/2}\big).
\end{align*}
Inserting this into \eqref{eq.lb_den1} gives the lower bound on $H\cap\{\widetilde f\le \widehat f^{\MLE}\}$
\begin{align}
	&\int e^{n \int f} \mathbf{1} (f\leq \widehat f^{\MLE}) d\Pi(f) \notag \\
	&\geq
	\frac{\lambda^{K_n}(1-n^{-1/2})^{K_n}}{n^{K_n}} \int e^{n\sum_{k=0}^{K_n} a_k(T-X_k')}
	\mathbf{1}\Big( \forall k \; :\; a_k \geq \frac{\log n}{2\sqrt n},  \sum_{\ell=0}^k a_\ell \leq Y_k^* \Big)  e^{-\sum_{k=0}^{K_n} a_k} \,
	d \ba.
	\label{eq.lb_den2}
\end{align}
Let $v_k\geq n^{-1/2}.$ With $Y_k^*-Y_{k-1}^* \geq  n^{-1/2}\log n$ for $k \geq 1$ and $Y_0^*\geq  n^{-1/2}\log n$ on $H$ by \eqref{MLE4}, we have on $H\cap\{\sum_{\ell=0}^{k-1} a_\ell \leq Y_{k-1}^*\}$
\begin{align*}
	\int e^{n a_k v_k} \mathbf{1}\Big( \frac{\log n}{2\sqrt n} \leq a_k \leq Y_k^* - \sum_{\ell=0}^{k-1} a_\ell  \Big) da_k
	&\geq
	\frac{ e^{n(Y_k^*-\sum_{\ell=0}^{k-1} a_\ell ) v_k}}{nv_k}
	\big( 1 -e^{\sqrt{n} v_k \log (n)/2-n(Y_k^*-Y_{k-1}^*) v_k}\big) \\
	&\geq \frac{1-n^{-1/2}}{nv_k} e^{n(Y_k^*-\sum_{\ell=0}^{k-1} a_\ell ) v_k}.
\end{align*}
With this inequality, we can now further lower bound the right hand side of \eqref{eq.lb_den2} by integrating successively over $a_{K_n}, a_{K_n-1}, \ldots , a_0.$ We need to choose $v_{K_n}= T-n^{-1}-X_{K_n}'$ and $v_k=X_{k+1}'-X_k'$ for $k<K_n.$ On $H,$ $v_k \geq n^{-1/2}$ for all sufficiently large $n.$ This shows that
\begin{align*}
	\int e^{n \int f} \mathbf{1} (f\leq \widehat f^{\MLE}) d\Pi(f)
	&\geq
	\frac{\lambda^{K_n}(1-n^{-1/2})^{2K_n+1}}{n^{2K_n+1} \prod_{k=0}^{K_n} v_k} e^{n \sum_{k=0}^{K_n} Y_k^*v_k}.
\end{align*}
Since $X_{K_n}'\leq 1,$ we have $\int_0^{T-\frac 1n} \widetilde f = - Y_{K_n}^*/n +\sum_{k=0}^{K_n} Y_k^* (X_{k+1}'-X_k') = \sum_{k=0}^{K_n} Y_k^* v_k$ and the assertion follows.
\end{proof}

Consider specifically $B:= \bigcup_{K\ge 1}B_K$ with
\begin{align}
	B_K=\Big\{f= \sum_{\ell=0}^K a_\ell \mathbf{1}(\cdot \geq t_\ell)\in \mM(K,\infty)\,\Big|\, \forall \ \ell=1,\ldots,K: f(t_\ell)>\widehat f^{\MLE}(t_{\ell-1}) \Big\}.\label{EqAK}
\end{align}
The set $B_K$ is empty if $K$ exceeds the number of jumps of the MLE. $B_0$ coincides with all constant functions (recall $t_0=0$).

\begin{lem}
\label{lem.AK_bd}
Consider the CPP prior with jump distributions \eqref{eq.g_def} and the event $B$ defined via \eqref{EqAK}. Then,
\begin{align*}
	\Pi\big(B^c \big | N \big) \le  \frac{\lambda \widehat f^{\MLE}(1) }{n}.
\end{align*}
\end{lem}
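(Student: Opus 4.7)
My plan is to bound $\Pi(B^c\mid N)$ by the posterior expected number of violating jumps. Set $V(f):=\sum_{\ell=1}^{K(f)}\mathbf{1}\{f(t_\ell)\le\widehat f^{\MLE}(t_{\ell-1})\}$, so $\mathbf{1}_{B^c}\le V$ and $\Pi(B^c\mid N)\le\E_\Pi[V]$. Via the Bayes formula~\eqref{EqPosterior}, this reduces to bounding $\sum_{K\ge1}\sum_{\ell=1}^K U_K(C_{K,\ell})$ relative to the denominator $D$, where $C_{K,\ell}:=\{f\in\mM(K,\infty):f(t_\ell)\le\widehat f^{\MLE}(t_{\ell-1})\}$.

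The central step is a jump-removal change of variables. For $f\in C_{K,\ell}$, write $f=\widetilde f+a_\ell\mathbf{1}_{[t_\ell,T]}$ with $\widetilde f\in\mM(K-1,\infty)$ obtained by deleting the $\ell$-th jump; since $g_K(\ba)/g_{K-1}(\widetilde\ba)=a_\ell e^{-a_\ell}$ and $e^{n\int f}=e^{n\int\widetilde f}\,e^{na_\ell(T-t_\ell)}$,
\begin{align*}
U_K(C_{K,\ell})=\lambda\int dU_{K-1}(\widetilde f)\int_{\widetilde t_{\ell-1}}^{\widetilde t_\ell}\int_0^{H_\ell(t,\widetilde f)}a\,e^{-a+na(T-t)}\,da\,dt,
\end{align*}
where $H_\ell:=M_\ell\wedge G_\ell$ with $M_\ell(\widetilde f):=\widehat f^{\MLE}(\widetilde t_{\ell-1})-\widetilde f(\widetilde t_{\ell-1})$ (the violation constraint $f(t_\ell)\le\widehat f^{\MLE}(t_{\ell-1})$) and $G_\ell(t,\widetilde f):=\min_{s\in[t,T]}(\widehat f^{\MLE}-\widetilde f)(s)$ (the feasibility $f\le\widehat f^{\MLE}$). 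Summing over $\ell$ tiles $[0,1]$, and summing over $K$ gives
\begin{align*}
\sum_{K,\ell}U_K(C_{K,\ell})=\lambda\int\Big[\int_0^1\int_0^{H(t,\widetilde f)}a\,e^{-a+na(T-t)}\,da\,dt\Big]d\sigma(\widetilde f),\qquad\sigma:=\sum_{K'\ge 0}U_{K'}.
\end{align*}

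The hard part is to bound the bracketed quantity by $\widehat f^{\MLE}(1)/n$ pointwise in $\widetilde f$, using $H_j\le M_j\le\widehat f^{\MLE}(\widetilde t_{j-1})\le\widehat f^{\MLE}(1)=:M^*$ on each slot $j$. I would exchange order of integration slot by slot and use
\begin{align*}
\int_{\widetilde t_{j-1}}^{\widetilde t_j}e^{na(T-t)}\,dt=\frac{e^{na(T-\widetilde t_{j-1})}-e^{na(T-\widetilde t_j)}}{na},
\end{align*}
so that the $1/(na)$ factor cancels the $a$ in the integrand, leaving $e^{-a}$ times a difference of exponentials. After integrating over $a\in[0,H_j]$ and using $H_j\le M^*$ at the right moment, the resulting boundary terms telescope across $j$ (the endpoints $e^{na(T-\widetilde t_j)}$ at the right of slot $j$ cancel against the corresponding term at the left of slot $j+1$), producing the factor $M^*/n$.

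Combining these estimates yields $\sum_{K,\ell}U_K(C_{K,\ell})\le(\lambda M^*/n)\,\sigma(\text{all})=(\lambda M^*/n)D$, and dividing by $D$ gives $\Pi(B^c\mid N)\le\lambda\widehat f^{\MLE}(1)/n$. I expect the inner-integral estimate to be the main obstacle: controlling the exponential factor $e^{na(T-t)}$ without accumulating $e^{nM^*T}$-type losses requires the $t$-integration to supply the $1/(na)$ factor at just the right moment, and it is essential that the prior density $g_K$ contributes an $a$-factor (from the $\Gamma(2,1)$ marginals in~\eqref{eq.g_def}) that is precisely matched against this $1/(na)$.
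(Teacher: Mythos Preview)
Your reduction to the expected number of violating jumps and the jump-removal change of variables are sound, and the formula
\[
\sum_{K\ge 1}\sum_{\ell=1}^K U_K(C_{K,\ell})=\lambda\int\Big[\int_0^1\int_0^{H(t,\widetilde f)}a\,e^{-a+na(T-t)}\,da\,dt\Big]d\sigma(\widetilde f)
\]
is correct. The gap is in the ``hard part'': the pointwise bound $[\text{bracket}]\le \widehat f^{\MLE}(1)/n$ is simply false. Take $\widetilde f\equiv 0$ (the $K'=0$ piece of $\sigma$ with $a_0=0$). Then there is a single slot $(0,1)$, $M_1=\widehat f^{\MLE}(0)=:m_0$, and $H(t)=m_0\wedge \widehat f^{\MLE}(t)=m_0$. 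Your own scheme (bound $H\le M^*$, swap, telescope) yields
\[
\frac1n\int_0^{m_0}e^{-a}\big(e^{naT}-e^{na(T-1)}\big)\,da,
\]
which is of order $n^{-1}e^{nm_0T}$ and in no way bounded by $\widehat f^{\MLE}(1)/n$. Keeping the slot-dependent caps $M_j$ does not help either: the $a$-integration limits differ across slots, so the boundary terms $e^{na(T-\widetilde t_j)}$ do \emph{not} telescope. The root cause is that your deletion $\widetilde f=f-a_\ell\mathbf 1_{[t_\ell,T]}$ produces the \emph{positive} extra exponent $e^{+na_\ell(T-t_\ell)}$, and no amount of $t$-integration can compensate for that pointwise in $\widetilde f$.

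The paper's proof repairs exactly this: instead of subtracting the violating jump, it \emph{merges} jump $q$ upward into jump $q-1$ via $a_{q-1}^{(-q)}:=a_{q-1}+a_q$, obtaining $f^{(-q)}\ge f$ with $f^{(-q)}\le\widehat f^{\MLE}$ (this last inequality is precisely where the violation condition $f(t_q)\le\widehat f^{\MLE}(t_{q-1})$ is used). Now $e^{n\int f}=e^{n\int f^{(-q)}}\cdot e^{-na_q(t_q-t_{q-1})}$ with a \emph{negative} extra exponent, so $\int_{t_{q-1}}^{t_{q+1}}e^{-na_q(t_q-t_{q-1})}a_q\,dt_q\le 1/n$, and integrating $a_q$ over $[0,a_{q-1}^{(-q)}]$ gives $a_{q-1}^{(-q)}/n$. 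After relabelling, summing over $q$ yields $\sum_{q}a_{q-1}\le\widehat f^{\MLE}(1)$, and one obtains $U_K((B^c)_K)\le \lambda\widehat f^{\MLE}(1)\,n^{-1}\,U_{K-1}(\mM(K-1,\infty))$ for every $K$, which immediately gives the lemma. Your observation that the $\Gamma(2,1)$ factor $a$ is essential is right, but it must be paired with the negative exponent coming from the upward merge, not with $e^{na(T-t)}$.
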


\begin{proof}
For $K\ge 1$ we show
\begin{align}
	U_K((B^c)_K) \leq  \frac{\lambda \widehat f^{\MLE}(1)}{n } U_{K-1}( \mM(K-1, \infty)),
	\label{eq.to_show_AK_bd}
\end{align}
which by \eqref{EqPosterior} immediately implies the assertion. For $f= \sum_{\ell=0}^K a_\ell \mathbf{1}(\cdot \geq t_\ell) \in  (B^c)_K$ there is a $q\in\{1,\ldots,K\}$ such that   with $a_\ell^{(-q)}:=a_\ell+a_q\mathbf{1}(\ell=q-1)$
\[f^{(-q)} := a_0^{(-q)}+\sum_{\ell \neq q} a_\ell^{(-q)} \mathbf{1}(\cdot \geq t_\ell) \leq \widehat{f}^{\MLE}.\]
 Using $\exp(-\sum_{k=0}^K a_k)\prod_{k=1}^K a_k \leq \exp(-\sum_{k\neq q} a_k^{(-q)})\prod_{k=1}^K a_k^{(-q)}$ and the prescription \eqref{eq.g_def} of $g_k$, we obtain
\begin{align}
	U_K((B^c)_K) \leq
	&\lambda^K\sum_{q=1}^K
	\int e^{n \sum_{\ell \neq  q} a_\ell^{(-q)} (T-t_\ell) - n a_q^{(-q)}(t_q-t_{q-1})}
	\mathbf{1}(f^{(-q)} \leq \widehat  f^{\MLE}) \notag \\
	&\cdot   e^{-\sum_{k\neq q} a_k^{(-q)}}\prod_{k=1}^K a_k^{(-q)} \mathbf{1}(0\leq t_1 \leq \hdots \leq t_K\leq 1) \, d\bt d\ba^{(-q)}.
	\label{eq.AK_bd2}
\end{align}
We have $a_q^{(-q)} \leq a_{q-1}^{(-q)}$ and
\begin{align*}
	\int_0^{a_{q-1}^{(-q)}} \int_{t_{q-1}}^{ t_{q+1}} e^{-na_q^{(-q)}(t_q-t_{q-1})} a_q^{(-q)} \, dt_q da_q^{(-q)} \leq \int_0^{a_{q-1}^{(-q)}} \frac 1n da_q^{(-q)} = \frac {a_{q-1}^{(-q)}}n.
\end{align*}
Thus, we can integrate over $t_q$ and $a_q^{(-q)}$ on the right hand side of \eqref{eq.AK_bd2}. For any fixed $q,$ we then rename $a_\ell^{(-q)}$ as $a_\ell$ if $\ell <q$ and $a_{\ell-1}$ if $\ell >q.$ We also rename $t_\ell$ in $t_{\ell-1}$ if $\ell >q$ and find
\begin{align*}
	U_K((B^c)_K) \leq
	&\frac{\lambda^K}n \sum_{q=1}^K
	\int a_{q-1} e^{n \sum_{\ell=0}^{K-1} a_\ell (T-t_\ell) }
	\mathbf{1}\Big(\sum_{\ell=0}^{K-1} a_\ell \mathbf{1}(\cdot \geq t_\ell) \leq \widehat  f^{\MLE} \Big) \notag \\
	&\cdot  g_{K-1}((a_0, \ldots, a_{K-1})) \mathbf{1}(0\leq t_1 \leq \hdots \leq t_{K-1}\leq 1) \, d\bt da_0 \ldots d a_{K-1}.
\end{align*}
We can bound the sum $\sum_{q=1}^K a_q$ by $\widehat  f^{\MLE}(1)$ and the remaining integral is then over step functions with $K-1$ jumps. Using the definition of $U_{K-1}( \mM(K-1, \infty))$ yields \eqref{eq.to_show_AK_bd}.
\end{proof}

We now derive bounds for the posterior mass of the events $B_K.$ Since $B_K$ is empty if $K$ exceeds the number $M$ of jumps of the MLE, it is sufficient to consider $K\leq M.$ Let $\bs=(s_0,s_1,\ldots,s_{K+1})$ with $0=: s_0 < s_1 < \ldots < s_K < s_{K+1}:=T$ and $s_k \in \{\widehat t_\ell^{\MLE}, \ell=1,\ldots, M\}$ for $k=1,\ldots,K.$ In particular, this implies that $s_K \leq 1$ as all the jumps of the MLE occur before time one. Given $\bs$ consider the function
\begin{align*}
	\fMLEKs = \sum_{k=0}^K \widehat f^{\MLE}(s_k) \mathbf{1}_{[s_k,s_{k+1})},
\end{align*}
which satisfies $\fMLEKs\le\widehat f^{\MLE}$ and whose $K$ jump points form a subset of the MLE jump points.

\begin{lem}
The event $B_K \cap \{ f\leq \fMLEKs\}$ consist of functions $f$ whose $K$ jump points $(t_i,f(t_i))$ lie in the rectangles $[s_i,s_{i+1})\times (\fMLE(s_{i-1}),\fMLE(s_i)]$ for $i=1,\ldots K$.
\end{lem}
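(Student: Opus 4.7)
Fix $f = \sum_{\ell=0}^K a_\ell \mathbf{1}(\cdot \geq t_\ell)$ in $B_K \cap \{f \leq \fMLEKs\}$, with $0 = t_0 \leq t_1 \leq \ldots \leq t_K \leq 1$. For each $i\in\{0,1,\ldots,K\}$ let $k(i)\in\{0,\ldots,K\}$ be the unique index such that $t_i\in[s_{k(i)},s_{k(i)+1})$; this is well-defined because $t_K\le 1<T=s_{K+1}$. Clearly $k(0)=0$, and since both sequences $(t_i)$ and $(s_j)$ are non-decreasing, $k(i-1)\le k(i)$. The entire proof reduces to showing that this inequality is strict, whereupon a pigeonhole pins down $k(i)=i$.

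The crucial step is to upgrade $k(i-1)\le k(i)$ to a strict inequality by combining the two defining conditions of our event. Evaluating $f\leq \fMLEKs$ at $t_i$ and using that $\fMLEKs$ is piecewise constant equal to $\fMLE(s_{k(i)})$ on $[s_{k(i)},s_{k(i)+1})$ gives
\[
f(t_i)\ \le\ \fMLEKs(t_i)\ =\ \fMLE(s_{k(i)}).
\]
On the other hand, the defining inequality of $B_K$ is $f(t_i)>\fMLE(t_{i-1})$, and since $t_{i-1}\ge s_{k(i-1)}$ while $\fMLE$ is monotone increasing,
\[
f(t_i)\ >\ \fMLE(t_{i-1})\ \ge\ \fMLE(s_{k(i-1)}).
\]
Putting these together yields $\fMLE(s_{k(i)})>\fMLE(s_{k(i-1)})$. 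Because $\fMLE$ is monotone, this forces $s_{k(i)}\ne s_{k(i-1)}$, and combined with $k(i-1)\le k(i)$ we obtain $k(i-1)<k(i)$.

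Consequently $0=k(0)<k(1)<\cdots<k(K)$ is a strictly increasing sequence of $K+1$ integers lying in $\{0,1,\ldots,K\}$, so by pigeonhole $k(i)=i$ for every $i$. This places $t_i\in[s_i,s_{i+1})$ for $i=1,\ldots,K$, which is the $x$-coordinate part of the claim. For the $y$-coordinate the two bounds derived above specialize to $\fMLE(s_{i-1})<f(t_i)\le \fMLE(s_i)$, which is exactly the claimed rectangle $(\fMLE(s_{i-1}),\fMLE(s_i)]$. The main (and only) subtlety is to be careful about where strict versus weak monotonicity of $\fMLE$ is used; since the $s_k$ are chosen among jump points of $\fMLE$ there is no risk of a false equality in the chain of inequalities, and the remaining steps are bookkeeping.
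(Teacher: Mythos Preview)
Your proof is correct and follows essentially the same idea as the paper's, just organized a bit more cleanly. The paper argues by contradiction that no interval $[s_i,s_{i+1})$ can contain two jumps of $f$ (the second one would exceed $\fMLEKs$), whereas you encode the interval containing $t_i$ by the index $k(i)$ and show directly that $k(i-1)<k(i)$ via the chain $\fMLE(s_{k(i-1)})\le \fMLE(t_{i-1})<f(t_i)\le \fMLE(s_{k(i)})$; both routes then finish with the same pigeonhole. Your closing remark about the $s_k$ being jump points of $\fMLE$ is unnecessary: the implication $\fMLE(s_{k(i)})>\fMLE(s_{k(i-1)})\Rightarrow k(i)>k(i-1)$ uses only that $\fMLE$ is non-decreasing and the $s_k$ are strictly increasing, so you can drop that sentence without loss.
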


\begin{proof}
If $f= \sum_{\ell=0}^K a_\ell \mathbf{1}(\cdot \geq t_\ell) \in B_K  \cap \{ f\leq \fMLEKs\} \subseteq B_K  \cap \{ f\leq \fMLE\},$ then $f(t_\ell)> \fMLE(t_{\ell-1}) \geq f(t_{\ell-1})$ and therefore $f$ has $K$ jumps. Suppose that $f\in B_K$ jumps twice in $[s_i,s_{i+1})$, then with the second jump it must jump strictly above $\widehat f^{\MLE}(s_i)$ violating the constraint $\{ f\leq \fMLEKs\}$. Consequently, all $f\in B_K \cap \{ f\leq \fMLEKs\}$ jump at most once in every $[s_i,s_{i+1}).$ A similar argument also shows that $f$ does not jump on $[0,s_1).$  Therefore, $f$ must jump in each of the $K$ intervals $[s_i,s_{i+1}),$ $i=1, \ldots, K$ exactly once and thus $t_i \in [s_i,s_{i+1}).$ Because of $\fMLE(s_{i-1}) = \fMLE(t_{i-1}) < f(t_i) \leq \fMLE(s_i)$ the result follows.
\end{proof}

Define
\begin{align}
	\ell(\bs) :=\big| \{X_1', \ldots, X_{K_n}'\} \cap \{s_1, \ldots, s_K\}\big|.
	\label{eq.Ll_def}
\end{align}

\begin{lem}
\label{lem.ub_num}
If $R \geq 1,$ then
\begin{align*}
	P_{f_0}\bigg(U_K\big(B_K \cap \{ f\leq \fMLEKs\}\big)
	\leq
	 \frac{\lambda^K ((K_n+2)R)^{K_n - \ell(\bs)}\exp(n \int_0^{T-\frac 1n} \fMLEKs)}{n^K (1-n^{-1/2}) \prod_{k=0}^{K} (4R)^{-1} \vee (n(s_{k+1}-s_k))} \ \ \forall K, \forall \bs \bigg) \xrightarrow{u} 1 .
\end{align*}
\end{lem}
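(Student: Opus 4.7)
The plan is to change variables from the jump heights $(a_0,\ldots,a_K)$ to the cumulative heights $b_\ell:=\sum_{k=0}^\ell a_k$. Under this reparametrisation the prior density becomes $g_K(\bb)=e^{-b_K}\prod_{k=1}^K(b_k-b_{k-1})$, the likelihood exponent reads $n\int_0^T f=n\sum_{k=0}^K b_k(t_{k+1}-t_k)$ with $t_0=0$, $t_{K+1}=T$, and the lemma preceding the statement identifies the integration region $B_K\cap\{f\le\fMLEKs\}$ with the product of rectangles $t_i\in[s_i,s_{i+1})$ for $i=1,\ldots,K$, $b_i\in(\fMLE(s_{i-1}),\fMLE(s_i)]$ for $i\ge 1$ and $b_0\in(0,\fMLE(s_0)]$. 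All estimates are carried out on the event $H\cap\{\widetilde f\le\fMLE\}$, whose $P_{f_0}$-probability tends uniformly to one by \eqref{MLE1} and \eqref{MLE7}.

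First I integrate the jump locations $t_i$. Rewriting the exponent as $nb_KT-n\sum_{i=1}^K(b_i-b_{i-1})t_i$ and using $b_i>b_{i-1}$, each elementary integral $\int_{s_i}^{s_{i+1}\wedge 1}e^{-n(b_i-b_{i-1})t_i}\,dt_i$ is bounded by $e^{-n(b_i-b_{i-1})s_i}/[n(b_i-b_{i-1})]$. The resulting factor $\prod_{i=1}^K(b_i-b_{i-1})^{-1}$ cancels \emph{exactly} against the jump-size product in the prior, leaving the clean factor $n^{-K}$ and an integrand in $\bb$ whose exponent simplifies, by summation by parts, to $n\sum_{j=0}^K b_j(s_{j+1}-s_j)-b_K$.

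Next I integrate the $b_j$ one by one. The $b_K$-integral is the crucial step because the $-b_K$ term surviving from the prior tail combines with the positive exponential to give the integrand $e^{b_K(n(T-s_K)-1)}$, which on $(\fMLE(s_{K-1}),\fMLE(s_K)]$ integrates to $e^{\fMLE(s_K)(n(T-s_K)-1)}/[n(T-s_K)-1]$. Rewriting the exponent as $n\fMLE(s_K)(T-1/n-s_K)$ produces precisely the $-\fMLE(s_K)/n$ correction that replaces $\int_0^T\fMLEKs$ by $\int_0^{T-1/n}\fMLEKs$; the bounds $T-s_K\ge T-1$ and $n(T-1)\ge\sqrt{n}$ yield $[n(T-s_K)-1]^{-1}\le [n(s_{K+1}-s_K)(1-n^{-1/2})]^{-1}$ and make the $k=K$ factor in the product $\prod_{k=0}^K$ collapse to $n(s_{K+1}-s_K)$. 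For each $j<K$ the integrand $e^{nb_j(s_{j+1}-s_j)}$ is increasing on $(\fMLE(s_{j-1}),\fMLE(s_j)]$, so
\[\int_{\fMLE(s_{j-1})}^{\fMLE(s_j)} e^{nb_j(s_{j+1}-s_j)}\,db_j\;\le\; e^{n\fMLE(s_j)(s_{j+1}-s_j)}\min\!\Big(\fMLE(s_j)-\fMLE(s_{j-1}),\ \tfrac{1}{n(s_{j+1}-s_j)}\Big).\]
The exponentials collected from all $b_j$ integrations combine into $\exp(n\int_0^{T-1/n}\fMLEKs)$.

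The main obstacle is to replace each of the $\min$-bounds by the uniform expression $[(4R)^{-1}\vee n(s_{j+1}-s_j)]^{-1}$ at the cost of the prefactor $((K_n+2)R)^{K_n-\ell(\bs)}$. On $H$ one always has $\fMLE(s_j)-\fMLE(s_{j-1})\le\fMLE(1)\le(K_n+2)R$, which per index yields at most an extra factor $(K_n+2)R$ when compared with the target bound. The sharper estimate $\fMLE(s_j)-\fMLE(s_{j-1})\le 4R$ (valid for $R\ge 1$) holds whenever $s_{j-1}=X_{k-1}'$ and $s_j=X_k'$ for consecutive starred indices, because Lemma \ref{lem.seqs_on_MLE} together with \eqref{MLE4} identifies this difference with $Y_k^*-Y_{k-1}^*\le a_k^0+o(1)\le R+o(1)$. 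A pigeon-hole argument exploiting the fact that exactly $\ell(\bs)$ of the $K_n$ starred points $X_k'$ appear among $s_1,\ldots,s_K$ shows that the number of indices where the tight $4R$-bound fails is at most $K_n-\ell(\bs)$, producing the claimed numerator factor. Multiplying all the pieces above yields the stated inequality, and taking the intersection of $H\cap\{\widetilde f\le\fMLE\}$ with the probability lower bounds from \eqref{MLE1}, \eqref{MLE7} gives the uniform convergence.
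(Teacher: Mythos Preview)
Your approach is essentially the paper's, just with the change of variables to cumulative heights $b_k$ made explicit and with the $\min$-structure extracted from the $b$-integration rather than the $t$-integration. This is a legitimate reorganisation and in one respect makes the combinatorial step cleaner: your key quantity is $\fMLE(s_j)-\fMLE(s_{j-1})$, which involves only the single interval $(s_{j-1},s_j)$, whereas the paper's corresponding factor spans two consecutive intervals.

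There is, however, a genuine gap in your pigeonhole argument. You assert that the number of indices $j$ with $\fMLE(s_j)-\fMLE(s_{j-1})>4R$ is at most $K_n-\ell(\bs)$, and you support this by observing that the $4R$-bound holds whenever $s_{j-1}=X_{k-1}'$ and $s_j=X_k'$ are \emph{consecutive} starred points. But this sufficient condition does not yield the count you need: take $\ell(\bs)=K_n$ (all starred points are used) with $K\gg K_n$; then most consecutive pairs $(s_{j-1},s_j)$ fail your criterion, while $K_n-\ell(\bs)=0$. The correct argument --- the one the paper uses --- is the converse implication. On the event $H$ one has $f_0(t_k^0)\le Y_k^*\le f_0(t_{k+1}^0)$, so consecutive values $Y_k^*$ differ by at most $2R$; hence if $\fMLE(s_j)-\fMLE(s_{j-1})>2R$ there must exist some $q^*$ with $\fMLE(s_{j-1})<Y_{q^*}^*<\fMLE(s_j)$, which forces the MLE jump point $X_{q^*}'$ to lie strictly inside $(s_{j-1},s_j)$ and therefore $X_{q^*}'\notin\{s_1,\ldots,s_K\}$. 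Since there are exactly $K_n-\ell(\bs)$ such missing points, and each open interval $(s_{j-1},s_j)$ can be charged to one of them, the count follows. Replace your justification with this one and the proof goes through.
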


\begin{proof}
For any $v_k > 0,$
\begin{align*}
	\int e^{na_k v_k} \mathbf{1}\Big( a_k \leq \fMLE(s_k) - \sum_{\ell=0}^{k-1} a_\ell \Big)  da_k
	\leq \frac{1}{nv_k} e^{nv_k(\fMLE(s_k) - \sum_{\ell=0}^{k-1} a_\ell)}.
\end{align*}
Integrating successively over $a_K, a_{K-1}, \ldots, a_0$  with $v_K:= T -n^{-1} - s_K$ and
$v_k :=s_{k+1}-s_k$ for $k<K,$ we find
\begin{align*}
	V := \int e^{n \sum_{k=0}^K a_k(T -n^{-1} - s_k)}
	\mathbf{1}\Big(\forall k \, : \, \sum_{\ell=0}^k a_\ell \leq \fMLE(s_k) \Big) \, d\ba
	\leq \frac {e^{n \sum_{k=0}^K v_k \fMLE(s_k)}}{n^{K+1}\prod_{k=0}^K v_k}.
\end{align*}
Since $\fMLEKs = \sum_{k=0}^K \widehat f^{\MLE}(s_k) \mathbf{1}_{[s_k,s_{k+1})}$ with $s_K \leq 1,$ $\sum_{k=0}^K v_k \fMLE(s_k)  = -\fMLEKs(1)/n + \int_0^T \fMLEKs = \int_0^{T-\frac 1n} \fMLEKs.$ Together with $v_K \geq (T-s_K)(1-n^{-1/2}),$
\begin{align}
	V  \leq \frac {e^{n \int_0^{T-\frac 1n} \fMLEKs}}{n^{K+1}(1-n^{-1/2})\prod_{k=0}^K (s_{k+1}-s_k)}.
	\label{eq.lem_ub_num0}	
\end{align}
Since $f_0 = \sum_{k=0}^{K_n} a_k^0 \mathbf{1}( \cdot \geq t_k^0) \in \mM_S(K_n,R),$ the jumps heights $a_k^0$ are all bounded by $R.$ On the event $H$ defined in \eqref{eq.H_def}, $f_0(t_k^0) \leq Y_k^* \leq f_0(t_{k+1}^0)$ for all $k=0,\ldots, K_n.$ Hence, if $\fMLE(s_{k+1})-\fMLE(s_k) > 2R$ then, there exists a $q^*,$ such that $\fMLE(s_k) < Y_{q^*}^* <\fMLE(s_{k+1}).$ The MLE jumps on $X_{q^*}'$ to $Y_{q^*}^*$  and this means that $s_k < X_{q^*}' < s_{k+1}.$ Using the definition of $\ell(\bs)$ in \eqref{eq.Ll_def} and $R\geq 1,$
\begin{align}
	&\prod_{k=1}^{K} 1 \wedge \big(n(s_{k+1}-s_k) (\fMLE(s_{k+1})-\fMLE(s_{k-1}))\big) \notag \\
	&\leq \big(1 \vee \fMLE(1) \big)^{K_n - \ell(\bs)}
	\prod_{k=1}^{K} 1 \wedge (4R n(s_{k+1}-s_k)).
	\label{eq.lem_ub_num1}
\end{align}
By the definition of $B_K$ in \eqref{EqAK}, $f = \sum_{\ell=0}^K a_\ell \mathbf{1} (\cdot \geq t_\ell) \in B_K$ with $f\le \fMLEKs$ implies for the jump times $t_k\in[s_k,s_{k+1})$ and $\fMLE(s_{k-1}) <\sum_{\ell=0}^k a_\ell \leq \fMLE(s_k).$ The latter implies in particular that $a_k \leq \fMLE(s_k)-\sum_{\ell=0}^{k-1} a_k < \fMLE(s_k)-\fMLE(s_{k-1}).$ Integrating out $t_1, \ldots, t_K$ and using the inequalities \eqref{eq.lem_ub_num1} and \eqref{eq.lem_ub_num0},
\begin{align*}
	&U_K(B_K \cap \{ f\leq \fMLEKs\}) \notag \\
	& \le
	\lambda^K
	\int e^{n \sum_{k=0}^K a_k(T- t_k)}
	\mathbf{1}\Big(\forall k \; : \; s_k \leq t_k < s_{k+1},  \fMLE(s_{k-1}) <\sum_{\ell=0}^k a_\ell \leq \fMLE(s_k) \Big) g_K(\ba) \, d\bt d\ba \notag  \\
	&\leq
	V\frac{\lambda^K}{n^K} \prod_{k=1}^{K} 1 \wedge [n(s_{k+1}-s_k) (\fMLE(s_{k+1})-\fMLE(s_{k-1}))] \notag \\
	&\leq
	 V\big( 1 \vee \fMLE(1)\big)^{K_n - \ell(\bs)} \frac{\lambda^K}{n^K} \prod_{k=1}^{K} 1 \wedge \big[4Rn(s_{k+1}-s_k)\big] \\
	 &\leq \frac{\lambda^K (1 \vee \widehat f^{\MLE}(1))^{K_n - \ell(\bs)}}{n^K (1-n^{-1/2}) \prod_{k=0}^{K} (4R)^{-1} \vee [n(s_{k+1}-s_k)]} e^{n \int_0^{T-\frac 1n} \fMLEKs},
\end{align*}
using for the last step that for $x,A >0,$ $(1 \wedge (Ax))/x = 1/(A^{-1} \vee x)$ and $s_1 -s_0 =s_1 \geq X_1' \geq 1/\sqrt{n}.$ Due to \eqref{eq.H_def} and $R \geq 1$ we have $1 \vee \widehat f^{\MLE}(1)\leq (K_n+2)R$ and this completes the proof.
\end{proof}

\begin{lem}
\label{lem.int_comps}
For any $\tau \in [(T+1)/2,T]$ and $\ell(\bs)$ as defined in \eqref{eq.Ll_def}, the event
\begin{align*}
	\mathcal{B}=\bigcap_{K, \bs }\Big\{ n\int_0^\tau \big(\fMLEKs- \widetilde f \big)  \leq (K-\ell(\bs) ) \big(\log K_n + C'(R)\log\log n \big)
	-(K_n -\ell(\bs))\log^2 n \Big\}
\end{align*}
with the intersection taken over all model dimensions $K$ and all subsets of jump locations $\bs$ has probability converging to one in the sense that
\begin{align*}
	P_{f_0}(\mathcal{B}) \xrightarrow{u} 1.
\end{align*}
\end{lem}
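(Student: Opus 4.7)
The plan is to argue on the intersection $\Omega_n$ of the high-probability events underlying $H$ (see \eqref{eq.H_def}), the identification $\widetilde f = \widehat f^{\MLE}_{K_n}$ from Proposition \ref{prop.MLE_with_Kn_jumps}, and the rectangle estimates \eqref{MLE7}, \eqref{MLE8}, \eqref{MLE10}, \eqref{MLE11}. Each of these has $P_{f_0}(\cdot) \xrightarrow{u} 1$, hence so does $P_{f_0}(\Omega_n)$, and it suffices to prove $\Omega_n \subseteq \mathcal{B}$: on $\Omega_n$ the asserted inequality must hold simultaneously for every admissible pair $(K,\bs)$.

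\textbf{Decomposition and positive side.} Fix $(K,\bs)$ and let $I_k := [X_k', X_{k+1}') \cap [0,\tau]$ with $X_0' = 0$ and $X_{K_n+1}' = T$. Since $\widetilde f \equiv Y_k^*$ on $I_k$,
\[
n \int_0^\tau (\fMLEKs - \widetilde f)\, dx = \sum_{k=0}^{K_n} n \int_{I_k} (\fMLEKs(x) - Y_k^*)\, dx,
\]
and each $I_k$-integral further splits into rectangle contributions determined by $\bs \cap I_k$ together with a leftmost piece of value $V_k^L := \widehat f^{\MLE}(\max\{s\in\bs: s < X_k'\}\cup\{0\})$ when $X_k' \notin \bs$. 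I plan to show that each $s_i \in \bs$ contributes at most $(\log K_n + C'(R)\log\log n)/n$ to the positive part, a bound that follows from \eqref{MLE10} upon combining $Y_{k(i)}^* \geq f_0(t_{k(i)}^0)$ (for $k(i)$ the index of the $I_{k(i)}\ni s_i$) with the width constraint $X_{k(i)+1}' < t_{k(i)+1}^0$ (and, when $s_i < t_{k(i)}^0$, from the version of \eqref{MLE10} at index $k(i)-1$). The $\ell(\bs)$ matches $s_i = X_{k(i)}'$ yield no extra positive contribution beyond what is already attributable to the non-match terms, so that the total positive part is bounded by $(K - \ell(\bs))(\log K_n + C'(R)\log\log n)$.

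\textbf{Negative side from missed true jumps.} For each of the $K_n - \ell(\bs)$ missed indices $k \in \{1,\ldots,K_n\}$ with $X_k' \notin \bs$, I localize a region on which $\fMLEKs$ is bounded above by $Y_k' < Y_k^*$: setting $\sigma_1^k := \min\{s\in\bs: s > X_k'\} \wedge X_{k+1}'$, on $[X_k', \sigma_1^k)$ the MLE constraint $\widehat f^{\MLE}(X_k') \leq Y_k'$ combined with monotonicity forces $\fMLEKs = V_k^L \leq Y_k'$, and the piece $[X_k', \sigma_1^k \wedge X_k^*)$ contributes at most $-(Y_k^* - Y_k')(\sigma_1^k \wedge X_k^* - X_k')$ to the integral. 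In the benign sub-case $\sigma_1^k \geq X_k^*$, inequality \eqref{MLE11} makes this at most $-\log^2 n/n$ directly. In the contrary sub-case, any $\bs$-points lying in $(X_k', X_k^*)$ are charged back to the positive budget through \eqref{MLE10} (each such interference costing a further $(\log K_n + C'\log\log n)/n$), and an iterative extension of the negative window past these interferences---valid since each interference sits inside the rectangle of area $\geq \log^2 n/n$ provided by \eqref{MLE11} and since the MLE jump count on any $[t_k^0, t_{k+1}^0]$ is controlled by \eqref{MLE8}---preserves a net negative contribution of at least $\log^2 n/n$ per missed jump. Summing and combining with the positive bound yields the claim. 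The principal obstacle is precisely this charging/iteration step; its validity in the admissible regime relies on $\log^2 n \gg \log K_n + C'\log\log n$, together with a careful combinatorial bookkeeping that assigns each $s_i \in (X_k', X_k^*)$ consistently to either the positive or negative accounting without double counting.
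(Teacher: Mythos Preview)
Your overall strategy matches the paper's: work on the intersection of the high-probability events and bound the positive and negative parts of $\int_0^\tau(\fMLEKs-\widetilde f)$ via \eqref{MLE10} and \eqref{MLE11} respectively. However, you have manufactured a difficulty that does not exist, and the proof is left incomplete at precisely the point you flag as ``the principal obstacle''.

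The observation you are missing is that the jump locations $s_1,\ldots,s_K$ are, by the very definition of $\fMLEKs$, selected from the jump points $\{\widehat t_\ell^{\MLE}\}$ of the monotone MLE. On the event $\{\widetilde f\le\widehat f^{\MLE}\}\cap D$ (both having probability tending uniformly to one by \eqref{MLE7} and Lemma~\ref{lem.seqs_on_MLE}), the MLE is identically equal to $Y_k^*$ on the open interval $(X_k',X_k^*)$: it is $\ge Y_k^*$ there because $\widetilde f\le\widehat f^{\MLE}$, and it is $\le Y_k^*$ since $(X_k^*,Y_k^*)$ is an observation to the right. Hence the MLE has \emph{no} jump in $(X_k',X_k^*)$, and there can be no $s_i\in(X_k',X_k^*)$. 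Your ``contrary sub-case'' is therefore vacuous: whenever $X_k'\notin\bs$ one automatically has $\sigma_1^k\ge X_k^*$, and \eqref{MLE11} yields the negative contribution $-(Y_k^*-Y_k')(X_k^*-X_k')\le -\log^2 n/n$ directly. The iterative extension and charging bookkeeping are entirely unnecessary.

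The same observation streamlines your positive part. For $s_i\notin\{X_k'\}$ with $s_i\in[X_{k(i)}',X_{k(i)+1}')$, the flatness of the MLE on $(X_{k(i)}',X_{k(i)}^*)$ forces $s_i\ge X_{k(i)}^*\ge t_{k(i)}^0$, so the sub-case $s_i<t_{k(i)}^0$ you worry about cannot occur either; \eqref{MLE10} applies at index $k(i)$ without modification.

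Once this is incorporated, the argument is complete and coincides with the paper's: each of the $K-\ell(\bs)$ non-matched $s_i$'s contributes a positive rectangle $[s_i,X_{k(i)+1}'\wedge\tau]\times[Y_{k(i)}^*,\widehat f^{\MLE}(s_i)]$ bounded via \eqref{MLE10}, each of the $K_n-\ell(\bs)$ missed $X_k'$'s contributes a disjoint negative rectangle $[X_k',X_k^*\wedge\tau]\times[Y_k',Y_k^*]$ bounded via \eqref{MLE11}, and the asserted inequality follows.
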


\begin{proof}
To compare the integrals, we can compare the areas under the curves, see Figure \ref{fig.the_argument}. For any $s_q \notin \{X_k', k=1, \ldots, K_n\},$ there exists an index $q^*$ such that $s_q\in (X_{q^*}',X_{q^*+1}').$ On the interval $[s_q, X_{q^*+1}']$ the function $\fMLEKs$ takes the value $\widehat f^{\MLE}(s_q)$ and $\widetilde f$ has the smaller value $Y_{q^*}^*$ implying that the areas under the curves differ by the  rectangle $[s_q, X_{q^*+1}' \wedge \tau] \times [Y_{q^*}^*, \widehat f^{\MLE}(s_1)].$ \eqref{MLE10} shows that there is a constant $C'(R),$ such that with probability tending uniformly to one, each of these areas have Lebesgue measure bounded by $(\log K_n + C' \log\log n)/n.$ On the contrary, if $X_k' \notin \{s_1,\ldots, s_K\},$ the area under $\widetilde f$ contains the rectangle $[X_k',X_k^* \wedge \tau] \times [Y_k',Y_k^*]$ that is not contained in the area under $\fMLEKs.$ \eqref{MLE11} shows that with probability tending uniformly to one, the Lebesgue measures of all of these rectangles is lower bounded by $\log^2 n/n.$ Multiplying all areas with $n$ shows that
\begin{align*}
	\bigcap_{K, \bs }\Big\{ n\int_0^\tau \big(\fMLEKs- \widetilde f \big)
	\leq \sum_{q: s_q \notin \{X_1',\ldots, X_{K_n}'\}}	\log K_n + C'(R) \log\log n - \sum_{k : X_k' \notin \{s_1,\ldots, s_K\}} \log^2 n\Big \}
\end{align*}
has probability tending uniformly to one.  The first and second sum are over $K-\ell(\bs)$ and $K_n-\ell(\bs)$ many terms, respectively, proving the assertion.
\end{proof}

\begin{lem}
\label{lem.A_K^c_bd}
Let $R \geq 1$ and $K_n \leq n^{1/2-\delta}$ for some $\delta>0.$ For any $\beta >0$
\begin{align*}
	P_{f_0}\Big(\forall\,K,\bs :\,\Pi\big(B_K \cap \{ f\leq \fMLEKs\} \, \big | \, N \big)
	\leq \lambda^{K-K_n} n^{-\frac 12 (1+\delta) (K-K_n)_+ - \beta (K_n -\ell(\bs))}\Big)\xrightarrow{u} 1.
\end{align*}
\end{lem}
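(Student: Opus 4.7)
The plan is to apply the Bayes formula \eqref{EqPosterior}, discarding all terms in the denominator except $K'=K_n$ and invoking Lemma~\ref{lem.denom} to bound that term below, while using Lemma~\ref{lem.ub_num} to bound the numerator $U_K(B_K\cap\{f\le\fMLEKs\})$ above. After cancelling the common $\lambda$- and $(1-n^{-1/2})$-factors, the resulting upper bound on $\Pi(B_K\cap\{f\le\fMLEKs\}\,|\,N)$ factorises as
\[\lambda^{K-K_n}\cdot((K_n+2)R)^{K_n-\ell(\bs)}\cdot\mathrm{RAT}\cdot e^{n\int_0^{T-1/n}(\fMLEKs-\widetilde f)},\]
with the algebraic ratio $\mathrm{RAT}:=n^{2K_n+1-K}\prod_{k=0}^{K_n}(X_{k+1}'-X_k')\big/\prod_{k=0}^K[(4R)^{-1}\vee n(s_{k+1}-s_k)]$. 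All subsequent work is to control these four factors on the intersection $\Omega^*$ of the events $H$, $\{\widetilde f\le\widehat f^{\MLE}\}$, $\mathcal{B}$ and the MLE jump-count event of \eqref{MLE8}, whose joint probability converges uniformly to one by \eqref{MLE1}, \eqref{MLE7}, \eqref{MLE8} and Lemma~\ref{lem.int_comps}.

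The exponential is then dominated via Lemma~\ref{lem.int_comps} with $\tau=T-1/n$ (admissible for $n$ large as $T>1$), yielding $e^{n\int\cdots}\le (K_n(\log n)^{C'})^{K-\ell(\bs)}\,n^{-(K_n-\ell(\bs))\log n}$. The super-polynomial term $n^{-(K_n-\ell(\bs))\log n}$ is the workhorse of the estimate: for $n$ beyond a threshold it absorbs any factor of the form $D^{K_n-\ell(\bs)}$ whose $D$ is polynomial in $n$. Splitting $K-\ell(\bs)=(K-K_n)+(K_n-\ell(\bs))$, it therefore absorbs the prior scale $((K_n+2)R)^{K_n-\ell(\bs)}$, the $(K_n(\log n)^{C'})^{K_n-\ell(\bs)}$ portion of the exponential bound, and any polynomial residual from RAT that scales in $K_n-\ell(\bs)$, producing the advertised $n^{-\beta(K_n-\ell(\bs))}$ for every $\beta>0$.

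For the $(K-K_n)_+$ contribution, what remains is to bound $\mathrm{RAT}\cdot(K_n(\log n)^{C'})^{K-K_n}$. On $\Omega^*$, \eqref{MLE3} forces $X_{k+1}'-X_k'\ge 1/\sqrt n$ and \eqref{MLE8} bounds the number of $s_k$'s lying inside each true interval by $C(R)\log n$. A corner-of-the-simplex minimum for the denominator product---one sub-interval carrying the bulk of each $(X_{p-1}',X_p')$ and the remaining sub-intervals sitting at the floor $(4R)^{-1}$---then gives
\[\prod_{k=0}^K[(4R)^{-1}\vee n(s_{k+1}-s_k)]\ge (4R)^{-(K-\ell(\bs))}\,n^{K_n+1}\prod_{k=0}^{K_n}(X_{k+1}'-X_k'),\]
whence $\mathrm{RAT}\le (4R)^{K-\ell(\bs)}n^{K_n-K}$. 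Multiplying by $(K_n(\log n)^{C'})^{K-K_n}$, the hypothesis $K_n\le n^{1/2-\delta}$ forces $K_n(\log n)^{C'}\le n^{1/2-\delta/2}$ for $n$ large, producing the target exponent $(1+\delta)(K-K_n)_+/2$, with the spurious constants safely absorbed into the $\beta$-factor of the previous paragraph.

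The main obstacle is the combinatorial accounting for the denominator lower bound. When $\ell(\bs)<K_n$, some true jump points are missing from $\bs$, so the $\bs$-subdivisions do not align cleanly with the $(X_{p-1}',X_p')$ intervals; the $K_n-\ell(\bs)$ merged true intervals must be handled separately, and the mismatch between numerator and denominator $n$-powers they create has to be shown to be absorbed by the super-polynomial saving $n^{-(K_n-\ell(\bs))\log n}$. The strict inequality $K_n\le n^{1/2-\delta}$ enters precisely at the last step of the previous paragraph; any weakening towards $K_n=O(n^{1/2})$ would destroy the required $(1+\delta)$-factor. Uniformity in $(K,\bs)$ requires no union bound, as every event above is uniform in $(K,\bs)$.
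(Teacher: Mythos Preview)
Your approach matches the paper's: combine Lemma~\ref{lem.denom} and Lemma~\ref{lem.ub_num} into the ratio \eqref{eq.1}, control the exponential via Lemma~\ref{lem.int_comps}, and bound the algebraic product ratio separately. The case analysis $K\le K_n$ versus $K>K_n$ and the absorption of all $(K_n-\ell(\bs))$-indexed polynomial factors into $n^{-(K_n-\ell(\bs))\log n}$ are exactly as in the paper.

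There is, however, a genuine gap in your displayed lower bound
\[
\prod_{k=0}^K[(4R)^{-1}\vee n(s_{k+1}-s_k)]\ge (4R)^{-(K-\ell(\bs))}\,n^{K_n+1}\prod_{k=0}^{K_n}(X_{k+1}'-X_k').
\]
Your corner-of-the-simplex argument assumes the $s$-partition refines the $X'$-partition, i.e.\ that every $X_k'$ is one of the $s_j$'s. This is only the case $\ell(\bs)=K_n$; for $\ell(\bs)<K_n$ the bound is simply false (take $K=1$, $K_n=2$, $s_1=X_1'$: the left side is $O(n^2)$ while the right side is $O(n^3)$). You do flag this in your last paragraph, but you never actually carry out the repair.

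The paper's fix is short and worth knowing. Letting $k(0)<k(1)<\dots<k(\ell(\bs)+1)$ index those $X_k'$ that lie in $\{s_j\}$ (with $k(0)=0$, $k(\ell(\bs)+1)=K_n+1$), one first coarsens the \emph{numerator}:
\[
\prod_{k=0}^{K_n}(X_{k+1}'-X_k')\ \le\ \prod_{m=0}^{\ell(\bs)}(X_{k(m+1)}'-X_{k(m)}'),
\]
valid because on each merged block the sub-lengths are $\le 1$ so their product is $\le$ their sum. Now the $s$-partition \emph{does} refine this coarser $X'$-partition, and the longest-piece bound on each block gives
\[
\frac{n^{K+1}\prod_{k=0}^{K_n}(X_{k+1}'-X_k')}{\prod_{k=0}^{K}(4R)^{-1}\vee[n(s_{k+1}-s_k)]}\le (4Rn)^{K-\ell(\bs)}\prod_{m=0}^{\ell(\bs)}(r_m+1)\le (8Rn)^{K-\ell(\bs)},
\]
using $\prod_m(r_m+1)\le 2^{\sum_m r_m}=2^{K-\ell(\bs)}$. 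This yields $\mathrm{RAT}\le (8R)^{K-\ell(\bs)}(n^2)^{K_n-K}n^{K-\ell(\bs)}$, which differs from your claimed $\mathrm{RAT}\le(4R)^{K-\ell(\bs)}n^{K_n-K}$ by a factor $n^{K_n-\ell(\bs)}$ --- precisely the ``mismatch'' you anticipated, and indeed absorbed by $n^{-(K_n-\ell(\bs))\log n}$. With this correction your remaining steps go through unchanged. Your invocation of \eqref{MLE8} in the $\mathrm{RAT}$ paragraph is unnecessary; neither you nor the paper use it at this point.
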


\begin{proof}
By Lemma \ref{lem.ub_num} and Lemma \ref{lem.denom}, with probability tending uniformly to one
\begin{align}
	&\Pi\big( B_K \cap \{ f\leq \fMLEKs\} \, \big | \, N \big) \notag \\
	&\leq
	\frac{((K_n+2)R)^{K_n-\ell(\bs)}\lambda^{K-K_n} n^{2(K_n-K)} \prod_{k=0}^{K_n} (X_{k+1}'-X_k')  }{(1-n^{-1/2})^{2K_n+2} n^{-K-1}\prod_{k=0}^{K} (4R)^{-1}\vee [n(s_{k+1}-s_k)]} e^{n \int_0^{T-\frac 1n} (\fMLEKs-\widetilde f)}.
	\label{eq.1}
\end{align}

\begin{figure}
\begin{center}
	\includegraphics[scale=0.6]{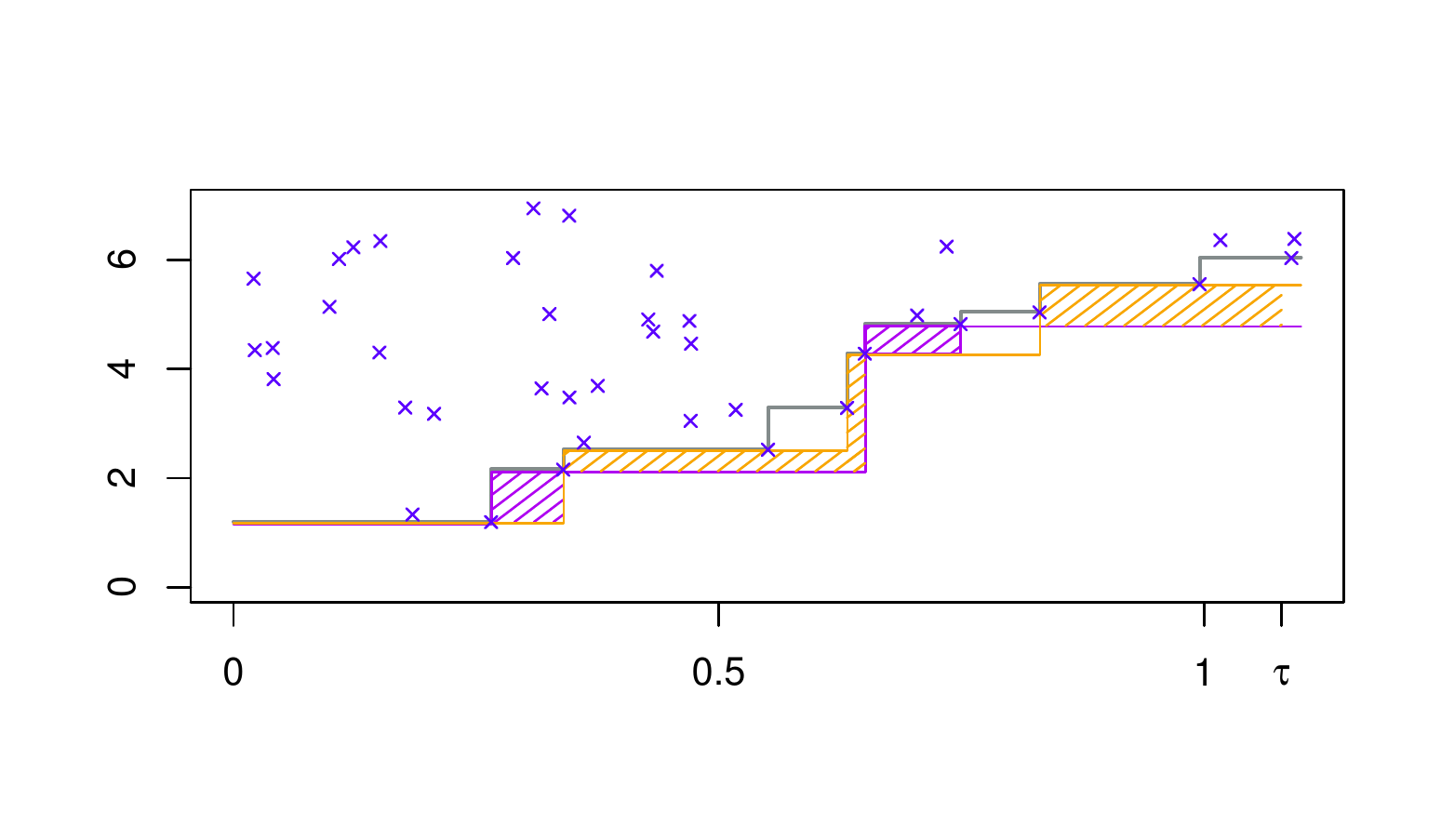}
	\vspace{-1cm}
	\caption{\label{fig.the_argument}  $\widetilde f$ (purple), $\fMLEKs$ (orange), MLE (gray) and the areas occurring in the proof of Lemma \ref{lem.int_comps}.}
\end{center}
\end{figure}

In a first step, we prove
\begin{align}
	P_{f_0}\Big(\forall K,\bs:\,\frac{\prod_{k=0}^{K_n} (X_{k+1}'-X_k') }{\prod_{k=0}^{K} (4R)^{-1}\vee [n(s_{k+1}-s_k)]} \leq \big( 8R n \big)^{K-\ell(\bs)}\Big)\xrightarrow{u}1.
	\label{eq.ub_prod_ratio_to_show}
\end{align}
Recall \eqref{eq.Ll_def}. Denote by $k(m),$ $m=1,\ldots, \ell(s)$ the  ordered entries of $L(\bs) := \{k: X_k' \in \{s_1, \ldots, s_K\}\}$ such that $0=: k(0) < k(1) < \ldots < k(\ell(\bs)) < k(\ell(\bs)+1) := K_n+1.$  We frequently use that $\ell(\bs) \leq K \wedge K_n.$ Obviously, $\prod_{k=0}^{K_n} (X_{k+1}'-X_k') \leq \prod_{m=0}^{\ell(\bs)} (X_{k(m+1)}'-X_{k(m)}').$ Denote by $r_m := \# \{ s_k \in (X_{k(m)}', X_{k(m+1)}')\}$ the number of jump locations $s_k$ that strictly lie between $X_{k(m)}'$ and $X_{k(m+1)}'.$ By construction, for each $m,$ there is an element in $\{s_j, j=1, \ldots, K\}$ with $s_j =X_{k(m)}'.$ If we split an interval in $q$ pieces, the longest piece must be larger than $1/(q+1)$ times the original interval length and
\begin{align}
	\frac{n^{K+1}\prod_{k=0}^{K_n} (X_{k+1}'-X_k') }{\prod_{k=0}^{K}  (4R)^{-1}\vee [n(s_{k+1}-s_k)]}
	&\leq n^{K+1}\prod_{m=0}^{\ell(\bs)} \frac{(X_{k(m+1)}'-X_{k(m)}')}{\prod_{k : s_k \in [X_{k(m)}', X_{k(m+1)}')}   (4R)^{-1}\vee [n(s_{k+1}-s_k)]} \notag \\
	&\leq \big(4R n \big)^{K-\ell(\bs)}\prod_{m=0}^{\ell(\bs)} (r_m+1).
	\label{eq.ub_prod_ratio}
\end{align}
Since $\sum_{m=0}^{ \ell(\bs)} r_m = K-\ell(\bs)$ and $r_m=0,1,\ldots$, we have  $\prod_{m=0}^{\ell(\bs)} (r_m+1) \leq 2^{K-\ell(\bs)}.$ Together with \eqref{eq.ub_prod_ratio} this yields \eqref{eq.ub_prod_ratio_to_show}.

As $K_n=o(n^{1/2})$, for sufficiently large $n$ we have $(1-n^{-1/2})^{2K_n+2}\ge 1/2$. With Lemma \ref{lem.int_comps}, \eqref{eq.1} can thus be simplified  to
\begin{align*}
	&\Pi\big( B_K \cap \{ f\leq \fMLEKs\} \, \big | \, N \big)  \notag \\
	&\leq
	2 \big((K_n+2)R\big)^{K_n -\ell(\bs)}\lambda^{K-K_n} (n^2)^{K_n-K} \big( 8RK_n n \log^{C'(R)} (n) \big)^{K-\ell(\bs)}
	n^{-(K_n-\ell(\bs)) \log n}
\end{align*}
with probability tending uniformly to one. If $K\leq K_n,$ then $K_n -K\leq K_n -\ell(\bs)$ and $K-\ell(\bs) \leq K_n -\ell(\bs).$ If $K > K_n, $ then we decompose $K-\ell(\bs) = (K-K_n )+ (K_n -\ell(\bs)).$ With $K_n \leq n^{1/2-\delta}$ and $n\to\infty$, the result follows.
\end{proof}

Indeed the posterior is asymptotically concentrated on the set $B_{K_n} \cap \{ f\leq \widetilde f\}.$ This means in particular that the posterior puts all mass on functions with the correct number $K_n$ of jumps.

\begin{thm}
\label{thm.model_selection}
Let  $K_n \leq n^{1/2-\delta}$ for some $\delta>0.$ Then,
\begin{align*}
	\lim_{n\to\infty}\inf_{f_0 \in \mM_S(K_n, R)} E_{f_0}\Big[\Pi\big( B_{K_n} \cap \{ f\leq \widetilde f\} \, \big | \, N \big) \Big]
	=1.
\end{align*}
\end{thm}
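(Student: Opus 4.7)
The plan is to bound the posterior mass of the complement $(B_{K_n}\cap\{f\le \widetilde f\})^c$ in $P_{f_0}$-probability, then take expectation. Throughout, I would condition on the intersection $\Omega_n$ of the uniformly high-probability events supplied by \eqref{MLE1}, \eqref{MLE7}, \eqref{MLE8}, Proposition~\ref{prop.MLE_with_Kn_jumps}, and Lemma~\ref{lem.int_comps}, so that on $\Omega_n$ one has $M\le C(R)(K_n+1)\log n$, $\widetilde f=\widehat f^{\MLE}_{K_n}\le \fMLE$, and the jump locations $\bs^\ast:=(X_1',\ldots,X_{K_n}')$ of $\widetilde f$ form a $K_n$-element subset of $\widehat J:=\{\widehat t_1^{\MLE},\ldots,\widehat t_M^{\MLE}\}$ for which $\widehat f^{\MLE}_{K_n,\bs^\ast}=\widetilde f$.

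The Bayes formula confines the posterior to $\{f\le \fMLE\}$, and Lemma~\ref{lem.AK_bd} combined with $\fMLE(1)\le (K_n+2)R$ gives $\Pi(B^c\mid N)\le \lambda(K_n+2)R/n=o(1)$, so only the part of the posterior on $B\cap\{f\le\fMLE\}$ needs attention. For $f\in B_K$ with jumps $t_1<\cdots<t_K$, the defining inequalities $f(t_\ell)>\fMLE(t_{\ell-1})$ force a jump of $\fMLE$ in each $(t_{\ell-1},t_\ell]$, so that $s_\ell:=\max\{\widehat t\in \widehat J:\widehat t\le t_\ell\}$ produces a strictly increasing sequence $\bs=(s_1,\ldots,s_K)\subseteq \widehat J$. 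A short case analysis on $[s_\ell,t_\ell)$ and $[t_\ell,s_{\ell+1})$, using that $\fMLE$ is constant between consecutive jumps, yields $f\le \fMLEKs$; moreover, in the case $K=K_n$, the assignment $\bs=\bs^\ast$ corresponds exactly to $f\le \widetilde f$. Therefore
\[
1-\Pi\big(B_{K_n}\cap\{f\le\widetilde f\}\,\big|\,N\big)\le \Pi(B^c\mid N)+\sum_{(K,\bs)\neq(K_n,\bs^\ast)}\Pi\big(B_K\cap\{f\le \fMLEKs\}\,\big|\,N\big).
\]

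For the residual sum I would invoke the sharper estimate established during the proof of Lemma~\ref{lem.A_K^c_bd}: with $m:=K_n-\ell(\bs)$ and $j:=K-\ell(\bs)$,
\[
\Pi\big(B_K\cap\{f\le\fMLEKs\}\,\big|\,N\big)\le 2\,a^{m}\, b^{j},\qquad a:=\frac{(K_n+2)R}{\lambda}\,n^{2-\log n},\ \ b:=\frac{8R\lambda K_n}{n}\log^{C'(R)}\!n.
\]
Parametrising each $\bs$ by the $m$ elements omitted from $\bs^\ast$ and the $j$ elements adjoined from $\widehat J\setminus \bs^\ast$, there are exactly $\binom{K_n}{m}\binom{M-K_n}{j}$ admissible $\bs$ with prescribed $(m,j)$, so the sum telescopes to
\[
2\Big((1+a)^{K_n}(1+b)^{M-K_n}-1\Big).
\]
Under $K_n\le n^{1/2-\delta}$ one checks $K_n a\lesssim K_n^2 n^{2-\log n}\to 0$ super-polynomially and $(M-K_n)b\lesssim K_n^2 n^{-1}\log^{C'(R)+1}n\lesssim n^{-2\delta}\log^{C'(R)+1}n\to 0$; both factors therefore equal $1+o(1)$ and the bound is $o(1)$.

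The key difficulty is precisely this counting step: the statement of Lemma~\ref{lem.A_K^c_bd} with a constant $\beta$ only delivers a polynomial $n^{-\beta}$ per mismatch, which is swamped by the $2^M$ subsets of $\widehat J$ since $M\asymp K_n\log n$ grows with $n$. Retaining the super-polynomial factor $n^{-(K_n-\ell(\bs))\log n}$ exposed inside the proof of Lemma~\ref{lem.A_K^c_bd} is what supplies the tiny $a$ that absorbs the binomial $\binom{K_n}{m}$. With the residual sum shown to be $o(1)$ uniformly in $f_0\in\mM_S(K_n,R)$, taking expectation and combining with $P_{f_0}(\Omega_n)\xrightarrow{u} 1$ gives the claim.
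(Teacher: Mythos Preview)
Your proof is correct and follows essentially the same route as the paper: cover $B\cap\{f\le\fMLE\}$ by the sets $B_K\cap\{f\le\fMLEKs\}$, handle $B^c$ via Lemma~\ref{lem.AK_bd}, stratify the remaining sum by $m=K_n-\ell(\bs)$ and $j=K-\ell(\bs)$, and control the count $\binom{K_n}{m}\binom{M-K_n}{j}$ against the posterior bound. Your binomial-theorem closed form $2((1+a)^{K_n}(1+b)^{M-K_n}-1)$ is a tidy packaging of what the paper does by a direct double sum.

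Your side remark, however, is not quite right. You claim the statement of Lemma~\ref{lem.A_K^c_bd} with a fixed $\beta$ is ``swamped by the $2^M$ subsets'' and that only the super-polynomial factor $n^{-(K_n-\ell(\bs))\log n}$ saves the day. In fact the paper uses the lemma \emph{statement} with $\beta=2$ and the very same stratified count, and this suffices: one has $\binom{K_n}{m}\binom{M-K_n}{j}\le K_n^{m}M^{j}$, and since $K_n\le n^{1/2-\delta}$ and $M\lesssim K_n\log n$, the combination
\[
K_n^{m}M^{j}\,\lambda^{K-K_n}\,n^{-\tfrac12(1+\delta)(K-K_n)_+-2m}
\;\lesssim\; n^{-\tfrac{\delta}{2}(K-K_n)_+-\,m}
\]
is already summable over $(K,m)$ to $O(n^{-\delta/2})$, exactly as in \eqref{eq.model_selection_2}. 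The crucial point is that one never sums over all $2^M$ subsets but only over those with prescribed $(m,j)$, and for each such pair the polynomial count is dominated by the polynomial decay. Your sharper $a^m b^j$ bound works too and gives a cleaner display, but it is not needed.
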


\begin{proof}
Since the function spaces are nested, it is enough to consider the case $R \geq 1.$ We show that the complement of $B_{K_n} \cap \{ f\leq \widetilde f\}$ has probability tending uniformly to zero. The complement can be decomposed as
\begin{align}
	\Big\{\bigcup_{K\neq K_n} B_K \Big\} \cup \Big\{\bigcup_{K\geq 1} (B^c)_K \Big\} \cup \Big\{ B_{K_n} \cap \{ f\leq \widetilde f\}^c \Big\}.
	\label{eq.model_selection_decomp}
\end{align}
As before, $M$ denotes the number of jumps of the MLE. To bound the first term observe that there are $\binom{K_n}{r}\binom{M-K_n}{K-r} \leq K_n^{K_n - r} M ^{K-r}$ possible functions $\fMLEKs$ with $K$ jumps and $\ell(\bs)=r.$ By \eqref{MLE8} there exists a constant $C(R)$ such that $P_{f_0}(M \leq C(R) (K_n+1) \log n)\xrightarrow{u}1.$ With Lemma \ref{lem.A_K^c_bd} (take $\beta=2$) and probability tending uniformly to one, we have
\begin{align}
	\Pi\Big(\bigcup_{K\neq K_n} B_K \, \Big | \, N \Big)
	&= \sum_{K\neq K_n} \Pi(B_K \cap \{ f\leq \widehat f^{\MLE} \} \, \big | \, N ) \notag \\
	&\leq \sum_{K\neq K_n}\sum_{\bs} \Pi(B_K \cap \{ f\leq \fMLEKs\}  \, \big | \, N ) \notag  \\
	&\leq \sum_{K\neq K_n}\sum_{r=0}^{K \wedge K_n}\sum_{\bs \, : \, \ell(\bs)=r} \lambda^{K-K_n} n^{-\frac 12 (1+\delta) (K-K_n)_+ - 2 (K_n -\ell(\bs))} \notag  \\
	& \leq \sum_{K\neq K_n}\sum_{r=0}^{K \wedge K_n} K_n^{K_n - r} (C(R) (K_n+1) \log n)^{K-r}  \lambda^{K-K_n} n^{-\frac 12 (1+\delta) (K-K_n)_+ - 2 (K_n -r)}  \notag  \\
	&\lesssim \sum_{K\neq K_n}\sum_{r=0}^{K \wedge K_n} n^{- \frac {\delta}2 (K-K_n)_+ - (K_n -r)} \notag  \\
	&= O(n^{-\delta/2}).
	\label{eq.model_selection_2}
\end{align}
On the event $H$ defined in \eqref{eq.H_def}, $\widehat f^{\MLE}(1)\leq R(K_n+2).$ The second term in decomposition \eqref{eq.model_selection_decomp} is therefore bounded uniformly in probability by Lemma \ref{lem.AK_bd}. For the third term, observe that $B_{K_n} \cap \{ f\leq \widetilde f\}^c$ means that $f\leq h_{\bs, K_n}$  for some $\bs$ with $h_{\bs,K_n} \neq \widetilde f$, implying $\ell(\bs) < K_n$. Arguing as for \eqref{eq.model_selection_2},
\begin{align*}
	\Pi\big(B_{K_n} \cap \{ f\leq \widetilde f\}^c \, \big | \, N \big)
	&\leq
	\sum_{r=0}^{K_n-1}\sum_{\bs \, : \, \ell(\bs)=r} n^{- 2 (K_n -\ell(\bs))}\\
	&\leq \sum_{r=0}^{K_n-1} K_n^{K_n - r} (C(R) (K_n+1) \log n)^{K_n-r}  n^{- 2 (K_n -r)}\\
	&=O(n^{-1}).
\end{align*}
This shows that all terms in \eqref{eq.model_selection_decomp} are bounded uniformly in probability.
\end{proof}

\subsection{Proof of Theorem \ref{thm.BvM2}}

Up to this point we proved that the posterior concentrates around the model with the correct number of jumps. In a next step, we derive contraction rates for the parameters. Given the sequences $(X_k')_k$ and $(Y_k^*)_k,$ define the intervals
\begin{align*}
	I_k :=\Big [Y_k^* - \frac {\log n}{2n (X_{k+1}'-X_k')} , Y_k^* \Big]
\end{align*}
and
\begin{align}
	T_k  :=\Big[ X_k',  X_k'+\frac{\log n}{2n(Y_k^*-Y_{k-1}^*)}\Big].
	\label{eq.Tk_def}
\end{align}
Define $I^*:= \{(a_0,\ldots, a_{K_n}): \sum_{\ell=0}^k a_\ell \in I_k \ \text{for all} \  k\}$ and $T^*:= \{(t_1, \ldots, t_{K_n}): t_k \in T_k \ \text{for all} \  k\}.$

\begin{lem}
\label{lem.localization}
Let $\lambda>0$ be fixed and $K_n \leq n^{1/2-\delta}$ for some $\delta >0.$ Then
\begin{align*}
	\lim_{n\to\infty}\inf_{f_0\in\mM_S(K_n,R)}E_{f_0}\Big[\Pi\Big( f=\sum_{k=0}^{K_n} a_k \mathbf{1}_{[t_k,T]} \ \text{with } \ \ba \in I^* \ \text{and} \ \bt\in T^* \, \Big | \, N \Big)\Big]=1.
\end{align*}
\end{lem}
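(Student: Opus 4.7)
By Theorem \ref{thm.model_selection}, the posterior asymptotically concentrates on $B_{K_n}\cap\{f\le\widetilde f\}$, so it suffices to bound
\[
E_{f_0}\bigl[\Pi\bigl(\{\ba\notin I^*\text{ or }\bt\notin T^*\}\cap B_{K_n}\cap\{f\le\widetilde f\}\,\bigm|\,N\bigr)\bigr].
\]
Decompose the complement of $\{\ba\in I^*,\,\bt\in T^*\}$ as the union of the $2K_n+1$ events
\[
A_k^{(1)}=\Bigl\{\sum_{\ell=0}^k a_\ell< Y_k^*-\tfrac{\log n}{2n(X_{k+1}'-X_k')}\Bigr\}\ (k=0,\dots,K_n),\quad A_k^{(2)}=\Bigl\{t_k>X_k'+\tfrac{\log n}{2n(Y_k^*-Y_{k-1}^*)}\Bigr\}\ (k=1,\dots,K_n).
\]
Since $K_n\le n^{1/2-\delta}$, a union bound reduces the task to showing, uniformly in $k$ and $j$, that
\[
E_{f_0}\bigl[\Pi(A_k^{(j)}\cap B_{K_n}\cap\{f\le\widetilde f\}\,|\,N)\bigr]=o(n^{-1/2+\delta}).
\]

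\textbf{Main computation.} For each such term, apply the Bayes formula \eqref{eq.Bayes}. The denominator has already been lower-bounded by Lemma \ref{lem.denom}, so what remains is an explicit upper bound on the restricted numerator. To this end, I would run the successive integration scheme developed in the proofs of Lemmas \ref{lem.denom} and \ref{lem.ub_num}: on $B_{K_n}\cap\{f\le\widetilde f\}$ the jump times satisfy $t_j\in[X_j',X_{j+1}')$ and the cumulative sums satisfy $\sum_{\ell=0}^j a_\ell\in(\widehat f^{\MLE}(t_{j-1}),Y_j^*]$, which allows integrating $e^{na_j(T-t_j)}$ and $e^{na_jv_j}$ successively using the one-sided bound $\int_{-\infty}^{c}e^{n\alpha x}\,dx=e^{n\alpha c}/(n\alpha)$.

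For $A_k^{(2)}$, the integral over $t_k$ is confined to $[X_k'+\delta_k,X_{k+1}')$ with $\delta_k=\log n/(2n(Y_k^*-Y_{k-1}^*))$, producing an extra factor $e^{-na_k\delta_k}$ relative to the denominator's integration over $[X_k',X_{k+1}')$. On the event under consideration, combining the upper bound $\sum_{\ell=0}^{k-1}a_\ell\le Y_{k-1}^*$ from $f\le\widetilde f$ with the lower bound $\sum_{\ell=0}^k a_\ell>\widehat f^{\MLE}(t_{k-1})\ge Y_{k-1}^*$ from $B_{K_n}$ (using $\widehat f^{\MLE}\ge\widetilde f$, i.e.\ \eqref{MLE7}) one obtains $a_k\ge Y_k^*-Y_{k-1}^*$ up to harmless lower-order terms, so $na_k\delta_k\ge\tfrac12\log n$. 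For $A_k^{(1)}$, fixing $\sum_{\ell<k}a_\ell$ and integrating the factor $e^{na_kv_k}$ with $v_k=X_{k+1}'-X_k'$ over $(-\infty,Y_k^*-\sum_{\ell<k}a_\ell-\delta_k')$ where $\delta_k'=\log n/(2n(X_{k+1}'-X_k'))$ yields the analogous factor $e^{-nv_k\delta_k'}=n^{-1/2}$. In both cases the remaining unrestricted integrals are handled exactly as in Lemma \ref{lem.denom} and produce a bound comparable to its denominator estimate, so the ratio is $O(n^{-1/2})$, and the union bound closes the argument.

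\textbf{Main obstacle.} The only genuinely delicate point is the circular coupling just flagged: the needed bound $a_k\gtrsim Y_k^*-Y_{k-1}^*$ in the $A_k^{(2)}$ estimate depends on a partial localization of $\ba$ that is itself part of what we are trying to prove. Two ways around this present themselves: (a) treat the two constraint families sequentially, first establishing $\ba\in I^*$ (where no knowledge of $\bt$ is needed because the key $v_k=X_{k+1}'-X_k'$ is data-given), and only then localizing $\bt$ conditionally; or (b) use the weaker but universally available bound $a_k>Y_{k-1}^*-\sum_{\ell<k}a_\ell$ and absorb the remaining room using that $g_{K_n}(\ba)=e^{-\sum_k a_k}\prod_k a_k$ is integrable against $e^{na_k v_k}$ up to the cap $Y_k^*$. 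Approach (a) is cleaner and mirrors the parametric Bernstein--von Mises strategy. The uniform control over $f_0\in\mM_S(K_n,R)$ needed throughout is supplied by \eqref{MLE1}--\eqref{MLE5}, \eqref{MLE7}, and \eqref{MLE10}--\eqref{MLE11}, which ensure that all widths $X_{k+1}'-X_k'$, heights $Y_k^*-Y_{k-1}^*$, and rectangle areas entering the estimates stay in the correct polylogarithmic regime.
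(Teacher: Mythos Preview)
Your overall architecture---reduce via Theorem \ref{thm.model_selection} to $B_{K_n}\cap\{f\le\widetilde f\}$, take a union bound over the $2K_n+1$ single-coordinate events, lower-bound the denominator by Lemma \ref{lem.denom}, and upper-bound each restricted numerator by a successive-integration argument in the style of Lemma \ref{lem.ub_num}---is exactly the paper's. Your treatment of the $A_k^{(1)}$ events (cumulative sum too small) is also essentially what the paper does.

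The difference is in how the $A_k^{(2)}$ events are handled. You want to pull out the factor $e^{-na_k\delta_k}$ from the $t_k$-integral and then need $a_k\gtrsim Y_k^*-Y_{k-1}^*$, which creates the circularity you flag. The paper sidesteps this entirely by a geometric trick: observe that
\[
A_\ell\cap B_{K_n}\cap\{f\le\widetilde f\}\subseteq B_{K_n}\cap\{f\le\widetilde f_\ell\},
\]
where $\widetilde f_\ell$ is obtained from $\widetilde f$ by shifting its $\ell$-th jump location from $X_\ell'$ to $X_\ell'+\delta_\ell$. One then re-runs the Lemma \ref{lem.ub_num} argument verbatim with the majorant $\widetilde f_\ell$ in place of $\widetilde f$, obtaining a numerator bound proportional to $\exp(n\int_0^{T-1/n}\widetilde f_\ell)$. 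The gain $n^{-1/2}$ now comes from the identity
\[
\int_0^{T-1/n}\widetilde f_\ell=\int_0^{T-1/n}\widetilde f-\delta_\ell(Y_\ell^*-Y_{\ell-1}^*)=\int_0^{T-1/n}\widetilde f-\frac{\log n}{2n},
\]
in which $Y_\ell^*-Y_{\ell-1}^*$ is the jump of the \emph{majorant}, not of the integration variable $f$. No information about $a_\ell$ is required, so there is no circularity and no need for a sequential argument. The $A_k^{(1)}$ events are treated symmetrically via a majorant $\widetilde f_{-k}$ with level $Y_k^*$ lowered by $\delta_k'$.

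Your sequential fix (a) would work---on $\ba\in I^*$ and the event $H$ one indeed gets $a_k\ge(1-o(1))(Y_k^*-Y_{k-1}^*)$ via \eqref{MLE5}---but it doubles the bookkeeping. The shifted-majorant device is both shorter and conceptually cleaner, and it is the mechanism that makes the two families of constraints genuinely parallel.
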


\begin{proof}
By Theorem \ref{thm.model_selection} and Lemma \ref{lem.denom}, we have for any event $A$
\begin{align}
	 &\sup_{f_0 \in \mM_S(K_n, R)} E_{f_0}\big[\Pi(A |N)\big]  	\label{eq.loca1} \\
	 &\leq o(1) +  \frac{n^{2K_n+1} }{\lambda^{K_n} (1-n^{-1/2})^{2K_n+1}} E_{f_0} \Big[ \prod_{k=0}^{K_n} (X_{k+1}'-X_k') e^{-n \int_0^{T-\frac 1n} \widetilde f}	
	U_{K_n}\big( A \cap B_{K_n} \cap \{ f\leq \widetilde f\} \big) \Big].\notag
\end{align}
In a first step, we bound the posterior mass of the event
\[A_\ell :=\Big\{f: f=\sum_{k=0}^{K_n} a_k \mathbf{1}_{[t_k,T]},\ t_\ell \in T_\ell^c\Big\},\quad \ell \in \{1, \ldots, K_n\}.\]
A function $f \in B_{K_n} \cap \{ f\leq \widetilde f\}$  jumps $K_n$ times and the $k$-th jump lies in the interval $[X_k',X_{k+1}')$ for all $k.$  Define $X_{k, \ell}' := X_k' + \log n/[2n(Y_\ell^*-Y_{\ell-1}^*)] \delta_{k, \ell}.$ Using the definition of $T_\ell$ in \eqref{eq.Tk_def}, we therefore conclude that if  $f \in A_\ell \cap B_{K_n} \cap \{ f\leq \widetilde f\},$ then, $f$ has $K_n$ jumps and the $k$-th jump lies in $[X_{k,\ell}',X_{k+1}').$ Recall the definition of $\widetilde f$ in \eqref{eq.widetildef_def} and define
\begin{align*}
	\widetilde f _\ell = \sum_{k=0}^{K_n}Y_k^* \mathbf{1}_{[X_{k, \ell}',X_{k+1, \ell}')}
\end{align*}
such that $A_\ell \cap B_{K_n} \cap \{ f\leq \widetilde f\} \subseteq B_{K_n} \cap \{ f\leq \widetilde f_\ell\}$ (the sets are not necessarily equal). To bound $U_{K_n}(B_{K_n} \cap \{ f\leq \widetilde f_\ell\})$ we can now argue similarly as for the proof of Lemma \ref{lem.ub_num} with $K=K_n$ and replacing $\fMLE_{K_n,\bs}$ by $\widetilde f_\ell.$ This means that the jump locations $s_k$ are replaced by $X_{k,\ell}'$ and the function values $\fMLE_{K_n,\bs}(s_k)$ by $Y_k^*.$ If we upper bound \eqref{eq.lem_ub_num1} by one, we find
\begin{align}
	U_{K_n}\big(A_\ell \cap B_{K_n} \cap \{ f\leq \widetilde f\}\big)
	&\leq U_{K_n}\big(B_{K_n} \cap \{ f\leq \widetilde f_\ell\}\big) \notag  \\
	&\leq \frac{\lambda^{K_n} }{n^{2K_n+1} (1-n^{-1/2}) \prod_{k=0}^{K_n} (X_{k+1, \ell}'-X_{k, \ell}')}  e^{n \int_0^{T-1/n} \widetilde f_{\ell}}.
	\label{eq.localization_lem1}
\end{align}
By \eqref{MLE3} and \eqref{MLE4}, on $H$ it holds $(X_{\ell+1,\ell}'-X_{\ell,\ell}') \geq (X_{\ell+1}'-X_\ell')/2.$ For $k\neq \ell,$ $(X_{k+1,\ell}'-X_{k,\ell}') \geq (X_{k+1}'-X_k').$ With \eqref{eq.widetildef_def} and the definition of $X_{k,\ell}',$ we find $\int_0^{T-1/n} \widetilde f_{\ell} = \int_0^{T-1/n} \widetilde f - \log n/(2n)$ and we can further bound the right hand side in \eqref{eq.localization_lem1},
\begin{align*}	
	U_{K_n}\big(A_\ell \cap B_{K_n} \cap \{ f\leq \widetilde f\}\big)
	\leq \frac{2\lambda^{K_n} }{n^{2K_n+3/2} (1-n^{-1/2}) \prod_{k=0}^{K_n} (X_{k+1}'-X_k')} e^{n \int_0^{T-\frac 1n} \widetilde f }.
\end{align*}
Together with \eqref{eq.loca1}, a union bound for $\bigcup_{\ell=1}^{K_n}D_\ell$ yields
\begin{align*}
	\sup_{f_0 \in \mM_S(K_n, R)} E_{f_0}\Big[\Pi\Big( f=\sum_{k=0}^{K_n} a_k \mathbf{1}(\cdot \geq t_k) \ \text{with} \ \bt \in (T^*)^c \, \Big | \, N \Big) \Big]=o(1)+O\Big(\frac{K_n}{\sqrt{n}}\Big).
\end{align*}
Due to $K_n=o(n^{1/2})$ this converges to zero.

It remains to show that the posterior puts asymptotically all mass on sequences $(a_k)_k$ with $\sum_{\ell=0}^k a_\ell \in I_k.$ Since the likelihood is zero for $\sum_{\ell=0}^k a_\ell > Y_k^*,$ we only need to control the posterior mass of the sets $\sum_{\ell=0}^k a_\ell \leq  Y_k^* - \log n /(2n [X_{k+1}'-X_k']),$ $k=0,\ldots, K_n.$

Define $Y_{\ell, k}^*= Y_\ell^* - \log n /(2n [X_{k+1}'-X_k']) \delta_{k,\ell}$ and let $\widetilde f_{-k} =  \sum_{\ell=0}^{K_n}Y_{\ell,k}^* \mathbf{1}_{[X_{\ell}',X_{\ell+1}')}.$ Then we have
\begin{align*}
	\Big\{ \sum_{\ell=0}^k a_\ell \leq  Y_k^* - \frac{\log n}{2n (X_{k+1}'-X_k')}\Big\}
	\cap B_{K_n} \cap \{ f\leq \widetilde f\} \subseteq B_{K_n} \cap \{ f\leq \widetilde f_{-k}\}.
\end{align*}
Arguing as for \eqref{eq.localization_lem1}, we find that
\begin{align*}
U_{K_n}\big(B_{K_n} \cap \{ f\leq \widetilde f_{-k}\}\big)
	&\leq \frac{\lambda^{K_n} }{ n^{2K_n+1} (1-n^{-1/2}) \prod_{r=0}^{K_n} (X_{r+1}'-X_r')}  e^{n \int_0^{T-1/n} \widetilde f_{-k}}.
\end{align*}
Since $\int \widetilde f_{-k} = \int \widetilde f - \log n/(2n),$ we can argue as for the first part using \eqref{eq.loca1} and a union bound to show that
\begin{align*}
	\sup_{f_0 \in \mM_S(K_n, R)} E_{f_0}\Big[\Pi\Big( f=\sum_{k=0}^{K_n} a_k \mathbf{1}_{[t_k,T]} \ \text{with} \ \ba \in (B^*)^c \, \Big | \, N \Big) \Big]=o(1)+O\Big(\frac{K_n}{\sqrt{n}}\Big)
\end{align*}
which tends to zero. This completes the proof.
\end{proof}

\begin{proof}[Proof of Theorem \ref{thm.BvM2}]

We write $\Pi_{f_0,n}^\infty(\cdot | I^* \cap T^*) := \Pi_{f_0,n}^\infty(\cdot | \{(Y_k^*-E_k^*)_k \in I^*\} \cap \{(X_k'+E_k')_k \in T^*\}).$ In particular, $(Y_k^*-E_k^*)_k \in I_k$  means that $E_k^*\leq \log n/(2n(X_{k+1}'-X_k'))$ and $X_k'+E_k' \in T_k$ implies $E_k'\leq \log n/(2n(Y_k^*-Y_{k-1}^*)).$

By Lemma \ref{lem.TV_for_cond_distribs}, it is sufficient to show that uniformly over $f_0\in \mM(K_n,R)$
\begin{itemize}
\item[(i)] $ \sup_{f_0\in \mM_S(K_n,R)} E_{f_0}[ \| \Pi (\cdot \cap I^* \cap T^* | N)/\Pi(I^* \cap T^* |N) -\Pi_{f_0,n}^\infty(\cdot | I^* \cap T^*) \|_{\TV}] \rightarrow 0$
\item[(ii)] $\sup_{f_0\in \mM_S(K_n,R)} E_{f_0} [ \Pi( (I^*)^c \cup (T^*)^c |N)+\Pi_{f_0,n}^\infty((I^*)^c \cup (T^*)^c) \big] \rightarrow 0.$
\end{itemize}

We start with proving $(i).$ For any random variable $Z\leq 1,$ $E_{f_0}[Z ] \leq E_{f_0}[Z \cdot \mathbf{1}( H)]+P_{f_0}(H^c).$ By Lemma \ref{lem.diff_lb} it is therefore enough to prove $(i)$ on the event $H$ defined in \eqref{eq.H_def}. This means in particular, that we may use the inequalities \eqref{MLE3},\eqref{MLE4},\eqref{MLE5}.

We apply Lemma \ref{lem.TV_bd} and work therefore only up to multiplicative constants. The posterior density of the vectors $\ba$ and $\bt$ with respect to $d\bt d\ba$ is proportional to
\begin{align*}
	e^{n\sum_{k=0}^{K_n} a_k(T-t_k)} \mathbf{1}\big( \bt \in T^*, \ba  \in I^* \big)g_{K_n}(\ba) .
\end{align*}
Let us now prove that
\begin{align}
	\sup_{f_0\in \mM_S(K_n,R)} \, \sup_{\ba \in I^*}\Big|\prod_{k=1}^{K_n}\frac{a_k}{Y_k^*-Y_{k-1}^*} - 1\Big| =o(1).
	\label{eq.BvM0}
\end{align}
The constraints in $I^*$ imply that
\begin{align}
	Y_k^*-Y_{k-1}^* - \frac{\log n}{2n(X_{k+1}'-X_k')} \leq a_k \leq Y_k^*-Y_{k-1}^* + \frac{\log n}{2n(X_k'-X_{k-1}')}
	\label{eq.BvM01}
\end{align}
for all $k.$ Because of \eqref{MLE5} we consequently have
\begin{align}
	\Big|\frac{a_k}{Y_k^*-Y_{k-1}^*} - 1\Big| \leq \frac{2}{K_n \log(eK_n) \log^2 n}.
	\label{eq.BvM1}
\end{align}
For real numbers $\Delta_m,$ $m=1,\ldots,q$ define $\Delta := \max_m |\Delta_m|.$ Set $\Delta_0:=0.$ Then,
\begin{align}
	\Big | \prod_{m=1}^q (1+\Delta_m) -1 \Big|
	\leq \sum_{r=1}^q \Big|\prod_{m=0}^r (1+\Delta_m) - \prod_{m=0}^{r-1} (1+\Delta_m) \Big|
	\leq (1+\Delta)^q q\Delta.
	\label{eq.BvM11}
\end{align}
Thanks to \eqref{eq.BvM1} and setting $\Delta:=\frac{2}{K_n \log(eK_n) \log^2 n}$ and $q=K_n$, this proves \eqref{eq.BvM0}. If $\sum_{\ell=0} ^{K_n} a_\ell \in I_{K_n},$ then
\begin{align*}
	\big | Y_{K_n}^* - \sum_{\ell=0}^{K_n} a_\ell \big |
	\leq \frac{\log n}{2n (X_{K_n+1}'-X_{K_n}')} \leq \frac{\log n}{2n (T-1)}.
\end{align*}
Combining this with \eqref{eq.BvM0} and using \eqref{eq.BvM11} with $q=2$ yields
\begin{align}
	\sup_{f_0\in \mM_S(K_n,R)} \, \sup_{\ba \in I^*}\Big| \frac{g_{K_n}(\ba)}{\prod_{k=1}^{K_n} (Y_k^*-Y_{k-1}^*)}e^{Y_{K_n}^*} -1 \Big| = o(1).
	\label{eq.BvM2}	
\end{align}
This shows that the prior is asymptotically a constant over $\ba \in I^*$ and this will imply that it is washed out in the limit. In the next step, we show that the product term $\sum_k a_k t_k$ in the likelihood can be decoupled. For $\ba \in I^*, \bt \in T^*,$ we have due to \eqref{eq.BvM01}, the definition of $T^*$ and \eqref{MLE5}
\begin{align*}
	\big| \big( a_k - Y_k^* + Y_{k-1}^*\big) (X_k'-t_k) \big|
	\leq \frac{\log^2 n}{4n^2 (Y_k^*-Y_{k-1}^*)[(X_{k+1}'-X_k') \wedge (X_k'-X_{k-1}')]}
	\leq \frac{1}{K_n n\log n}.
\end{align*}
Hence, for $\ba \in I^*$ and  $\bt \in T^*,$
\begin{align*}
	\big | a_k(T-t_k) - a_k(T- X_k') - (Y_k^*-Y_{k-1}^*)(t_k - X_k') \big| \leq \frac{1}{K_n n\log n} \quad \text{for all} \ k,
\end{align*}
and with \eqref{eq.BvM11}
\begin{align*}
	\sup_{f_0\in \mM_S(K_n,R)} \, \sup_{\ba \in I^*, \, \bt \in T^*} \Big|\frac{e^{n \sum_k a_k(T-t_k)}}{e^{n \sum_k a_k(T- X_k') + (Y_k^*-Y_{k-1}^*)(t_k - X_k')}} - 1 \Big| 		
	=o(1).
\end{align*}
With \eqref{eq.BvM2} and by Lemma \ref{lem.TV_bd}, we see that the posterior converges in total variation and uniformly over $f_0\in \mM_S(K_n,R)$  to the distribution with Lebesgue density
\begin{align}
	&\propto e^{n\sum_{k=0}^{K_n} a_k(T-X_k')+ n \sum_{k=0}^{K_n}(Y_k^*-Y_{k-1}^*) t_k} \mathbf{1}(\bt \in T^*, \ba \in I^*) d\ba d\bt.
	\label{eq.BvM3}
\end{align}
To complete the proof, let us now show that this is the density of the distribution $\Pi_{f_0,n}^\infty(\cdot |I^*\cap T^*).$ Because we work conditionally on $T^*,$ we have that $X_k'+E_k' \in T_k$ and with \eqref{MLE5}, $E_k'\leq \log n/(2n(Y_k^*-Y_{k-1}^*))< X_{k+1}'-X_k'.$ On $T^*$ we therefore never have to take care of the truncation by $X_{k+1}'-X_k'$ that appears in the definition of $E_k'.$ Rewriting $f = \sum_{k=0}^{K_n} a_k \mathbf{1}_{[t_k,T]} = \sum_{k=0}^{K_n} \sum_{\ell=0}^k a_\ell \mathbf{1}_{[t_k,t_{k+1})}$ and comparing this with the unconditional limit distribution $\Pi_{f_0,n}^\infty(\cdot ),$ we find $\sum_{\ell=0}^k a_\ell = Y_k^*-E_k^*$ with $E_k^* \sim \Exp(n(X_{k+1}'-X_k'))$ and $t_k = X_k'+E_k'$ with $E_k' \sim \Exp(n(Y_k^*-Y_{k-1}^*)) \wedge (X_{k+1}'-X_k').$ Recall that the random variables $E_k^*$ and $E_k',$ $k=0,\ldots, K_n,$ are also independent. The Lebesgue density of $\Pi_{f_0,n}^\infty(\cdot |I^*\cap T^*)$ is therefore up to constants
\begin{align*}
	&d\Pi_{f_0,n}^\infty\Big(f= \sum_{k=0}^{K_n}  a_k \mathbf{1}_{[t_k,T]} \Big | I^*\cap T^* \Big) \\
	&\propto
	 e^{n\sum_{k=0}^{K_n} (X_{k+1}'-X_k')\sum_{\ell=0}^k a_\ell+ n \sum_{k=0}^{K_n}(Y_k^*-Y_{k-1}^*) t_k} \mathbf{1}(\bt \in T^*, \ba \in I^*)  \, d\ba d\bt \\
	 &= e^{n\sum_{k=0}^{K_n} a_k(T-X_k')+ n \sum_{k=0}^{K_n}(Y_k^*-Y_{k-1}^*) t_k} \mathbf{1}(\bt \in T^*, \ba \in I^*)  \, d\ba d\bt
\end{align*}
using partial summation for the last step. This is the same as \eqref{eq.BvM3} and the assertion in $(i)$ follows.

To prove $(ii)$ notice that $E_{f_0} [ \Pi( (I^*)^c \cup (T^*)^c |N)]\rightarrow 0$ follows from Lemma \ref{lem.localization}. Let $(E_k)_k$ be an i.i.d. sequence of $\Exp(1)$ random variables. Using the definition of $I^*, T^*$ and $E_k', E_k^*,$ we find with \eqref{MLE3}, \eqref{MLE4} and \eqref{MLE5} that on $H,$
\begin{align*}
	\Pi_{f_0,n}^\infty((I^*)^c \cup (T^*)^c)
	&=
	\Pi_{f_0,n}^\infty\Big( \bigcup_{k=0}^{K_n} \Big\{E_k^*> \frac{\log n}{2n(X_{k+1}'-X_k')}\Big\} \cup \bigcup_{k=1}^{K_n} \Big\{E_k' > \frac{\log n}{2n(Y_k^*-Y_{k-1}^*)}\Big\} \Big)\\
	&=
	\P\Big(\bigcup_{k=0}^{2K_n+1} \Big\{E_k \geq \frac{\log n}{2} \Big\}\Big)
	\leq \frac{2K_n+1}{\sqrt{n}} \rightarrow 0
\end{align*}
and consequently $\sup_{f_0\in \mM_S(K_n,R)}  E_{f_0}[\Pi_{f_0,n}^\infty((I^*)^c \cup (T^*)^c)] \rightarrow 0.$
This completes the proof of $(ii).$
\end{proof}

\subsection{Proof of Proposition \ref{prop.MLE_with_Kn_jumps}}

The likelihood process for functions with at most $K_n$ jumps that occur in $[0,1]$ is $f \mapsto e^{n \int_0^T f}\mathbf{1}(f \leq \widehat f^{\MLE}).$ The MLE over these functions must be a function of the form $\fMLE_{K_n, \bs}.$ Notice that $\widetilde f$ is the only function of this form with $\ell(\bs)=K_n.$ To show that asymptotically all other functions $\fMLE_{K_n, \bs}$ have a smaller likelihood, it suffices to prove
\begin{align*}
	P_{f_0}\Big(   \int_0^T \widetilde f > \int_0^T \fMLE_{K_n, \bs}, \ \forall \bs  \ \text{with}  \ \ell(\bs)<K_n \, \Big) \xrightarrow{u} 1.
\end{align*}
This follows from Lemma \ref{lem.int_comps} with $K=K_n.$

\subsection{Proof of Corollary \ref{cor.marg_BvM_CPP}}
As in the proof of Theorem \ref{thm.BvM2}, it will be enough to work on the event $H.$ Moreover, Theorem \ref{thm.BvM2} shows that it is sufficient to prove the assertion with the posterior replaced by the limit distribution $\Pi_{f_0,n}^\infty.$ Under the limit distribution, the functional $\vartheta$ can be written in the form \eqref{eq.vartheta_under_limit}.

To control the remainder term in \eqref{eq.vartheta_under_limit}, observe that on the set $I^* \cap T^*,$ by \eqref{MLE5},
\begin{align*}
	 \Big|\sum_{k=0}^{K_n} E_k^* (E_{k+1}'-E_k')\Big|
	 &\leq  \sum_{k=0}^{K_n-1} E_k^* E_{k+1}' \vee \sum_{k=1}^{K_n} E_k^* E_k' \\
	 &\leq  \sum_{k=0}^{K_n-1}  \frac{\log^2 n}{4n^2 (X_{k+1}'-X_k')(Y_{k+1}^*-Y_k^*)}
	 \vee \sum_{k=1}^{K_n}  \frac{\log^2 n}{4n^2 (X_{k+1}'-X_k')(Y_{k}^*-Y_{k-1}^*)} \\
	 &\leq \frac 1{n\log n}
\end{align*}
with probability tending uniformly to one. Next we define two new probability measures. Under $\Pi_{f_0,n}^{\infty,(1)},$ $\vartheta$ has distribution $\vartheta = \int_0^T \widetilde f - \sum_{k=0}^{K_n} E_k^* (X_{k+1}'-X_k') - \sum_{k=1}^{K_n} E_k' (Y_k^*-Y_{k-1}^*).$ Under $\Pi_{f_0,n}^{\infty,(2)},$ $\vartheta$ has distribution $\vartheta = \int_0^T \widetilde f - \sum_{k=0}^{K_n} E_k^* (X_{k+1}'-X_k') - \sum_{k=1}^{K_n} E_k'' (Y_k^*-Y_{k-1}^*)$ with independent $E_k'' \sim \Exp(n(Y_k^*-Y_{k-1}^*)).$ For the latter probability measure, $\vartheta$ does not have point masses anymore and can also be written as
\begin{align}
	\vartheta = 	 \int_0^T \widetilde f - \frac{1}{n} \sum_{k=1}^{2K_n+1} E_k, \quad \text{with} \ E_k \sim \Exp(1) \text{\ independent.}
	\label{eq.vtheta_under_(2)}
\end{align}
Moreover,  the densities of $E_k'$ and $E_k''$ are the same on the interval $[0, X_{k+1}'-X_k').$ If $(X_k'+E_k')_k \in I^*$ and $(X_k'+E_k'')_k \in I^*,$ then, on the event $H,$ $E_k'\vee E_k''\leq \log n/(2n(Y_k^*-Y_{k-1}^*)) < X_{k+1}'-X_k'$ for all $k.$ This implies that for any event $A$
\begin{align*}
	\Pi_{f_0,n}^{\infty,(1)} ( A \cap I^*)
	:=\Pi_{f_0,n}^{\infty,(1)} \big( A \cap 	\{(X_k+E_k')_k\in I^*\}\big)
	&= \Pi_{f_0,n}^{\infty,(2)} \big( A \cap 	\{(X_k+E_k'')_k\in I^*\}\big) \\
	&:= \Pi_{f_0,n}^{\infty,(2)} ( A \cap I^*).
\end{align*}
With exactly the same argument as in part $(ii)$ of the proof of Theorem \ref{thm.BvM2}, we have that $\Pi_{f_0,n}^{\infty,(2)}((I^*)^c \cup (T^*)^c) \leq (2K_n+1)/n \rightarrow 0.$ The following inequalities hold thus uniformly over $f_0\in \mM_S(K_n,R)$ and any $x\in \mathbb{R}.$ Set $m_n:=1/(n\log n),$ then
\begin{align}
	\Pi_{f_0,n}^{\infty,(2)}\big( (-\infty, x-m_n]) \big)
	&= \Pi_{f_0,n}^{\infty,(1)}\big( (-\infty, x-m_n]) \cap I^* \cap T^*\big) +o(1)  \notag \\
	&\leq \Pi_{f_0,n}^\infty\big( (-\infty, x]) \cap I^* \cap T^*\big) +o(1)  \label{eq.marg_BvM_CPP1}\\
	&\leq \Pi_{f_0,n}^{\infty,(1)}\big((-\infty, x+m_n]) \cap I^* \cap T^*\big) +o(1) \notag  \\
	&\leq \Pi_{f_0,n}^{\infty,(2)}\big((-\infty, x+m_n])\big) +o(1) \notag
\end{align}
and
\begin{align}
	\big\| \Pi_{f_0,n}^\infty( (-\infty, \cdot] \cap I^* \cap T^*) - \Pi_{f_0,n}^\infty( (-\infty, \cdot]) \big\|_\infty =o(1). \label{eq.marg_BvM_CPP2}
\end{align}
Denote the limit distribution $\mathcal{N}(\int \widetilde f - (2K_n+1)/n, (2K_n+1)/n^2)$ by $Q_n^\infty.$ Using \eqref{eq.vtheta_under_(2)} and Lemma \ref{lem.CLT_in_TV} we find
\begin{align}
	\sup_{f_0\in \mM_S(K_n,R)} E_{f_0}^n \Big[\Big\| \Pi_{f_0,n}^{\infty,(2)}(\vartheta \in \cdot ) -Q_n^\infty \Big\|_{\TV}\Big] \rightarrow 0.
	\label{eq.marg_BvM_CPP3}
\end{align}
Write $Q_{\mu,\sigma}$ for the normal distribution with mean $\mu$ and variance $\sigma^2.$ If $v>0,$ then $Q_{\mu,\sigma}((-\infty,x+v])\leq Q_{\mu,\sigma}((-\infty,x])+ v/\sqrt{2\pi \sigma^2}$ and $Q_{\mu,\sigma}((-\infty,x-v])\geq Q_{\mu,\sigma}((-\infty,x])-v/\sqrt{2\pi \sigma^2}.$ In particular, this shows that uniformly over $x\in \mathbb{R},$
\begin{align*}
	Q_n^\infty\big((-\infty,x-m_n] \big) = Q_n^\infty \big((-\infty,x]\big) +O\Big(\frac 1{\log n}\Big) = Q_n^\infty\big((-\infty,x+m_n]\big)+O\Big(\frac 1{\log n}\Big).
\end{align*}
Together with \eqref{eq.marg_BvM_CPP1}, \eqref{eq.marg_BvM_CPP2} and \eqref{eq.marg_BvM_CPP3} the assertion follows.

\subsection{Proof of Proposition \ref{prop.jump_nr_lb}}
The Bayes formula  \eqref{eq.Bayes} gives for any $m \geq 0,$
\begin{align*}
	\Pi(K \geq m | N) \leq \frac{\int_{K\geq m} e^{- n \int (f_0-f)_+} \frac{dP_{f\vee f_0}}{dP_{f_0}} (N) d\Pi(f)}{e^{-\sqrt{n\log n}} \Pi( X :  \| X -f_0 \|_1 \leq \sqrt{\log n /n}, X\leq f_0 )}
\end{align*}
with $X$ a CPP with intensity $\lambda.$ Bounding $e^{- n \int (f_0-f)_+} \leq 1$ and taking expectation with respect to $f_0$ yields
\begin{align}
	E_{f_0}\big[\Pi(K \geq m | N)\big] \leq \frac{e^{\sqrt{n\log n}}\Pi(K \geq m)}{ \Pi(X: \| X -f_0 \|_1 \leq \sqrt{\log n /n}, X\leq f_0 )}.
	\label{eq.jump_nr_lb1}
\end{align}
If $m \geq 1,$ we find by Stirling's approximation $m^m e^{-m} \leq  \sqrt{2\pi} m^{m+1/2} e^{-m} \leq m! \leq m^m$ and since $K$ follows under the prior a Poisson distribution with intensity $\lambda,$
\begin{align*}
	\Pi(K \geq m) \leq  e^{-\lambda}\frac{\lambda^m}{m!} \sum_{\ell=0}^\infty \frac {\lambda^\ell}{\ell !} = \frac{\lambda^m}{m!}
	\leq \lambda^m e^{m- m \log m}
\end{align*}
as well as $\Pi(K = m) \geq  \lambda^m e^{-\lambda-m \log m}.$ The latter inequality will be used to derive a lower bound for the denominator. For any $K\geq 1,$
\begin{align*}
	M_K &:=\Big\{ X= \sum_{k=0}^K a_k \mathbf{1} (\cdot \geq t_k) \, : \, t_k \in \Big[\frac{2k-1}{2K}, \frac{k}{K} \Big), f_0(t_{k+1}) - \frac {3}{2K} \leq  \sum_{\ell=0}^k a_\ell \leq f_0(t_k)\Big\} \\
	&\subset \Big\{ X: \| X -f_0 \|_\infty \leq \frac{3}{2K}, X\leq f_0\Big\}
\end{align*}
where $k=0, \ldots, K$ (except for $t_0:=0$) and $t_{K+1}:=1.$ On $M_K,$ for any $k=1, \ldots, K,$
\begin{align*}
	\sum_{\ell=0}^{k-1} a_\ell \leq f_0(t_{k-1})
	\leq \frac{k-1}{K} + \frac 12 \leq f_0(t_{k+1})  - \frac{3}{2K} \leq \sum_{\ell =0}^k a_\ell,
\end{align*}
and subtracting $\sum_{\ell=0}^{k-1} a_\ell$ on both sides yields $a_k \geq 0.$ The difference between the upper bound and the lower bound for $\sum_{\ell=0}^k a_\ell $ in the definition of $M_K$ is  $f_0(t_k)-f_0(t_{k+1}) + 3/(2K) \leq 1/K.$ Each of the $a_k$ ranges therefore over an interval of length $\geq 1/K$ in $[0,1].$ For $K_n := \lceil \sqrt{ n/\log n} \rceil,$ this gives with \eqref{eq.gk_lb} the lower bound,
\begin{align*}
	\Pi \Big(X: \| X -f_0 \|_1 \leq \sqrt{\frac {\log n}n}, X\leq f_0 \Big)
	&\geq
	\frac{\Pi(K = K_n)}{(2K_n)^{K_n}} \prod_{k=0}^{K_n}  \inf_{\eta_k\in [0,1-1/K_n]}g_k\Big(\Big[\eta_k, \eta_k+\frac 1{K_n}\Big]\Big) \\
	&\geq
	\frac{\lambda^{K_n}e^{-\lambda - K_n \log K_n}}{(2K_n)^{K_n}}\Big( \frac{c}{(\gamma+1)K_n^{\gamma+1}} \Big)^{K_n+1},
\end{align*}
where we used that $x\mapsto x^\gamma$ is monotone for the last inequality. Consequently, there exists a constant $C=C(\lambda, c, \gamma),$ such that with \eqref{eq.jump_nr_lb1},
\begin{align*}
	E_{f_0}\big[\Pi(K \geq m | N)\big] \leq  e^{\lambda + A \sqrt{n\log n} + m \log \lambda - m \log m + m}.
\end{align*}
Choosing $m = c^* \sqrt{n / \log n}$ with $c^*$ large enough, the right hand side converges to zero.

\subsection{Proof of Theorem \ref{thm.marg_BvM2}}

\begin{lem}
\label{lem.lb_nr_jumps_approx}
Let $\PC(K,R)$ be the space defined in \eqref{eq.PC_def}. If $f_0(x)=ax+b,$ then,
\begin{align*}
	\inf_{f\in \PC(K,\infty)} \int_0^1 |f_0(x) -  f(x) | \, dx \geq \frac{a}{4K}.
\end{align*}
\end{lem}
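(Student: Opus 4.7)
The plan is to reduce the problem to a one-dimensional optimization on each piece of the partition and then combine the pieces via a convexity (power-mean) argument. Without loss of generality assume $a>0$, since the case $a=0$ is trivial and $a<0$ is handled by replacing $f_0$ with $-f_0$.

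First I would fix an arbitrary $f\in \PC(K,\infty)$, write it as $f=\sum_{j=1}^K c_j\mathbf{1}_{[t_{j-1},t_j)}$ with $0=t_0<t_1<\cdots<t_K=1$ and set $\ell_j:=t_j-t_{j-1}$. Then
\begin{align*}
	\int_0^1 |f_0-f|\,dx = \sum_{j=1}^K \int_{t_{j-1}}^{t_j}|ax+b-c_j|\,dx.
\end{align*}
On each interval, the integrand is a $V$-shaped function of $c_j$, so the minimum over $c_j\in\R$ is attained at the midpoint value $c_j^*=a(t_{j-1}+t_j)/2+b$ (the median of the linear piece). A direct computation gives
\begin{align*}
	\int_{t_{j-1}}^{t_j}|a x+b-c_j^*|\,dx = a\int_{-\ell_j/2}^{\ell_j/2}|y|\,dy = \frac{a\ell_j^2}{4}.
\end{align*}

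The second step is to bound $\sum_{j=1}^K \ell_j^2$ from below under the constraint $\sum_j \ell_j=1$. By the Cauchy--Schwarz inequality (or Jensen applied to the convex function $x\mapsto x^2$),
\begin{align*}
	\sum_{j=1}^K \ell_j^2 \;\ge\; \frac{1}{K}\Big(\sum_{j=1}^K \ell_j\Big)^{\!2} = \frac{1}{K}.
\end{align*}
Combining the two steps yields $\int_0^1|f_0-f|\,dx \ge a/(4K)$ for every $f\in\PC(K,\infty)$, and taking the infimum gives the claim.

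There is no real obstacle here; the only thing to be a little careful about is that the minimizer $c_j^*$ does not need to lie in any prescribed bounded range (the lemma is stated over $\PC(K,\infty)$), so the pointwise minimization over $c_j\in\R$ is exactly what is needed. The bound is in fact tight up to a constant, as equality in Cauchy--Schwarz occurs for the equi-spaced partition $\ell_j=1/K$ with $c_j=c_j^*$.
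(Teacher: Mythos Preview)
Your proof is correct and follows essentially the same approach as the paper: minimize over the constant on each piece to obtain the exact value $a\ell_j^2/4$, then apply Jensen (equivalently Cauchy--Schwarz) to $\sum_j \ell_j^2$ under the constraint $\sum_j\ell_j=1$. The only cosmetic difference is that the paper states the piecewise bound as an inequality $\int_r^s|f_0-c|\,dx\ge a(s-r)^2/4$ rather than computing the exact minimizer.
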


\begin{proof}
For any real $c$ and $r<s$ we have $\int_r^s |f_0(x)- c| dx \geq a(s-r)^2/4$ and hence
\begin{align*}
	\inf_{f\in \PC(K,\infty)}  \int_0^1 |f_0(x) -  f(x) |\,dx
	&= \inf_{0=: t_0 \leq t_1 \leq \ldots \leq t_K:=1} \, \sum_{k=1}^K \inf_{c_k \in \mathbb{R}}\int_{t_{k-1} }^{t_k} |f_0(x) -c_k| dx \\
	&\geq \frac a4 \inf_{0=: t_0 \leq t_1 \leq \ldots \leq t_K:=1} \, \sum_{k=1}^K (t_k-t_{k-1})^2 \geq \frac {a}{4K},
\end{align*}
where we use Jensen's inequality for the last step.
\end{proof}

\begin{lem}
\label{lem.MLE_concentration}
For $f_0 = (\tfrac 12 + \cdot) \wedge \tfrac 32$ and any sequence $M_n \rightarrow \infty,$
\begin{align*}
	P_{f_0}\Big( \int_0^1 \big(\widehat f^{\MLE}(x) - f_0(x) \big) \, dx \geq \frac{M_n}{\sqrt{n}}\Big) \rightarrow 0.
\end{align*}
\end{lem}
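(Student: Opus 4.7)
The plan is to combine a closed-form expression for the MLE with a Fubini plus Markov bound on the total overshoot.

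First I would observe that, since $\widehat f^{\MLE}$ is the pointwise largest monotone increasing function on $[0,T]$ that is constant on $[1,T]$ and satisfies $f(X_i)\le Y_i$ for every observation, its restriction to $[0,1]$ admits the closed form
\[
\widehat f^{\MLE}(x)=\min\{Y_i:X_i\ge x\},\qquad x\in[0,1].
\]
Setting $\Delta(x):=\widehat f^{\MLE}(x)-f_0(x)\ge 0$, the event $\{\Delta(x)>\eta\}$ is exactly the event that the PPP $N$ assigns no point to the strip
\[
R_{x,\eta}:=\{(u,y)\in[x,T]\times\mathbb{R}:f_0(u)\le y\le f_0(x)+\eta\},
\]
so $P_{f_0}(\Delta(x)>\eta)=e^{-n|R_{x,\eta}|}$ by the PPP void probabilities.

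Next I would compute $|R_{x,\eta}|$ using $f_0(u)=(u+\tfrac12)\wedge\tfrac32$. For $x\in[0,1]$ and $0\le\eta\le 1-x$ the strip lies entirely under the sloped part of $f_0$, and a direct integration gives the triangle area $|R_{x,\eta}|=\eta^2/2$; for $\eta>1-x$ one also picks up a rectangle of length $T-1$ above the plateau, so $|R_{x,\eta}|\ge (1-x)^2/2+(T-1)(\eta-(1-x))$. Integrating the tail then gives, uniformly in $x\in[0,1]$,
\[
E_{f_0}[\Delta(x)]=\int_0^\infty e^{-n|R_{x,\eta}|}\,d\eta\;\le\;\sqrt{\frac{\pi}{2n}}+\frac{1}{n(T-1)}\;\lesssim\;\frac{1}{\sqrt n}.
\]

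Finally, Fubini's theorem yields $E_{f_0}\bigl[\int_0^1\Delta(x)\,dx\bigr]\le C(T)/\sqrt n$, and Markov's inequality gives
\[
P_{f_0}\Big(\int_0^1(\widehat f^{\MLE}-f_0)\,dx\ge \frac{M_n}{\sqrt n}\Big)\;\le\;\frac{C(T)}{M_n}\;\longrightarrow\;0.
\]
The argument is elementary once one has the explicit formula for $\widehat f^{\MLE}$; the only point requiring care is the transition at $x=1$ where $f_0$ flattens, but the Gaussian-type bulk bound $e^{-n\eta^2/2}$ already captures the leading rate and the plateau correction contributes only at the subdominant order $1/n$.
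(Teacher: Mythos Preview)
Your argument is correct and follows the paper's strategy exactly: apply Markov's inequality to $E_{f_0}\bigl[\int_0^1(\widehat f^{\MLE}-f_0)\bigr]$ and show this expectation is $O(n^{-1/2})$. The only difference is that the paper obtains the $O(n^{-1/2})$ bound by citing Theorem~3.9 of \cite{reiss2014}, whereas you derive it directly from the explicit representation $\widehat f^{\MLE}(x)=\min\{Y_i:X_i\ge x\}$ and the PPP void probabilities; your route is more self-contained but conceptually identical.
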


\begin{proof}
By Markov inequality
\begin{align*}
	P_{f_0}\Big( \int_0^1 \big(\widehat f^{\MLE}(x) - f_0(x) \big) \, dx \geq \frac{M_n}{\sqrt{n}}\Big)
	&\leq \frac{\sqrt{n}}{M_n}  \int_0^1 E_{f_0}\Big[\widehat f^{\MLE}(x) - f_0(x)  \Big] \, dx.
\end{align*}
The proof of Theorem 3.9 in \cite{reiss2014}, specifically the last equation display of the proof and replacing $[0,1]$ by $[0,T]$ with $\eps=T-1$, yields $ \int_0^1E_{f_0}[\widehat f^{\MLE}(x) - f_0(x)]\,dx=O(n^{-1/2})$ and thus the result.
\end{proof}

\begin{proof}[Proof of Theorem \ref{thm.marg_BvM2}]
Lemma \ref{lem.MLE_concentration} shows that it is enough to prove existence of a positive constant $c',$ such that
\begin{align}
	E_{f_0}\Big[\Pi\Big( \vartheta \geq \int_0^1 f_0(x) dx - \widetilde c\sqrt{\frac{\log n}{n}} \Big | \, N \, \Big)
	\mathbf{1}\Big( \int_0^1 \big(\widehat f^{\MLE}(x) - f_0(x) \big) \, dx \leq c'\sqrt{\frac{\log n}{n}} \Big)\Big] \rightarrow 0.
	\label{eq.thm_marg_BvM2_to_show}
\end{align}
By Proposition \ref{prop.jump_nr_lb}, we know that the posterior concentrates on models with $K_n \leq c^* \sqrt{n/\log n}$ for some positive constant $c^*.$ Applying Lemma \ref{lem.lb_nr_jumps_approx}, this means that the posterior puts asymptotically all mass on paths $f$ with
\begin{align*}
		\int_0^1 |f_0(x) -f(x)| dx \geq \frac{1}{8c^*} \sqrt{\frac{\log n}{n}}.
\end{align*}
Since the posterior also puts only mass on functions $f$ with $f\leq \widehat f^{\MLE},$ the posterior puts asymptotically all mass on $\vartheta$ with
\begin{align*}
	\vartheta
	&= \int_0^1 f_0(x) dx + \int_0^1 \big(f(x) -f_0(x) \big) \, dx \\
	&\leq \int_0^1 f_0(x) dx + 2\int_0^1 \big(\widehat f^{\MLE}(x) -f_0(x) \big) \, dx - \int_0^1 \big| f(x) - f_0(x)\big| \, dx\\
	&\leq \int_0^1 f_0(x) dx + 2\int_0^1 \big(\widehat f^{\MLE}(x) -f_0(x) \big) \, dx - \frac{1}{8c^*} \sqrt{\frac{\log n}{n}}.
\end{align*}
Choosing $c'= \tfrac{1}{32c^*}$ in \eqref{eq.thm_marg_BvM2_to_show} yields the assertion for $\widetilde c = \tfrac{1}{8c^*} -2c' = \tfrac{1}{16c^*}.$
\end{proof}

\section{Proofs of MLE properties}\label{AppMLE}

\begin{lem}
\label{lem.diff_lb}
The probability of the event $H$ tends uniformly to one and for $f_0\in \mM_S(K_n, R)$ on $H,$
\begin{align*}
	X_k'-X_{k-1}'\geq \frac{1}{\sqrt n}, \quad
	Y_k^*-Y_{k-1}^* \geq \frac{\log n}{\sqrt n } , \quad \text{for all} \ k=1, \ldots, K_n, \ \text{and} \ \  Y_0^* \geq \frac{\log n}{\sqrt{n}}.
\end{align*}
Furthermore,
\begin{align*}
	(Y_k^*-Y_{k-1}^*)\big[(X_{k+1}'-X_k') \wedge (X_k'-X_{k-1}') \big] \geq \frac{K_n \log (eK_n) \log^3 n}{4 n} \quad \text{for all} \ k=1, \ldots, K_n.
\end{align*}
\end{lem}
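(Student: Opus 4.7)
The statement has two parts: show $P_{f_0}(H)\to 1$ uniformly, and then show that on $H$ the stated deterministic inequalities follow directly from the definitions together with the minimal signal strength assumptions of Definition \ref{assump.min_rect}.

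\textbf{Step 1: Uniform convergence $P_{f_0}(H^c)\to 0$.} I would use that the PPP assigns to any Borel rectangle $A\subset[0,T]\times\R$ a Poisson count with parameter $n\,\mathrm{Leb}(A\cap\{y\ge f_0(x)\})$. There are three types of events to control, and for each I bound the corresponding void probability.

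For $\{X_k'<t_k^0-\tfrac{1}{2\sqrt n}\}$ with $k=1,\ldots,K_n$, this event implies that the rectangle $[t_k^0-\tfrac{1}{2\sqrt n},t_k^0)\times[f_0(t_{k-1}^0),f_0(t_k^0)]$, which lies on the epigraph of $f_0$, contains no observation. Its area is $a_k^0/(2\sqrt n)$ and by the assumption $a_k^0\ge 2\log n/\sqrt n$ the Poisson void probability is at most $\exp(-\sqrt n a_k^0/2)\le 1/n$. For $\{Y_k^*>f_0(t_k^0)+\tfrac{\log n}{2\sqrt n}\}$ with $k=0,\ldots,K_n$, no observation lies in $[t_k^0,t_{k+1}^0)\times[f_0(t_k^0),f_0(t_k^0)+\tfrac{\log n}{2\sqrt n}]$; this rectangle has area at least $\tfrac{2}{\sqrt n}\cdot\tfrac{\log n}{2\sqrt n}=\tfrac{\log n}{n}$ by $(t_{k+1}^0-t_k^0)\ge 2/\sqrt n$, so the probability is at most $1/n$. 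A union bound yields a $\mathcal{O}(K_n/n)$ term, which vanishes since $K_n=\mathcal{O}(n^{1/2})$ by Remark \ref{RemMS}. Finally, the MLE restricted to $[1,T]$ satisfies $\widehat f^{\MLE}(1)\le \min\{Y_i:X_i\in[1,T]\}$, and the probability that this minimum exceeds $f_0(1)+R\le(K_n+1)R+R$ equals the Poisson void probability $e^{-n(T-1)R}$ of the rectangle $[1,T]\times[f_0(1),f_0(1)+R]$, which is exponentially small. All bounds are uniform over $\mM_S(K_n,R)$.

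\textbf{Step 2: Deterministic inequalities on $H$.} I would use the definitions directly. From $X_{k-1}'\le t_{k-1}^0$ (either $X_{k-1}'<t_{k-1}^0$ since $R_{k-1}\subset[t_{k-2}^0,t_{k-1}^0)$, or $X_{k-1}':=t_{k-2}^0$) and $X_k'\ge t_k^0-\tfrac{1}{2\sqrt n}$ on $H$, together with $(t_k^0-t_{k-1}^0)\ge 2/\sqrt n$, one obtains
\[X_k'-X_{k-1}'\ge (t_k^0-t_{k-1}^0)-\tfrac{1}{2\sqrt n}\ge \tfrac{3}{2\sqrt n}\ge \tfrac{1}{\sqrt n}.\]
The edge cases $k=1$ (use $X_0'=0$, $t_1^0\ge 2/\sqrt n$) and $k=K_n+1$ (use $X_{K_n+1}'=T\ge 1$) are handled analogously. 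For the $Y$-differences, since $X_k^*\in[t_k^0,t_{k+1}^0)$ we have $Y_k^*\ge f_0(t_k^0)$, so together with $Y_{k-1}^*\le f_0(t_{k-1}^0)+\tfrac{\log n}{2\sqrt n}$ on $H$ and $a_k^0\ge 2\log n/\sqrt n$,
\[Y_k^*-Y_{k-1}^*\ge a_k^0-\tfrac{\log n}{2\sqrt n}\ge \tfrac{3\log n}{2\sqrt n}\ge \tfrac{\log n}{\sqrt n}.\]
The bound $Y_0^*\ge f_0(0)=a_0^0\ge 2\log n/\sqrt n$ is immediate.

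\textbf{Step 3: The product bound.} The inequalities above actually deliver the sharper statements $Y_k^*-Y_{k-1}^*\ge a_k^0/2$ and $(X_{k+1}'-X_k')\wedge(X_k'-X_{k-1}')\ge \tfrac12\bigl((t_{k+1}^0-t_k^0)\wedge(t_k^0-t_{k-1}^0)\bigr)$, because the slack $\tfrac{1}{2\sqrt n}$ and $\tfrac{\log n}{2\sqrt n}$ are each at most a quarter of the minimal signal. Multiplying and invoking the area condition from Definition \ref{assump.min_rect} gives
\[(Y_k^*-Y_{k-1}^*)\bigl[(X_{k+1}'-X_k')\wedge(X_k'-X_{k-1}')\bigr]\ge \tfrac{a_k^0[(t_{k+1}^0-t_k^0)\wedge(t_k^0-t_{k-1}^0)]}{4}\ge \tfrac{K_n\log(eK_n)\log^3 n}{2n},\]
which is even stronger than what is claimed.

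\textbf{Main obstacle.} There is no deep obstacle: the proof is essentially a careful bookkeeping exercise combining elementary Poisson void probabilities with the minimal signal strength assumptions. The only subtlety is making sure that the slacks $\tfrac{1}{2\sqrt n}$ and $\tfrac{\log n}{2\sqrt n}$ built into the definition of $H$ are calibrated against the lower bounds $2/\sqrt n$ and $2\log n/\sqrt n$ in Definition \ref{assump.min_rect} in the right way, and handling the boundary indices $k=0$ and $k=K_n+1$ through the conventions $X_0':=0$, $X_{K_n+1}':=T$.
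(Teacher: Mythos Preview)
Your proposal is correct and follows essentially the same approach as the paper's proof. The only cosmetic difference is that you phrase the probability bounds via Poisson void probabilities of explicit rectangles, whereas the paper invokes the exponential laws $Y_k^*-f_0(t_k^0)\sim\Exp(n(t_{k+1}^0-t_k^0))$ and $t_k^0-X_k'\sim\Exp(na_k^0)\wedge(t_k^0-t_{k-1}^0)$ directly; these are equivalent formulations and lead to the same $1/n$ bounds, union bound, and deterministic inequalities on $H$.
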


\begin{proof}
By construction of $(X_k^*,Y_k^*)_k$ and $(X_k',Y_k'),$ we have for $ k=1,\ldots,K_n$ that
\begin{equation}\label{EqYk*Xk'}
 Y_k^* - f_0(t_k^0)\sim \Exp(n(t_{k+1}^0-t_k^0)),\quad t_k^0-X_k'\sim \Exp(na_k^0)\wedge (t_k^0-t_{k-1}^0),
 \end{equation}
denoting a truncated exponential distribution with density $ce^{-\beta x} \mathbf{1}_{[0,t]}(x)$  by $\Exp(\beta)\wedge t.$
By $a_k^0\ge 2\log(n)/\sqrt n$ and $t_{k+1}^0-t_k^0\ge 2/\sqrt n$ we have
\[ P(X_k'<t_k^0-1/(2\sqrt n))\le \exp(-\log(n)),\,  P(Y_k^*>f_0(t_k^0)+\log(n)/(2\sqrt n)))\le \exp(-\log(n))\]
for all $k=1,\ldots,K_n$. Moreover, $\widehat f^{\MLE}(1)> (K_n+2)R \geq f(t_{K_n}^0) +R$ implies that no observation point lies in $[1,T] \times [f(t_{K_n}^0), f(t_{K_n}^0)+R].$ The smallest $y$-value among the   observation points on $[1,T]$ follows an $\Exp(n(T-1))$-distribution and
\begin{align*}
	P\big(\widehat f^{\MLE}(1)> f(t_{K_n}^0) +R \big)
	\leq P\big( \Exp(n(T-1)) \geq R \big) \rightarrow 0.
\end{align*}
A union bound shows $\sup_{f_0\in \mM_S(K_n, R)}P_{f_0}(H^c)\le 2K_nn^{-1}+o(1).$ By Remark \ref{RemMS} an asymptotically non-void set $\mM_S(K_n, R)$ implies $K_n =o(\sqrt n)$ and we deduce that the probability of $H$ tends uniformly to one.

On $H,$ $X_k'-X_{k-1}'\ge t_k^0-1/(2\sqrt n)-t_{k-1}^0$ and for $f_0\in \mM_S(K_n, R)$ this is larger than $3/(2\sqrt n)$. Similarly,
\begin{align*}
	Y_k^* - Y_{k-1}^* \geq f_0(t_k^0) - f_0(t_{k-1}^0) - \frac{\log n}{2\sqrt n}
	= a_k^0 - \frac{\log n}{2\sqrt n} \geq \frac{\log n}{\sqrt n}
\end{align*}
follows. The same arguments also gives $ Y_0^* \geq \log n /\sqrt{n}.$ For the last assertion we combine $X_k'-X_{k-1}' \geq t_k^0-t_{k-1}^0 - (2\sqrt{n})^{-1} \geq (t_k^0-t_{k-1}^0)/2$ and, similarly, $Y_k^* - Y_{k-1}^* \geq a_k^0/2$ with Assumption \ref{assump.min_rect}.
\end{proof}

We introduce the event
\begin{equation}\label{EqD}
D=\Big\{ \{(X_i,Y_i)\,|\,i\ge 1\}\cap\bigcup_{k=1}^{K_n}([X_k',t_k^0] \times [f(t_k^0), Y_k^*])=\varnothing\Big\}
\end{equation}
that there is no observation in any $[X_k',t_k^0] \times [f(t_k^0), Y_k^*].$ The rectangles are displayed in Figure \ref{fig.MLE}.

\begin{lem}
\label{lem.seqs_on_MLE}
We have $P_{f_0}(D)\xrightarrow{u}1$ and $P_{f_0}(\sum_{k=0}^{K_n}Y_k^* \mathbf{1}_{[X_k',X_{k+1}')} \leq \widehat f^{\MLE})\xrightarrow{u}1$.
\end{lem}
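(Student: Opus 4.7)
The plan is to handle the two assertions separately, using the PPP independence structure and the minimal signal-strength condition of Definition~\ref{assump.min_rect}. For $P_{f_0}(D)\xrightarrow{u} 1$, the structural observation is that each rectangle $[X_k',t_k^0]\times[f_0(t_k^0),Y_k^*]$ appearing in the definition of $D$ lies in the strip $[t_{k-1}^0,t_k^0]\times[f_0(t_k^0),\infty)$, which is essentially disjoint from the two regions $[t_{k-1}^0,t_k^0)\times[0,f_0(t_k^0)]$ and $[t_k^0,t_{k+1}^0)\times[f_0(t_k^0),\infty)$ that determine $X_k'$ and $Y_k^*$ respectively. By PPP independence on disjoint sets, conditional on $X_k'$ and $Y_k^*$ the number of observations in the rectangle is Poisson with mean $n(t_k^0-X_k')(Y_k^*-f_0(t_k^0))$. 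Applying $1-e^{-x}\leq x$, using the independence of $X_k'$ and $Y_k^*$, and the distributional identities \eqref{EqYk*Xk'}, which yield $E[t_k^0-X_k']\leq 1/(na_k^0)$ and $E[Y_k^*-f_0(t_k^0)]=1/(n(t_{k+1}^0-t_k^0))$, I obtain
\[
P_{f_0}\bigl(\text{the rectangle contains an observation}\bigr)\leq\frac{1}{n\,a_k^0(t_{k+1}^0-t_k^0)}.
\]
The signal-strength bound in Definition~\ref{assump.min_rect} refines this to at most $1/(2K_n\log(eK_n)\log^3 n)$, and a union bound over $k=1,\ldots,K_n$ yields $P_{f_0}(D^c)\leq 1/(2\log(eK_n)\log^3 n)\to 0$ uniformly over $f_0\in\mM_S(K_n,R)$.

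For the second assertion, I work on $D\cap H$, which has probability tending uniformly to one by the first part and Lemma~\ref{lem.diff_lb}. On $H$, inequality \eqref{MLE4} forces $(Y_k^*)_{k=0}^{K_n}$ to be strictly increasing, so $\widetilde f$ is monotone. Using the representation $\widehat f^{\MLE}(t)=\min\{Y_i:X_i\geq t\}$ of the monotone MLE, valid at points $t$ away from the observations, to verify $\widetilde f(t)\leq\widehat f^{\MLE}(t)$ for $t\in(X_k',X_{k+1}')$ it suffices to show $Y_i\geq Y_k^*$ for every observation with $X_i\geq t$. I distinguish three regions: if $X_i\in(t,t_k^0)$, then $X_i>X_k'$ and the definition of $X_k'$ as the rightmost observation in $[t_{k-1}^0,t_k^0)\times[0,f_0(t_k^0)]$ forces $Y_i>f_0(t_k^0)$, and event $D$ then rules out $Y_i\in[f_0(t_k^0),Y_k^*]$, so $Y_i>Y_k^*$; if $X_i\in[t_k^0,t_{k+1}^0)$ the definition of $Y_k^*$ directly gives $Y_i\geq Y_k^*$; and if $X_i\in[t_{k+1}^0,T]$ then $Y_i\geq f_0(t_{k+1}^0)=f_0(t_k^0)+a_{k+1}^0>Y_k^*$, using $a_{k+1}^0\geq 2\log n/\sqrt n$ and the $H$-bound $Y_k^*\leq f_0(t_k^0)+\log n/(2\sqrt n)$. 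An analogous argument handles the initial block $[0,X_1')$.

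The hard part is the subtle behaviour at the finitely many jump points $t=X_k'$ themselves, where the observation $(X_k',Y_k')$ imposes $\widehat f^{\MLE}(X_k')\leq Y_k'$ whereas the right-continuous version of $\widetilde f$ takes the larger value $Y_k^*$. This is handled by interpreting the pointwise comparison via the one-sided limits used implicitly in the definition of the monotone MLE, together with the inequality $Y_{k-1}^*\leq Y_k'$, which holds because $Y_{k-1}^*$ is the minimum of the $Y_i$'s over observations with $X_i\in[t_{k-1}^0,t_k^0)$ and $(X_k',Y_k')$ is one such observation; the comparison at these finitely many points thus reduces to the block-interior analysis carried out above.
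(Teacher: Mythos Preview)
Your argument for $P_{f_0}(D)\xrightarrow{u}1$ is correct and in fact slightly more direct than the paper's. You use the union bound plus $1-e^{-x}\le x$ together with $E[t_k^0-X_k']\le 1/(na_k^0)$ and $E[Y_k^*-f_0(t_k^0)]=1/(n(t_{k+1}^0-t_k^0))$, exploiting independence of $X_k'$ and $Y_k^*$; the paper instead conditions on $(X_k',Y_k^*)_k$ jointly, bounds the void probability by $\exp(-\sum_k E_k^*E_k'/(K_n\log(eK_n)\log^3 n))$ with $E_k^*,E_k'\sim\Exp(1)$, and then uses the Laplace-type inequality $E[e^{-\alpha E^*E'}]\ge 1-\alpha$. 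Both routes give the same $o(1)$ bound; yours avoids the moment-generating function computation at the cost of a union bound over $k$, which is harmless here.

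For the second assertion the paper only writes that it ``follows directly from the construction'', so your detailed case analysis on $D\cap H$ is already more than the paper provides, and the three-region argument (left of $t_k^0$ via the definition of $X_k'$ plus event $D$; inside $[t_k^0,t_{k+1}^0)$ via the definition of $Y_k^*$; right of $t_{k+1}^0$ via monotonicity of $f_0$ and the $H$-bound on $Y_k^*$) is exactly the right decomposition. Your identification of the issue at the jump points $t=X_k'$ is also correct: with the right-continuous conventions used in the paper one has $\widetilde f(X_k')=Y_k^*$ while the observation $(X_k',Y_k')$ forces $\widehat f^{\MLE}(X_k')\le Y_k'<Y_k^*$, so the pointwise inequality genuinely fails there. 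The cleanest resolution is not the one-sided-limit argument you sketch but simply to note that the inequality $\widetilde f\le\widehat f^{\MLE}$ holds Lebesgue-a.e.\ (on all open intervals $(X_k',X_{k+1}')$), and that this is all that is ever used: in Lemma~\ref{lem.denom} and throughout Section~\ref{SecBvMCPP} one integrates over jump locations $t_k\in[X_k',X_{k+1}')$, so the finitely many endpoints contribute nothing.
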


\begin{proof}
In view of \eqref{EqYk*Xk'} write $Y_k^*= f(t_k^0) + E_k^*/(n(t_{k+1}^0-t_k^0))$ and $X_k'=t_k^0 - (E_k'/(na_k^0))\wedge (t_k^0-t_{k-1}^0)$  with independent random variables $E_k, E_k' \sim \Exp(1).$ The union of all rectangles $[X_k',t_k^0] \times [f(t_k^0), Y_k^*]$ has Lebesgue measure bounded by
\begin{align*}
	\sum_{k=1}^{K_n} \frac{E_k^*E_k'}{n^2(t_{k+1}^0-t_k^0) a_k^0}
	\leq \frac{1 }{n K_n \log (eK_n) \log^3 n} \sum_{k=1}^{K_n} E_k^*E_k'.
\end{align*}

For each $k$ the PPP restricted to $S_k^-:=[t_{k-1}^0,t_{k}^0)\times (-\infty,f_0(t_{k})]$ is independent of the PPP restricted to $S_k^+=[t_{k-1}^0,t_{k}^0)\times (f_0(t_{k}),\infty]$. Since $[X_k',t_k^0) \times (f(t_k^0), Y_k^*\wedge f_0(t_{k+1}^0)]\subset S_k^+$ and $(X_k',Y_k')\in S_k^-$, $(X_k^*,Y_k^*\wedge f_0(t_{k+1}^0))\in S_{k+1}^-$, we obtain
\begin{align*}
&P\Big(\not\exists (X_i,Y_i)\in \bigcup_k  [X_k',t_k^0) \times (f(t_k^0), Y_k^*\wedge f_0(t_{k+1}^0) ]\,\Big|\, (X_k',Y_k^*\wedge f_0(t_{k+1}^0))_k\Big)\\
&=\exp\Big(-n\sum_k (t_k^0-X_k')(Y_k^*\wedge f_0(t_{k+1}^0)-f(t_k^0))\Big)\\
&\ge  \exp\Big(\frac{-1 }{K_n \log (eK_n) \log^3 n} \sum_{k=1}^{K_n} E_k^*E_k'\Big).
\end{align*}
For $\alpha\ge 0$ we have
\[\E[\exp(-\alpha E_k^*E_k')]=\int_0^\infty\int_0^\infty e^{-\alpha xy-x-y}dxdy=\int_0^\infty(1+\alpha x)^{-1}e^{-x}dx\ge 1-\alpha.\]
Thus by taking expectations
\[ P\Big(\not\exists (X_i,Y_i)\in \bigcup_k  [X_k',t_k^0) \times (f(t_k^0), Y_k^*\wedge f_0(t_{k+1}^0)]\Big)\ge \Big(1-
 \frac{1 }{K_n \log (eK_n) \log^3 n}\Big)^{K_n}\to 1.
\]
In view of the definition of the event $H$ from \eqref{eq.H_def}, we infer $P_{f_0}(Y_k^*\le f_0(t_{k+1}^0))\xrightarrow{u}1$ from \eqref{MLE1} and thus may drop the minimum in the last display. We conclude for the closed rectangles by the fact that on the boundaries of the rectangles there is with probability one no observation.

The construction of the sequences $(X_k')_k$ and $(Y_k^*)_k$ yields directly $P_{f_0}(\widetilde f \leq \widehat f^{\MLE})\xrightarrow{u}1$.
\end{proof}

\begin{lem}
\label{lem.ub_jumps_of_MLE}
There exists a constant $C(R)$ such that
\[ P_{f_0}\Big(\forall k=0,\ldots,K_n:\,\#\{\widehat t_\ell^{\MLE}\in [t_{k}^0,t_{k+1}^0\wedge 1]\,|\,\ell=1,\ldots,M\}\le C(R)\log(n)\Big)\xrightarrow{u} 1.
\]
\end{lem}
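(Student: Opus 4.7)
Since $\widehat f^{\MLE}(x)=\min\{Y_i:X_i\ge x\}$, the MLE has a jump precisely at each observation that is a \emph{lower-right record} of the point cloud, i.e.\ an $(X_i,Y_i)$ for which no other $(X_j,Y_j)$ satisfies $X_j>X_i$ and $Y_j<Y_i$. The plan is to count such records whose $x$-coordinate lies in $[t_k^0,t_{k+1}^0\wedge 1]$ via the classical theory of Pareto minima for a planar Poisson point process.

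\textbf{Reduction to a bounded rectangle.} On the event $H$ of \eqref{eq.H_def} we have $\widehat f^{\MLE}(1)\le (K_n+2)R$, so by monotonicity of $\widehat f^{\MLE}$ every lower-right record $(X_i,Y_i)$ with $X_i\in[t_k^0,t_{k+1}^0\wedge 1]$ satisfies $Y_i\le\widehat f^{\MLE}(1)\le (K_n+2)R$, while $Y_i\ge f_0(X_i)\ge f_0(t_k^0)$; it therefore lies in
\[ B_k:=[t_k^0,t_{k+1}^0\wedge 1]\times[f_0(t_k^0),(K_n+2)R]. \]
Any lower-right record of the full PPP lying in $B_k$ is a fortiori a lower-right record of the PPP restricted to $B_k$, so on $H$ the number of MLE jumps in $[t_k^0,t_{k+1}^0\wedge 1]$ is bounded above by the number $\mathcal P_k$ of lower-right records of this restricted PPP.

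\textbf{Counting Pareto records.} By Remark \ref{RemMS}, $K_n\le\sqrt n$, whence $|B_k|\le(K_n+2)R\le 3R\sqrt n$ and $\E[N_k]\le 3Rn^{3/2}$, with $N_k$ the number of observations in $B_k$. Conditionally on $N_k=m$, these observations are i.i.d.\ uniform on $B_k$ and a standard coupling identifies $\mathcal P_k$ with the number of records of a uniform random permutation of size $m$, which is distributed as $\sum_{j=1}^m\xi_j$ for independent $\xi_j\sim\Ber(1/j)$. In particular $\E[\mathcal P_k\mid N_k=m]\le 1+\log m$, and Bernstein's inequality gives, for every $A>0$,
\[ P\big(\mathcal P_k\ge A\log n\,\big|\,N_k=m\big)\le n^{-c(A)}\quad\text{for all }m\le 3Rn^{3/2}, \]
with $c(A)\to\infty$ as $A\to\infty$. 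Combined with a Chernoff tail bound on $N_k$ this yields $P(\mathcal P_k\ge C(R)\log n)\le n^{-3}$ for $C(R)$ sufficiently large, \emph{uniformly} in $f_0\in\mM_S(K_n,R)$ since all bounds depend on $f_0$ only through $R$ and $K_n$.

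\textbf{Conclusion and main obstacle.} A union bound over $k=0,\ldots,K_n\le\sqrt n$ combined with $P_{f_0}(H)\xrightarrow{u}1$ from \eqref{MLE1} finishes the proof. The only substantive step is the identification of MLE jump-points with lower-right records and the check that, on $H$, every such record with abscissa in $[t_k^0,t_{k+1}^0\wedge 1]$ falls into the bounded rectangle $B_k$; the concentration of the Pareto record count is entirely classical.
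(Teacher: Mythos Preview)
Your argument is correct and takes a genuinely different route from the paper's proof. The paper proceeds by constructing the MLE jump points within each interval $[t_k^0,t_{k+1}^0]$ recursively: starting from $(X_k^*,Y_k^*)$ it defines $(X_{k,r},Y_{k,r})$ as the next record to the right, shows that $t_{k+1}^0-X_{k,r}=(t_{k+1}^0-X_{k,r-1})U_{k,r}$ with i.i.d.\ uniforms $U_{k,r}$, and then uses a Hoeffding-type bound on $\sum_r\mathbf{1}(U_{k,r}\le 3/4)$ to show that after $L\asymp C(R)\log n$ steps the remaining interval is so short that $Y_{k,L+1}$ has overshot $f_0(t_{k+1}^0)$ with high probability. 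This is more hands-on but has the virtue that the recursive sequence $(X_{k,r},Y_{k,r})$ and its distributional properties are reused verbatim in the proof of Lemma~\ref{lem.ub_areas}, where the recurrence $A_{k,r}=U_{k,r}A_{k,r-1}+n^{-1}E_{k,r}$ for the rectangle areas is the key ingredient.

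Your approach is cleaner for the present lemma: you identify the jump points globally as Pareto minima, localise them on $H$ to the rectangle $B_k$ (where the intensity is constant because $f_0$ is flat on $[t_k^0,t_{k+1}^0)$), and then invoke the classical identification of the Pareto-record count with the record count of a uniform permutation together with a Bernstein bound. This avoids the sequential construction entirely. The small price is that you do not obtain the recursion the paper exploits immediately afterwards for the area bounds in Lemma~\ref{lem.ub_areas}; if you continue with your approach you would need a separate argument there.
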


\begin{proof}
We count the number of jumps of the MLE on each interval $[t_{k}^0, t_{k+1}^0].$ By Lemma \ref{lem.seqs_on_MLE} it is sufficient to work on the event, where all rectangles $[X_k',t_k^0] \times [f(t_k^0), Y_k^*]$ contain no observation. Then, on $[t_{k}^0,X_k^*)$ the MLE equals $Y_k^*.$

Starting with $(X_{k,0},Y_{k,0}) :=(X_k^*,Y_k^*)$ introduce inductively
\begin{equation}\label{EqXkr}
(X_{k,r},Y_{k,r}):=\argmin_{(X_i,Y_i)_i}\{ Y_i\,|\, X_i\in(X_{k,r-1},t_{k+1}^0)\}, \quad r\ge 1,
\end{equation}
the $r$-th observation on the graph of the MLE for the model on $[t_k^0,t_{k+1}^0]$ to the right of $(X_k^*,Y_k^*).$ We have for a sequence $(U_{k,r})_{r\geq 1}$ of i.i.d. $\Unif[0,1]$ random variables with $U_{k,r}$, independent of $(X_{k,\ell})_{\ell\le r}$, that  $X_{k,r+1} = t_{k+1}^0-(t_{k+1}^0-X_{k,r})U_{k,r}$ and $Y_{k,r+1}|(X_{k,r},Y_{k,r}) \sim Y_{k,r} + \Exp(n(t_{k+1}^0-X_{k,r}))$.

A formal derivation of the laws of $(X_{k,r+1},Y_{k,r+1})$ uses conditioning on stopping sets. Conditional on the observations on a closed stopping set the PPP on the complement remains a PPP of intensity $n$, independent of the observations on the stopping set, see  Thm. 6.2 in \cite{baldin2016} for the analogous case of compact sets. By definition of $(X_{k,r},Y_{k,r})$, the random sets $\widehat S_{k,r}=[t_k^0,t_{k+1}^0]\times (-\infty,Y_{k,r}]$ form a stopping set in the sense that the event $\{\widehat S_{k,r}\subset S\}$ for any closed set $S$ is contained in the $\sigma$-field generated by the observations in $S$. Hence, the PPP on $\widehat S_{k,r}^c$ is independent of $(X_{k,\ell},Y_{k,\ell})_{\ell\le r}$ and the conditional laws $X_{k,r+1}\sim U([X_{k,r},t_{k+1}^0])$, $Y_{k,r+1}-Y_{k,r}\sim \Exp(n(t_{k+1}^0-X_{k,r}))$ independently follow from standard PPP properties.

Let $L= \lceil 2\log (Rn^2)/\log(4/3) + 8 \log(n) \rceil.$ Define the events $C_k:=\{t_{k+1}^0 -X_{k,L} > (\frac{3}{4})^{L/2}\}$ and $A_k := \{(U_{k,\ell})_{\ell=1,\ldots, L} : \#\{s:U_{k,s} \leq 3/4\}< L/2\}.$ On $A_k^c,$ we have $t_{k+1}^0 -X_{k,L} = (t_{k+1}^0 -X_{k,0}) \prod_{\ell=0}^{L-1} U_{k,\ell} \leq (3/4)^{L/2}$ and therefore $A_k^c \subseteq C_k^c.$ With $(Z_s)_{s\geq 1}$ a sequence of i.i.d. Bernoulli random variables with success probability $1/4,$ this shows
\begin{align*}
	P(C_k) \leq P(A_k)
	\leq
	P\Big(\sum_{s=1}^L (1-Z_s) \leq  L/2\Big)
	\leq P\Big(\sum_{s=1}^L (Z_s-\tfrac 14) \geq L/4\Big)
	\leq e^{-L/8}\leq \frac 1n,
\end{align*}
using Hoeffding's inequality and $L\geq 8\log n.$

Using that $Y_{k,r+1}|(X_{k,r},Y_{k,r}) \sim Y_{k,r} + \Exp(n(t_{k+1}^0-X_{k,r}))$ and $L \geq 2\log (Rn^2)/\log(4/3),$ we obtain for any $k=0,1,\ldots, K_n-1,$
\begin{align*}
	&P\big(Y_{k,L+1} \leq f(t_{k+1}^0)\big)\\
	&\leq E\Big[ P\Big(Y_{k,L+1} \leq f(t_k^0) +R \, \Big | \, (X_{k,L},Y_{k,L})\Big){ {\bf 1}_{ C_k^c}}\Big]
	+ \frac 1n\\
	&\leq 1-\exp\Big( -Rn \Big(\frac{3}{4}\Big)^{L/2}\Big)+ \frac 1n\\
	&\leq Rn\Big(\frac{3}{4}\Big)^{L/2}+ \frac 1n \\
	&\leq \frac 2{n}.
\end{align*}
The case $k=K_n$ is special. Since
\begin{align*}
	P\big(\widehat f^{\MLE}(1) - f(t_{K_n}^0) \geq R\big)
	\leq
	P\big(\min_i \{Y_i :X_i \in [1,T]\} \geq R\big)
	= e^{-Rn(T-1)}\rightarrow 0
\end{align*}
we can argue as above. This means that the number of jumps of the MLE on the interval $[t_k^0,t_{k+1}^0\wedge 1]$ is bounded by $L+1$ with probability at least $1-2/n-e^{-Rn(T-1)}.$ Because of $K_n=o(n)$ the assertion follows with the union bound.
\end{proof}

\begin{lem}
\label{lem.max_gamma}
Let $M\geq 1$ and consider any family of random variables $Z_k \sim \Gamma(2,1),$ for $k=1,2, \ldots, M.$ Then for any $t\geq 0$
\begin{align*}
	P\Big(\max_{k=1, \ldots, M} Z_k \geq \log M + \log \log (eM) + t\Big) \le (2+t)e^{-t}.
\end{align*}
\end{lem}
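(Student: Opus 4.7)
The key observation is that no independence among the $Z_k$ is needed; a union bound against the explicit tail of $\Gamma(2,1)$ suffices. For $Z\sim \Gamma(2,1)$ the density is $ze^{-z}\mathbf{1}_{\{z\ge 0\}}$, so integration by parts gives
\begin{equation*}
P(Z\ge s)=\int_s^\infty ze^{-z}\,dz=(s+1)e^{-s},\qquad s\ge 0.
\end{equation*}
Hence by the union bound,
\begin{equation*}
P\Big(\max_{k=1,\ldots,M}Z_k\ge s\Big)\le M(s+1)e^{-s}\quad\text{for all } s\ge 0.
\end{equation*}

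The plan is then to insert $s:=\log M+\log\log(eM)+t$, which is non-negative for $M\ge 1$, $t\ge 0$. With this choice,
\begin{equation*}
Me^{-s}=\frac{e^{-t}}{\log(eM)},\qquad s+1=\log M+\log\log(eM)+t+1,
\end{equation*}
and since $\log(eM)=1+\log M$ we get
\begin{equation*}
M(s+1)e^{-s}=\frac{1+\log M+\log\log(eM)+t}{1+\log M}\,e^{-t}.
\end{equation*}
So the remaining task is the elementary inequality
\begin{equation*}
\frac{1+\log M+\log\log(eM)+t}{1+\log M}\le 2+t\qquad(M\ge 1,\ t\ge 0),
\end{equation*}
which rearranges to $\log\log(eM)+t\le (1+t)(1+\log M)$. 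Since $t\le t(1+\log M)$ trivially and $\log\log(eM)\le 1+\log M$ (because $\log(eM)\le eM$ for $M\ge 1$, so $\log\log(eM)\le 1+\log M$), the inequality follows at once.

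There is no real obstacle here; the only point requiring a brief check is the final scalar inequality, and the constant $2+t$ on the right-hand side is calibrated precisely so that the contributions $\log\log(eM)/(1+\log M)\le 1$ and $t/(1+\log M)\le t$ fit together with the leading $1$. Combining the three displayed steps yields the claimed bound $(2+t)e^{-t}$.
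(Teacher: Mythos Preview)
Your proof is correct and follows essentially the same approach as the paper: bound the $\Gamma(2,1)$ tail by $(x+1)e^{-x}$, apply a union bound, and substitute $x=\log M+\log\log(eM)+t$. Your version is in fact cleaner, since you compute the exact tail $(s+1)e^{-s}$ via integration by parts and spell out the final scalar inequality, whereas the paper obtains the tail bound by a slightly roundabout splitting argument and leaves the algebra implicit.
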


\begin{proof}
Let $Z\sim \Gamma(2,1).$ Since $ue^{-u/2}\leq 2/e\leq 1,$
\begin{align*}
	P(Z\geq x) = \frac 12	\int_x^\infty u e^{-u} du
	\leq \frac 12 \int_x^{2x} u e^{-u} du + \frac 12 \int_{2x}^\infty e^{-u/2} du
	\leq (x+1)e^{-x}.
\end{align*}
Hence, $P(Z\geq \log M + \log \log (eM) + t) \leq M^{-1}(2+t)e^{-t}.$ The assertion follows using the union bound.
\end{proof}

%\begin{figure}
%\begin{center}
%	%\includegraphics[scale=0.4]{Figures/AreasUnrestrictedMLE.png}
%	\caption{\label{fig.areas_unrestricted_MLE} The areas $A_{k,r}$ defined in \eqref{eq.A_kr_def}.}
%\end{center}
%\end{figure}

The MLE has much more jumps than the true function, see Figure \ref{fig.MLE}. Based on the next lemma, we can bound the increase of the likelihood induced by artificial jumps.

\begin{lem}
\label{lem.ub_areas}
With probability tending uniformly to one we have for all $k=0,\ldots,K_n$
\begin{align*}
\max_{\ell: t_k^0\le \widehat t_\ell^{\MLE}\le t_{k+1}^0} \big( \widehat f^{\MLE}(\widehat t_\ell^{\MLE})-f(t_k^0) \big)\big(t_{k+1}^0 - \widehat t_\ell^{\MLE} \big)  \leq \frac{\log K_n + C' \log \log n} n.
\end{align*}
\end{lem}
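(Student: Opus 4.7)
The plan is to bound the expected number of MLE jumps in each interval $[t_k^0,t_{k+1}^0]$ whose associated rectangular area exceeds $s/n$, by applying Campbell's formula to the PPP, and then take $s\asymp \log K_n+\log\log n$ together with a union bound. First, I would restrict to the event $H':=H\cap\{\widehat f^{\MLE}(1)\le f_0(t_{K_n}^0)+R\}$, which satisfies $P_{f_0}(H')\xrightarrow{u}1$ by Lemma \ref{lem.diff_lb} together with the exponential tail estimate $P(\widehat f^{\MLE}(1)-f_0(t_{K_n}^0)\ge R)\le \exp(-Rn(T-1))$ used in the proof of Lemma \ref{lem.diff_lb}. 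On $H'$, the inequality $\widehat f^{\MLE}(\widehat t_\ell^{\MLE})\le \widehat f^{\MLE}(t_{k+1}^0)\le Y_{k+1}^*\le f_0(t_{k+1}^0)+\log n/(2\sqrt n)$ (with the obvious modification for $k=K_n$) gives
\begin{equation*}
\widehat f^{\MLE}(\widehat t_\ell^{\MLE})-f_0(t_k^0)\le a_k^0+1\le R+1\qquad\text{for all large }n.
\end{equation*}

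Next, I would use the explicit representation $\widehat f^{\MLE}(x)=\min\{Y_j:X_j\ge x\}$. Each jump $\widehat t_\ell^{\MLE}\in [t_k^0,t_{k+1}^0]$ therefore coincides with the $X$-coordinate of an observation $(X_i,Y_i)$ with $Y_i=\widehat f^{\MLE}(\widehat t_\ell^{\MLE})$, and no further PPP point may lie in the open rectangle $(X_i,t_{k+1}^0]\times[f_0(t_k^0),Y_i)$. Since $f_0\equiv f_0(t_k^0)$ on $[t_k^0,t_{k+1}^0)$, the portion of this rectangle above $f_0$ has Lebesgue measure exactly $(t_{k+1}^0-X_i)(Y_i-f_0(t_k^0))$, i.e.\ the quantity we wish to bound. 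Hence, on $H'$, the number of MLE jumps in $[t_k^0,t_{k+1}^0]$ with area exceeding $s/n$ is at most the number of PPP observations $(X_i,Y_i)$ with $X_i\in[t_k^0,t_{k+1}^0]$, $Y_i\in(f_0(t_k^0),f_0(t_k^0)+R+1]$, empty rectangle, and $n(t_{k+1}^0-X_i)(Y_i-f_0(t_k^0))>s$.

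By the Slivnyak--Mecke formula, the expectation of this count equals
\begin{equation*}
\int_{t_k^0}^{t_{k+1}^0}\int_{f_0(t_k^0)}^{f_0(t_k^0)+R+1} n\,e^{-n(t_{k+1}^0-x)(y-f_0(t_k^0))}\mathbf{1}\bigl(n(t_{k+1}^0-x)(y-f_0(t_k^0))>s\bigr)\,dy\,dx,
\end{equation*}
which, after substituting $u=n(t_{k+1}^0-x)$ and $v=y-f_0(t_k^0)$, becomes $\int_0^{n(t_{k+1}^0-t_k^0)}\!\int_0^{R+1} e^{-uv}\mathbf{1}(uv>s)\,dv\,du$. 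The inner integral vanishes for $u<s/(R+1)$ and is at most $e^{-s}/u$ otherwise, so the whole expression is bounded by $e^{-s}\log(n(R+1)/s)$. With $s=\log K_n+C'\log\log n$ and $C'=2$, each interval contributes at most $O(K_n^{-1}(\log n)^{-1})$, and summing over the $K_n+1$ intervals $[t_k^0,t_{k+1}^0]$ yields total expectation $O((\log n)^{-1})$. Markov's inequality combined with $P_{f_0}(H')\xrightarrow{u}1$ completes the proof.

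The main obstacle is the truncation of the $v$-integral: without the hard upper bound $Y_i-f_0(t_k^0)\le R+1$ furnished by $H'$, integrating $e^{-uv}$ over $v\in(s/u,\infty)$ yields $e^{-s}/u$, and the subsequent $u$-integral then diverges at $u=0$ (i.e.\ near $x=t_{k+1}^0$). The a priori boundedness of the MLE's height near the jump, rather than the Campbell calculation itself, is the genuinely delicate ingredient.
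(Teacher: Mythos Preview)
Your argument is correct and takes a genuinely different route from the paper's proof. The paper proceeds by identifying a Markov-chain structure: ordering the MLE jump points in each interval $[t_k^0,t_{k+1}^0]$ as $(X_{k,r},Y_{k,r})_{r\ge 0}$, the areas $A_{k,r}:=(t_{k+1}^0-X_{k,r-1})(Y_{k,r}-f_0(t_k^0))$ satisfy the recursion $nA_{k,r}=U_{k,r}\,nA_{k,r-1}+E_{k,r}$ with independent $U_{k,r}\sim U[0,1]$, $E_{k,r}\sim\Exp(1)$. The stationary law of this chain is $\Gamma(2,1)$, so by stochastic domination each $nA_{k,r}$ is bounded above by a $\Gamma(2,1)$ variable; the maximum of $O(K_n\log n)$ such variables is then controlled via Lemma~\ref{lem.max_gamma}. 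Your Slivnyak--Mecke calculation bypasses the recursion entirely and bounds directly the expected number of ``bad'' observations. This is arguably more transparent and, notably, does not require Lemma~\ref{lem.ub_jumps_of_MLE} as an input (the paper uses it to cap the number of $A_{k,r}$ terms entering the maximum). The paper's approach, on the other hand, reveals more distributional structure---the $\Gamma(2,1)$ law is exact for the stationary chain---and would yield sharper constants if that were of interest.

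Two minor points. First, your displayed bound $\widehat f^{\MLE}(\widehat t_\ell^{\MLE})-f_0(t_k^0)\le a_k^0+1$ should read $a_{k+1}^0+1$, since $Y_{k+1}^*\le f_0(t_{k+1}^0)+o(1)$ on $H$; both jump sizes are $\le R$, so the conclusion $\le R+1$ is unaffected. Second, the representation $\widehat f^{\MLE}(x)=\min\{Y_j:X_j\ge x\}$ holds for $x\in[0,1]$ only, with the MLE constant on $[1,T]$; since all MLE jumps lie in $[0,1]$ this is enough for your purposes, and for $k=K_n$ the empty-rectangle area $(T-X_i)(Y_i-f_0(t_{K_n}^0))$ still matches because $f_0$ is constant on $[t_{K_n}^0,T]$.
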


\begin{proof}
Consider $(X_{k,r}, Y_{k,r})_{r\ge 0}$ from \eqref{EqXkr} and set $X_{k,-1}:=t_k^0$. It is argued in the proof of Lemma \ref{lem.ub_jumps_of_MLE} that, with probability uniformly tending to one, all jump locations $\widehat t_\ell^{\MLE}$ of the MLE, which lie in $[t_k^0,t_{k+1}^0]$ are given by $(X_{k,r})_{r=0,\ldots,R}$ where the number $R$ of these jumps is by the statement of Lemma \ref{lem.ub_jumps_of_MLE} of order $\log n$. It thus suffices to prove that for any constant $C>0$ there exists a constant $C'>0$ such that
\begin{align}\label{EqOldAssertion}
	P_{f_0}\Big(\max_{k=0, \ldots, K_n, 0\leq r \leq C \log n}\Leb([X_{k,r-1},t_{k+1}^0]\times[f(t_k^0),Y_{k,r}]) \leq \frac{\log K_n + C' \log \log n} n\Big)\xrightarrow{u} 1.
\end{align}

Set $A_{k,r} := \Leb([X_{k,r-1},t_{k+1}^0]\times[f(t_k^0),Y_{k,r}])$.
By construction, conditional  on  $(X_{k,\ell})_{0\le\ell\le r-2}$ and $(Y_{k,\ell})_{0\le\ell\le r-1}$, $X_{k,r-1}$ is uniformly distributed on $[X_{k,r-2},t_{k+1}^0]$ and  $Y_{k,r}$ is the minimum in $y$-direction of an independent PPP on $[t_k^0,t_{k+1}^0]\times (Y_{k,r-1},\infty)$ of intensity $n$. These properties imply that
\begin{align*}
U_{k,r}&:=(t_{k+1}^0-X_{k,r-1})/(t_{k+1}^0-X_{k,r-2})\sim U([0,1]),\\
E_{k,r}&:=n(t_{k+1}^0-X_{k,r-1})(Y_{k,r}-Y_{k,r-1})\sim \Exp(1)
\end{align*}
are independent and independent of $(X_{k,\ell})_{\ell\le r-2},(Y_{k,\ell})_{\ell\le r-1}$. This gives the  recurrence property
\begin{align*}
	A_{k,0}=\tfrac1n E_{k,0},\quad A_{k,r} = U_{k,r}A_{k,r-1}+ \tfrac1n E_{k,r}, \quad \text{for} \ r=1,2,\ldots
\end{align*}
with random variables $U_{k,r}\sim U([0,1])$, $E_{k,r}\sim \Exp(1)$, all independent.

A formal PPP proof relies on the stopping set property of
\[S_{k,r-2}:=([t_k^0,t_{k+1}^0]\times[f(t_k^0),Y_{k,r-2}])\cup ([t_k^0,X_{k,r-2}]\times [f(t_k,0),\infty)),\]
which shows that the PPP on the complement $(X_{k,r-2},t_{k+1}^0]\times(Y_{k,r-2},\infty)$ remains a PPP independent of $(X_{k,\ell},Y_{k,\ell})_{\ell\le r-2}$  such that in turn $X_{k,r-1}\sim U([X_{k,r-2},t_{k+1}^0])$ and $Y_{k,r-1}-Y_{k,r-2}\sim \Exp(n(t_{k+1}^0-X_{k,r-2}))$ are independent, given $(X_{k,\ell},Y_{k,\ell})_{\ell\le r-2}$. This yields consecutively $X_{k,r-1}\sim U([X_{k,r-2},t_{k+1}^0])$ given $(X_{k,\ell})_{\ell\le r-2}$ and $(Y_{k,\ell})_{\ell\le r-1}$ and then, increasing $r$, $Y_{k,r}-Y_{k,r-1}\sim \Exp(n(t_{k+1}^0-X_{k,r-1}))$ given $(X_{k,\ell},Y_{k,\ell})_{\ell\le r-1}$.

We now use that for independent $U \sim \Unif[0,1],$ $G\sim \Gamma(2,1)$ and $E\sim \Exp(1)$ we have $UG+E\sim \Gamma(2,1)$, which is easily checked via $UG\sim\Exp(1)$. For $nA_{k,0}\sim\Gamma(2,1)$ the recurrence relation would thus give $nA_{k,r}\sim\Gamma(2,1)$, a stationary solution. Since $nA_{k,0}\sim\Exp(1)\le\Gamma(2,1)$ with respect to stochastic ordering, the monotonicity of the recurrence relation shows that $nA_{k,r}$ can be upper bounded by a $\Gamma(2,1)$-distributed random variable.
\begin{comment}
Let $E' \sim \Exp(1)$ be independent of the sequences $(U_{k,r})_{r\geq 1}$ and $(E_{k,r})_{r \geq 0}.$ Define $A_{k,0}' = n^{-1} (E_{k,0}+E')$ and
\begin{align*}
	A_{k,r}' = U_{k,r}A_{k,r-1}'+ \tfrac1n E_{k,r}, \quad \text{for} \ r=1,2,\ldots
\end{align*}
Due to $E'\geq 0,$ we must have $A_{k,r}'\geq A_{k,r}.$ Since by definition $nA_{k,0}'\sim \Gamma(2,1),$ we also have that $nA_{k,rr}'\sim \Gamma(2,1)$ for all $k,\ell.$
\end{comment}
Consequently, $\max_{k=0, \ldots, K_n, 0\leq r \leq C \log n}nA_{k,r}$ can be bounded from above by the maximum over $(K_n+1)(1+C \log n)$ many $\Gamma(2,1)$-distributed random variables. The assertion \eqref{EqOldAssertion} follows from Lemma \ref{lem.max_gamma} with $t=\log\log n$ and some elementary algebra.
\end{proof}

\begin{lem}
\label{lem.lb_areas}
For $\tau = (T+1)/2,$
\begin{align*}
	P_{f_0}\Big(\min_{k=1, \ldots, K_n} (Y_k^* - Y_k') (X_k^*\wedge \tau-X_k') \geq \frac{\log^2 n}{n}\Big)\xrightarrow{u} 1.
\end{align*}
\end{lem}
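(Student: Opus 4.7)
The plan is to bound the area from below by the product of two explicit distances with well-understood laws, and then to close via a union bound. Since $Y_k^*\ge f_0(t_k^0)\ge Y_k'$ and $X_k^*\ge t_k^0>X_k'$, one has the trivial lower bounds
$Y_k^*-Y_k'\ge b_k:=f_0(t_k^0)-Y_k'$ and $X_k^*-X_k'\ge c_k:=X_k^*-t_k^0$.
Thus it suffices (ignoring truncation at $\tau$, handled below) to show $\min_k b_k c_k\ge \log^2(n)/n$ with probability tending uniformly to one.

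By standard Poisson point process properties, conditional on the event $R_k\neq\varnothing$ and on $X_k'$, the point $Y_k'$ is uniformly distributed in $[f_0(t_{k-1}^0),f_0(t_k^0)]$, so $b_k=U_{b,k}a_k^0$ with $U_{b,k}\sim U([0,1])$. Similarly, conditional on $Y_k^*$, the point $X_k^*$ is uniform in $[t_k^0,t_{k+1}^0)$, so $c_k=U_{c,k}(t_{k+1}^0-t_k^0)$ with $U_{c,k}\sim U([0,1])$. Moreover, $(X_k',Y_k')$ and $(X_k^*,Y_k^*)$ depend only on the PPP restricted to the disjoint strips $[t_{k-1}^0,t_k^0)$ and $[t_k^0,t_{k+1}^0)$, respectively, so by the independence property of PPP restrictions the pair $(U_{b,k},U_{c,k})$ consists of independent $U([0,1])$ variables. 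The signal strength assumption $a_k^0(t_{k+1}^0-t_k^0)\ge 2K_n\log(eK_n)\log^3(n)/n$ then gives
$b_k c_k\ge U_{b,k}U_{c,k}\cdot 2K_n\log(eK_n)\log^3(n)/n$.

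Using $P(U_1U_2\le x)=x(1-\log x)$ for $x\in(0,1]$, applied with $x_n=1/(2K_n\log(eK_n)\log n)$, a union bound yields
\begin{equation*}
P_{f_0}\Big(\min_{k\le K_n} b_k c_k<\tfrac{\log^2 n}{n}\Big)\le K_n\cdot x_n(1-\log x_n)=\frac{1+\log(2K_n\log(eK_n)\log n)}{2\log(eK_n)\log n}.
\end{equation*}
Since $K_n\le n^{1/2-\delta}$, the numerator is $O(\log n)$ while the denominator is $\ge 2\log(eK_n)\log n$; elementary bookkeeping (splitting the two cases $\log K_n\le\log\log n$ and $\log K_n>\log\log n$) shows the ratio tends to zero. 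The auxiliary events $R_k=\varnothing$ occur with probability $\le e^{-n a_k^0(t_k^0-t_{k-1}^0)}\le e^{-2K_n\log(eK_n)\log^3 n}$, which, summed over $k$, is negligible. For the boundary case $k=K_n$, where $X_{K_n}^*$ may exceed $\tau$, one replaces $c_{K_n}$ by $c_{K_n}\wedge(\tau-t_{K_n}^0)$; since $\tau-t_{K_n}^0\ge (T-1)/2$ and $(\tau-t_{K_n}^0)/(T-t_{K_n}^0)\ge 1/2$, the uniform distribution of $c_{K_n}$ only loses a harmless constant factor.

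The main obstacle is the log-factor bookkeeping in the union bound: the threshold $\log^2 n/n$ is only a factor $2K_n\log(eK_n)\log n$ smaller than the typical scale of $b_kc_k$, so the uniform-uniform tail $x(1-\log x)$ cancels the $K_n$ in the union bound only by virtue of the extra $\log(eK_n)$ in the signal condition. This explains why the definition of $\mM_S(K_n,R)$ uses $K_n\log(eK_n)$ rather than just $K_n$: it is precisely calibrated so that $K_n\cdot x_n(-\log x_n)\to 0$.
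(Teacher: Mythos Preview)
Your proof is correct and follows essentially the same route as the paper: lower bound the area by $(f_0(t_k^0)-Y_k')(X_k^*\wedge\tau-t_k^0)$, recognize the two factors as (stochastically dominated by) independent uniforms times the deterministic scales $a_k^0$ and $t_{k+1}^0-t_k^0$, invoke the signal strength condition, and close with a union bound via the tail $P(U_1U_2\le x)=x(1-\log x)$, which the paper packages through the identity $-\log(U_1U_2)\sim\Gamma(2,1)$ and its Lemma~\ref{lem.max_gamma}. The only stylistic differences are that the paper absorbs the truncation at $\tau$ and the case $R_k=\varnothing$ into a single stochastic-domination statement (defining $U_k^*=(X_k^*\wedge\tau-t_k^0)/(t_{k+1}^0\wedge\tau-t_k^0)$ and $U_k'=(f_0(t_k^0)-Y_k')/a_k^0$ and noting both are stochastically $\ge U([0,1])$), whereas you treat these as separate boundary/exceptional cases; and the paper appeals to its $\Gamma(2,1)$-tail lemma while you compute the tail directly. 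Two minor remarks: the hypothesis $K_n\le n^{1/2-\delta}$ is not part of the lemma's statement, but $K_n=o(\sqrt n)$ is already implied by $f_0\in\mM_S(K_n,R)$ (Remark~\ref{RemMS}), which suffices for your bookkeeping; and on $R_k=\varnothing$ one has $U_{b,k}=1$ deterministically, so that case only helps and need not be excluded.
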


\begin{proof}
Observe first $(Y_k^* - Y_k') (X_k^*\wedge \tau-X_k')\ge (f(t_k^0)-Y_k')(X_k^*\wedge \tau-t_k^0).$ Using the independence of the PPP on $[0,t_k^0)\times\R$ and $[t_k^0,T]\times\R$, $U_k^*:=(X_k^*\wedge \tau-t_k^0)/(t_{k+1}^0\wedge \tau-t_k^0)$ and $U_k':=(f(t_k^0)-Y_k')/a_k^0$ are independent with $U_k^*, U_k'$  stochastically larger than $U([0,1])$ (recall $Y_k'=f_0(t_{k-1}^0)$ in case $R_k=\varnothing$). By the properties of $f_0$ in Definition \ref{assump.min_rect} and using $(t_{k+1}^0\wedge \tau-t_k^0) \geq \tfrac 12 (t_{k+1}^0-t_k^0),$
\begin{align*}
	\min_{k=1, \ldots, K_n} (Y_k^* - Y_k') (X_k^*\wedge \tau-X_k')
	\geq \frac{K_n \log (eK_n) \log^3 n}{n} \min_{k=1, \ldots, K_n}  U_k^* U_k'.
\end{align*}
From $-\log(U_1U_2)\sim \Gamma(2,1)$ for independent $U_1,U_2\sim U([0,1])$  we deduce
\[ P\Big(\min_{k=1, \ldots, K_n} (Y_k^* - Y_k') (X_k^*\wedge \tau-X_k') < \frac{\log^2 n}{n}\Big)\le P\Big(K_n\log(eK_n)\log(n)e^{-V*}< 1\Big)
\]
with $V^*$ the maximum over $K_n$ independent $\Gamma(2,1)$-random variables. The result follows from Lemma \ref{lem.max_gamma} with $t= \log \log n.$
\end{proof}

\section{Results on total variation distance}\label{AppTV}

\begin{lem}
\label{lem.TV_for_cond_distribs}
Let $P,Q$ be probability measures on the same measurable space $(\Omega, \mathcal{A}).$ For any $A \in \mathcal{A}$ with $P(A)>0$, $Q(A)>0$
\begin{align*}
	\TV(P,Q) \leq \TV\big( P(\cdot | A) , Q(\cdot |A) \big) + 2P(A^c) +2Q(A^c).
\end{align*}
\end{lem}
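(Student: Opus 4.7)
The plan is to do a straightforward decomposition of any test set $B\in\mathcal{A}$ into its intersection with $A$ and with $A^c$ and bound the two contributions separately. Writing
\[
P(B)-Q(B)=\bigl(P(B\cap A)-Q(B\cap A)\bigr)+\bigl(P(B\cap A^c)-Q(B\cap A^c)\bigr),
\]
the $A^c$-term is trivially bounded in absolute value by $\max(P(A^c),Q(A^c))\le P(A^c)+Q(A^c)$, so the work is in the $A$-term.

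For the $A$-term, I would use $P(B\cap A)=P(A)P(B\mid A)$ and likewise for $Q$, then insert the hybrid $P(A)Q(B\mid A)$:
\[
P(B\cap A)-Q(B\cap A)=P(A)\bigl(P(B\mid A)-Q(B\mid A)\bigr)+\bigl(P(A)-Q(A)\bigr)Q(B\mid A).
\]
The first summand is bounded in absolute value by $\TV(P(\cdot\mid A),Q(\cdot\mid A))$ using $P(A)\le 1$, and the second is bounded by $|P(A)-Q(A)|=|P(A^c)-Q(A^c)|\le P(A^c)+Q(A^c)$ using $Q(B\mid A)\le 1$.

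Combining the two bounds yields
\[
|P(B)-Q(B)|\le \TV\bigl(P(\cdot\mid A),Q(\cdot\mid A)\bigr)+2P(A^c)+2Q(A^c),
\]
and taking the supremum over $B\in\mathcal{A}$ gives the claim. There is no real obstacle here; the only thing to watch is that the hybrid term is chosen so that the coefficient in front of $\TV(P(\cdot\mid A),Q(\cdot\mid A))$ is $P(A)\le 1$ rather than something larger, and that the two appearances of $P(A^c)+Q(A^c)$ (one from the $A^c$-term, one from $|P(A)-Q(A)|$) add up to the stated factor of $2$.
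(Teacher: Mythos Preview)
Your proof is correct. The paper takes a different route: it first establishes the auxiliary bound $\TV\bigl(P,P(\cdot\mid A)\bigr)\le 2P(A^c)$ by writing $P(D)-P(D\mid A)=P(D\cap A)\bigl(1-1/P(A)\bigr)+P(D\cap A^c)$ and bounding each piece by $P(A^c)$; the analogous bound for $Q$ and the triangle inequality $\TV(P,Q)\le \TV(P,P(\cdot\mid A))+\TV(P(\cdot\mid A),Q(\cdot\mid A))+\TV(Q(\cdot\mid A),Q)$ then give the result. Your argument instead works directly on a single test set $B$, splitting it over $A$ and $A^c$ and inserting the hybrid $P(A)Q(B\mid A)$. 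Both are equally elementary; the paper's version isolates the reusable fact that conditioning on a high-probability event perturbs a measure by at most twice the complement's mass, whereas your decomposition is slightly more economical in that it avoids passing through intermediate measures and the triangle inequality.
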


\begin{proof}
The assertion follows from
\begin{align*}
	\TV\big(P, P(\cdot | A) \big)
	&= \sup_{D \in A} \Big| P(D) - \frac{P(D\cap A)}{P(A)}\Big| \\
	&\leq  \sup_{D \in A}  \Big| P(D \cap A)\Big( 1 - \frac{1}{P(A)} \Big) \Big| +P(A^c)  \\
	&=  \sup_{D \in A} \frac{P(D \cap A)}{P(A)} P(A^c) +P(A^c)  \\
	&\leq 2P(A^c)
\end{align*}
and the triangle inequality.
\end{proof}

By a slight abuse of notation we write $\|P_X-P\|_{\TV}$ as $\TV(X,P)$ when $X\sim P_X.$

\begin{lem}
\label{lem.CLT_in_TV}
If $\xi_j \sim \Exp(1),$ $j=1, 2, \ldots$ are independent, then, for any real number $\alpha$ and any sequence of integers $M \rightarrow \infty,$
\begin{align*}
	\TV\Big( \alpha- n^{-1}\sum_{j=1}^M \xi_j , \mathcal{N}\Big(\alpha-\frac Mn, \frac{M}{n^2} \Big) \Big)\rightarrow 0.
\end{align*}
\end{lem}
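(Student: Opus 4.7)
The claim is a central-limit statement in total variation for i.i.d. exponentials, which is standard but the TV-mode of convergence needs a small argument beyond the usual weak CLT.

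First I would reduce the assertion to a statement about the standardized partial sum $Z_M := (S_M - M)/\sqrt{M}$, where $S_M = \sum_{j=1}^M \xi_j$. Since total variation is invariant under any bimeasurable bijection applied jointly to both laws, the affine map $x \mapsto \alpha - x/n$ shows
\[
	\TV\Big( \alpha - n^{-1} S_M,\, \mathcal{N}\big(\alpha - M/n,\, M/n^2\big)\Big) = \TV\Big( Z_M,\, \mathcal{N}(0,1)\Big),
\]
which is independent of $n$ and $\alpha$. So it suffices to prove $\TV(Z_M,\mathcal{N}(0,1)) \to 0$ as $M \to \infty$.

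Second, I would upgrade the classical CLT (which gives weak convergence of $Z_M$ to $\mathcal{N}(0,1)$) to TV convergence by Scheff\'e's lemma. Both laws possess Lebesgue densities, so it is enough to prove pointwise convergence of the density $f_{Z_M}$ to the standard normal density $\varphi$. Since $S_M \sim \Gamma(M,1)$ has density $x^{M-1}e^{-x}/(M-1)!$ on $\R^+$,
\[
	f_{Z_M}(z) = \sqrt{M}\,\frac{(M + z\sqrt{M})^{M-1}\, e^{-(M+z\sqrt{M})}}{(M-1)!}, \quad z > -\sqrt{M}.
\]
Applying Stirling's approximation $(M-1)! = \sqrt{2\pi M}\,M^{M-1}e^{-M}(1 + o(1))$ and the expansion
\[
	(M-1)\log\!\Big(1 + \tfrac{z}{\sqrt{M}}\Big) - z\sqrt{M}
	= -\tfrac{z^2}{2} + O\!\big(M^{-1/2}(1+|z|^3)\big),
\]
valid for each fixed $z$ once $M$ is large enough, one obtains $f_{Z_M}(z) \to \varphi(z)$ for every $z \in \R$. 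Scheff\'e's lemma then yields $\int |f_{Z_M} - \varphi| \to 0$, equivalently $\TV(Z_M,\mathcal{N}(0,1)) \to 0$.

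The only mildly delicate point is the Stirling computation; it is entirely routine for each fixed $z$, and Scheff\'e's lemma removes the need for any uniform estimate. Alternatively, one could cite any standard local CLT for sums of i.i.d. random variables with an absolutely continuous distribution and finite variance, which directly gives pointwise (even uniform) convergence of the standardized density. Either route yields the claim.
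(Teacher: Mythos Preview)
Your proof is correct. The reduction step via affine invariance of total variation is identical to the paper's argument. Where you diverge is in the second step: the paper simply cites a general CLT in total variation (Bally and Caramellino, 2016, Theorem 2.5) to conclude that $\TV(M^{-1/2}\sum_{j=1}^M(1-\xi_j),\mathcal{N}(0,1))\to 0$, whereas you supply a self-contained argument via Scheff\'e's lemma, using the explicit Gamma density of $S_M$ and Stirling's formula to obtain pointwise convergence of the standardized density to $\varphi$. Your route is more elementary and avoids the external reference, at the cost of a short computation; the paper's route is a one-line citation but imports a considerably more general result than is needed here. Both are perfectly valid, and your Scheff\'e/Stirling computation is exactly the classical local-limit calculation for the Gamma family.
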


\begin{proof}
Invertible transformations do not change the total variation distance. Therefore,
\begin{align*}
	\TV\Big( \alpha- n^{-1}\sum_{j=1}^M \xi_j , \mathcal{N}\Big(\alpha-\frac Mn, \frac{M}{n^2} \Big) \Big)
	= \TV\big( M^{-1/2}\sum_{j=1}^M (1-\xi_j), \mathcal{N}(0,1)\big).
\end{align*}
By the CLT in total variation (cf. \cite{bally2016}, Theorem 2.5), the latter converges to zero as $M \rightarrow \infty.$
\end{proof}

\begin{lem}
\label{lem.TV_bd}
Consider two probability measures $P,Q$ on the same probability space with $P\ll Q.$ If $h(x) \propto dP/dQ(x),$ and $\int |h(x) -1| dQ(x) \leq \delta$ for some $\delta\in(0,1)$, then
\begin{align*}
	\TV(P,Q) \leq \frac{\delta	}{1-\delta}.
\end{align*}
\end{lem}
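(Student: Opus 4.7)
The plan is to exploit the identity that the proportionality constant relating $h$ to $dP/dQ$ must be close to $1$ because $h$ is close to $1$ on average under $Q$. Write $h = c\, dP/dQ$ for a constant $c>0$. Since $P$ is a probability measure, $c = \int h\, dQ$, and by the triangle inequality
\[
|c - 1| = \Bigl|\int (h-1)\, dQ\Bigr| \le \int |h-1|\, dQ \le \delta.
\]
In particular $c \ge 1-\delta > 0$, so $h/c$ is well defined and equals $dP/dQ$.

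Next, using $dP/dQ - 1 = h/c - 1 = (h - 1)/c + (1 - c)/c$ and the triangle inequality,
\[
\Bigl|\frac{dP}{dQ} - 1\Bigr| \le \frac{|h-1|}{c} + \frac{|1-c|}{c}.
\]
Integrating against $Q$ and applying the hypothesis together with the bound $|1-c|\le\delta$ obtained above yields
\[
\int \Bigl|\frac{dP}{dQ} - 1\Bigr|\, dQ \le \frac{2\delta}{c} \le \frac{2\delta}{1-\delta}.
\]
Since $\TV(P,Q) = \sup_A |P(A) - Q(A)| = \tfrac12 \int |dP/dQ - 1|\, dQ$, this gives the claimed bound $\TV(P,Q) \le \delta/(1-\delta)$.

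There is no real obstacle here; the only subtlety is keeping track of the normalizing constant $c$ and ensuring $c>0$ (which requires $\delta<1$, exactly the hypothesis). The argument is purely algebraic once one recognizes that the hypothesis on $h$ simultaneously controls both the deviation of $h$ from $1$ and the deviation of the normalizer $c$ from $1$.
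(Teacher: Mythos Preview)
Your proof is correct and follows essentially the same approach as the paper: both identify the normalizer $c=\int h\,dQ$, bound $|c-1|\le\delta$ (hence $c\ge 1-\delta$), decompose $dP/dQ-1=(h-1)/c+(1-c)/c$, and combine the two resulting $\delta/(1-\delta)$ contributions with the factor $\tfrac12$ in the total variation formula.
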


\begin{proof}
By triangle inequality, we have $\int h\, dQ \geq 1-\delta$ and
\begin{align*}
	\TV(P,Q)
	&= \frac 12 \int \Big| \frac{h(x)}{\int h\, dQ }- 1\Big| dQ(x)
	\leq \frac{1}{2} \int  \Big(\frac{|h(x) -1|}{\int h\, dQ} + \frac{|1-\int h\, dQ|}{\int h\, dQ}\Big)\, dQ(x).
\end{align*}
Both terms in the integral are upper bounded by $\delta/(1-\delta)$.
\end{proof}

\bibliographystyle{acm}       % (uses file "plain.bst")
\bibliography{bibIM}           % expects file "refsPart1.bib"

\begin{thebibliography}{10}

\bibitem{baldin2016}
{\sc Baldin, N., and Rei\ss, M.}
\newblock Unbiased estimation of the volume of a convex body.
\newblock {\em Stochastic Process. Appl. 126}, 12 (2016), 3716--3732.

\bibitem{bally2016}
{\sc Bally, V., and Caramellino, L.}
\newblock Asymptotic development for the {CLT} in total variation distance.
\newblock {\em Bernoulli 22}, 4 (2016), 2442--2485.

\bibitem{bontemps2011}
{\sc Bontemps, D.}
\newblock Bernstein-von {M}ises theorems for {G}aussian regression with
  increasing number of regressors.
\newblock {\em Ann. Statist. 39}, 5 (2011), 2557--2584.

\bibitem{castillo2015}
{\sc Castillo, I., Schmidt-Hieber, J., and van~der Vaart, A.}
\newblock Bayesian linear regression with sparse priors.
\newblock {\em Ann. Statist. 43}, 5 (2015), 1986--2018.

\bibitem{spa}
{\sc Castillo, I., and van~der Vaart, A.~W.}
\newblock Needles and straw in a haystack: posterior concentration for possibly
  sparse sequences.
\newblock {\em Ann. Statist. 40}, 4 (2012), 2069--2101.

\bibitem{castillo2015b}
{\sc {Castillo, I. and Rousseau, Judith}}.
\newblock {A Bernstein - von Mises theorem for smooth functionals in
  semiparametric models}.
\newblock {\em Ann. Statist. 43}, 6 (2015), 2353--2383.

\bibitem{chatterjee2015}
{\sc Chatterjee, S., Guntuboyina, A., and Sen, B.}
\newblock On risk bounds in isotonic and other shape restricted regression
  problems.
\newblock {\em Ann. Statist. 43}, 4 (2015), 1774--1800.

\bibitem{chernozhukov2004}
{\sc Chernozhukov, V., and Hong, H.}
\newblock Likelihood estimation and inference in a class of nonregular
  econometric models.
\newblock {\em Econometrica 72}, 5 (2004), 1445--1480.

\bibitem{chipman2010}
{\sc Chipman, H.~A., George, E.~I., and McCulloch, R.~E.}
\newblock {BART: Bayesian additive regression trees}.
\newblock {\em Ann. Appl. Stat. 4}, 1 (2010), 266--298.

\bibitem{coram2006}
{\sc Coram, M., and Lalley, S.~P.}
\newblock {Consistency of Bayes estimators of a binary regression function}.
\newblock {\em Ann. Statist. 34}, 3 (2006), 1233--1269.

\bibitem{denison1998}
{\sc Denison, D., Mallick, B., and Smith, A.}
\newblock {A Bayesian CART algorithm}.
\newblock {\em Biometrika 85}, 2 (1998), 363--377.

\bibitem{Embrechts2003}
{\sc Embrechts, P., Kl\"uppelberg, C., and Mikosch, T.}
\newblock {\em Modelling Extremal Events}.
\newblock Springer, New York, 2003.

\bibitem{Frick2014}
{\sc Frick, K., Munk, A., and Sieling, H.}
\newblock Multiscale change point inference.
\newblock {\em Journal of the Royal Statistical Society: Series B (Statistical
  Methodology) 76}, 3 (2014), 495--580.

\bibitem{Gao2017}
{\sc {Gao}, C., {Han}, F., and {Zhang}, C.-H.}
\newblock {On Estimation of Isotonic Piecewise Constant Signals}.
\newblock {\em ArXiv e-prints\/} (May 2017).

\bibitem{ghosal1999}
{\sc Ghosal, S.}
\newblock Asymptotic normality of posterior distributions in high-dimensional
  linear models.
\newblock {\em Bernoulli 5}, 2 (1999), 315--331.

\bibitem{Ghosal2000}
{\sc Ghosal, S.}
\newblock Asymptotic normality of posterior distributions for exponential
  families when the number of parameters tends to infinity.
\newblock {\em J. Multivariate Anal. 74}, 1 (2000), 49--68.

\bibitem{ghoshal2017}
{\sc Ghosal, S., and van~der Vaart, A.~W.}
\newblock {\em {Fundamentals of Nonparametric Bayesian Inference}}.
\newblock Cambridge University Press, Cambridge, 2017.

\bibitem{gijbels1999}
{\sc Gijbels, I., Mammen, E., Park, B.~U., and Simar, L.}
\newblock On estimation of monotone and concave frontier functions.
\newblock {\em J. Amer. Statist. Assoc. 94}, 445 (1999), 220--228.

\bibitem{Hermansen2015}
{\sc Hermansen, G.~H., and Hjort, N.~L.}
\newblock Bernshte\u\i n--von {M}ises theorems for nonparametric function
  analysis via locally constant modelling: a unified approach.
\newblock {\em J. Statist. Plann. Inference 166\/} (2015), 138--157.

\bibitem{holmes2003}
{\sc Holmes, C.~C., and Heard, N.~A.}
\newblock Generalized monotonic regression using random change points.
\newblock {\em Statistics in Medicine 22}, 4 (2003), 623--638.

\bibitem{kim2004}
{\sc Kim, Y., and Lee, J.}
\newblock A {Bernstein - von Mises} theorem in the nonparametric
  right-censoring model.
\newblock {\em Ann. Statist. 32}, 4 (2004), 1492--1512.

\bibitem{2012arXiv1210.6204K}
{\sc {Kleijn}, B., and {Knapik}, B.}
\newblock {Semiparametric posterior limits under local asymptotic
  exponentiality}.
\newblock {\em ArXiv e-prints\/} (2012).

\bibitem{Kleijn2012}
{\sc Kleijn, B. J.~K., and van~der Vaart, A.~W.}
\newblock The {B}ernstein-{V}on-{M}ises theorem under misspecification.
\newblock {\em Electron. J. Stat. 6\/} (2012), 354--381.

\bibitem{korostelev1993}
{\sc Korostel\"ev, A.~P., and Tsybakov, A.~B.}
\newblock {\em Minimax theory of image reconstruction}, vol.~82 of {\em Lecture
  Notes in Statistics}.
\newblock Springer-Verlag, New York, 1993.

\bibitem{LiGhoshal2015}
{\sc Li, M., and Ghosal, S.}
\newblock Bayesian detection of image boundaries.
\newblock {\em Ann. Statist. 45}, 5 (2017), 2190--2217.

\bibitem{mariucci2017}
{\sc {Mariucci}, E., {Ray}, K., and {Szabo}, B.}
\newblock {A Bayesian nonparametric approach to log-concave density
  estimation}.
\newblock {\em ArXiv e-prints\/} (Mar. 2017).

\bibitem{meisterreiss2013}
{\sc Meister, A., and Rei\ss, M.}
\newblock Asymptotic equivalence for nonparametric regression with non-regular
  errors.
\newblock {\em Probab. Theory Related Fields 155}, 1-2 (2013), 201--229.

\bibitem{Panov2015}
{\sc Panov, M., and Spokoiny, V.}
\newblock Finite sample {B}ernstein--von {M}ises theorem for semiparametric
  problems.
\newblock {\em Bayesian Anal. 10}, 3 (2015), 665--710.

\bibitem{reiss2018a}
{\sc {Rei\ss }, M., and {Schmidt-Hieber}, J.}
\newblock {Posterior contraction rates for support boundary recovery}.
\newblock {\em ArXiv e-prints\/} (2017).

\bibitem{reiss2014}
{\sc Rei{\ss}, M., and Selk, L.}
\newblock Efficient estimation of functionals in nonparametric boundary models.
\newblock {\em Bernoulli 23}, 2 (2017), 1022--1055.

\bibitem{rivoirard2012}
{\sc Rivoirard, V., and Rousseau, J.}
\newblock {Bernstein-von Mises theorem for linear functionals of the density}.
\newblock {\em Ann. Statist. 40}, 3 (2012), 1489--1523.

\bibitem{rockova2017}
{\sc {Rockova}, V., and {van der Pas}, S.}
\newblock {Posterior Concentration for Bayesian Regression Trees and their
  Ensembles}.
\newblock {\em ArXiv e-prints\/} (2017).

\bibitem{salomond2014}
{\sc Salomond, J.-B.}
\newblock Concentration rate and consistency of the posterior distribution for
  selected priors under monotonicity constraints.
\newblock {\em Electron. J. Statist. 8}, 1 (2014), 1380--1404.

\bibitem{Sato2013}
{\sc Sato, K.-i.}
\newblock {\em L\'evy processes and infinitely divisible distributions},
  vol.~68 of {\em Cambridge Studies in Advanced Mathematics}.
\newblock Cambridge University Press, Cambridge, 2013.

\bibitem{Scricciolo2007}
{\sc Scricciolo, C.}
\newblock {On rates of convergence for Bayesian density estimation}.
\newblock {\em Scandinavian Journal of Statistics 34}, 3 (2007), 626--642.

\bibitem{Simon2004}
{\sc Simon, T.}
\newblock Small ball estimates in {$p$}-variation for stable processes.
\newblock {\em J. Theoret. Probab. 17}, 4 (2004), 979--1002.

\bibitem{Szabo2015}
{\sc Szab\'o, B., van~der Vaart, A.~W., and van Zanten, J.~H.}
\newblock Frequentist coverage of adaptive nonparametric {B}ayesian credible
  sets.
\newblock {\em Ann. Statist. 43}, 4 (2015), 1391--1428.

\bibitem{vdvaart1998}
{\sc van~der Vaart, A.~W.}
\newblock {\em Asymptotic statistics}, vol.~3 of {\em Cambridge Series in
  Statistical and Probabilistic Mathematics}.
\newblock Cambridge University Press, Cambridge, 1998.

\bibitem{AadWell}
{\sc van~der Vaart, A.~W., and Wellner, J.~A.}
\newblock {\em Weak convergence and empirical processes}.
\newblock Springer Series in Statistics. Springer-Verlag, New York, 1996.

\end{thebibliography}

\end{document}